\documentclass{kepart2010}
\usepackage{amsmath,amsthm,amsfonts,amssymb,amscd,verbatim}
\usepackage[all,dvips]{xy}
\usepackage{epsfig,graphicx,verbatim,pifont}
\usepackage[usenames,dvips]{color}
\pagestyle{headings}

\parskip = 12 pt
\def\({\left(}
\def\){\right)}
\def\be {\begin{equation}}
\def\en{\end{equation}}

\def\F{\mathcal F (\ca)}
 \DeclareMathOperator{\C}{{\mathbb C}}
 
 \DeclareMathOperator{\R}{\mathbb R}
 \DeclareMathOperator{\A}{\ca^{*}}
 \DeclareMathOperator{\Z}{\mathbb Z}

 \DeclareMathOperator{\supp}{\text supp}

\def\ca{{\mathcal A}}
\def\cb{{\mathcal B}}
\def\cc{{\mathcal C}}

\def\cf{{\mathcal F}}

\def\cl{{\mathcal L}}
\def\cm{{\mathcal M}}


\def\cu{{\mathcal U}}
\def\cv{{\mathcal V}}

\def\Br{\mathbb R}
\def\Bz{\mathbb Z}
\def\Bn{\mathbb N}

\def\one{\pmb{1}}




\theoremstyle{plain}   
\begingroup 
   %
\newtheorem{thm}{Theorem}[section]   
\newtheorem*{thm*}{Theorem}          
\newtheorem*{cor*}{Corollary}        
\newtheorem{cor}[thm]{Corollary}     

\newtheorem{lem}[thm]{Lemma}         
\newtheorem{prop}[thm]{Proposition}  
\newtheorem{rema}[thm]{Remark}
\endgroup



\theoremstyle{definition}
\newtheorem{defn}[thm]{Definition}
\newtheorem*{rem*}{Remark}
\newtheorem*{ack*}{Acknowledgment}
\newtheorem{prob}[thm]{Problem}

\theoremstyle{remark}
\newtheorem{rem}[thm]{Remark}        %
\newtheorem{ex}[thm]{Example}        

                               %
\theoremstyle{definition}


\numberwithin{equation}{section}


\begin{document}
\thispagestyle{empty}

\title[Symbolic Dynamics Viewpoint]
{Hidden Markov Processes in the Context of Symbolic Dynamics}


\subjclass[2010]{Primary: 60K99, 60-02, 37-02; Secondary: 37B10,
60J10, 37D35, 94A15}


\keywords{}

\author{Mike Boyle}
\address{Department of Mathematics, University of Maryland, College Park, MD 20742-4015 USA}
\email{mmb@math.umd.edu}

\author{Karl Petersen}
\address{Department of Mathematics,
CB 3250, Phillips Hall,
         University of North Carolina,
Chapel Hill, NC 27599 USA} \email{petersen@math.unc.edu}




\date{\today}
\begin{abstract}
{
 In an effort to aid communication among
different fields and perhaps facilitate progress on problems common
to all of them, this article discusses hidden Markov processes from
several viewpoints, especially that of symbolic dynamics, where they
are known as sofic measures, or continuous shift-commuting images of
Markov measures. It provides background, describes known tools and
methods, surveys some of the literature, and proposes several open
problems. }
\end{abstract}

\maketitle

\tableofcontents


\nocite{BoyleTuncel1984,BurkeRosenblatt1958,
  Kleene1956,Schutzenberger1961,
  BerstelReutenauer1988,HanselPerrin1989,
  BinkowskaKaminski1984,Furstenberg1960,
  MarcusPetersenWilliams1984,
  Blackwell1957,Billingsley1995,
  ChazottesUgalde2003,
  DownarowiczMauldin2005,
  Ellis,
  LedrappierWalters1977,
  Parry1964,ParryTuncel1982,
  Petersen1989,Petersen1998,
  PetersenShin2005,PetersenQuasShin2003,
  Phelps2002,ShannonWeaver1949,
  Shin2001,Shin2001-2,Shin2006,
  Walters1986,Kitchens1982}

  \nocite{
  Abdel-MoneimLeysieffer1982,
  Abdel-MoneimLeysieffer1984,
  Ahmad1977,
  Arbib1967,
  Bancilhon1974,
  BlackwellKoopmans1957,
  Bosch1974/75,
  Boudreau1968,
  BurkeRosenblatt1958-2,
  Ellis1976,
  EphraimMerhav2002,
  Erickson1970,
  FoxRubin1968,
  FoxRubin1969,
  FoxRubin1970,
  HachigianRosenblatt1962,
  Heller1967,
  Holland1968,
  InagakiFutumuraMutuura1972,
  Kelly1982,
  KomotaKimura1978, KomotaKimura1978-2,
  KomotaKimura1981,
  Leysieffer1967,
  Madsen1975,
  NeuhoffShields1982,
  Paz1975,
  Robertson1973,
  Robertson1973-2,
  Silio1979}

\section{Introduction}\label{sec_intro}

Symbolic dynamics is the study of shift (and other) transformations
on spaces of infinite sequences or arrays of symbols and maps
between such systems. A symbolic dynamical system, with a
shift-invariant measure, corresponds to a stationary stochastic
process. In the setting of information theory, such a system amounts
to a collection of messages. Markov measures and hidden Markov
measures, also called sofic measures, on symbolic dynamical systems
have the desirable property of being determined by a finite set of
data. But not all of their properties, for example the entropy, can
be determined by finite algorithms. This article surveys some of the
known and unknown properties of hidden Markov measures that are of
special interest from the viewpoint of symbolic dynamics. To keep
the article self contained, necessary background and related
concepts are reviewed briefly. More can be found in
\cite{LindMarcus1995, Petersen1989, Petersen1998, Walters1982}.

  We discuss methods and tools that have been useful in the study of
 symbolic systems, measures supported on them, and maps between
 them.
 Throughout we state several problems that we believe to be
 open and
meaningful for further progress.
 We review {a swath of }the complicated
literature starting around 1960 that deals with the problem of
recognizing hidden Markov measures, as
 closely related ideas were repeatedly rediscovered in varying
 settings and with varying degrees of generality or practicality.
 {Our focus is on the probability papers that relate most closely to
symbolic dynamics. We have left out much of the literature
concerning probabilistic and linear automata and control, but we
have tried to include the main ideas relevant to our problems.
  Some of the explanations that we give and connections that we
draw are new, as
 are some results near the end of the article.
 In Section \ref{sec_orders} we give bounds on the possible order (memory)
 if a given sofic measure is in fact a Markov measure,
  with the consequence that in some situations there is
  an algorithm for determining whether a hidden Markov measure is Markov.
 In Section \ref{sec_factorhiddenmarkovian} we show that every
 factor map is hidden Markovian, in the sense that every
 hidden Markov measure on an irreducible
 sofic subshift lifts to a fully supported hidden Markov measure.
}

\section{Subshift background}\label{sec_sftbackground}

\subsection{Subshifts}\label{sec_sfts}

Let $\ca$ be a set, usually finite or sometimes countable, which we
consider to be an alphabet of symbols.
 \be \ca^*=\bigcup_{k=0}^\infty \ca^k
 \en
denotes the set of all finite blocks or words with entries from
$\ca$, including the empty word, $\epsilon$;
 $\ca^+$ denotes the set of all nonempty words in $\ca^*$;
$\mathbb Z$ denotes the integers and $\mathbb Z_+$ denotes the
nonnegative integers.
 Let $\Omega(\ca)=\ca^\Bz$ and $\Omega^+(\ca)=\ca^{\Bz_+}$ denote the
set of all two or one-sided sequences with entries from $\ca$. If
$\ca=\{ 0,1,\dots ,d-1\}$ for some integer $d>1$, we denote
$\Omega(\ca)$ by $\Omega_d$ and $\Omega^+(\ca)$ by $\Omega^+_d$.
Each of these spaces is a metric space with respect to the metric
defined by setting for $x\neq y$
 \be k(x,y)=\min\{|j|:x_j \neq y_j\} \quad\text{and  }
 d(x,y)= e^{-k(x,y)}.
 \en
 For $i \leq j$ and $x \in \Omega(\ca)$
we denote by $x [i,j]$ the block or word $x_ix_{i+1}\dots x_j$.  If
$\omega=\omega_0\dots \omega_{n-1}$ is a block of length $n$, we
define
 \be \cc_0(w)=\{y \in \Omega(\ca):y[0,n-1]=\omega\} ,
 \en
 and, for $i \in \Bz$,
 \be
 \cc_i(\omega)=\{y \in \Omega(\ca):y[i,i+n-1]=\omega\} .
\en The cylinder sets $\cc_i(\omega),\omega \in \ca^*, i\in \mathbb
Z$,
are open and
closed and form a base for the topology of $\Omega(\ca)$.

In this paper, a {\em topological dynamical system} is
a continuous self map of a compact metrizable space.
The {\em shift transformation} $\sigma : \Omega_d \to \Omega_d$ is
defined by $(\sigma x)_i=x_{i+1}$ for all $i$. On $\Omega_d$ the
maps $\sigma$ and $\sigma^{-1}$ are one-to-one, onto, and
continuous. The pair $(\Omega_d, \sigma)$ forms a  topological
dynamical system  which is called the {\em full $d$-shift}.

If $X$ is a closed $\sigma$-invariant subset of $\Omega_d$, then the
topological dynamical system $(X,\sigma)$ is called a {\em
subshift}. In this paper, with ``$\sigma$-invariant'' we include
the requirement that the restriction of the shift be surjective.
Sometimes we denote a subshift $(X,\sigma)$ by only $X$,
the shift map being understood implicitly. When dealing with several
subshifts, their possibly different alphabets will be denoted by
$\ca(X), \ca(Y),$ etc.

The {\em language} $\cl(X)$ of the subshift $X$ is the set of all
finite words or blocks that occur as consecutive strings
 \be
 x[i,i+k-1]=x_ix_{i+1}\dots x_{i+k-1}
 \en
 in the infinite sequences $x$
which comprise $X$. Denote by $|w|$ the length of a string $w$. Then
 \be
 \cl(X) = \{w \in \ca^* : \text{there are  } n \in \Bz, y \in X
 \text{ such that } w=y_n \dots y_{n+|w|-1} \}.
 \en

Languages of (two-sided) subshifts are characterized by being {\em
extractive} (or {\em factorial}) (which means that every subword of
any word in the language is also in the language) and {\em
insertive} (or {\em extendable}) (which means that every word in the
language extends on both sides to a longer word in the language).

 For each subshift
$(X,\sigma)$ of $(\Omega_d,\sigma)$ there is a set $\cf(X)$ of
finite ``forbidden'' words such that \be X=\{x \in \Omega_d :
\text{for each  } i \leq j, x_ix_{i+1}\dots x_j \notin \cf(X)\} .
\en A {\em shift of finite type (SFT)} is a subshift $(X,\sigma)$ of
some $(\Omega (\ca),\sigma)$ for which it is possible to choose the
set $\cf(X)$ of forbidden words defining $X$ to be finite. (The
choice of set $\cf(X)$ is not uniquely determined.) The SFT is {\it
n-step} if it is possible to choose the set of words in $\cf(X)$ to
have length at most $n+1$.
  {
  We will sometimes use ``SFT" as an
adjective describing a dynamical system.}

One-step shifts of finite type may be defined by $0,1$ transition
matrices. Let $M$ be a $d \times d$ matrix with rows and columns
indexed by $\ca=\{0,1,\dots,d-1\}$ and entries from $\{0,1\}$.
Define
 \be\label{sft}
 \Omega_M=\{\omega \in \ca^\Bz: \text{for all  } n\in \Bz,
 M(\omega_n,\omega_{n+1})=1\}.
 \en
 These were called {\em topological
Markov chains} by Parry \cite{Parry1964}. A topological Markov
chain  $ \Omega_M$ may be viewed as a {\it vertex shift}:
its alphabet may be identified with the vertex set of a
 finite directed graph such that  there is an edge from
vertex $i$ to vertex $j$ if and only if $M(i,j)=1$. (A square matrix
with nonnegative integer entries can similarly be viewed as defining
an {\it edge shift}, but we will not need edge shifts in this
paper.) A topological Markov chain with transition matrix $M$ as
above is called {\em irreducible} if for all $i,j \in \ca$ there is
$k$ such that $M^k(i,j)>0$. Irreducibility corresponds to the
associated graph being strongly connected.

\subsection{Sliding block codes}\label{sec_codes}

Let $(X,\sigma)$ and $(Y,\sigma)$ be subshifts on alphabets
$\ca,\ca'$, respectively. For $k \in \Bn$, a {\em $k$-block code} is
a map $\pi: X \to Y$ for which there are $m,n \geq 0$ with $k=m+n+1$
and a function $\pi:\ca^k \to \ca'$ such that \be (\pi x)_i =
\pi(x_{i-m} \dots x_i \dots x_{i+n}). \en We will say that $\pi$ is
a {\em block code} if it is a $k$-block code for some $k$.

\begin{thm}[Curtis-Hedlund-Lyndon Theorem] For subshifts $(X,\sigma)$ and
$(Y,\sigma)$, a map $\psi : X \to Y$ is continuous and commutes with
the shift
($\psi\sigma=\sigma\psi$) if and only if it is a block code.
\end{thm}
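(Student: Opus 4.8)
The plan is to prove the two implications separately; since the substance lies in showing that a continuous shift-commuting map is a block code, I would dispatch the reverse implication first. Suppose $\psi$ is a $k$-block code, $k=m+n+1$, induced by a function $\pi:\ca^k\to\ca'$ with $(\psi x)_i=\pi(x_{i-m}\cdots x_{i+n})$. Shift-commutativity is then immediate, since applying $\sigma$ increments all the indices in that formula by one on each side. Continuity, in fact uniform continuity, follows because if $x$ and $x'$ agree on a central block of coordinates $[-(K+m),K+n]$ then $\psi x$ and $\psi x'$ agree on $[-K,K]$; equivalently $\psi$ carries the ball of radius $e^{-(K+m+n)}$ about $x$ into the ball of radius $e^{-K}$ about $\psi x$.

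For the converse, assume $\psi:X\to Y$ is continuous and $\psi\sigma=\sigma\psi$. I would first note that $X$, as a closed subset of the compact space $\Omega(\ca)$, is compact. Consider the map $c:X\to\ca'$ defined by $c(x)=(\psi x)_0$; it is continuous, being the composition of $\psi$ with evaluation at coordinate $0$, and the latter is continuous from $Y$ into the discrete alphabet $\ca'$. Hence $c$ is locally constant, and the sets $c^{-1}(a)$, $a\in\ca'$, form an open cover of $X$ (indeed a finite clopen partition, since a continuous image of a compact set in a discrete space is finite). The key step is now to extract a \emph{uniform} window: by compactness of $X$, a Lebesgue-number argument applied to the cover $\{c^{-1}(a)\}$ yields $\delta>0$ such that $d(x,x')<\delta$ implies $c(x)=c(x')$; choosing $N$ with $e^{-N}<\delta$, this says that $(\psi x)_0$ depends only on the central block $x[-N,N]$. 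Define $\pi$ on those words $w\in\ca^{2N+1}$ that occur as $x[-N,N]$ for some $x\in X$ by $\pi(w)=(\psi x)_0$ — well defined by the previous sentence — and extend $\pi$ arbitrarily to all of $\ca^{2N+1}$.

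It then remains to propagate this from coordinate $0$ to all coordinates using shift-commutativity: for $x\in X$ and $i\in\Bz$,
\[ (\psi x)_i=(\sigma^i\psi x)_0=(\psi\sigma^i x)_0=\pi\big((\sigma^i x)[-N,N]\big)=\pi\big(x[i-N,\,i+N]\big), \]
so $\psi$ is a $(2N+1)$-block code with memory and anticipation both equal to $N$. The main obstacle, and the only genuinely nontrivial point, is the passage from pointwise local constancy of $c$ to a single window size $N$ valid for all of $X$; this is precisely where compactness of the domain enters, and indeed the statement fails without it.
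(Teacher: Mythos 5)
Your proof is correct and is the standard argument for the Curtis--Hedlund--Lyndon theorem: the easy direction is a direct check, and the converse uses compactness of $X$ (via a Lebesgue-number/uniform-continuity argument applied to the clopen partition $\{c^{-1}(a)\}$) to convert local constancy of $x\mapsto(\psi x)_0$ into a uniform window, then propagates by shift-commutativity. The paper states this classical theorem as background without giving a proof, so there is nothing to compare against; your write-up would serve as a complete proof, and you correctly identify the extraction of a uniform $N$ as the one nontrivial step.
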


If $(X,T)$ and $(Y,S)$ are topological dynamical systems,
 then a {\em factor map} is a continuous onto map $\pi :X \to Y$
such that $\pi T=S \pi$. $(Y,S)$ is called a {\em factor} of
$(X,T)$, and $(X,T)$ is called an {\em extension} of $(Y,S)$. A
one-to-one factor map is called an {\em isomorphism} or {\em
topological conjugacy}.

Given  a subshift $(X,\sigma)$,
$r\in \mathbb Z$ and $k \in \mathbb Z_+ $,
there is
a block code $\pi=\pi_{r,k}$ onto
the subshift which is
the
{\it k-block presentation} of $(X,\sigma )$,
 by the rule
\be (\pi x)_i = x[i+r,i+r+1,\dots ,i+r+k-1] \quad\text{for all } x
\in X. \en Here  $\pi$ is a topological conjugacy between
$(X,\sigma)$ and its image $(X^{[k]},\sigma)$ which is a subshift of
the full shift on the alphabet $\ca^k$.

Two factor maps $\phi, \psi$ are {\it topologically equivalent} if
there exist topological conjugacies $\alpha, \beta$ such that
$\alpha \phi \beta = \psi $. In particular, if $\phi$ is a block
code with $(\phi x)_0$ determined by $x[-m,n]$ and $k=m+n+1$ and
$\psi$ is the composition $(\pi_{m,k})^{-1}$ followed by $\phi$,
then
$\psi$ is a 1-block code (i.e. $(\psi x)_0 = \psi (x_0)$) which is
topologically equivalent to $\phi$.

A {\em sofic} shift is a subshift which is the image of a shift of
finite type under a factor map.
A sofic shift $Y$ is {\em irreducible} if it is the image of an
irreducible shift of finite type under a factor map.
(Equivalently, $Y$ contains a point with a dense forward orbit.
Equivalently, $Y$ contains a point with a dense orbit, and the
periodic points of $Y$ are dense.)

\subsection{Measures}\label{sec_measures}

Given a subshift $(X,\sigma)$, we denote by $\cm (X)$ the set of
$\sigma$-invariant Borel probability measures on $X$. These are the
measures for which the coordinate projections $\pi_n(x)=x_n$ for $x
\in X, n \in \Bz$, form a two-sided finite-state stationary
stochastic process.

Let $P$ be a {}{ $d \times d$  stochastic} matrix and $p$ {}{a}
 stochastic row vector such that $pP=p$.
{}{(If $P$ is irreducible, then $p$ is unique.)} Define a $d \times
d$ matrix $M$ with entries from $\{ 0,1\}$ by $M(i,j)=1$ if and only
if $P(i,j)>0$. Then $P$ determines a 1-step stationary
($\sigma$-invariant) Markov measure $\mu$ on the shift of finite
type $\Omega_M$ by
 \be
 \begin{aligned}
 \mu(\cc_i(\omega[i,j]))&=\mu\{y \in \Omega_M:y[i,j]=\omega_i\omega_{i+1}\dots \omega_j\}\\
 &=p(\omega_i)P(\omega_i,\omega_{i+1}) \cdots P(\omega_{j-1},\omega_j)
 \end{aligned}
 \en
 (by the Kolmogorov Extension Theorem).

For $k \geq 1$, we say that a measure $\mu \in \cm(X)$ is
{\em
$k$-step Markov}
(or more simply
{\em k-Markov})
 if for all $i \geq 0$ and all $j \geq k-1$
and all $x$ in $X$,
 \be
\mu(\cc_0(x[0,i])|\cc_0(x[-j,-1]))=\mu(\cc_0(x[0,i])|\cc_0(x[-k,-1])).
\en A measure is $1$-step Markov if and only if it is determined by
a {}{ pair $(p,P)$ } as above. A measure is $k$-step Markov if and
only if its image under the topological conjugacy taking
$(X,\sigma)$ to its $k$-block presentation is 1-step Markov. We say
that a measure is {\em Markov} if it is $k$-step Markov for some
$k$. The set of $k$-step Markov measures is denoted by $\cm_k$
(adding an optional argument to specify the system or transformation
if necessary.) {\em From here on, ``Markov" means ``shift-invariant
Markov with full support"}, that is, every nonempty cylinder subset
of $X$ has positive measure. {}{With this convention, a Markov
measure with defining matrix $P$ is ergodic if and only if $P$ is
irreducible.}

A probabilist might ask for  motivation for bringing  in the
machinery of topological and dynamical systems when we want to study
a  stationary stochastic process. First, looking at $\cm(X)$ allows
us to consider and compare many measures in a common setting. By
relating them to continuous functions (``thermodynamics''---see
Section \ref{thermo} below) we may find some distinguished measures,
for example maximal ones in terms of some variational problem.
Second, by topological conjugacy we might be able to simplify a
situation conceptually; for example,  many problems involving block
codes reduce to problems involving just 1-block codes. And third,
with topological and dynamical ideas we might see (and know to look
for) some structure or common features, such as invariants of
topological conjugacy, behind the complications of a particular
example.

\subsection{Hidden Markov (sofic) measures}\label{sec_hmms}

If $(X,\sigma)$ and $(Y,\sigma)$ are subshifts and $\pi: X \to Y$ is
a sliding block code (factor map), then each measure $\mu \in
\cm(X)$ determines a measure $\pi \mu \in \cm(Y)$ by
 \be (\pi\mu)(E) =\mu(\pi^{-1}E) \quad\text{for each measurable } E \subset Y .
 \en
(Some authors write $\pi_*\mu$
or  $\mu\pi^{-1}$
for $\pi\mu$.)

 If $X$ is SFT, $\mu$ is a Markov measure on $X$ and $\pi: X \to
Y$ is a sliding block code, then $\pi\mu$ on $Y$ is called a {\em
hidden Markov measure}
 or {\em sofic measure}.
(Various other names, such as ``submarkov" and ``function of a
Markov chain" have also been used for such a measure or the
 associated stochastic process.)
{}{Thus $\pi \mu$ is a convex combination of images of ergodic
Markov measures.
 {\em From here on, unless
otherwise indicated, the
domain of a Markov measure is assumed to be an irreducible SFT, and
the Markov measure is assumed to have full support (and thus by
irreducibility be ergodic).
 Likewise, unless otherwise indicated, a
sofic measure is assumed to have full support and to be the image of
an ergodic Markov measure.}} Then the sofic measure is ergodic and it
is defined on an irreducible sofic subshift.
 Hidden Markov measures provide a natural way to model
systems governed by chance in which dependence on the past of
probabilities of future events is limited (or at least decays, so
that approximation by Markov measures may be reasonable) and
complete knowledge of the state of the system may not be possible.

Hidden Markov processes are often defined as probabilistic functions
of Markov chains (see for example \cite{EphraimMerhav2002}), but by
enlarging the state space each such process can be represented as a
deterministic function of a Markov chain, such as we consider here
(see \cite{BaumPetrie1966}).

The definition of hidden Markov measure raises several questions.
\begin{prob}\label{prob_markovimage}
Let $\mu$ be a 1-step Markov measure on $(X,\sigma)$ and $\pi :X \to
Y$ a 1-block code. The image measure may not be Markov---see Example
\ref{figblack1/3}. What are necessary and sufficient conditions for
$\pi\mu$ to be 1-step Markov?
\end{prob}
This problem has been solved, in fact several times.
Similarly,
given $\mu$ and $\pi$, it is possible to determine whether $\pi\mu$
is $k$-step Markov. Further, given $\pi$ and a Markov measure $\mu$,
it is possible to specify $k$ such that either $\pi\mu$ is $k$-step
Markov or else is not Markov of any order. These results are
discussed in Section \ref{sec_markov}.

\begin{prob}\label{prob_identify}
Given a shift-invariant measure $\nu$ on $(Y,\sigma)$, how can one
tell whether or not $\nu$ is a hidden Markov measure? If it is, how
can one construct Markov measures of which it is the image?
\end{prob}
The answers to Problem \ref{prob_identify}
provided by various authors are discussed in Section
\ref{sec_ident}. The next problem reverses the viewpoint.

\begin{prob}\label{prob_lift}
Given a sliding block code $\pi:X \to Y$
and a Markov measure $\nu$ on $(Y,\sigma)$, does there exist a
Markov
measure $\mu$ on $X$ such that $\pi\mu = \nu$?
\end{prob}

In Section \ref{sec_mapsandthermo},
we take up Problem \ref{prob_lift} (which apart from special
cases remains open)
and some theoretical background that motivates
it.

Recall that a factor map
$\pi : X\to Y$
between irreducible sofic shifts
has a {\it degree}, which is the cardinality
of the preimage of any doubly transitive point of $Y$
 \cite{LindMarcus1995}. (If the cardinality is infinite,
it can only be the power of the continuum, and we simply
write $\textnormal{degree}(\pi )=\infty$.)
 If $\pi$
has degree $n<\infty$, then an ergodic measure $\nu$ with full
support on $Y$ can lift to  at most $n$  ergodic measures on $X$. We
say that the {\it degree of a hidden Markov measure} $\nu$, also
called its {\em sofic degree}, is the minimal degree of a factor map
which sends some Markov measure to $\nu$.

\begin{prob}\label{prob_soficdegree}
Given a hidden Markov measure $\nu$ on $(Y,\sigma)$, how can one
determine the degree of $\nu$? If the degree is $n< \infty$, how can
one construct Markov measures of which
$\nu$ is the image under a degree $n$ map?
\end{prob}

We conclude this section with  examples.

\begin{ex}\label{figshinnonmarkovian}

An example was given in \cite{MarcusPetersenWilliams1984}
 of a code $\pi : X \to Y$ that is {\em non-Markovian}: some Markov measure
 on $Y$ does not lift
 to any Markov measure on $X$, and hence (see Section \ref{sec_markovian})
 no Markov measure on $Y$ has a Markov preimage on $X$. The following diagram
 presents a simpler example, due to Sujin Shin \cite{Shin2001-2,Shin2006},
  of such a map. Here $\pi$ is a 1-block code: $\pi (1)=1$ and $\pi
 (j)=2$ if $j\neq 1$.

\begin{center}
\xymatrix{
& & &  & 2 \ar@(ur,dr)[] \\
X: & & 1\ar@{<->}[urr] \ar@{<-}[drr] \ar@{->}[rr] & & 3 \ar@{<->}[d]
\ar@{<->}[dr] & & \ar [r]^\pi & & 1
\ar@{<->} [r] & 2 \ar@(ur,dr)[] &  :Y\\
& & & &  5 & 4 }
\end{center}
\end{ex}

\begin{ex}\label{figblack1/2}
Consider the shifts of finite type given by the graphs below, the
1-block code $\pi$ given by the rule $\pi(a)=a,
\pi(b_1)=\pi(b_2)=b$, and the Markov measures $\mu, \nu$ defined by
the transition probabilities shown on the edges. We have $\pi\mu =
\nu$, so the code is {\em Markovian}---some Markov measure maps to a
Markov measure.
\begin{center}
\xymatrix{
& & & &     b_1 \ar@<.7ex>[dll]^{1/2}\ar@(ur,dr)[]^{1/2}  \\
& & a\ar@<.7ex>[urr]^{1/2} \ar@<.7ex>[drr] ^{1/2}& & & & \ar [r]^\pi
& & a
\ar@<.7ex>[r]^{1}& b \ar@<.7ex>[l]^{1/2}\ar@(ur,dr)^{1/2} \\
& & & &    b_2 \ar@<.7ex>[ull]^{1/2}\ar@(ur,dr)[] ^{1/2} }
\end{center}
\end{ex}

\begin{ex}\label{figblack1/3}
This example uses the same shifts of finite type and 1-block code as
in Example \ref{figblack1/2},
 but we define a new 1-step Markov measure on the
upstairs shift of finite type $X$ by assigning transition
probabilities as shown.

\begin{center}
\xymatrix{
& & & &     b_1 \ar@<.7ex>[dll]^{1/3}\ar@(ur,dr)[]^{2/3}  \\
& & a\ar@<.7ex>[urr]^{2/3} \ar@<.7ex>[drr] ^{1/3}& & & & \ar [r]^\pi
& & a
\ar@<.7ex>[r]& b \ar@<.7ex>[l]\ar@(ur,dr) \\
& & & &    b_2 \ar@<.7ex>[ull]^{2/3}\ar@(ur,dr)[] ^{1/3}  }
\end{center}
The entropy of the Markov measure $\mu$
(the definition is recalled in Sec. \ref{thermo})
 is readily obtained from the
familiar formula $-\sum p_i P_{ij}\log P_{ij}$, but
there is no such simple rule for computing the entropy of $\nu$.
 If $\nu$ were the finite-to-one image of some other
Markov measure $\mu'$, maybe on some other shift of finite type,
then we would have $h(\nu ) = h( \mu ') $ and the entropy of $\nu$
would be easily computed by applying the familiar formula to $\mu'$.
But for  this example (due to Blackwell \cite{Blackwell1957}) it can
be shown \cite{MarcusPetersenWilliams1984} that $\nu$ is not the
finite-to-one image of any Markov measure. Thus Problem
\ref{prob_soficdegree} is relevant to the much-studied problem of
estimating the entropy of a hidden Markov measure (see
\cite{HanMarcus2006,HanMarcus2007} and their references).
\end{ex}
\begin{ex}\label{ex_Walters}
In this example presented in \cite{Walters1986}, $X=Y=\Sigma_2=$
full $2$-shift, and the factor map is the 2-block code
 \be (\pi x)_0=x_0+x_1 \mod 2 .
 \en
Suppose $0<p<1$ and $\mu_p$ is the Bernoulli (product) measure on
$X$,
with $\mu (\mathcal C_0(1))=p$.
Let $\nu_p$ denote the hidden Markov measure
$\pi \mu_p=\pi \mu_{1-p}$.
If $p\neq 1/2$, then $\nu_p$ is
a hidden Markov measure strictly of degree 2 (it is not degree 1).

\end{ex}

\section{Factor maps and thermodynamical
concepts}\label{sec_mapsandthermo}

\subsection{Markovian and non-Markovian maps}\label{sec_markovian}

We have mentioned (Example \ref{figblack1/3}) that the image under a
factor map $\pi: X \to Y$ of a Markov measure need not be Markov,
and (Example \ref{figshinnonmarkovian}) that a Markov measure on $Y$
need not have any Markov preimages. In this section we study maps
that do not have the latter undesirable property.
Recall our convention: a Markov measure
is required to have full support.
\begin{defn}\cite{BoyleTuncel1984}
A factor map $\pi : \Omega_A \to \Omega_B$ between irreducible
shifts of finite type ($A$ and $B$ are $0,1$ transition matrices,
see (\ref{sft})) is {\em Markovian} if for every  Markov measure
$\nu$ on $\Omega_B$, there is a Markov measure on $\Omega_A$ such
that $\pi \mu = \nu$.
\end{defn}
\begin{thm}\label{th_markovian}\cite{BoyleTuncel1984} For a factor map $\pi: \Omega_A \to
  \Omega_B$ between irreducible
shifts of finite type, if there exist
any fully supported Markov $\mu$ and $\nu$ with $\pi \mu = \nu$,
then  $\pi$ is Markovian.
\end{thm}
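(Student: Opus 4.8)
The plan is to recast matters in terms of equilibrium states and compensation functions and then to exhibit an explicit, \emph{locally constant} compensation function for $\pi$ manufactured from the hypothesised pair $(\mu_0,\nu_0)$; once that is in hand, every Markov measure on $\Omega_B$ lifts automatically. First, by replacing $\Omega_A$, $\Omega_B$ by sufficiently high block presentations and $\pi$ by a topologically equivalent code---operations affecting neither the hypotheses, nor the conclusion, nor the classes of Markov measures involved---I may assume that $\pi$ is a $1$-block code with symbol map $\partial\colon\ca(\Omega_A)\to\ca(\Omega_B)$, that $\mu_0$ is $1$-step Markov with stochastic transition matrix $P_0$ (so $P_0(i,j)>0\iff A(i,j)=1$) and stationary vector $p_0>0$, and that $\nu_0:=\pi\mu_0$ is $1$-step Markov with stochastic matrix $Q_0$ and stationary vector $q_0>0$. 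Put $\psi_0(x)=\log P_0(x_0,x_1)$ and $\phi_0(y)=\log Q_0(y_0,y_1)$; these are locally constant, $\mu_0$ is the unique equilibrium state of $\psi_0$ on $\Omega_A$, $\nu_0$ the unique equilibrium state of $\phi_0$ on $\Omega_B$, and the topological pressures satisfy $P_{\Omega_A}(\psi_0)=P_{\Omega_B}(\phi_0)=0$.

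Next I set $G:=\psi_0-\phi_0\circ\pi$, a locally constant function on $\Omega_A$, and argue that $G$ is a \emph{compensation function} for $\pi$, i.e.\ that $P_{\Omega_A}(g\circ\pi+G)=P_{\Omega_B}(g)$ for every $g\in C(\Omega_B)$ \cite{Walters1986}. Granting this, the theorem follows: an arbitrary Markov measure $\nu$ on $\Omega_B$ is the unique equilibrium state of a locally constant potential $\phi$; then $\phi\circ\pi+G$ is locally constant on the irreducible SFT $\Omega_A$, so its unique equilibrium state $\mu$ is a fully supported (ergodic) Markov measure; and the standard argument---relative variational principle, together with $\int P(\pi,G)\,d\nu=0$, which drops out below---forces $\pi\mu$ to be the equilibrium state of $\phi$, i.e.\ $\pi\mu=\nu$. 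As $\nu$ was arbitrary, $\pi$ is Markovian.

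The heart is therefore showing that $G$ is a compensation function. Here I use Walters' form of the relative variational principle, $P_{\Omega_A}(f)=\sup_{\nu\in\cm(\Omega_B)}\bigl(h(\nu)+\int P(\pi,f)\,d\nu\bigr)$, where $P(\pi,f)$ denotes relative (fiber) pressure; since $g\circ\pi$ and $\phi_0\circ\pi$ are constant along $\pi$-fibers, $P(\pi,g\circ\pi+G)=(g-\phi_0)+P(\pi,\psi_0)$ up to a Birkhoff average that has no effect on $\nu$-integrals, so it suffices to prove
\[
\int_{\Omega_B}P(\pi,\psi_0)\,d\nu=\int_{\Omega_B}\phi_0\,d\nu\qquad\text{for every }\nu\in\cm(\Omega_B),
\]
for then $\int P(\pi,g\circ\pi+G)\,d\nu=\int g\,d\nu$ and $P_{\Omega_A}(g\circ\pi+G)=\sup_\nu(h(\nu)+\int g\,d\nu)=P_{\Omega_B}(g)$. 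For the $1$-block code $\pi$ the fiber $\pi^{-1}(y)$ is the set of bi-infinite $\partial$-lifts of $y$, so the fiber pressure of $\psi_0$ over $y$ is the exponential growth rate of
\[
\Sigma_n(y):=\!\!\sum_{\substack{a_0\cdots a_n:\ \partial a_i=y_i,\ A(a_i,a_{i+1})=1}}\!\!\ \prod_{i=0}^{n-1}P_0(a_i,a_{i+1}).
\]
Because $P_0$ is stochastic and---this is the crucial point---$\mu_0$ is \emph{fully supported}, so $0<\min_i p_0(i)\le p_0(a_0)\le 1$, the quantity $\Sigma_n(y)$ differs by a factor bounded above and below independently of $n$ from
\begin{align*}
\sum_{a_0}p_0(a_0)\!\!\sum_{\substack{a_1\cdots a_n:\ \partial a_i=y_i}}\!\!\prod_{i=0}^{n-1}P_0(a_i,a_{i+1})
&=\mu_0\bigl((\pi x)[0,n]=y[0,n]\bigr)=\nu_0\bigl(\cc_0(y[0,n])\bigr)\\
&=q_0(y_0)\prod_{i=0}^{n-1}Q_0(y_i,y_{i+1}),
\end{align*}
the last equalities using $\pi\mu_0=\nu_0$ and the \emph{hypothesised} Markov property of $\nu_0$. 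Hence, for $\nu$-a.e.\ $y$, $P(\pi,\psi_0)(y)=\lim_n\frac1n\log\Sigma_n(y)=\lim_n\frac1n\sum_{i=0}^{n-1}\log Q_0(y_i,y_{i+1})$, which by the Birkhoff theorem integrates to $\int\phi_0\,d\nu$, as required.

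I expect the displayed identity to be the only real work, and it is where the hypotheses matter: the Markov property of $\nu_0$ is precisely what makes $P(\pi,\psi_0)$ cohomologous to a \emph{locally constant} function, so that $G$ is locally constant and its equilibrium states are again Markov; and full support is exactly what licenses the ``bounded factor'' comparison---without it the fiber path-sums $\Sigma_n(y)$ can grow strictly faster than $\nu_0(\cc_0(y[0,n]))$ and the identity, hence the conclusion, breaks down (as it must, non-Markovian maps existing, cf.\ Example~\ref{figshinnonmarkovian}). The remaining ingredients are routine but need the usual care: the reduction to a $1$-block code; Walters' pressure identity and the relative variational principle in the possibly discontinuous relative-pressure setting \cite{Walters1986,LedrappierWalters1977}; passing from the $\nu$-a.e.\ limit to the integral identity via the ergodic decomposition; and the standard fact that a compensation function transports unique equilibrium states to unique equilibrium states.
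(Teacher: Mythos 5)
Your argument is correct, but it takes a genuinely different route from the one the paper gives for this theorem. The paper's proof is an explicit, elementary matrix construction: after recoding to the $1$-block, $1$-step case it defines $M(i,j)=Q'(\overline i,\overline j)P(i,j)/Q(\overline i,\overline j)$ on the support of $P$ as in (\ref{markovmarkovian}), observes that $M$ has spectral radius $1$, and takes $P'=\textnormal{stoch}(M)$ as in (\ref{stochasticization}); that $\pi\mu'=\nu'$ is then a finite computation on cylinders, requiring nothing beyond Perron--Frobenius. What you prove instead is that $\psi_0-\phi_0\circ\pi$ is a locally constant compensation function and that such a function transports Markov equilibrium states to Markov equilibrium states --- in effect you are reproving Theorem \ref{markoviancase} and Proposition \ref{prop_compfns}(2), which the paper presents precisely as the ``more general thermodynamic result'' of which its matrix construction is the germ. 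Your route buys more (an explicit locally constant compensation function, hence control over lifts of equilibrium states of arbitrary potentials on $\Omega_B$, not just Markov measures), but at the cost of heavier inputs: the Ledrappier--Walters relative variational principle, and --- a point you should make explicit rather than bury in ``the usual care'' --- the fact that $P(\pi,\psi_0)(y)$ can be computed a.e.\ from preimage \emph{words} of $y[0,n]$ rather than from initial segments of points of the fiber $\pi^{-1}(y)$: your $\Sigma_n(y)$ is the $E_n$-sum of Section \ref{sec_relpressure}, not the $D_n$-sum appearing in the definition of relative pressure, and their a.e.\ agreement is the nontrivial content of \cite{PetersenShin2005}. One small quibble with your closing remark: when $\mu_0$ lacks full support the real failure is that $\psi_0=\log P_0$ ceases to be a finite-valued continuous potential on $\Omega_A$; the paths of zero weight contribute nothing to $\Sigma_n(y)$, so it is not that $\Sigma_n$ outgrows $\nu_0(\cc_0(y[0,n]))$.
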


Note that if a factor map is Markovian, then so too is every factor
map which is topologically equivalent to it, because a topological
conjugacy takes Markov  measures to Markov measures. We will see a
large supply of Markovian maps (the ``e-resolving factor maps'') in
Section  \ref{sec_resolving}.

These considerations lead to a reformulation of Problem
\ref{prob_lift}:
\begin{prob}\label{prob_markovianmap}
Give a procedure to decide, given a factor map $\pi: \Omega_A \to
\Omega_B$, whether $\pi$ is Markovian.
\end{prob}

We sketch the proof of Theorem \ref{th_markovian} for the 1-step
Markov case: if any 1-step Markov measure on $\Omega_B$ lifts to a
1-step Markov measure, then every 1-step Markov measure on
$\Omega_B$ lifts to a 1-step Markov measure. For this, recall that
if $M$ is an irreducible matrix with spectral radius $\rho$, with
positive right eigenvector $r$, then the {\em stochasticization} of
$M$ is the stochastic matrix \be \label{stochasticization}
\text{stoch}(M)= \frac 1\rho D^{-1}MD \ , \en where $D$ is the
diagonal matrix with diagonal entries $D(i,i)=r(i)$.

Now suppose that $\pi:\Omega_A\to \Omega_B$ is a 1-block factor map,
with $\pi (i)$ denoted $\overline i$ for all $i$ in the alphabet of
$\Omega_A$; that $\mu ,\nu$ are 1-step Markov measures defined by
stochastic matrices $P,Q$; and that $\pi\mu = \nu$.
Suppose that $\nu'\in \mathcal M(\Omega_B)$ is defined by a
stochastic matrix $Q'$. We will find a stochastic matrix $P'$
defining $\mu '$ in $\mathcal M(\Omega_A)$
such that $\pi \mu'=\nu'$.

First define a matrix $M$ of size matching $P$ by
$M(i,j)=0$ if $P(i,j)=0$ and
otherwise
 \be  \label{markovmarkovian}
M(i,j)=
Q'(\overline i, \overline j) P(i,j)/ Q(\overline i, \overline j) .
 \en
This matrix $M$ will have spectral radius 1.
Now set $P'=\text{stoch}(M)$.
The proof that $\pi \mu' = \nu'$ is a straightforward computation
that  $\pi \mu' = \nu'$ on  cylinders
$\mathcal C_0(y[0,n])$
 for all $n\in \mathbb N$ and $y\in \Omega_B$.
This construction is the
germ of a more general thermodynamic result, the background for
which we develop in the next section. We finish this section
with an example.

\begin{ex}\label{exliftone}
In this example one sees explicitly how being able to lift one
Markov measure to a Markov measure, allows one to lift other Markov
measures to Markov measures.

Consider the 1-block code $\pi$ from $\Omega_3=\{0,1,2\}^{\mathbb
Z}$ to $\Omega_2=\{0,1\}^{\mathbb Z}$, via $0\mapsto 0$ and
$1,2\mapsto 1$. Let  $\nu$ be the  1-step Markov measure on
$\Omega_2$
 given by
the transition matrix
\begin{equation*}
\begin{pmatrix}
 1/2 &  1/2 \\
 1/2 & 1/2
\end{pmatrix} \ .
\end{equation*}
Given positive numbers $\alpha , \beta, \gamma <1 $, the stochastic
matrix \be
\begin{pmatrix}
1/2 &\alpha (1/2) & (1-\alpha ) (1/2)   \\
1/2 &\beta (1/2) & (1-\beta ) (1/2)     \\
1/2 & \gamma (1/2) & (1-\gamma ) (1/2)
\end{pmatrix}
\en defines a 1-step Markov measure on $\Omega_3$ which
$\pi$ sends to $\nu$.

Now, if $\nu'$ is  any other 1-step Markov measure  on $X_2$, given
by a stochastic matrix
\begin{equation*}
\begin{pmatrix}
p & q \\
r & s
\end{pmatrix} \  ,
\end{equation*}
then $\nu'$ will lift to the 1-step Markov measure defined by the
stochastic matrix \be
\begin{pmatrix} \label{resolvingsplit}
p &\alpha  q & (1-\alpha )  q   \\
r &\beta s & (1-\beta ) s  \\
r & \gamma s & (1-\gamma ) s
\end{pmatrix} \ .
\en
\end{ex}

\subsection{Thermodynamics on subshifts 001}\label{thermo}
We recall the definitions of entropy and pressure and how the
thermodynamical approach provides convenient machinery for dealing
with Markov measures (and hence eventually, it is hoped, with hidden
Markov measures).

Let $(X,\sigma )$ be a subshift and $\mu \in \mathcal M(X)$ a
shift-invariant Borel probability measure on $X$. The {\em
topological entropy} of $(X,\sigma)$ is
 \be h(X)= \lim_{n\to\infty} \frac1n\log|\{x[0,n-1]:x\in X\}|.
 \en
 The {\em measure-theoretic entropy}
of the measure-preserving system $(X,\sigma,\mu)$ is
 \be h(\mu)=h_{\mu}(X) =\lim_{n\to\infty} \frac 1n
 \sum \{-\mu (\cc_0(w))\log \mu (\cc_0(w)):w \in
 \{x[0,n-1]: x\in X\}\} .
 \en
 (For more background on these concepts,
one could consult \cite{Petersen1989,Walters1982}.)

{\em Pressure} is a refinement of entropy which takes into account
not only the map $\sigma: X\to X$ but also weights coming from a
given ``potential function" $f$ on $X$. Given a continuous
real-valued function $f\in C(X, \mathbb R)$, we define the {\em
pressure of $f$ (with respect to $\sigma$)} to be
 \be
 P(f,\sigma) = \lim_{n\to\infty} \frac 1n \log \sum
 \{\exp[S_n(f,w)]:w
\in \{x[0,n-1]: x\in X\}\},
\en
where
 \be
 S_n(f,w) =\sum_{i=0}^{n-1} f(\sigma^i x)
 \quad\text{for some }x\in X
 \quad\text{such that } x[0,n-1]=w .
 \en
 (In the limit the choice of
$x$ doesn't matter.) Thus,
 \be \text{if } f \equiv 0,
\ \text{then } P(f,\sigma )=
h(X) .
\en

The pressure functional satisfies the important {\em
Variational Principle}:
 \be P(f,\sigma ) = \sup \{ h(\mu) + \int f
\ d\mu : \mu \in \mathcal M(X)\} .
 \en
 An {\em equilibrium state}
for $f$ (with respect to $\sigma$) is a measure $\mu = \mu_f$ such
that
 \be P(f,\sigma) = h(\mu) + \int f \ d\mu .
  \en

{ Often (e.g., when the potential function $f$ is H\"older
continuous on an irreducible shift of finite type), there is a
unique equilibrium state $\mu_f$ which is a {\em (Bowen) Gibbs
measure} for $f$: i.e.,
 $P(f,\sigma )= \log (\rho )$, and
 \be \label{Eq_BowenGibbs}
 \mu_f (\cc_0(x[0,n-1])) \sim
\rho^{-n}\exp S_nf(x) . \en
Here ``$\sim$'' means the ratio of the
two sides is bounded above and away from zero,
 uniformly in $x$ and $n$.
}

If $f\in C(\Omega_A, \mathbb R)$, depends on only two coordinates,
$f(x) = f(x_0x_1)$ for all $x \in \Omega_A$, then $f$ has a unique
equilibrium state $\mu_f$, and $\mu_f \in \mathcal M( \Omega_A)$.
This measure $\mu_f$ is the 1-step Markov measure defined by the
stochastic matrix $P= \textnormal{stoch}(Q)$, where
\be Q(i,j)=
\begin{cases}
0\qquad&\text{ if } A(i,j)=0, \\
\exp[f(ij)] &\text{ otherwise } \ .
\end{cases}
\en (For an exposition see \cite{ParryTuncel1982}.)

 The pressure of $f$ is $\log \rho$,
where $\rho $ is the spectral radius of $Q$.
Conversely, a Markov measure with stochastic transition
matrix $P$ is the equilibrium state of the potential function
$f[ij]=\log P(i,j)$.

By passage to the
$k$-block presentation, we can generalize to the
case of $k$-step Markov measures: if $f(x) = f(x_0x_1\cdots
x_k)$, then $f$ has a unique equilibrium state $\mu$, and
$\mu$ is a $k$-step Markov measure.

\begin{defn}
We say that a  function on a subshift $X$  is {\em locally
constant} if there is $m \in \Bn$ such that $f(x)$ depends only on
$x[-m,m]$. LC($X,\mathbb R$) is the vector space of
locally constant real-valued
functions on $X$. $C_k(X,\mathbb R)$ is the set of $f$
in LC($X,\mathbb R$) such that $f(x)$ is determined by
$x[0,k-1]$.
\end{defn}

We can now express a viewpoint on Markov measures, due to Parry and
Tuncel \cite{Tuncel1981,ParryTuncel1982-2}, which follows from the
previous results.

\begin{thm}\cite{ParryTuncel1982-2}
Suppose
 $\Omega_A$ is  an irreducible shift of finite type;
 $k \geq 1$;  and $f,g \in C_k(X,\mathbb R)$.
Then the following are equivalent.
\begin{enumerate}
\item $\mu_f = \mu_g$.
\item There are $h\in C(X, \mathbb R)$ and $c \in \Br$ such that $f=g +
(h-h\circ \sigma) + c$.
\item There are $h\in C_{k-1}(X, \mathbb R )$ and $c \in \Br$ such that $f=g +
(h-h\circ \sigma) +c$.
\end{enumerate}
\end{thm}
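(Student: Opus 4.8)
The plan is to prove the chain of implications $(3)\Rightarrow(2)\Rightarrow(1)\Rightarrow(3)$. The implication $(3)\Rightarrow(2)$ is immediate, since $C_{k-1}(X,\mathbb R)\subset C(X,\mathbb R)$. So the real content is $(2)\Rightarrow(1)$ and $(1)\Rightarrow(3)$.

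For $(2)\Rightarrow(1)$: suppose $f = g + (h - h\circ\sigma) + c$ with $h$ merely continuous and $c$ constant. One shows that adding a coboundary $h - h\circ\sigma$ and a constant to a potential does not change the equilibrium state. The cleanest way is via the variational principle: for any $\mu\in\mathcal M(X)$, shift-invariance gives $\int (h - h\circ\sigma)\,d\mu = 0$, so $\int f\,d\mu = \int g\,d\mu + c$, hence $h(\mu) + \int f\,d\mu = h(\mu) + \int g\,d\mu + c$. Taking suprema gives $P(f,\sigma) = P(g,\sigma) + c$, and a measure achieves the supremum for $f$ if and only if it achieves it for $g$; thus $f$ and $g$ have the same equilibrium state(s), and since $f,g\in C_k(X,\mathbb R)$ these are unique, so $\mu_f = \mu_g$.

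For $(1)\Rightarrow(3)$, the strategy is to pass to the $k$-block presentation so that we may assume $k=1$, i.e. $f,g\in C_1$ depend on two coordinates $x_0x_1$ and $\mu_f=\mu_g=:\mu$ is the $1$-step Markov measure they both define. By the correspondence recalled before the theorem, $\mu$ is given by $\mathrm{stoch}(Q_f)$ and also by $\mathrm{stoch}(Q_g)$, where $Q_f(i,j)=\exp f(ij)$ (and $0$ where $A(i,j)=0$), similarly $Q_g$; write $P(i,j) = \mu(\mathcal C_0(ij))/\mu(\mathcal C_0(i))$ for the common stochastic matrix. From $P = \mathrm{stoch}(Q_f)$ we get, on allowed transitions, $P(i,j) = \rho_f^{-1} r_f(i)^{-1} Q_f(i,j) r_f(j)$ where $r_f$ is the positive right Perron eigenvector of $Q_f$ and $\rho_f$ its spectral radius; taking logs, $\log P(i,j) = f(ij) + \log r_f(j) - \log r_f(i) - \log\rho_f$. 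Doing the same with $g$ and subtracting yields $f(ij) - g(ij) = \big(\log r_g(i) - \log r_f(i)\big) - \big(\log r_g(j) - \log r_f(j)\big) + \log(\rho_f/\rho_g)$ on every allowed transition. Now define $h\in C_0$ (a function of the single coordinate $x_0$) by $h(x) = \log r_g(x_0) - \log r_f(x_0)$ and $c = \log(\rho_f/\rho_g)$; then $f = g + (h - h\circ\sigma) + c$ on $\Omega_A$, which after transporting back through the $k$-block conjugacy gives $h\in C_{k-1}(X,\mathbb R)$ as required. (One must check the conjugacy indeed moves $C_0$ of the block presentation to $C_{k-1}$ of $X$ and is compatible with the $C_{k-1}$/$C_k$ filtration; this is the role of $\pi_{r,k}$ and is where the ``$k-1$'' in the statement comes from.)

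I expect the main obstacle to be the bookkeeping in $(1)\Rightarrow(3)$: identifying the common stochastic matrix $P$ from the two presentations, and especially verifying that the coboundary term one extracts from the ratio of Perron eigenvectors lands in $C_{k-1}$ rather than merely in $C_k$ or in $C(X,\mathbb R)$ — i.e. that reducing to $k=1$ and producing $h$ depending on a single coordinate is legitimate and survives transport back through the $k$-block presentation. The analytic inputs (uniqueness of equilibrium states for two-coordinate potentials, the Gibbs/Perron–Frobenius description of Markov measures, invariance of pressure and equilibrium states under coboundaries) are all quoted from the material preceding the theorem, so the argument is essentially a matching-up of these standard facts rather than anything requiring new estimates.
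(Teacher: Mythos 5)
The paper gives no proof of this theorem---it is quoted from Parry--Tuncel---so there is no internal argument to compare against; your proof is the standard one and its structure is sound. The cycle $(3)\Rightarrow(2)\Rightarrow(1)\Rightarrow(3)$ is the right decomposition, the variational-principle argument for $(2)\Rightarrow(1)$ is complete (the coboundary and constant integrate to $c$ against every invariant measure, so pressures shift by $c$ and the equilibrium states coincide, with uniqueness for locally constant potentials on an irreducible SFT supplied by the preceding discussion), and the Perron-eigenvector computation for $(1)\Rightarrow(3)$ is exactly the mechanism that produces an $h$ one coordinate shorter than $f$ and $g$. Two small corrections, neither a real gap. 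First, a sign: from $\log P(i,j)=f(ij)+\log r_f(j)-\log r_f(i)-\log\rho_f$ and the analogous identity for $g$, subtraction gives $f(ij)-g(ij)=\bigl(\log r_f(i)-\log r_g(i)\bigr)-\bigl(\log r_f(j)-\log r_g(j)\bigr)+\log(\rho_f/\rho_g)$, so the correct choice is $h(x)=\log r_f(x_0)-\log r_g(x_0)$, the negative of yours (and since $r_f,r_g$ are defined only up to positive scalars, $h$ is determined only up to an additive constant, which is harmless). Second, indexing: with the paper's convention that $C_k$ consists of functions determined by $x[0,k-1]$, a function of the two coordinates $x_0x_1$ lies in $C_2$, not $C_1$, and a function of $x_0$ alone lies in $C_1$, not $C_0$; so the reduction should pass to the $(k-1)$-block presentation, under which $C_k(X)$ pulls back from $C_2(X^{[k-1]})$ and the resulting $H\in C_1(X^{[k-1]})$ pulls back to $h\in C_{k-1}(X)$, which is precisely the bookkeeping you flagged as needing a check.
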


\begin{prop}\cite{ParryTuncel1982-2} \label{markovasfunctions}
Suppose  $\Omega_A$ is an irreducible shift of finite type. Let \be
W=\{h-h\circ \sigma+ c: h \in \text{LC}(\Omega_A,\mathbb R ),
c\in\Br\}\ .
\en
Then the rule $[f]\mapsto \mu_f$ defines maps
\begin{align*}
C_k(\Omega_A,\mathbb R)/W \ &\to \ \mathcal M_k(\sigma_A) \  \\
\text{LC}(\Omega_A,\mathbb R)/W \ &\to \ \cup_k \mathcal
M_k(\sigma_A),
\end{align*}
and these maps are bijections.
\end{prop}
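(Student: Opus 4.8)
The plan is to exploit the correspondence already established between locally constant potentials and Markov measures, and to deduce both bijections from the preceding theorem together with the uniqueness-of-equilibrium-state facts recalled earlier. First I would treat the map $C_k(\Omega_A,\mathbb R)/W \to \mathcal M_k(\sigma_A)$. Well-definedness is immediate: if $[f]=[g]$ in $C_k(\Omega_A,\mathbb R)/W$ then $f = g + (h - h\circ\sigma) + c$ for some $h \in \mathrm{LC}(\Omega_A,\mathbb R)$ and $c \in \Br$, and by condition (2) $\Rightarrow$ (1) of the preceding theorem (applied after noting $f,g \in C_k$, so the theorem's hypotheses hold) we get $\mu_f = \mu_g$. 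Actually one must be slightly careful: the theorem as stated takes $h \in C(X,\mathbb R)$, which covers the $h \in \mathrm{LC}$ case, so the implication goes through verbatim. Surjectivity onto $\mathcal M_k(\sigma_A)$ follows from the paragraph before the definition of LC: every $k$-step Markov measure is the equilibrium state of the potential $f[ij\cdots] = \log P(i,j)$ (after passing to the $k$-block presentation), and this potential lies in $C_k(\Omega_A,\mathbb R)$ since full support makes all the relevant transition probabilities positive, so the logarithm is finite and the function is genuinely locally constant.

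Next, injectivity. Suppose $f,g \in C_k(\Omega_A,\mathbb R)$ with $\mu_f = \mu_g$. By condition (1) $\Rightarrow$ (3) of the preceding theorem, there are $h \in C_{k-1}(\Omega_A,\mathbb R)$ and $c \in \Br$ with $f = g + (h - h\circ\sigma) + c$. Since $C_{k-1}(\Omega_A,\mathbb R) \subset \mathrm{LC}(\Omega_A,\mathbb R)$, this exactly says $f - g \in W$, i.e. $[f] = [g]$ in $C_k(\Omega_A,\mathbb R)/W$. Hence the first map is a bijection. The role of clause (3) (rather than merely clause (2)) is not essential for this particular quotient, since we are quotienting by $W$ built from all of $\mathrm{LC}$; clause (2) would already suffice here once one observes the coboundary can be taken locally constant — but using (3) makes the argument cleanest and avoids any appeal to regularity of the transfer-operator eigenfunction.

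For the second map $\mathrm{LC}(\Omega_A,\mathbb R)/W \to \cup_k \mathcal M_k(\sigma_A)$, the point is that $\mathrm{LC}(\Omega_A,\mathbb R) = \bigcup_k C_k(\Omega_A,\mathbb R)$ (every locally constant $f$ depending on $x[-m,m]$ becomes, after a coordinate shift that is itself a coboundary, an element of some $C_k$ — or more simply, $C_k$ can be taken to absorb a window $[-m,m]$ by enlarging $k$), and $W$ is the same subspace in both. So the second map is the union/colimit of the first maps over $k$, with the filtrations $C_k \subset C_{k+1}$ on the domain and $\mathcal M_k \subset \mathcal M_{k+1}$ on the codomain compatible with the maps. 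Surjectivity and well-definedness transfer immediately from the $C_k$ case. For injectivity one needs: if $f \in C_k$, $g \in C_\ell$ with $k \le \ell$ and $\mu_f = \mu_g$, then $f - g \in W$; but $f \in C_k \subset C_\ell$, so this is just the injectivity already proved at level $\ell$.

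The main obstacle, such as it is, is bookkeeping around the three filtration issues — (i) that a coboundary witnessing $[f]=[g]$ can be chosen \emph{locally constant} and in fact in $C_{k-1}$, which is precisely what clause (3) of the cited theorem delivers, so this is handed to us; (ii) that $C_k \subset C_{k+1}$ and that every element of $\mathrm{LC}$ sits in some $C_k$ up to the harmless re-indexing of coordinates, which should be stated carefully but is routine; and (iii) that the equilibrium-state potential for a given Markov measure has the claimed local-constancy and finiteness, which uses the full-support convention. None of these is deep once the preceding theorem is in hand; the proof is essentially a formal consequence of it plus the existence/uniqueness of equilibrium states for locally constant potentials on irreducible SFTs.
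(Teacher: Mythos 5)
Your proposal is correct and follows essentially the same route as the paper, which offers no separate proof but states that the proposition ``follows from the previous results'' --- namely the Parry--Tuncel equivalence $(1)\Leftrightarrow(2)\Leftrightarrow(3)$ for well-definedness and injectivity (with clause (3) supplying the locally constant coboundary needed to land in $W$, exactly as you note) and the stochasticization correspondence between locally constant potentials and Markov measures for surjectivity, with the $\mathrm{LC}$ case obtained as the union over $k$. The only point worth flagging is the indexing convention: with the paper's definition of $C_k$ (functions determined by $x[0,k-1]$, i.e.\ $k$ coordinates) the equilibrium state of an element of $C_k$ is $(k-1)$-step Markov, so the potential $\log P$ of a genuinely $k$-step Markov measure lives in $C_{k+1}$ rather than $C_k$; your surjectivity paragraph inherits this off-by-one from the statement itself rather than introducing a new error.
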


\subsection{Compensation functions}

Let $\pi : (X,T) \to (Y,S)$ be a factor map between topological
dynamical systems. A {\em compensation function} for the factor map
is a continuous function $\xi:X \to \mathbb R$ such that
\begin{equation}\label{compeq}
P_Y(V) = P_X(V \circ \pi + \xi) \quad\text{for all } V \in \cc
(Y,\Br) .
\end{equation}

Because $h(\pi\mu) \leq h(\mu)$ and $\int V\, d(\pi \mu)
 = \int V \circ \pi
\, d\mu$,
 we always have
\begin{align}
P_Y(V) &= \sup\{ h(\nu) + \int_Y V\,d\nu : \nu \in \cm (Y)
  \} \\
 &\leq  \sup\{ h(\mu) + \int_X V \circ \pi \,d\mu : \mu \in \cm (X)
   \}
= P_X(V \circ \pi) ,
   \end{align}
with possible strict inequality when $\pi $ is infinite-to-one, in
which case a strict inequality $h(\mu )> h(\pi \mu)$ can arise from
(informally) the extra information/complexity arising from motion in
fibers over points of $Y$. The pressure equality (\ref{compeq})
tells us that the addition of a compensation function $\xi$ to the
functions $V\circ \pi$ takes into account (and exactly cancels out),
for all potential functions $V$ on $Y$ at once, this measure of
extra complexity. Compensation functions were introduced in
\cite{BoyleTuncel1984} and studied systematically in
\cite{Walters1986}. A compensation function is a kind of oracle for
how entropy can appear in a fiber. The Markovian case is  the case
in which the oracle has finite range, that is, there is a locally
constant compensation function.

A compensation function for a factor map $\pi : X \to Y$ is {\it
saturated} if it has the form $G\circ \pi$ for a continuous function
$G$ on $Y$.

\begin{ex}\label{ex_compfn1}
For the factor map in Examples \ref{figblack1/2} and
\ref{figblack1/3}, the formula
 \be
 G(y)=\begin{cases}
-\log 2 &\text{ if } y=.a\dots\\
0 &\text{ if } y=.b\dots
\end{cases}
\en determines a saturated compensation function $G \circ \pi$ on
 $\Omega_A$.
 The  sum (or {\em cocycle}) $S_nG(y)=G(y)+G(\sigma y) + \dots +
G(\sigma^{n-1}y)$ measures the growth  of the number of preimages
of initial blocks of $y$: \be |\pi^{-1}(y_0\dots y_{n-1})| =
2^{\text{\#}\{i: y_i=a,0\leq i < n\} \pm 1 } \sim 2^{\text{\#}\{i:
y_i=a,0\leq i < n\} } = e^{-S_nG(y)}. \en
\end{ex}

\begin{ex}\label{ex_compfn2}
In the situation described at the end of Section
\ref{sec_markovian}, in which a 1-step Markov measure maps to a
1-step
Markov measure under a 1-block map, an associated compensation
function is \be \xi(x)= \log P(i,j) -\log Q(\overline i, \overline
j) \quad\text{when } x_0x_1=ij . \en
\end{ex}

\begin{thm}\cite{BoyleTuncel1984,Walters1986}\label{markoviancase}
Suppose that $\pi : \Omega_A \to \Omega_B$ is a factor map between
irreducible shifts of finite type, with $f\in \text{LC}(\Omega_A)$
and
$g\in \text{LC}(\Omega_B)$, and $\pi \mu_f = \mu_g$. Then
there is a constant $c$ such that
$f-g\circ \pi +c$ is a compensation function. Conversely, if
$\xi$ is a
locally constant compensation function, then
$\mu_{\xi+g\circ \pi} $ is Markov and
$\pi \mu_{\xi+g\circ \pi}  = \mu_g$.
\end{thm}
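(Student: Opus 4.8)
The plan is to exploit the description of Markov measures as equilibrium states (Proposition \ref{markovasfunctions} and the discussion preceding it) together with the Variational Principle, reducing everything to manipulations of pressure. First I would pass, via a $k$-block presentation, to the case where $\pi$ is a $1$-block code and $f,g$ depend on two coordinates; this is legitimate because topological conjugacies carry Markov measures to Markov measures, locally constant functions to locally constant functions, and compensation functions to compensation functions, and because the $k$-block presentation of an irreducible SFT is again an irreducible SFT. So assume $\pi:\Omega_A\to\Omega_B$ is $1$-block, $f\in C_2(\Omega_A,\Br)$, $g\in C_2(\Omega_B,\Br)$, and $\pi\mu_f=\mu_g$. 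Write $P=\mathrm{stoch}(Q_f)$ and $Q=\mathrm{stoch}(Q_g)$ for the defining stochastic matrices, where $Q_f(i,j)=\exp f(ij)$ on allowed transitions (and $0$ otherwise), similarly $Q_g$; recall $P(f,\sigma)=\log\rho_f$ and $P(g,\sigma)=\log\rho_g$.

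For the forward direction, set $\xi=f-g\circ\pi+c$ where $c=P(g,\sigma)-P(f,\sigma)$ (equivalently $\log\rho_g-\log\rho_f$); I claim $\xi$ is a compensation function. The clean way to see this is to verify the defining pressure identity \eqref{compeq} on the dense-in-pressure class of locally constant $V$, using the variational characterization. Since $\pi\mu_f=\mu_g$ and $h(\mu_f)=h(\mu_g)+c'$ for a constant coming from the cocycle relation — here is where the hypothesis $\pi\mu_f=\mu_g$ does the work — one shows $h(\mu)+\int(f-g\circ\pi+c)\,d\mu$ is maximized exactly at $\mu=\mu_f$ among all $\mu\in\mathcal M(\Omega_A)$, with maximal value $P(g,\sigma)$; more generally, for any locally constant $V$ on $\Omega_B$, the measure $\mu_{\,\xi+V\circ\pi}$ is the equilibrium state and pushes forward to $\mu_{V+g}$ (this is exactly the stochasticization computation sketched in Section \ref{sec_markovian}, applied with potentials rather than bare matrices), giving $P_X(V\circ\pi+\xi)=P_Y(V)$. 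Concretely: the matrix with $(i,j)$-entry $\exp[V(\bar i\bar j)+g(\bar i\bar j)]\cdot P(i,j)/\exp[g(\bar i\bar j)]=\exp[V(\bar i\bar j)]P(i,j)$ — wait, one must be careful to insert $f$ rather than $P$ — has the property that its row-sums along a fiber over $\bar i$ reproduce the corresponding entry of the matrix defining $\mu_{V+g}$ upstairs-to-downstairs, after stochasticization the spectral radii match, and the pushforward identity follows by the cylinder computation indicated in Section \ref{sec_markovian}. The constant $c$ is forced to be $P(g,\sigma)-P(f,\sigma)$ by evaluating \eqref{compeq} at $V\equiv 0$ and using $P_X(f)=P(f,\sigma)$, $P_Y(0)=h(\Omega_B)$ together with $\pi\mu_f=\mu_g\Rightarrow h(\mu_f)+\int f\,d\mu_f=h(\mu_g)+\int(g\circ\pi+\,\text{stuff})$.

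For the converse, suppose $\xi\in\mathrm{LC}(\Omega_A)$ is a compensation function, and let $g\in\mathrm{LC}(\Omega_B)$ be arbitrary. After a $k$-block presentation I may assume $\xi$ and $g$ depend on two coordinates and $\pi$ is $1$-block. Then $\xi+g\circ\pi\in C_2(\Omega_A,\Br)$, so by the discussion in Section \ref{thermo} it has a unique equilibrium state $\mu_{\xi+g\circ\pi}$, which is a (full-support) $1$-step Markov measure on $\Omega_A$ — that settles ``$\mu_{\xi+g\circ\pi}$ is Markov.'' To see $\pi\mu_{\xi+g\circ\pi}=\mu_g$: for every locally constant $V$ on $\Omega_B$, the compensation identity gives $P_X((V+g)\circ\pi+\xi)=P_Y(V+g)$, and $\pi$ maps the equilibrium state of $(V+g)\circ\pi+\xi$ onto an equilibrium state of $V+g$ on $\Omega_B$ (because the pressures agree and $\int(V+g)\circ\pi\,d\mu=\int(V+g)\,d(\pi\mu)$ forces $h(\pi\mu)\ge h(\mu)+\int\xi\,d\mu$, hence equality in the variational principle downstairs); since $V+g$ is locally constant on an irreducible SFT it has a unique equilibrium state, namely $\mu_{V+g}$, so $\pi\mu_{\xi+g\circ\pi}$ and $\mu_g$ are measures whose pairings $\int V\,d(\cdot)$ agree for all locally constant $V$ — wait, that is not quite the same as $\pi\mu_{\xi+g\circ\pi}=\mu_{V+g}$ for each $V$. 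The right statement is: take $V\equiv 0$ to get $\pi\mu_{\xi+g\circ\pi}=\mu_g$ directly, since $P_X(g\circ\pi+\xi)=P_Y(g)$ and $\mu_{\xi+g\circ\pi}$ is the (unique) equilibrium state upstairs while $\mu_g$ is the (unique) equilibrium state downstairs, and the pushforward of an equilibrium state realizing equality in \eqref{compeq} is an equilibrium state downstairs.

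I expect the main obstacle to be the bookkeeping that guarantees the pushforward of the upstairs equilibrium state is genuinely the downstairs equilibrium state $\mu_g$ — i.e. that equality in \eqref{compeq} together with $h(\pi\mu)\le h(\mu)$ forces $h(\pi\mu_{\xi+g\circ\pi})+\int g\,d(\pi\mu_{\xi+g\circ\pi})=P_Y(g)$. This is a short but slightly delicate chain of inequalities: $P_Y(g)=P_X(g\circ\pi+\xi)=h(\mu_{\xi+g\circ\pi})+\int(g\circ\pi+\xi)\,d\mu_{\xi+g\circ\pi}$, and one must show the $\int\xi$ term is absorbed correctly — this is exactly the sense in which $\xi$ is ``an oracle for how entropy appears in the fiber,'' and is where the explicit stochasticization formula \eqref{markovmarkovian} from Section \ref{sec_markovian}, now read as a statement about potentials, makes the equality transparent rather than merely an inequality. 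The secondary nuisance is making sure all the reductions to the two-coordinate/$1$-block situation are compatible (they are, since everything in sight — Markovianity, local constancy, the compensation identity — is invariant under topological conjugacy), but that is routine.
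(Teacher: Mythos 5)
The paper does not actually prove Theorem \ref{markoviancase}; it is quoted from Boyle--Tuncel and Walters, with only the stochasticization construction (\ref{markovmarkovian}) and Example \ref{ex_compfn2} sketched nearby. Measured against those sources, your forward direction is essentially the intended argument and is correct in outline: recode so that $\pi$ is $1$-block and $f,g$ depend on two coordinates, apply the lifting construction (\ref{markovmarkovian}) with $Q'$ the stochastic matrix of $\mu_{V+g}$ to obtain $P_X\bigl((V+g)\circ\pi+f-g\circ\pi+c\bigr)=P_Y(V+g)$ for every locally constant $V$, with $c=P(g,\sigma)-P(f,\sigma)$ as you say. Two points should be made explicit: (i) the spectral radius of the matrix $M$ in (\ref{markovmarkovian}) equals $1$ precisely because $\pi\mu_f=\mu_g$ (this is the content of Theorem \ref{th_markovian}), and (ii) the identity (\ref{compeq}) must then be extended from locally constant $V$ to all of $C(\Omega_B,\mathbb R)$, which follows from density of the locally constant functions together with the $1$-Lipschitz dependence of pressure on the potential; you gesture at this ("dense-in-pressure") but never say it.

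The converse contains a genuine gap, at exactly the step you flag as "the main obstacle" and then do not close. Your parenthetical claim that agreement of the pressures "forces $h(\pi\mu)\ge h(\mu)+\int\xi\,d\mu$" is unsupported, and the naive variational computation gives the \emph{opposite} inequality: from $P_Y(g)=P_X(g\circ\pi+\xi)=h(\mu)+\int g\,d(\pi\mu)+\int\xi\,d\mu$ and $h(\pi\mu)+\int g\,d(\pi\mu)\le P_Y(g)$ one gets only $h(\pi\mu)\le h(\mu)+\int\xi\,d\mu$. The single instance $V\equiv 0$ of (\ref{compeq}) cannot suffice. The standard repair (this is exactly Proposition \ref{prop_compfns}(2), quoted in the paper from Walters) uses the compensation identity for \emph{all} $V$ simultaneously: for $\mu=\mu_{\xi+g\circ\pi}$ and every $V\in C(\Omega_B,\mathbb R)$,
\begin{equation*}
P_Y(g+V)=P_X\bigl((g+V)\circ\pi+\xi\bigr)\;\ge\; h(\mu)+\int(g\circ\pi+\xi)\,d\mu+\int V\,d(\pi\mu)\;=\;P_Y(g)+\int V\,d(\pi\mu),
\end{equation*}
so $\pi\mu$ is a tangent functional to the pressure $P_Y$ at $g$; since the entropy map on a subshift is upper semicontinuous, tangent functionals are equilibrium states, and uniqueness of the equilibrium state of the locally constant $g$ gives $\pi\mu=\mu_g$. (Alternatively, in the locally constant setting one can argue directly with matrices, deriving from the family of spectral-radius identities the cylinder computation of Section \ref{sec_markovian}; but some such argument is required --- the appeal to "the stochasticization formula read as a statement about potentials" is not a proof.)
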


In Theorem \ref{markoviancase},
the locally constant compensation function $\xi$ relates potential
functions on $\Omega_B$ to their lifts by composition on $\Omega_A$
in the same way that the corresponding equilibrium states are
related: \be
\begin{gathered}
 \text{LC}(\Omega_B)\ \hookrightarrow \
\text{LC}(\Omega_A) \quad\text{via } g\to (g\circ \pi)+\xi\\
\cm(\Omega_B) \hookrightarrow \cm(\Omega_A) \quad\text{via } \mu_g
\to \mu_{(g\circ \pi)+\xi} .
\end{gathered}
\en

Theorem \ref{markoviancase} holds if we replace the class of locally
constant functions with the class of H\"{o}lder (exponentially
decaying) functions, or with functions in the larger and more
complicated ``Walters class'' (defined in \cite[Section
4]{Walters1986}). More generally, the arguments in  \cite[Theorem
4.1]{Walters1986} go through to prove the following.

\begin{thm}\label{walterishtheorem}
Suppose that $\pi : \Omega_A \to \Omega_B$ is a factor map between
irreducible shifts of finite type. Let $\mathcal V_A,\mathcal V_B$ be real
vector spaces of functions in $C(\Omega_A, \mathbb R),C(\Omega_B,
\mathbb R)$ respectively such that the following hold.
\begin{enumerate}
\item
$\mathcal V_A$ and $\mathcal V_B$ contain the locally constant
  functions.
\item
If $f$ is in $\mathcal V_A$ or $\mathcal V_B$,
then $f$ has a unique equilibrium
state $\mu_f$, and $\mu_f$ is a Gibbs measure.
\item
If $f\in \mathcal V_B$, then $f\circ \pi \in \mathcal V_A$.
\end{enumerate}
Suppose $f\in \mathcal V_A$ and
$g\in \mathcal V_B$, and $\pi \mu_f = \mu_g$. Then there is a
constant $C$ such that
$f-g\circ \pi +C$ is a compensation function. Conversely, if
$\xi$ in $\mathcal V_A$ is a
 compensation function,
then for all $g\in \mathcal V_B$ it holds that
$\pi \mu_{\xi+g\circ \pi}  = \mu_g$.

Moreover, if $G \in \mathcal V_B$, then $G\circ \pi$ is a
compensation function if and only if there is $c \geq 1$ such that
\begin{equation}
\frac{1}{c} \leq e^{S_nG(y)} \, |\pi^{-1}(y_0\dots y_{n-1})| \leq c
\text{ for all } y,n.
\end{equation}
\end{thm}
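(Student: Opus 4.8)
The plan is to follow the structure of the proof of \cite[Theorem 4.1]{Walters1986}, checking at each step that only the three hypothesized properties of $\mathcal V_A, \mathcal V_B$ are used, together with the Variational Principle and the Gibbs property. First I would establish the ``converse'' direction, as it is the cleaner half. Suppose $\xi \in \mathcal V_A$ is a compensation function. Fix $g \in \mathcal V_B$. By hypothesis (3), $g \circ \pi \in \mathcal V_A$, and since $\mathcal V_A$ is a vector space containing $\xi$, we have $\xi + g\circ \pi \in \mathcal V_A$; by hypothesis (2) this function has a unique equilibrium state $\mu_{\xi + g\circ\pi}$, which is Gibbs. The claim is that $\pi \mu_{\xi + g\circ\pi} = \mu_g$. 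This is proved via the Variational Principle: for any $\nu \in \mathcal M(\Omega_B)$, one pushes and pulls back along $\pi$, using $h(\nu) \le h(\mu)$ for $\mu$ with $\pi\mu = \nu$ together with the pressure identity $P_{\Omega_B}(g) = P_{\Omega_A}(g\circ\pi + \xi)$ defining the compensation function; equality in the variational inequality for $\mu_{\xi+g\circ\pi}$ forces $\pi\mu_{\xi+g\circ\pi}$ to be an equilibrium state for $g$, hence equal to $\mu_g$ by uniqueness. None of this uses local constancy, only the three listed properties.

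For the forward direction, suppose $f \in \mathcal V_A$, $g \in \mathcal V_B$, and $\pi\mu_f = \mu_g$. I want a constant $C$ with $f - g\circ\pi + C$ a compensation function, i.e.\ $P_{\Omega_B}(V) = P_{\Omega_A}(V\circ\pi + f - g\circ\pi + C)$ for all $V \in C(\Omega_B, \mathbb R)$. The key observation is that it suffices to verify this for $V$ ranging over a dense set (say locally constant $V$, which lie in $\mathcal V_B$ by hypothesis (1)), since pressure is continuous (indeed Lipschitz) in the sup norm of the potential and both sides depend continuously on $V$; then one extends to all continuous $V$ by density. For $V \in \mathcal V_B$, $V + g$ and $V\circ\pi + f$ lie in $\mathcal V_B$ and $\mathcal V_A$ respectively (using (3) and vector space structure), so both have unique Gibbs equilibrium states. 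One then shows $\mu_{V\circ\pi + f}$ projects to $\mu_{V+g}$: the relation $\pi\mu_f = \mu_g$ together with the Gibbs estimates \eqref{Eq_BowenGibbs} for $\mu_f$ and $\mu_g$ yields that the ``fiber weight'' $e^{S_n(f - g\circ\pi)}$ summed over a fiber of $\pi$ is bounded above and below uniformly; this boundedness is exactly what is needed to run the pressure comparison and to get $P_{\Omega_B}(V+g) = P_{\Omega_A}(V\circ\pi + f) + C$ with $C$ independent of $V$ (here $C = P_{\Omega_B}(g) - P_{\Omega_A}(f)$, which equals $0$ in fact since $\pi\mu_f = \mu_g$ forces $h(\mu_f) = h(\mu_g) + \int(f - g\circ\pi)$, but one carries $C$ to be safe). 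Rearranging gives $P_{\Omega_B}(V) = P_{\Omega_A}(V\circ\pi + f - g\circ\pi + C)$.

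The ``moreover'' clause is the most concrete part and I would handle it by combining the two directions with Example \ref{ex_compfn1}'s cocycle interpretation. If $G \in \mathcal V_B$ and $G\circ\pi$ is a compensation function, apply the converse direction with $\xi = G\circ\pi$: for the zero potential $g \equiv 0$ we get $\pi\mu_{G\circ\pi} = \mu_0$, the measure of maximal entropy on $\Omega_B$, and more relevantly, unwinding the pressure identity $P_{\Omega_B}(0) = P_{\Omega_A}(G\circ\pi)$ together with the Gibbs property shows $\mu_{G\circ\pi}(\cc_0(x[0,n-1])) \sim \rho_B^{-n} e^{S_n(G\circ\pi)(x)}$ while summing over a $\pi$-fiber recovers $\mu_0(\cc_0(y[0,n-1])) \sim \rho_B^{-n}$; the ratio of these is $\sum_{x \in \pi^{-1}(\text{block})} e^{S_nG(y)}$, which up to uniform constants is $|\pi^{-1}(y_0\cdots y_{n-1})| e^{S_nG(y)}$ since $S_nG$ is constant along the fiber (as $G$ is a function on $\Omega_B$). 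Hence this quantity is bounded above and away from zero uniformly in $y, n$. Conversely, if such a $c$ exists, the same Gibbs computations show $\mu_{(V+ G)\circ\pi}$ projects correctly for every locally constant $V$, giving the compensation equation on a dense set and hence everywhere. The main obstacle I anticipate is the bookkeeping in the forward direction: tracking that the constant $C$ in the pressure comparison genuinely does not depend on $V$, and justifying the density/continuity passage from $\mathcal V_B$ to all of $C(\Omega_B, \mathbb R)$ cleanly — but since \cite[Theorem 4.1]{Walters1986} already does exactly this for the Walters class, the point is simply to confirm that the argument there only invoked properties (1)--(3), which a careful reading shows it does.
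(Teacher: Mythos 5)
Your proposal takes essentially the same approach as the paper, which offers no independent argument for Theorem \ref{walterishtheorem} beyond the remark that ``the arguments in [Walters, Theorem 4.1] go through''; your sketch correctly identifies that those arguments need only hypotheses (1)--(3), the Gibbs fiber-sum estimates over $\pi$-preimages of cylinders, and the extension from a sup-norm-dense set of potentials via Lipschitz continuity of pressure. One parenthetical slip: $\pi\mu_f=\mu_g$ does \emph{not} force $C=P_{\Omega_B}(g)-P_{\Omega_A}(f)$ to vanish (replace $f$ by $f+1$, which leaves $\mu_f$ unchanged but shifts $P_{\Omega_A}(f)$ by $1$), but since you carry $C$ throughout rather than setting it to $0$, this does not affect the argument.
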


 {\begin{prob}\label{prob_wishingforcompfn} Determine whether there
exists a factor map $\pi :X \to Y$ between mixing SFT's and a
potential function $F \in \cc(X)$ which is {\em not} a compensation
function but has a unique equilibrium state $\mu_F$ whose image
$\pi\mu_F$ is the measure of maximal entropy on $Y$. If there were
such an example, it would show that the assumptions on function
classes in Theorem \ref{walterishtheorem} cannot simply be dropped.
\end{prob} }

We finish this section with some more general statements
about compensation functions for factor maps between shifts
of finite type.

\begin{prop}\label{prop_compfns} \cite{Walters1986}
Suppose that $\pi : \Omega_A \to \Omega_B$ is a factor map between
irreducible shifts of finite type. Then
\begin{enumerate}
\item There exists a compensation function.
\item If $\xi$ is a compensation function, $g\in \C(\Omega_B,\mathbb
  R),$ and $\mu$ is an equilibrium state of $\xi + g\circ \pi$,
then $\pi \mu$ is an equilibrium state of $g$.
\item The map $\pi$ takes the measure of maximal entropy (see Section
\ref{sec_relmaxent}) of $\Omega_A$ to that
of $\Omega_B$  if and only if there is a
{\em constant} compensation function.
\end{enumerate}
\end{prop}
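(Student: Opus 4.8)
The three parts are of quite different character, and I would attack them in the order (2), (1), (3). Part (2) is essentially a repackaging of the Variational Principle together with the defining identity \eqref{compeq} for a compensation function. The plan is: let $\mu$ be an equilibrium state of $\xi + g\circ\pi$, so that $P_X(\xi + g\circ\pi) = h(\mu) + \int_X(\xi + g\circ\pi)\,d\mu$. Rewrite the right-hand side as $\bigl(h(\mu) + \int_X \xi\,d\mu\bigr) + \int_Y g\,d(\pi\mu)$, and bound $h(\mu) + \int_X\xi\,d\mu \le h(\pi\mu) + \int_X\xi\,d\mu$ is the wrong direction, so instead I would use the relativized variational principle: among all $\mu'$ with $\pi\mu' = \pi\mu =: \nu$, the quantity $h(\mu') + \int\xi\,d\mu'$ is at most $P_X(\xi + g\circ\pi) - \int_Y g\,d\nu$, with the supremum over such $\mu'$ equal to (by \eqref{compeq} applied fiberwise, i.e.\ the pressure-equation characterization) $h(\nu)$ plus a term that, combined with $P_X(\xi + g\circ\pi) = P_Y(g)$, forces $h(\nu) + \int_Y g\,d\nu = P_Y(g)$. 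Concretely: $P_Y(g) = P_X(g\circ\pi + \xi) = h(\mu) + \int_X(g\circ\pi + \xi)\,d\mu \le \sup_{\mu':\,\pi\mu'=\nu}\{h(\mu') + \int_X\xi\,d\mu'\} + \int_Y g\,d\nu \le h(\nu) + \int_Y g\,d\nu \le P_Y(g)$, where the middle inequality is the part of the proof of \eqref{compeq} (the relative variational principle / Ledrappier–Walters formula) that says $\sup\{h(\mu') + \int\xi\,d\mu' : \pi\mu'=\nu\} = h(\nu)$ when $\xi$ is a compensation function. Equality throughout gives $h(\nu) + \int_Y g\,d\nu = P_Y(g)$, i.e.\ $\nu = \pi\mu$ is an equilibrium state of $g$.

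For part (1), existence of a compensation function, I would invoke Theorem \ref{walterishtheorem} or, more directly, Theorem \ref{markoviancase} in reverse: start with any fully supported Markov measure $\mu_f$ on $\Omega_A$ with $f$ locally constant (for instance the measure of maximal entropy, $f\equiv 0$); its image $\pi\mu_f$ is a hidden Markov measure on $\Omega_B$, but need not be Markov. So this naive route does not immediately produce a locally constant $\xi$; what it does produce, via the Ledrappier–Walters / Boyle–Tuncel machinery, is a \emph{continuous} compensation function. The clean argument is: by the relative variational principle there is, for each $V\in C(\Omega_B,\mathbb R)$, a measure achieving the relative pressure over $\nu$, and one shows the function $y\mapsto$ (relative pressure of $0$ over the fiber) is realized by a genuine continuous $\xi$ on $\Omega_A$; this is exactly the content of \cite[Theorem 4.1 or its corollaries]{Walters1986}, which I would cite. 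Alternatively, since $\pi$ between irreducible SFT's always factors through a sequence of amalgamations and splittings down to a 1-block code, and 1-block codes between 1-step SFT's always admit a compensation function by an explicit construction (essentially a relative Perron–Frobenius eigenvector, as in the $G$ of Example \ref{ex_compfn1}), one gets existence constructively. I expect to cite \cite{Walters1986} here rather than reprove it.

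Part (3) is the substantive one and where the main obstacle lies. One direction is easy: if $\xi \equiv c$ is a constant compensation function, then \eqref{compeq} reads $P_Y(V) = P_X(V\circ\pi) + c$ for all $V$, and taking $V\equiv 0$ gives $h(\Omega_B) = h(\Omega_A) + c$; now if $\mu$ is the measure of maximal entropy on $\Omega_A$, part (2) (with $g\equiv 0$) says $\pi\mu$ is an equilibrium state of $0$, i.e.\ a measure of maximal entropy on $\Omega_B$, and by uniqueness (irreducibility) it is \emph{the} measure of maximal entropy. The converse is the delicate part: suppose $\pi$ takes $\mu_{\max}^A$ to $\mu_{\max}^B$. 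I would argue that then $h(\mu_{\max}^A) = h(\pi\mu_{\max}^A) = h(\mu_{\max}^B)$, so $\pi$ is entropy-preserving, hence finite-to-one, and in fact the fibers over doubly-transitive points have constant cardinality equal to $\mathrm{degree}(\pi)$; combined with the Gibbs property of $\mu_{\max}^A$ (it satisfies \eqref{Eq_BowenGibbs} with $f\equiv 0$, i.e.\ $\mu_{\max}^A(\mathcal C_0(x[0,n-1])) \sim \rho_A^{-n}$) and the same for $\mu_{\max}^B$, pushing forward gives $|\pi^{-1}(y_0\cdots y_{n-1})| \sim \rho_A^n/\rho_B^n = $ constant (since $\rho_A = \rho_B$ by equal entropy), which by the last clause of Theorem \ref{walterishtheorem} (with $G\equiv$ const) says precisely that a constant is a compensation function. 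The technical friction is making the "$\sim$" uniform and handling the passage between cylinder measures and preimage counts rigorously; the cleanest path is to reduce to a 1-block code and use that $\mu_{\max}^A$ is the Parry measure, whose cylinder values are \emph{exactly} $\rho_A^{-n}$ times a ratio of Perron eigenvector entries, making the preimage-count estimate an honest two-sided bound rather than an asymptotic one. I would then cite \cite{Walters1986} for the statement that this bounded-preimage condition is equivalent to $G\circ\pi$ (here $G$ constant) being a compensation function, closing the loop.
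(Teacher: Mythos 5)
The paper itself gives no argument for this proposition; it is quoted from \cite{Walters1986}, so your proposal has to stand on its own. Parts (1) and (2) of your plan are essentially sound: for (1) you cite Walters, exactly as the paper does, and for (2) your chain $P_Y(g)=P_X(g\circ\pi+\xi)=h(\mu)+\int\xi\,d\mu+\int g\,d(\pi\mu)\le h(\pi\mu)+\int g\,d(\pi\mu)\le P_Y(g)$ is correct, provided you justify the middle inequality cleanly: for every $V\in C(\Omega_B,\mathbb R)$ and every $\mu'$ with $\pi\mu'=\nu$ one has $h(\mu')+\int\xi\,d\mu'\le P_X(\xi+V\circ\pi)-\int V\,d\nu=P_Y(V)-\int V\,d\nu$ by \eqref{compeq}, and then $\inf_V\{P_Y(V)-\int V\,d\nu\}=h(\nu)$ by expansiveness (upper semicontinuity of entropy). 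That is a one-line fix of your somewhat vague ``applied fiberwise'' phrasing.

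The genuine gap is in the converse of part (3). From $\pi\mu_{\max}^A=\mu_{\max}^B$ you conclude $h(\mu_{\max}^A)=h(\pi\mu_{\max}^A)$, hence that $\pi$ is entropy-preserving and finite-to-one and $\rho_A=\rho_B$. None of this follows: pushing forward can strictly decrease entropy even when the maximal measure goes to the maximal measure. Take $X=\Omega_2\times\Omega_2$ with the coordinate projection onto $Y=\Omega_2$: the maximal measure (product of two Bernoulli $(1/2,1/2)$ measures) maps to the maximal measure of $Y$, yet $h(X)=2\log 2>\log 2=h(Y)$, the map is infinite-to-one, and the compensation function is the constant $-\log 2$, not $0$. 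Indeed the whole point of allowing a (negative) constant in statement (3) is to cover exactly these infinite-to-one maps, which your argument excludes. The repair uses the mechanism you already have, with the right constant: reduce to a 1-block code, use the exact Parry formula so that $\mu_{\max}^A(\mathcal C_0(v))$ is uniformly comparable to $\rho_A^{-n}$ for $X$-words $v$ of length $n$ and similarly on $Y$; then $\pi\mu_{\max}^A=\mu_{\max}^B$ gives $|\pi^{-1}(y_0\cdots y_{n-1})|$ uniformly comparable to $(\rho_A/\rho_B)^n$, and with $G\equiv\log\rho_B-\log\rho_A=h(\Omega_B)-h(\Omega_A)$ the quantity $e^{S_nG(y)}\,|\pi^{-1}(y_0\cdots y_{n-1})|$ is bounded above and below, so the last clause of Theorem \ref{walterishtheorem} makes this constant a compensation function. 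Your easy direction of (3) (constant $\xi$ plus part (2) with $g\equiv 0$ and uniqueness of the maximal measure) is fine as written.
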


Yuki Yayama \cite{Yayama2009} has begun the study of
compensation functions which are  bounded Borel functions.

\subsection{Relative pressure}\label{sec_relpressure}

When studying factor maps, relativized versions of entropy and
pressure are relevant concepts. Given a factor map $\pi: \Omega_A
\to \Omega_B$ between shifts of finite type, for each $n = 1, 2,
\cdots$ and $y \in Y$, let $D_n (y)$ be a set consisting of exactly
one point from each nonempty set $[x_0 \cdots x_{n-1}] \cap \pi^{-1}
(y)$. Let $V \in \cc(\Omega_A, \Br)$ be a potential function on
$\Omega_A$. For each $y \in \Omega_B$, the {\em relative pressure of
$V$ at $y$ with respect to $\pi$} is defined to be \be P(\pi, V)(y)
= \limsup_{n \rightarrow \infty} \frac{1}{n} \log \bigg[ \sum_{x \in
D_n (y)} \exp \Big( \sum_{i=0}^{n-1} V(\sigma^i x) \Big) \bigg]. \en
The {\em relative topological entropy function} is defined for all
$y \in Y$ by \be P (\pi, 0) (y) = \limsup\limits_{n \rightarrow
\infty} \frac{1}{n} \log \big\vert D_n (y) \big\vert, \en the
relative pressure of the potential function $V \equiv 0$.

For the relative pressure function, a {\em Relative Variational
Principle} was proved by Ledrappier and Walters
(\cite{LedrappierWalters1977}, see also
\cite{DownarowiczSerafin2002}):
 for all
$\nu$ in $\cm(\Omega_B)$ and all $V$ in $C(\Omega_A)$,
 \be
 \int\!P(\pi, V) \, d \nu = \sup\limits \Big\{ h(\mu) + \int\!V d
\mu : \pi\mu =\nu \Big\} - h(\nu). \en

In particular, for a fixed $\nu \in \cm(\Omega_B)$, the maximum
measure-theoretic entropy of a measure on $\Omega_A$ that maps under
$\pi$ to $\nu$ is given by
\begin{align}\label{eq_maxent}
h(\nu ) + \sup \{h_{\mu}(X|Y) :\pi\mu =\nu\}
&= h(\nu ) +\sup \{h(\mu)- h(\nu): \pi\mu =\nu\} \\
\notag  &= h(\nu ) +\int_Y P(\pi ,0)\,  d \nu \ .
\end{align}

In \cite{PetersenShin2005} a finite-range, combinatorial approach
was developed for the relative pressure and entropy, in which
instead of examining entire infinite sequences $x$ in each fiber
over a given point $y \in \Omega_B$, it is enough to deal just with
preimages of finite blocks (which may or may not be extendable to
full sequences in the fiber). For each $n = 1, 2, \dots$ and $y \in
Y$ let $E_n (y)$ be a set consisting of exactly one point from each
nonempty cylinder $x[0,n-1] \subset \pi^{-1} y[0,n-1]$.
\medskip
Then for each $V \in C(\Omega_A)$,
 \be P(\pi,V) (y) = \limsup_{n
 \rightarrow \infty} \frac{1}{n} \log \bigg[ \sum_{x \in E_n (y)}
 \exp \Big( \sum_{i=0}^{n-1} V (\sigma^i x) \Big) \bigg]
 \en
 {\em a.e. with respect to every
ergodic invariant measure on Y}. Thus, we obtain the
value of $P(\pi,V) (y)$ a.e. with respect to every
ergodic invariant measure
on $Y$ if we {delete from the definition of $D_n (y)$ the
requirement that $x \in \pi^{-1} (y)$}.

In particular, the relative topological entropy is given by
 \be P(\pi ,0)(y)=\limsup_{n \to \infty}\frac{1}{n}\log
 |\pi^{-1}y[0,n-1]|
 \en
 a.e. with respect to every ergodic
invariant measure on $Y$.

And if $\mu$ is relatively maximal over $\nu$, in the sense that it
achieves the supremum in (\ref{eq_maxent}), then
 \be h_\mu(X|Y)=\int_Y \lim_{n \to \infty}\frac{1}{n}
 \log |\pi^{-1}y[0,n-1]|\, d\nu (y) .
 \en

\subsection{Measures of maximal and relatively maximal entropy}\label{sec_relmaxent}

Already Shannon \cite{ShannonWeaver1949} constructed the measures of
maximal entropy on irreducible shifts of finite type. Parry
\cite{Parry1964} independently and from the dynamical viewpoint
rediscovered the construction and proved uniqueness. For an
irreducible shift of finite type the unique measure of maximal
entropy is a 1-step Markov measure whose transition probability
matrix is the stochasticization, as in
(\ref{stochasticization}),
of the $0,1$ matrix that defines the subshift. When studying factor
maps $\pi : \Omega_A \to \Omega_B$ it is natural to look for {\em
measures of maximal relative entropy}, which we also call {\em
relatively maximal measures }: for fixed $\nu$ on $\Omega_B$, look
for the $\mu\in\pi^{-1}\nu$ which have {maximal entropy in that
fiber}. Such measures always exist by compactness and upper
semicontinuity, but, in contrast to the Shannon-Parry case (when
$\Omega_B$ consists of a single point), they need not be unique.
E.g., in Example \ref{ex_Walters}, the two-to-one map $\pi$
respects entropy, and for $p\neq 1/2$  there
are exactly two ergodic measures
(the Bernoulli measures $\mu_p$ and $\mu_{1-p}$) which
$\pi$ sends to $\nu_p$. Moreover, there exists some
 $V_p \in \cc(Y)$ which has $\nu_p$ as a unique equilibrium
state \cite{Israel1979, Phelps2002}, and  $V_p\circ \pi$ has exactly
two {ergodic} equilibrium states,
 $\mu_p$ and $\mu_{1-p}$.

Here is a useful characterization of relatively maximal measures due
to Shin.

\begin{thm}[\cite{Shin2001}]
 Suppose that $\pi :X \to Y$ is a factor map of shifts of finite type,
 $\nu \in \cm(Y)$ is ergodic, and $\pi \mu = \nu$.
Then $\mu$ is relatively maximal over $\nu$ if and only if there is
$V \in \cc (Y,\Br)$ such that $\mu$ is an equilibrium state of $V
\circ \pi$.
\end{thm}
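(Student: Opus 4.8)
The plan is to use the Relative Variational Principle of Ledrappier and Walters together with the ordinary Variational Principle, playing off the global pressure $P_X(V\circ\pi)$ against the fiber-wise quantity $\int P(\pi,V)\,d\nu$. For the ``if'' direction, suppose $\mu$ is an equilibrium state of $V\circ\pi$ for some $V\in\cc(Y,\Br)$. First I would record that, since $\pi\mu=\nu$, we have $\int_X V\circ\pi\,d\mu=\int_Y V\,d\nu$, so $h(\mu)+\int_Y V\,d\nu=P_X(V\circ\pi)$. Next, take any $\mu'$ with $\pi\mu'=\nu$; then $h(\mu')+\int_Y V\,d\nu=h(\mu')+\int_X V\circ\pi\,d\mu'\le P_X(V\circ\pi)=h(\mu)+\int_Y V\,d\nu$, whence $h(\mu')\le h(\mu)$. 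Thus $\mu$ maximizes entropy in the fiber $\pi^{-1}\nu$, i.e.\ $\mu$ is relatively maximal over $\nu$. This direction is essentially formal and does not even require ergodicity of $\nu$.

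For the harder ``only if'' direction, suppose $\mu$ is relatively maximal over $\nu$; I want to produce $V\in\cc(Y,\Br)$ with $\mu$ an equilibrium state of $V\circ\pi$. The idea is that the relative pressure function $P(\pi,\cdot)(y)$ is (a.e.) determined by preimages of finite blocks of $y$ (by the Petersen--Shin combinatorial description quoted above), so $P(\pi,0)$ should be realizable, up to a coboundary, by a potential on $Y$ — and then I would choose $V$ roughly so that $V\circ\pi$ ``corrects'' the entropy defect in the fiber. More precisely, I would like to find $V$ so that $P_X(V\circ\pi)=h(\mu)+\int_X V\circ\pi\,d\mu$. By the Relative Variational Principle, $\int_Y P(\pi,V)\,d\nu=\sup\{h(\mu'')+\int V\,d\pi\mu'':\pi\mu''=\nu\}-h(\nu)$; since $\mu$ is relatively maximal and $\int V\,d\pi\mu''=\int V\,d\nu$ is constant over the fiber, the supremum is attained at $\mu$, giving $h(\mu)-h(\nu)+\int_Y V\,d\nu=\int_Y P(\pi,V)\,d\nu$. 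So it suffices to arrange that $P(\pi,V)(y)$ is $\nu$-a.e.\ equal to its integral — i.e.\ that $P(\pi,V)$ is (cohomologous to) a constant — because then $V\circ\pi$ has $\mu$ as an equilibrium state: indeed combining with $P_X(V\circ\pi)=\int_Y P(\pi,V)\,d\nu+h(\nu)$ (a consequence of the Relative Variational Principle applied with $\nu$ ranging, or of Walters' compensation-function machinery) would yield the equality defining an equilibrium state.

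The main obstacle, then, is the construction of a $V$ on $Y$ making the relative pressure function $P(\pi,V)$ constant (or at least $\nu$-integrable with the right value) while not disturbing the fiber-wise maximality at $\mu$. The natural candidate is to take $V$ related to a compensation function: by Proposition~\ref{prop_compfns} a compensation function $\xi$ exists, and (2) of that proposition says $\pi$ sends equilibrium states of $\xi+g\circ\pi$ to equilibrium states of $g$. If one can choose $\xi$ so that the particular relatively maximal $\mu$ over $\nu$ appears as an equilibrium state of $\xi+g\circ\pi$ for a suitable $g\in\cc(Y,\Br)$ with $\nu=\mu_g$, then $V=g$ works and $\mu$ is an equilibrium state of $V\circ\pi$ (absorbing $\xi$, which is itself of the form $\eta\circ\pi$ up to a coboundary in the cases of interest, or invoking Theorem~\ref{walterishtheorem} in the locally constant/H\"older setting). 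The delicate point is that compensation functions need not be saturated, so $\xi$ need not be a function of $Y$; handling that requires either restricting to suitable function classes as in Theorem~\ref{walterishtheorem} or a more careful approximation argument. I expect Shin's actual proof to finesse this by working directly with the finite-block combinatorics of $P(\pi,0)$ and a coboundary argument on $Y$ (using ergodicity of $\nu$ and the a.e.\ equality of the two definitions of relative pressure) rather than passing through compensation functions at all; that is the route I would pursue if the compensation-function approach stalls.
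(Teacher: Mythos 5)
Your ``if'' direction is correct and is the standard argument: for any $\mu'$ in the fiber, $\int V\circ\pi\,d\mu'=\int V\,d\nu=\int V\circ\pi\,d\mu$, so the variational inequality for $P_X(V\circ\pi)$ forces $h(\mu')\le h(\mu)$. (The paper itself states the theorem only as a citation to \cite{Shin2001}, so there is no in-text proof to compare against; I am assessing the argument on its merits.)

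The ``only if'' direction has a genuine gap, and it is located exactly where you place the weight of the proof. Your reduction ``it suffices to arrange that $P(\pi,V)$ is ($\nu$-a.e.) constant, because then $V\circ\pi$ has $\mu$ as an equilibrium state'' is false. Applying the Relative Variational Principle over \emph{all} $\nu'\in\cm(Y)$ and then taking the supremum gives
\begin{equation*}
P_X(V\circ\pi)=\sup\Big\{\,h(\nu')+\int_Y\big(V+P(\pi,V)\big)\,d\nu' : \nu'\in\cm(Y)\Big\},
\end{equation*}
so a relatively maximal $\mu$ over $\nu$ is an equilibrium state of $V\circ\pi$ if and only if $\nu$ attains this \emph{outer} supremum, i.e.\ $\nu$ is a (generalized) equilibrium state on $Y$ of the Borel function $V+P(\pi,V)$. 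Making $P(\pi,V)$ constant merely reduces this to the requirement that $\nu$ be an equilibrium state of $V$ itself, which your construction never arranges. A concrete failure: take $V=0$ and $\pi$ finite-to-one, so $P(\pi,0)\equiv 0$ is constant; the equilibrium states of $V\circ\pi=0$ are only the measures of maximal entropy of $X$, so an arbitrary ergodic $\nu$ and its relatively maximal lift are not captured. Your compensation-function fallback also cannot close the gap as stated: compensation functions need not be saturated (the paper's own reference \cite{Shin2001-2} is precisely an example of a factor map with no saturated compensation function), so $\xi$ cannot in general be absorbed into a potential of the form $V\circ\pi$; and even when it can, being an equilibrium state of $\xi+g\circ\pi$ is not the same as being one of $g\circ\pi$.

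The missing idea is a tangent-functional argument on $Y$. The map $W\mapsto P_X(W\circ\pi)$ is convex and continuous on $\cc(Y,\Br)$, and the affine functional $W\mapsto h(\mu)+\int W\,d\nu$ lies below it everywhere; the theorem amounts to showing this functional actually \emph{touches} the convex function at some $V$. That is an attainment statement, and it is proved by Israel--Bishop--Phelps style approximation (cf.\ the \cite{Israel1979,Phelps2002} citations elsewhere in the paper): one approximates $(h(\mu),\nu)$ by genuine tangent functionals, i.e.\ by equilibrium states of nearby potentials $V_n\circ\pi$, and uses ergodicity of $\nu$ together with upper semicontinuity of entropy and affinity of the entropy map to pass to the limit and land back at $\nu$ and at a measure of maximal entropy in its fiber. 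Without some version of this step, the hard direction does not go through.
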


If there is a {\em locally constant} saturated compensation function
$G \circ \pi$, then every {Markov} measure on $Y$ has a unique
relatively maximal lift, which is Markov, because then the
relatively maximal measures over an equilibrium state of $V \in \cc
(Y,\Br)$ are the equilibrium states of $V \circ \pi + G \circ \pi$
\cite{Walters1986}.
Further, the measure of maximal entropy $\max_X$ is the unique
equilibrium state of the potential function 0 on $X$; and the
relatively maximal measures over $\max_Y$ are the equilibrium states
of $G \circ \pi$.

It was proved in \cite{PetersenQuasShin2003} that for each ergodic
$\nu$ on $Y$, there are only a finite number of relatively maximal
measures over $\nu$.
 In fact, for a 1-block factor map $\pi$ between 1-step
shifts of finite type $X,Y$,
the number of ergodic
  invariant measures of maximal entropy in the fiber
  $\pi^{-1}\{\nu\}$ is at most
\be N_\nu(\pi)= \min\{|\pi^{-1}\{b\}|: b \in \ca (Y), \nu [b]>0\}.
\en

This follows from the theorem in \cite{PetersenQuasShin2003} that
for each ergodic $\nu$ on $Y$, any two distinct ergodic measures on
$X$ of maximal entropy in the fiber $\pi^{-1}\{\nu\}$ are {{\em
relatively orthogonal}}. This concept is defined as follows.

For $\mu_1 , \dots ,\mu_n \in \cm (X)$ with $\pi \mu_i= \nu$ for all
$i$, their {\em relatively
  independent joining} $\hat\mu$ over $\nu$ is defined by:

if $A_1, \dots , A_n$ are measurable subsets of $X$ and $\cf$ is the
$\sigma$-algebra of $Y$, then
 \be
    \label{eq:relind}
    \hat\mu(A_1\times\ldots\times A_n)=
    \int_Y \prod_{i=1}^n
    \mathbb E_{\mu_i}(\one _{A_i}|\pi^{-1}\mathcal F)\circ\pi^{-1}
    \,d\nu
  \en
in which $\mathbb E$ denotes conditional expectation.
Two ergodic measures $\mu_1 , \mu_2$ with $\pi \mu_1 = \pi \mu_2 =
\nu$ are {\em relatively orthogonal} (over $\nu$),
  $\mu_1 \perp_\nu \mu_2$, if
\be (\mu_1 \otimes_\nu  \mu_2) \{ (u,v) \in X \times X : u_0
=v_0\}=0. \en This means that with respect to the relatively
independent joining or coupling, there is {zero probability of
coincidence of symbols in the two coordinates}.

That the second theorem (distinct ergodic relatively maximal
measures in the same fiber are relatively orthogonal) implies the
first (no more than $N_{\nu}(\pi)$ relatively maximal measures over
$\nu$)
follows from the Pigeonhole Principle. If we have $n
> N_\nu(\pi)$ ergodic measures $\mu_1,\ldots,\mu_n$
  on $X$, each projecting to $\nu$ and each of maximal entropy in the
  fiber $\pi^{-1}\{ \nu\}$,
we form the relatively independent joining $\hat\mu$ on $X^n$  of
the
  measures $\mu_i$ as above.
Write $p_i$ for the projection $X^n\to X$ onto the $i$'th
  coordinate.
 For $\hat\mu$-almost every $\hat x$ in
  $X^n$, $\pi(p_i(\hat x))$ is independent of $i$;
abusing notation for simplicity, denote it by $\pi
  (\hat x)$.
  Let $b$ be a symbol in the alphabet of $Y$ such that $b$ has
  $N_\nu(\pi)$ preimages $a_1,\dots ,a_{N_\nu(\pi)}$ under the block map $\pi$.
Since $n > N_\nu(\pi )$, for every $\hat x \in \pi^{-1}[b]$ there
are $i \neq j$ with $(p_i \hat x)_0 = (p_j \hat x)_0$. At least one
of the sets $S_{i,j} = \{ \hat x \in X^n : (p_i \hat x)_0 = (p_j
\hat x)_0 \}$ must have positive $\hat\mu$-measure, and then also
\be (\mu_i \otimes _\nu \mu_{j}) \{(u,v) \in X \times X: \pi u = \pi
v, u_0 = v_0 \} > 0, \en
 contradicting relative orthogonality.
(Briefly, if you have more measures than preimage symbols, two of
those measures have to coincide on one of the symbols: with respect
to each measure, that symbol a.s. appears infinitely many times in
the same place.)

The second theorem is proved by ``interleaving" measures to increase
entropy.
  If there are two relatively maximal measures over $\nu$ which are
  not relatively orthogonal, then the measures can be
  `mixed' to give a measure with greater entropy.
We concatenate words from the two processes, using the fact that the
two measures are supported on sequences that agree infinitely often.
Since $X$ is a 1-step SFT, we can switch over whenever a
{coincidence} occurs.
That the switching {increases entropy} is seen by using the strict
concavity of the function $-t \log t$ and lots of calculations with
conditional expectations.

\begin{ex}\label{ex_nosofics}

Here is an example (also discussed in \cite[Example
1]{PetersenQuasShin2003}) showing that to find relatively maximal
measures over a Markov measure it is not enough to consider only
sofic measures which map to it. We describe a factor map $\pi$ which
is both left and right e-resolving (see section \ref{sec_resolving})
and such that there is a unique relatively maximal measure $\mu$
above any fully-supported Markov measure $\nu$, but the measure
$\mu$
is not Markov, and it is not even sofic.

We use vertex shifts of finite type.
    The alphabet for the domain subshift is $\{a_1, a_2, b\}$
    (in that order for indexing purposes),
    and the factor map (onto the 2-shift $(\Omega_2,\sigma)$) is the
    1-block code $\pi$ which
    erases subscripts. The transition diagram and matrix $A$ for the domain
    shift of finite type $(\Omega_A,\sigma)$ are
    \be
    \begin{gathered}
    \mbox{
    \xymatrix{
    a_1\ar@(ul,dl)[]\ar@{<->}[drr]\ar@{->}[dd]\\
    &&b\ar@(ur,dr)[]
  &&\mbox
    {
    $\begin{pmatrix}
    1 & 1 & 1\\
    0 & 1 & 1\\
    1 & 1 &1
    \end{pmatrix}$\quad .}
    \\
    a_2\ar@(ul,dl)[]\ar@{<->}[urr]
    }}
    \end{gathered}
    \en
Above the word $ba^nb$ in $\Omega_2$ there are $n+1$ words in
$\Omega_A$: above $a^n$ we see $k$ $a_1$'s followed by $n-k$
$a_2$'s, where $0 \leq k \leq n$. Let us for simplicity consider the
maximal measure $\nu$ on $(\Omega_2,T)$; so, $\nu (\mathcal
C_0(ba^nb))
=2^{-n-2}$. Now the maximal entropy lift $\mu$ of $\nu$ will assign
equal measure $2^{-(n+2)}/(n+1)$ to each of the preimage blocks
of $ba^nb$.
If $\mu$ is sofic, then
(as in Sec. \ref{linrepser})
there are vectors $u,v$ and a square  matrix
$Q$ such that
$\mu (\mathcal C_0(b(a_1)^nb)= uQ^nv$ for all $n>0$.
Then
the function $n\mapsto uQ^nv$ is some finite sum of terms
of the form $r n^j (\lambda^n)$ where $j\in \mathbb Z_+$ and $r,
\lambda$ are constants. The function
$n\mapsto 2^{-(n+2)}/(n+1)$
 is
not a function of this type.
\end{ex}

\begin{prob}\label{prob_liftmarkovs}
Is it true that for every factor map $\pi : \Omega_A \to \Omega_B$
every (fully supported) Markov measure $\nu$ on $\Omega_B$ has a
unique relatively maximal measure that maps to it, and this is also
a measure with full support?
\end{prob}

\begin{rem} After the original version of this paper was posted on the
Math Arxiv and submitted for review, we received the preprint
\cite{Yoo2009} of Jisang Yoo containing the following result: "Given
a factor map from an irreducible SFT X to a sofic shift Y and an
invariant measure $\nu$ on Y with full support, every measure on X
of maximal relative entropy over $\nu$ is fully supported." This
solves half of Problem \ref{prob_liftmarkovs}.
\end{rem}

\subsection{Finite-to-one codes}\label{sec_finitetoone}

Suppose $\pi : \Omega_A \to \Omega_B$ is a finite-to-one factor map
of irreducible shifts of finite type. There are some special
features of this case which we collect here for mention. Without
loss of generality, after recoding we assume that $\pi$ is a 1-block
code. Given a Markov measure $\mu$ and a periodic point $x$ we
define the {\it weight-per-symbol} of $x$ (with respect to $\mu$) to
be \be \label{wps} \textnormal{wps}_{\mu}(x) :=\lim_{n \to
\infty}\frac 1n \log \mu \{y: x_i = y_i, 0 \leq i < n\} \ . \en

\begin{prop} \label{prop_finitetoone}
Suppose
 $\pi : \Omega_A \to \Omega_B$ is a finite-to-one
factor map of irreducible shifts of finite type. Then

\begin{enumerate}
\item
The measure of maximal entropy on $\Omega_B$
lifts to the measure of maximal entropy on $\Omega_A$.
\item
Every Markov measure on $\Omega_B$ lifts to a {unique}
 Markov measure of equal order on
$\Omega_A$.
\item  \label{percond}
If $\mu, \nu$  are Markov measures
 on $\Omega_A,\Omega_B$ respectively, then
 the following are equivalent:
\begin{enumerate}
\item
  $\pi \mu = \nu$
\item
for every periodic point $x$ in $\Omega_A$,
$\textnormal{wps}_{\mu}(x) =
\textnormal{wps}_{\nu}(\pi x) $.
\end{enumerate}
\end{enumerate}
\end{prop}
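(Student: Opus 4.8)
The plan is to pass first to higher‑block presentations — a topological conjugacy, hence harmless for Markov measures and their orders, for periodic points and their weight‑per‑symbol, and for all entropies and pressures — so that we may assume $\pi$ is a $1$‑block code. The starting point is that, since $\pi$ is finite‑to‑one, every fiber $\pi^{-1}(y)$ is finite, so the number of distinct length‑$n$ initial words of points of $\pi^{-1}(y)$ is at most $|\pi^{-1}(y)|$, independent of $n$; hence $P(\pi,0)\equiv 0$. By the Relative Variational Principle of Section \ref{sec_relpressure}, together with the general inequality $h(\pi\mu)\le h(\mu)$, this gives $h(\mu)=h(\pi\mu)$ for every $\mu\in\cm(\Omega_A)$, and in particular (taking $\mu$ to be, or to project to, a maximal measure) $h(\Omega_A)=h(\Omega_B)$. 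From $h(\mu)=h(\pi\mu)$ and the computation $P_X(V\circ\pi)=\sup_\mu\{h(\mu)+\int V\,d(\pi\mu)\}=\sup_\mu\{h(\pi\mu)+\int V\,d(\pi\mu)\}\le P_Y(V)$ (the reverse inequality always holding) it follows that the zero function is a compensation function for $\pi$. Part (1) is then immediate: $\pi(\max_X)$ is a measure on $\Omega_B$ of entropy $h(\max_X)=h(\Omega_A)=h(\Omega_B)$, so it equals $\max_Y$ by Parry's uniqueness theorem, and conversely any $\mu$ with $\pi\mu=\max_Y$ has $h(\mu)=h(\Omega_B)=h(\Omega_A)$, hence $\mu=\max_X$.

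For existence in (2), write $\nu=\mu_g$ with $g\in\text{LC}(\Omega_B)$ depending on one more coordinate than the order of $\nu$; applying Proposition \ref{prop_compfns}(2) with the zero compensation function, the equilibrium state $\mu_{g\circ\pi}$ of $g\circ\pi$ projects to that of $g$, so $\pi\mu_{g\circ\pi}=\nu$, and since $\pi$ is $1$‑block, $g\circ\pi$ depends on the same number of coordinates as $g$, so $\mu_{g\circ\pi}$ is Markov of the same order (one can also write this lift out by the stochasticization recipe of the sketch following Theorem \ref{th_markovian}). Both uniqueness in (2) and the forward direction of (3) follow from the key claim that \emph{every locally constant compensation function $\xi$ for $\pi$ is cohomologous to a constant}. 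To see this, let $\mu_\xi$ be the equilibrium state of $\xi$; it is unique, and is a fully supported Markov measure, because after a block presentation $\xi$ depends on finitely many coordinates and is real‑valued on an irreducible SFT. By Proposition \ref{prop_compfns}(2) applied to $\xi$ with $g\equiv 0$, $\pi\mu_\xi$ is the equilibrium state of $0$ on $\Omega_B$, i.e.\ $\pi\mu_\xi=\max_Y$, whence $h(\mu_\xi)=h(\pi\mu_\xi)=h(\Omega_B)=h(\Omega_A)$. Thus $\mu_\xi$ is a measure of maximal entropy on $\Omega_A$, so $\mu_\xi=\max_X=\mu_0$ by Parry uniqueness, and then the cohomology theorem of Parry and Tuncel \cite{ParryTuncel1982-2} forces $\xi=0+(h-h\circ\sigma)+c$ for some locally constant $h$ and $c\in\Br$.

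Now fix Markov $\mu=\mu_f$ and $\nu=\mu_g$, taking $f(x_0x_1)=\log P(x_0,x_1)$ and $g(y_0y_1)=\log Q(y_0,y_1)$ for the defining transition matrices $P,Q$. For a periodic point $x$ of least period $n$ one has $\textnormal{wps}_\mu(x)=\tfrac1n\sum_{j=0}^{n-1}f(\sigma^jx)$ and $\textnormal{wps}_\nu(\pi x)=\tfrac1n\sum_{j=0}^{n-1}(g\circ\pi)(\sigma^jx)$, so (b) holds iff $f-g\circ\pi$ has zero sum around every periodic orbit of $\Omega_A$, which by the (elementary, locally‑constant) Liv\v sic argument holds iff $f-g\circ\pi$ is a locally constant coboundary; any such coboundary gives $\mu_f=\mu_{g\circ\pi}$ (by \cite{ParryTuncel1982-2}) and hence, by Proposition \ref{prop_compfns}(2) with the zero compensation function, $\pi\mu=\pi\mu_{g\circ\pi}=\mu_g=\nu$, which is (b)$\Rightarrow$(a). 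For the converse, if $\pi\mu=\nu$ then Theorem \ref{markoviancase} gives a constant $c$ with $f-g\circ\pi+c$ a locally constant compensation function; by the key claim it is cohomologous to a constant, so $f-g\circ\pi=(\eta-\eta\circ\sigma)+c'$ for some locally constant $\eta$ and $c'\in\Br$, and integrating against $\mu$ gives $c'=\int f\,d\mu-\int g\,d\nu=-h(\mu)+h(\nu)=0$; thus $f-g\circ\pi$ is a coboundary and (b) holds. Finally, if $\mu_i=\mu_{f_i}$ ($i=1,2$) are Markov on $\Omega_A$ with $\pi\mu_1=\pi\mu_2=\nu$, then (a)$\Rightarrow$(b) forces $\textnormal{wps}_{\mu_1}=\textnormal{wps}_{\mu_2}$ on every periodic orbit, so $f_1-f_2$ has vanishing orbit sums and $\mu_1=\mu_2$ by the same coboundary argument — this is uniqueness in (2).

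The main obstacle is the key claim: a priori a compensation function could carry cohomological information invisible to the pressure identity, and one wants to exclude this without a direct combinatorial control of the $\mu$‑weights of the many preimage blocks of a periodic orbit. The argument above sidesteps that control by routing through uniqueness of the maximal measure; the remaining points requiring care are the bookkeeping of Markov orders under the recoding to a $1$‑block map and the invocation of the standard auxiliary facts ($P(\pi,0)\equiv0$ for finite‑to‑one maps, and the Liv\v sic property for locally constant functions on an irreducible SFT — see \cite{LindMarcus1995}).
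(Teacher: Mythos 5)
Your proof is correct, but it is worth saying up front that the paper offers no argument of its own for Proposition \ref{prop_finitetoone}: it simply refers the reader to \cite{Kitchens1982}, where the standard treatment is combinatorial (recode so the finite-to-one map has a magic symbol, then count and weight preimage words directly, which also produces the lifted transition matrix explicitly). Your route is genuinely different: it stays inside the thermodynamic machinery of Section \ref{sec_mapsandthermo}. The chain $|D_n(y)|\le|\pi^{-1}(y)|<\infty$ $\Rightarrow$ $P(\pi,0)\equiv 0$ $\Rightarrow$ (Ledrappier--Walters) $h(\mu)=h(\pi\mu)$ for every invariant $\mu$ $\Rightarrow$ the zero function is a compensation function is sound, and your key claim --- every locally constant compensation function is cohomologous to a constant, proved by pushing its equilibrium state forward to $\max_Y$, using entropy preservation and Parry uniqueness to identify it with $\max_X$, and then invoking the Parry--Tuncel cohomology theorem --- correctly delivers uniqueness in (2) and both directions of (3) once $\textnormal{wps}_\mu(x)$ and $\textnormal{wps}_\nu(\pi x)$ are identified with periodic orbit averages of $f=\log P$ and $g\circ\pi$. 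The two facts you quote without proof (the Liv\v sic property for locally constant functions on an irreducible SFT, and $\int \log P\,d\mu=-h(\mu)$ for a Markov measure) are standard and elementary, so they are not gaps; and since the paper states the proposition after the reduction to a $1$-block code, your order bookkeeping for (2) is consistent with the intended statement. What the combinatorial route buys is an explicit formula for the lifted stochastic matrix and finer fiber information; what yours buys is a short, uniform derivation of all three parts from the single fact that the zero function compensates, expressed in the language the rest of the paper already develops.
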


Proofs can be found in, for example, \cite{Kitchens1982}. For
infinite-to-one codes, we do not know an analogue of Prop.
\ref{prop_finitetoone} (\ref{percond}).

\subsection{The semigroup measures of Kitchens and Tuncel}\label{semigroup}

{There is}
 a hierarchy of sofic
measures according to their sofic degree. Among the degree-1 sofic
measures, there is a distinguished and very well behaved subclass,
properly containing the Markov measures. These are the {\em
semigroup measures} introduced and studied by Kitchens and Tuncel in
their memoir \cite{KitchensTuncel1985}.
Roughly speaking,  semigroup measures are to Markov measures as
sofic subshifts are to SFT's.

A sofic subshift can be presented by a semigroup \cite{Weiss1973,
KitchensTuncel1985}. Associated to this are nonnegative transition
matrices $R_0,L_0$. A semigroup measure (for the semigroup
presentation) is defined by
 a state probability vector and
a pair of stochastic matrices $R,L$ with 0/+ pattern matching
$R_0,L_0$ and satisfying certain consistency conditions.
 These matrices can be multiplied to compute measures of cylinders. A
measure is a semigroup measure if there exist a semigroup and
apparatus as above which can present it.
 We will not review this constructive part of the theory, but
 we mention some alternate characterizations of these
measures.

For a sofic measure $\mu$ on $X$ and a periodic point $x$
in $X$, the weight-per-symbol of $x$ with respect to
$\mu$ is still well defined by (\ref{wps}).
Let us say a factor map
$\pi$ {\em respects $\mu$-weights}
 if whenever $x,y$ are periodic points with the same image
we have $\textnormal{wps}_{\mu}(x)=\textnormal{wps}_{\mu}(y)$. Given
a word $U=U[-n\dots 0]$ and a measure $\mu$, let $\mu_U$ denote the
conditional measure on the future, i.e. if $UW$ is an allowed word
then $\mu_U (W)= \mu( UW)/\mu (U)$.

\begin{thm}\cite{KitchensTuncel1985}
Let $\nu$ be a shift-invariant measure on an irreducible sofic
subshift $Y$. Then the following are equivalent:
\begin{enumerate}
\item
$\nu$ is a semigroup measure.
\item
$\nu$ is the image of a Markov measure $\mu$ under a finite-to-one
factor map which respects $\mu$-weights.
\item
$\nu$ is the image of a Markov measure $\mu$
under a degree 1 resolving factor map which respects
$\mu$-weights.
\item
The collection of conditional measures $\mu_U$, as
$U$ ranges over all $Y$-words, is finite.
\end{enumerate}
\end{thm}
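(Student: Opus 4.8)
The plan is to prove the cycle of equivalences $(1) \Leftrightarrow (3) \Rightarrow (2) \Rightarrow (4) \Rightarrow (1)$, using the semigroup presentation for the implications that construct data, and the conditional-measure characterization as the ``soft'' endpoint that closes the loop. The easiest link is $(3) \Rightarrow (2)$, which is immediate since a degree 1 resolving factor map is in particular finite-to-one. For $(1) \Rightarrow (3)$: given a semigroup measure, the defining apparatus (a state vector together with stochastic matrices $R,L$ whose $0/+$ patterns match the transition matrices $R_0,L_0$ of a semigroup presentation of $Y$) should be used to build a Markov measure $\mu$ on an SFT cover $X$ of $Y$ that is both left and right resolving; the consistency conditions on $R$ and $L$ are exactly what force the cover to be degree $1$ and force the map to respect $\mu$-weights, because the weight-per-symbol of a periodic point can be read off from the product of matrix entries around the cycle and the consistency conditions make this product depend only on the image cycle in $Y$. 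This is essentially the content of the Kitchens--Tuncel construction, which we are permitted to cite.

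For $(2) \Rightarrow (4)$: suppose $\nu = \pi\mu$ with $\pi$ finite-to-one and weight-respecting. The key point is that for a finite-to-one code the fibers $\pi^{-1}(y)$ are uniformly bounded, so the conditional measure $\nu_U$ on the future of a $Y$-word $U$ is a weighted average of the conditional measures $\mu_{\tilde U}$ over the finitely many $X$-words $\tilde U$ lifting $U$. Because $\mu$ is Markov of some order $k$, the conditional measure $\mu_{\tilde U}$ depends only on the last $k$ symbols of $\tilde U$ together with a normalization, and the weight-respecting hypothesis forces the normalizing constants (the ratios $\mu(\tilde U W)/\mu(\tilde U)$ as one slides along) to be controlled by data attached to $Y$ rather than to the lift; one then shows that $\nu_U$ is determined by the \emph{follower set} of $U$ in the sofic shift $Y$ together with a finite amount of probabilistic bookkeeping, and since a sofic shift has only finitely many follower sets, the collection $\{\nu_U\}$ is finite. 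The implication $(4) \Rightarrow (1)$ is the most structural: from a finite family of conditional measures one manufactures a semigroup presentation by taking as ``states'' the distinct conditional measures $\nu_U$, with the semigroup operation induced by concatenation of words (well-defined precisely because the family is finite and concatenation acts on it), and the stochastic matrices $R,L$ recording the transition probabilities between these states; one must check the consistency conditions, but these should follow from the compatibility of the conditional measures under refinement.

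The main obstacle will be $(4) \Rightarrow (1)$, specifically verifying that the finite set of conditional measures, equipped with the concatenation action, really does assemble into a \emph{semigroup} presentation satisfying Kitchens--Tuncel's consistency axioms on $(R,L)$, rather than merely into some ad hoc finite gadget that computes cylinder measures. The delicate step is that concatenation of $Y$-words must descend to a well-defined associative binary operation on the finite set of states, and that the left and right versions of this operation must be linked by the appropriate compatibility; this is where one genuinely needs the hypothesis that $Y$ is sofic (so that the underlying follower-set semigroup is finite) in tandem with the finiteness of $\{\nu_U\}$. I would handle this by first recording that the map $U \mapsto (\text{follower set of } U,\ \nu_U)$ has finite image, then defining the semigroup to be that image with the evident multiplication, and finally checking the consistency conditions one at a time against the definition of $\nu_U$ as a conditional measure; each check is a short computation but collectively they are the heart of the matter.
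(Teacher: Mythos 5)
First, a point of reference: the paper does not prove this theorem. It is stated purely as a citation to Kitchens--Tuncel \cite{KitchensTuncel1985}, and the surrounding text explicitly declines to review the constructive part of that theory. So there is no in-paper argument to compare against, and your proposal has to be judged on its own. Judged that way, it is a sensible roadmap (the cycle $(1)\Rightarrow(3)\Rightarrow(2)\Rightarrow(4)\Rightarrow(1)$ is the right shape, and $(3)\Rightarrow(2)$ is indeed immediate), but the two implications that carry all the content are left as assertions, and $(1)\Rightarrow(3)$ as written simply defers to ``the Kitchens--Tuncel construction''---if that is admissible, the whole theorem is citable and nothing needs proving.

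The concrete gap is in $(2)\Rightarrow(4)$. Writing $\nu_U$ as a convex combination of the conditional measures $\mu_{\tilde U}$ over the finitely many lifts $\tilde U$ of $U$ is correct, and since $\mu$ is $k$-step Markov each $\mu_{\tilde U}$ depends only on the terminal $k$-block of $\tilde U$, so the combination runs over a fixed finite set of measures. But the weights are the ratios $\mu(\tilde U)/\nu(U)$, and the entire difficulty is to show that this weight vector takes only finitely many values as $U$ ranges over all $Y$-words. The hypothesis available---that $\pi$ respects $\mu$-weights---is a statement about periodic points only, and converting it into finiteness of these ratios (via a magic-word argument: if the measures of two lifts of the same word had ratios taking infinitely many values, one could manufacture collapsed periodic points with distinct weights-per-symbol) is the heart of the implication. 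Your phrase that the normalizing constants are ``controlled by data attached to $Y$'' names the desired conclusion rather than establishing it; in particular, the follower set of $U$ in the sofic shift $Y$ is purely topological information and cannot by itself bound the probabilistic data, so ``finitely many follower sets'' does not close the argument. Likewise $(4)\Rightarrow(1)$, which you correctly flag as the hard step, is left entirely open: the semigroup presentation requires both transition matrices $R$ and $L$ with their consistency conditions, so you will need control of conditional measures on the past as well as the future, and verifying the axioms is precisely the content of the implication, not a routine check appended to it.
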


There is also a thermodynamic characterization of these measures as
unique equilibrium states of  bounded Borel functions which are
locally constant on doubly transitive points, very analogous to the
characterization of Markov measures as unique equilibrium states of
continuous locally constant functions.
 The semigroup measures satisfy other nice properties as well.

\begin{thm}\cite{KitchensTuncel1985}
Suppose $\pi:X\to Y$ is a finite-to-one factor map of irreducible
sofic subshifts and $\mu$ and  $\nu$ are semigroup measures on $X$
and $Y$ respectively. Then
\begin{enumerate}
\item
$\nu$ lifts by $\pi$ to a unique semigroup measure on $X$, and this
is the unique ergodic measure on $X$  which maps to $\nu$;
\item
$\pi\mu$ is a semigroup measure if and only if $\pi$ respects
$\mu$-weights;
\item
there is an irreducible sofic subshift $X'$ of $X$ such that $\pi$
maps $X'$ finite-to-one onto $X$ \cite{MarcusPetersenWilliams1984},
and therefore $\nu$ lifts to a semigroup measure  on $X'$.
\end{enumerate}
\end{thm}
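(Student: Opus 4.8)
The plan is to deduce all three assertions from the corresponding, already understood, facts about finite-to-one factor maps between shifts of finite type (Proposition~\ref{prop_finitetoone}), by passing to shift-of-finite-type covers and using the four equivalent descriptions of semigroup measures stated above. For the existence of a semigroup lift in part (1) I would start from the degree-one resolving presentation of $\nu$: there are an irreducible shift of finite type $\hat Y$, a Markov measure $\hat\nu$ on $\hat Y$, and a degree-one resolving factor map $q\colon\hat Y\to Y$ respecting $\hat\nu$-weights with $q\hat\nu=\nu$. Form the fiber product
\[
Z=\{(x,\hat y)\in X\times\hat Y:\pi x=q\hat y\},
\]
a sofic subshift, with coordinate projections $\alpha\colon Z\to X$ and $\beta\colon Z\to\hat Y$; since $q$ is finite-to-one $\alpha$ is finite-to-one, since $q$ has degree one so does $\alpha$, and since $\pi$ is finite-to-one $\beta$ is finite-to-one. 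Pass to the irreducible component $Z_0$ on which $\alpha$ has degree one (canonical, and on which $\beta$ still maps onto $\hat Y$ --- a routine check using doubly transitive points), and choose a finite-to-one shift-of-finite-type cover $p\colon\hat X\to Z_0$ with $\hat X$ irreducible. Then $\beta p\colon\hat X\to\hat Y$ is a finite-to-one factor map of irreducible shifts of finite type, so by Proposition~\ref{prop_finitetoone} the Markov measure $\hat\nu$ lifts to a unique Markov measure $\hat\mu$ on $\hat X$; set $\mu':=(\alpha p)\hat\mu$, so that $\pi\mu'=(q\beta p)\hat\mu=q\hat\nu=\nu$. Finally $\mu'$ is a semigroup measure because $\alpha p$ respects $\hat\mu$-weights: if $\hat x_1,\hat x_2$ are periodic with $\alpha p\hat x_1=\alpha p\hat x_2$, writing $p\hat x_i=(x,\hat y_i)$ gives $q\hat y_1=q\hat y_2$, hence $\textnormal{wps}_{\hat\nu}(\hat y_1)=\textnormal{wps}_{\hat\nu}(\hat y_2)$ since $q$ respects $\hat\nu$-weights, while $\textnormal{wps}_{\hat\mu}(\hat x_i)=\textnormal{wps}_{\hat\nu}(\hat y_i)$ by Proposition~\ref{prop_finitetoone}(\ref{percond}); so $\textnormal{wps}_{\hat\mu}(\hat x_1)=\textnormal{wps}_{\hat\mu}(\hat x_2)$ and the characterization theorem applies.

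For uniqueness in part (1) the point is that the presentation $q$, being degree one with $\hat\nu$ fully supported, is a measure-theoretic isomorphism of $(\hat Y,\hat\nu)$ onto $(Y,\nu)$, so a measurable section $s$ of $q$ exists $\nu$-almost everywhere. Given any ergodic $\mu''$ on $X$ with $\pi\mu''=\nu$, the map $x\mapsto(x,s(\pi x))$ pushes $\mu''$ to an ergodic measure $\tilde\mu$ on $Z_0$ with $\alpha\tilde\mu=\mu''$ and $\beta\tilde\mu=\hat\nu$; lift $\tilde\mu$ through $p$ to an ergodic $\hat\mu''$ on $\hat X$, so $(\beta p)\hat\mu''=\hat\nu$ and $(\alpha p)\hat\mu''=\mu''$. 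I would then argue that $\hat\mu''$ must coincide with the Markov lift $\hat\mu$: both project onto the Markov measure $\hat\nu$ under the finite-to-one map $\beta p$, the weight-per-symbol data on periodic points of $\hat X$ is forced by that of $\hat\nu$ together with the finite fibers, and a periodic-orbit argument (Liv\v{s}ic-type rigidity, in the spirit of Proposition~\ref{prop_finitetoone}) then pins $\hat\mu''=\hat\mu$, whence $\mu''=(\alpha p)\hat\mu''=(\alpha p)\hat\mu=\mu'$, the semigroup lift just constructed. Uniqueness of $\mu'$ as a semigroup measure is a special case.

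For part (2), the implication ``$\pi$ respects $\mu$-weights $\Rightarrow$ $\pi\mu$ semigroup'' is essentially ``(2)$\Rightarrow$(1)'' of the characterization: present $\mu$ as $r\hat\mu$ with $r\colon\hat X\to X$ a degree-one resolving weight-respecting factor map from an irreducible shift of finite type and $\hat\mu$ Markov; then $\pi r\colon\hat X\to Y$ is finite-to-one, and combining ``$r$ respects $\hat\mu$-weights'' with ``$\pi$ respects $\mu$-weights'' and Proposition~\ref{prop_finitetoone}(\ref{percond}) shows $\pi r$ respects $\hat\mu$-weights, so $\pi\mu=(\pi r)\hat\mu$ is a semigroup measure. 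For the converse, if $\pi\mu$ is a semigroup measure then by part (1) it has a unique ergodic preimage on $X$, namely $\mu$, and $\mu$ is then the semigroup lift produced above, whose construction forces $\pi$ to respect its weights (the weights on periodic points being inherited, through the finite fibers and Proposition~\ref{prop_finitetoone}(\ref{percond}), from the weight-respecting presentation $q$ of $\pi\mu$); alternatively and more directly, were $\pi$ not to respect $\mu$-weights, two periodic points of $X$ with a common image would carry different $\mu$-weights, and following these through the bounded family of preimage cylinders of their common image word would produce infinitely many conditional measures of $\pi\mu$, contradicting the finiteness characterization. Part (3) is then immediate: by the cited theorem of Marcus, Petersen, and Williams (valid for any factor map of irreducible sofic shifts) there is an irreducible sofic subshift $X'\subseteq X$ on which $\pi$ restricts to a finite-to-one factor map onto $Y$, and part (1) lifts $\nu$ to a (unique) semigroup measure on $X'$.

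I expect the main obstacle to be the rigidity step inside the uniqueness part of (1): showing that an \emph{arbitrary} ergodic measure over the Markov measure $\hat\nu$ --- not merely a Markov one, as in Proposition~\ref{prop_finitetoone} --- is pinned down by its periodic-orbit weight data, and, correspondingly, that a genuine failure of the weight-respecting condition really forces infinitely many conditional measures in the converse of (2). A persistent but routine nuisance throughout is the bookkeeping needed to keep fiber products irreducible and to ensure the chosen components surject onto both coordinate subshifts.
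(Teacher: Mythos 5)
The paper does not actually prove this theorem: it is quoted verbatim from the Kitchens--Tuncel memoir \cite{KitchensTuncel1985} with only the citation, so there is no internal proof to compare against. Judged on its own terms, your proposal gets the ``soft'' parts right: the existence of a semigroup lift in (1) via the fiber product of $\pi$ with the degree-one resolving presentation $q$ of $\nu$, the forward implication of (2) by composing the weight-respecting presentation $r$ of $\mu$ with $\pi$ and invoking characterization (2) of semigroup measures together with Proposition~\ref{prop_finitetoone}(\ref{percond}), and the deduction of (3) from Marcus--Petersen--Williams plus (1) are all correct uses of the machinery the paper has set up (modulo routine bookkeeping about which irreducible component of the fiber product carries the measures --- note that an arbitrary ergodic lift $\mu''$ need not a priori have full support, so its pushforward under your section need not land in the component $Z_0$ you fixed).

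The genuine gap is exactly where you suspect it: the claim in (1) that the semigroup lift is the \emph{unique ergodic} lift. You reduce this to showing that an arbitrary ergodic measure $\hat\mu''$ on an irreducible SFT that pushes forward under a finite-to-one factor map to the Markov measure $\hat\nu$ must equal the Markov lift $\hat\mu$, and you propose to pin it down by ``Liv\v{s}ic-type'' periodic-orbit weight data. That mechanism fails for non-Markov measures: Proposition~\ref{prop_finitetoone}(\ref{percond}) is an equivalence between two \emph{Markov} measures and their weights, and a general ergodic measure is not determined by (and need not even define, in any useful uniform sense) its weight-per-symbol data on periodic orbits --- Example~\ref{ex_Walters} shows a degree-$2$ finite-to-one map under which a fully supported ergodic measure has two distinct ergodic lifts, so finiteness of fibers alone buys you nothing and the Markov/semigroup structure of $\hat\nu$ must enter in an essential way. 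This reduced statement is essentially equivalent in difficulty to assertion (1) itself, so the proposal is circular at its core; and since your ``direct'' proof of the converse of (2) falls back on (1), and your other argument there (that a weight discrepancy on two periodic preimages forces infinitely many conditional measures $(\pi\mu)_U$) is only sketched, part (2) inherits the same gap.
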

In contrast to the last statement, it can happen for {an
infinite-to-one} factor map between irreducible SFTs that there is a
Markov measure on the range which cannot lift to a Markov measure on
any subshift of the domain \cite{MarcusPetersenWilliams1984}.

We finish here with
an example. There are others in
\cite{KitchensTuncel1985}.

\begin{ex}
This is an example of a finite-to-one, one-to-one a.e. 1-block code
$\pi : \Omega_A \to \Omega_B$ between mixing vertex shifts of finite
type, with a 1-step Markov measure $\mu$ on $\Omega_A$,  such that
the following hold:
\begin{enumerate}
\item
For all periodic points $x,y$ in $\Omega_A$, $ \pi x = \pi y \text{
implies that } \textnormal{wps}_{\mu}(x) = \textnormal{wps}_{\mu}(y)
\ . $
\item
$\pi \mu $ is not Markov on $\Omega_B$.
\end{enumerate}
Here the alphabet of $\Omega_A$ is $\{1,2,3\}$; the alphabet of
$\Omega_B$ is $\{1,2\}$;
 \begin{equation*}
 A= \begin{pmatrix}
    0 & 1 & 0\\
    1 & 0 & 1\\
    1 & 1 & 0
    \end{pmatrix}
\qquad\text{ and }\qquad B=
\begin{pmatrix}
    0 & 1 \\
    1 & 1
    \end{pmatrix};
    \end{equation*}
and $\pi$ is the 1-block code sending $1$ to $1$ and sending $2$ and
$3$ to $2$. The map $\pi$ collapses the points in the orbit of
$(23)^*$ to a fixed point and collapses no other periodic points.
(Given a block $B$, we let $B^*$ denote a periodic point obtained by
infinite concatenation of the block $B$.)

Let $f$ be the function on
 $\Omega_A$ such that $f(x) = \log 2$ if $x_0x_1=23$,
$f(x) = \log (1/2)$ if $x_0x_1=32$ and $f(x) = 0$ otherwise. Let $\mu$ be
the 1-step Markov measure which is the unique equilibrium state for
$f$, defined by the stochasticization $P$ of the matrix
\begin{equation*}
M =
 \begin{pmatrix}
    0 & 1 & 0\\
    1 & 0 & 2\\
    1 & 1/2 & 0
    \end{pmatrix}.
    \end{equation*}
Let $\lambda$ denote the spectral radius
of $M$.
Suppose that $\nu = \pi \mu$ is Markov, of any order.
Then $
\textnormal{wps}_{\nu}(2^*) =
\textnormal{wps}_{\mu}((23)^*) =
-\log \lambda $. Also, there must be a constant
$c$ such that for all large $n$,
\be
\textnormal{wps}_{\nu}((12^n)^*) =
\frac 1{n+1}(c + (n+1)\textnormal{wps}_{\nu}(2^*)) =
\frac {c}{n+1} -\log \lambda
\ .
\en
So,  for all large $n$,
\be
\begin{aligned}
\frac {c}{2n+1} - \log \lambda
&= \textnormal{wps}_{\nu}((12^{2n})^*) =
\textnormal{wps}_{\mu}((1(23)^n)^*)
= \frac 1{2n+1}\log (2\lambda^{-(2n+1)})
 \\
 &\text{and}\\
\frac {c}{2n+2}-\log \lambda
&= \textnormal{wps}_{\nu}((12^{2n+1})^*) =
\textnormal{wps}_{\mu}((1(23)^n2)^*) =
\frac {1}{2n+2}\log (\lambda^{-(2n+2)})\ .
\end{aligned}
\en
Thus $c=\log 2 $ and $c=0$, a contradiction.
Therefore $\pi \mu $ is not Markov.
\end{ex}

\section{Identification of hidden Markov measures}\label{sec_ident}

Given a finite-state stationary process, how can we tell whether it
is a hidden Markov process? If it is, how can we construct some
Markov process of which it is a factor by means of a sliding block
code? When is the image of a Markov measure under a factor map again
a Markov measure? These questions are of practical importance, since
scientific measurements often capture only partial information about
systems under study, and in order to construct useful models the
significant hidden variables must be identified and included.
Beginning in the 1960's some criteria were developed for recognizing
a hidden Markov process: loosely speaking, an abstract algebraic
object constructed from knowing the measures of cylinder sets should
be in some sense finitely generated. Theorem \ref{th_rationalequiv}
below gives equivalent conditions, in terms of formal languages and
series (the series is ``rational"), linear algebra (the measure is
``linearly representable"), and abstract algebra (some module is
finitely generated), that a shift-invariant probability measure be
the image under a 1-block map of a shift-invariant 1-step Markov
measure. In the following we briefly explain this result, including
the terminology involved.

Kleene \cite{Kleene1956} characterized rational languages as the
linearly representable ones, and this was generalized to formal
series by Sch\"{u}tzenberger \cite{Schutzenberger1961}. In the study
of stochastic processes, functions of Markov chains were analyzed by
Gilbert \cite{Gilbert1959}, Furstenberg \cite{Furstenberg1960},
Dharmadhikari
\cite{Dharma1963,Dharma1963-2,Dharma1964,Dharma1965,Dharma1968,
DharmaNadkarni1970},
Heller \cite{Heller1965, Heller1967}, and others. For the connection
between rational series and continuous images of Markov chains, we
follow Berstel-Reutenauer \cite{BerstelReutenauer1988} and
Hansel-Perrin \cite{HanselPerrin1989}, with an addition to explain
how to handle zero entries. Subsequent sections describe the
approaches of Furstenberg and Heller and related work.

 Various problems around these ideas were (and
continue to be) explored and solved. In particular, it is natural to
ask when is the image of a Markov measure $\mu$ under a continuous
factor map $\pi$ a Gibbs measure (see (\ref{Eq_BowenGibbs}), or when
 is the image of a Gibbs measure again a Gibbs measure?
 Chazottes and Ugalde \cite{ChazottesUgalde2003} showed that
 if $\mu$ is $k$-step Markov on a full shift $\Omega_d$ and
  $\pi$ maps $\Omega_d$ onto another full shift $\Omega_D$,
  then the image $\pi\mu$ is a
Gibbs measure which is the unique equilibrium state of a H\"older
continuous potential which can be explicitly described in terms of a
limit of matrix products and computed at periodic points. They also
gave sufficient conditions in the more general case when the factor
map is between SFT's. The case when $\mu$ is Gibbs but not
necessarily Markov is considered in \cite{ChazottesUgalde2009}. For
higher-dimensional versions see for example \cite{KunschGeman1995,
MaesVandevelde1995, Haggstrom2003}.

Among the extensive literature that we do not cite elsewhere, we can
mention in addition \cite{Harris1955,Nasu1985,Fannes1992,
BezhaevaOseledets2005,Schonhuth2009}.

\subsection{Formal series  and formal languages}\label{sec_lang}

\subsubsection{Basic definitions}

As in Section \ref{sec_sfts}, continue to let $\ca$ be a finite
alphabet, $\ca^{*}$ the set of all finite words on $\ca$, and
$\ca^{+}$ the set of all finite nonempty words on $\ca$. Let
$\epsilon$ denote the empty word.  A \em language \em~ on $\ca$ is
any subset $\mathcal{L}\subset \ca^{*}$.

Recall that a \em monoid \em~ is a set $S$ with a binary operation
$S \times S \rightarrow S$ which is associative and has a neutral
element (identity).  This means we can think of $\ca^{*}$ as the
multiplicative free monoid generated by $\ca$, where the operation
is concatenation and the neutral element is $\epsilon$.

A \em formal series \em~ (nonnegative real-valued, based on $\ca$)
is a function $s: \ca^{*} \rightarrow {\R}_{+}$. For all $ w \in
\ca^{*}, \: s(w)=(s, w) \in {\R}_{+}$, which can be thought of as
the coefficient of $w$ in the series $s$.  We will think of this $s$
as $\sum_{w\in \ca^{*}}s(w)w$, and this will be justified later. If
$v\in \ca^{*}$ and $s$ is the series such that $s(v)=1$ and $s(w)=0$
otherwise, then we sometimes use simply
 $v$ to denote  $s$.

Associated with any language $\cl$ on $\ca$ is its {\em
characteristic series} $F_\cl :\ca^* \to \Br_+$ which assigns 1 to
each word in $\cl$ and 0 to each word in $\ca^* \setminus \cl$.
Associated to any Borel measure $\mu$ on $\ca^{\Bz_+}$ is its {\em
corresponding series} $F_\mu$ defined by
 \be
 F_\mu(w) = \mu(\cc_0(w)) = \mu\{x \in \ca^{\Bz_+}: x[0,|w|-1]=w\} .
 \en

It is sometimes useful to consider formal series with values in any
{\em semiring} $K$, which is just a ring without subtraction.  That
is, $K$ is a set with operations $+$ and $\cdot$ such that $(K, +)$
is a commutative monoid with identity element $0$, $(K,\cdot)$ is a
monoid with identity element $1$; the product distributes over the
sum; and for $k \in K$, $0k=k0=0$.

We denote the set of all $K$-valued formal series based on $\ca$ by
$K \langle \langle {\ca} \rangle \rangle$ or $\mathcal{F}_{K}(\ca)$.
We further abbreviate ${\R}_{+} \langle \langle {\ca} \rangle
\rangle = \mathcal {F}(\ca) $.

Then $\mathcal {F}(\ca)$ is a semiring in a natural way: For $f_{1},
f_{2} \in \mathcal{F} (\ca)$, define
\begin{enumerate}
\item $(f_{1} + f_{2}) (w) = f_{1}(w)+f_{2}(w)$
\item $(f_{1}f_{2})(w) = \sum f_{1}(u)f_{2}(v)$,
where the sum is over all $u, v \in \ca^{*}$ such that $uv=w$, a
finite sum.
 \end{enumerate}
The neutral element for multiplication in $\F$ is
 \be
 s_{1}(w) = \begin{cases}1 & \text{if $w=\epsilon$} \\0 &
 \text{otherwise.}
 \end{cases}
 \en
As discussed above, we will usually write simply
$\epsilon $ for $s_1$.
There is a natural injection ${\R}_{+} \hookrightarrow \F$
defined by $t
\mapsto t \epsilon$ for all $t\in {\R}_{+}$.

Note that:
\begin{itemize}
\item ${\R}_{+}$ acts on $\F$ on both sides: \\
      $(ts)(w) = ts(w), \; (st)(w) = s(w)t, \; $ for all
      $w \in \ca^{*}$, for all $t \in {\R}_{+}$.
\item There is a natural injection $\ca^{*} \hookrightarrow \F$ as a
multiplicative submonoid: For $w\in \ca^{*}$ and $v \in \ca^{*}$,
define
    $$w(v)= \delta_{wv} = \begin{cases}      1 & \text{if $w=v$} \\
   0 & \text{otherwise.}       \end{cases}$$
This is a 1-term series.
\end{itemize}
\begin{defn}
The \em{support} \em~ of a formal series $s \in \F$ is
$$\supp(s) = \{ w \in \ca^{*}: s(w)\neq 0\} .$$
\end{defn}
Note that $\supp(s)$ is a language. A language corresponds to a
series with coefficients 0 and 1, namely its characteristic series.
\begin{defn}\label{defn_poly}
A \em polynomial \em~ is an element of $\F$ whose support is a
finite subset of $\ca^{*}$. Denote the $K$-valued polynomials based
on $\ca$ by $\wp_{K}(\ca)=K \langle \ca \rangle$. The \em degree
\em~ of a polynomial $p$ is $\deg(p) = \max \{|w|:p(w) \neq 0\}$ and
is $- \infty$ if $p \equiv 0$.
\end{defn}
\begin{defn}  A family $\{f_{\lambda}: \lambda \in \Lambda\} \subset \F$ of series
is called \em locally finite \em~ if for all $w \in \ca^{*}$ there
are only finitely many $\lambda \in \Lambda$ for which
$f_{\lambda}(w)\neq 0$. A series $f \in \F$ is called \em proper
\em~ if $f(\epsilon)=0$.
\end{defn}
\begin{prop}
If $f \in \F$ is proper, then $\{f^{n}: n=0, 1, 2, \dots \}$ is
locally finite.
\end{prop}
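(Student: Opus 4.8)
The plan is to compute $f^n(w)$ explicitly for a fixed but arbitrary word $w \in \ca^*$ and to show that it vanishes as soon as $n$ exceeds $|w|$. First I would record the $n$-fold form of the product rule: a straightforward induction on $n$, starting from the definition $(f_1 f_2)(w) = \sum_{uv = w} f_1(u) f_2(v)$, yields, for every $n \ge 1$ and every $w \in \ca^*$,
\[
 f^n(w) \;=\; \sum_{u_1 u_2 \cdots u_n = w} f(u_1)\, f(u_2) \cdots f(u_n),
\]
the sum being over all ordered tuples $(u_1, \dots, u_n) \in (\ca^*)^n$ with $u_1 u_2 \cdots u_n = w$. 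This sum is finite, since a factorization of $w$ into $n$ (possibly empty) blocks is specified by choosing $n-1$ cut points among the $|w|$ positions of $w$; equivalently, it is finite because multiplication in $\F$ is well defined.

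Next I would invoke properness. Because $f$ is proper, $f(\epsilon) = 0$, so any tuple $(u_1, \dots, u_n)$ in which some $u_i$ equals $\epsilon$ contributes $0$ to the sum above. Hence only tuples with $u_i \ne \epsilon$, that is $|u_i| \ge 1$, for all $i$ can contribute; and for such a tuple $|w| = \sum_{i=1}^n |u_i| \ge n$. Consequently $f^n(w) = 0$ whenever $n > |w|$. Since moreover $f^0 = s_1$ (the series $\epsilon$), we conclude $\{\, n \ge 0 : f^n(w) \ne 0 \,\} \subseteq \{0, 1, \dots, |w|\}$, a finite set; as $w$ was arbitrary, the family $\{f^n : n \ge 0\}$ is locally finite.

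I do not expect any genuine obstacle here: the argument is a short bookkeeping exercise. The only points calling for a little care are the edge cases — the displayed formula and the bound $n \le |w|$ are stated for $n \ge 1$, while $n = 0$ is treated separately by $f^0 = s_1$ (and for $w = \epsilon$ this is consistent, since then $f^n(\epsilon) = f(\epsilon)^n = 0$ for all $n \ge 1$) — and the remark, recorded above, that the sum defining $f^n(w)$ really is finite.
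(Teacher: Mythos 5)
Your argument is correct and is essentially identical to the paper's own proof: both expand $f^n(w)$ as the sum over ordered factorizations $u_1\cdots u_n=w$ and observe that for $n>|w|$ at least one $u_i$ must be $\epsilon$, so properness kills every term. The extra bookkeeping you supply (the induction behind the $n$-fold product rule, finiteness of the sum, and the $n=0$ case) is fine but not needed beyond what the paper already records.
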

\begin{proof}  If $n>|w|$, then $f^{n}(w) = 0$, because \\
$$f^{n}(w) = \displaystyle \sum_{\substack{
u_{1} \dots u_{n} = w\\
 u_i \in \ca^{*}, \; i=1, \dots , n}
 }f(u_{1}) \dots f(u_{n}
)$$ and at least one $u_{i}$ is $\epsilon$.
\end{proof}
\begin{defn}
If $f \in \F$ is proper, define $$f^{*} = \sum_{n=0}^\infty f^{n}
\textrm{ and } f^{+} = \sum_{n=1}^\infty f^{n} \; \textrm{(a
pointwise finite sum)},$$ with $f^0 = 1 = 1 \cdot \epsilon =
\epsilon$.
\end{defn}

\subsubsection{Rational series and languages}

\begin{defn}
The \em rational operations \em~ in $\F$ are sum $(+)$, product
$(\cdot)$, multiplication by real numbers $(tw)$, and $*: f
\rightarrow f^{*}$. The family of \em rational series \em~ consists
of those $f \in \F$ that can be obtained by starting with a finite
set of polynomials in $\F$ and applying a finite number of rational
operations.
\end{defn}
\begin{defn}
A language $\mathcal{L} \subset \ca^{*}$ is \em rational \em~ if and
only if its characteristic series
 \be
 F(w) =    \begin{cases}   1 & \text{if $w \in \mathcal{L}$} \\
     0 &    \text{if $w \notin \mathcal{L}$}
     \end{cases}
     \en
is rational.
\end{defn}

Recall that regular languages correspond to {\em regular
expressions}: The set of regular expressions includes $\ca, \;
\epsilon, \; \emptyset$ and is closed under $+, \; \cdot$ , *. A
language recognizable by a finite-state automaton, or consisting of
words obtained by reading off sequences of edge labels on a finite
labeled directed graph, is regular.

\begin{prop}
A language $\mathcal{L}$ is rational if and only if it is \em
regular. \em~ Thus a nonempty
 insertive and extractive language is rational
if and only if it is the language of a sofic subshift.
\end{prop}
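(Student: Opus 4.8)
The plan is to establish the first assertion — that a language $\mathcal L\subset\ca^*$ is rational (meaning, by definition, that its characteristic series $F_{\mathcal L}$ is a rational element of $\F$) if and only if it is regular — and then to read off the second assertion by combining this with the description, recalled in Section~\ref{sec_sfts}, of which languages come from subshifts and from sofic subshifts.

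For the direction ``regular $\Rightarrow$ rational'' I would \emph{not} induct on a regular expression for $\mathcal L$: the obstacle is that the operations $f\mapsto fg$ and $f\mapsto f^*$ on $\F$ record \emph{multiplicities} of factorizations, so applied to $\{0,1\}$-valued series they need not return a $\{0,1\}$-valued series, and a regular expression for $\mathcal L$ has no reason to be unambiguous. Instead I would fix a \emph{deterministic} finite automaton $\mathcal A=(Q,i,T,\delta)$ recognizing $\mathcal L$ (Kleene's theorem, \cite{Kleene1956}). For $q\in Q$ let $\mathcal L_q$ be the language accepted from state $q$, so $\mathcal L_i=\mathcal L$, and consider the system of formal-series equations $X_q=\sum_{a\in\ca} a\,X_{\delta(q,a)}+\chi_T(q)\,\epsilon$, $q\in Q$. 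Determinism is exactly what makes $(F_{\mathcal L_q})_{q\in Q}$ a solution: comparing the coefficient of a word $w=aw'$ on the two sides, the only term of $\sum_b b\,X_{\delta(q,b)}$ that contributes is $b=a$, giving $F_{\mathcal L_q}(aw')=F_{\mathcal L_{\delta(q,a)}}(w')$. The matrix of this system has entries that are finite sums of single letters, hence \emph{proper} series, so it is a proper linear system over $\F$; its unique solution is $A^*b$, whose entries are built from the matrix entries by rational operations (Arden's lemma / elimination over $\F$; see \cite{BerstelReutenauer1988,HanselPerrin1989}). Hence $F_{\mathcal L}=F_{\mathcal L_i}$ is rational.

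For ``rational $\Rightarrow$ regular'' the key point is that $\Br_+$ has no cancellation, so passing to supports turns the rational operations on $\F$ into the regular operations on languages: $\supp(f+g)=\supp(f)\cup\supp(g)$, $\supp(fg)=\supp(f)\cdot\supp(g)$, $\supp(tf)=\supp(f)$ for $t>0$ (and $=\emptyset$ for $t=0$), and, for proper $f$, $\supp(f^n)=(\supp(f))^n$ so $\supp(f^*)=(\supp(f))^*$. Given any rational expression evaluating to $F_{\mathcal L}$, a structural induction — with base case the support of a polynomial, which is a finite language — shows that $\mathcal L=\supp(F_{\mathcal L})$ is obtained from finite languages by finitely many unions, concatenations, and Kleene stars, hence is regular. (Equivalently, a rational series over $\Br_+$ admits a linear representation by nonnegative matrices, and by the same no-cancellation remark the support of such a series is the set of labels of paths between a fixed finite set of start vertices and a fixed finite set of end vertices in a finite labeled digraph, hence regular by the observation preceding the proposition.)

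For the second sentence, let $\mathcal L\neq\emptyset$ be extractive and insertive. By the characterization of languages of two-sided subshifts recalled in Section~\ref{sec_sfts}, $\mathcal L=\cl(X)$ for a unique subshift $(X,\sigma)$. If $X$ is sofic, a presentation of $X$ is a finite directed graph with edges labeled by the alphabet of $X$, and $\cl(X)$ is the language of finite label-sequences along paths of this graph, hence regular (as noted just above). Conversely, if $\cl(X)=\mathcal L$ is regular, take a finite automaton for $\mathcal L$ and, using insertivity, trim it to the subgraph of edges lying on bi-infinite labeled paths; this finite labeled graph presents $X$ as the label-image of an edge shift of finite type, so $X$ is sofic (see \cite{LindMarcus1995}). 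Thus $X$ is sofic if and only if $\cl(X)$ is regular, which by the first part holds if and only if $\mathcal L$ is rational. I expect ``regular $\Rightarrow$ rational'', and specifically the need to route through a deterministic (unambiguous) presentation to avoid spurious multiplicities, to be the main technical point; the remaining steps are bookkeeping once one exploits the positivity of $\Br_+$ and the facts already recorded in the text.
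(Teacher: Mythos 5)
The paper itself offers no proof of this proposition: it is stated as standard background, with Kleene's theorem \cite{Kleene1956} and the automaton/labelled-graph description of regular languages recalled just before it, so the comparison here is with the route the surrounding machinery suggests rather than with a written argument. Your treatment of the first sentence is correct, and you are right that multiplicities are the one genuine subtlety: over $\R_{+}$ one cannot simply transcribe a regular expression, and routing through a deterministic (unambiguous) automaton handles this. Your linear system $X=AX+b$ with $A$ proper, solved uniquely by $A^{*}b$ whose entries are rational, is exactly the mechanism of Lemma \ref{lem_rationallemma}. A slightly shorter path using the paper's own apparatus: a DFA gives a linear representation $(x,\phi,y)$ of the characteristic series, with $x,y$ the $0,1$ indicator vectors of the initial and final states and $\phi(a)$ the $0,1$ transition matrices, and the argument of Section \ref{subsec_lr=rat} then yields rationality directly; your support computation for the converse (no cancellation in $\R_{+}$) is the standard and correct argument.

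The one step that, as written, would fail is the converse half of the second sentence. Trimming an automaton for $\mathcal{L}=\cl(X)$ to ``the subgraph of edges lying on bi-infinite labeled paths'' is not enough: in a complete DFA the dead (sink) state carries self-loops on every letter, those edges lie on bi-infinite paths, yet their labels need not belong to $\mathcal{L}$; for the golden mean language the trimmed graph would then present a shift containing the point $1^{\infty}$, which is not in $X$. You must first restrict to states that are both accessible from the initial state and co-accessible to a final state; factoriality of $\mathcal{L}$ then guarantees that every finite path label in the remaining graph lies in $\mathcal{L}$, so the presented sofic shift is contained in $X$. For the reverse containment you still need an argument (insertivity alone does not give it): for $x\in X$ each central block $x[-n,n]$ labels some path in the trimmed graph, and a K\"onig/compactness argument over the finitely many states produces a bi-infinite path labeled $x$. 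Alternatively, simply invoke the follower-set presentation of a subshift whose language has finitely many follower sets, which is the form in which this equivalence is proved in \cite{LindMarcus1995}. With that repair, the proposal is complete and is the natural argument for a statement the paper leaves as standard.
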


\subsubsection{Distance and topology in $\F$}
If $f_{1}, f_{2} \in \F,$ define \be D(f_{1}, f_{2}) = \inf \{ n
\geq 0: \; \text{there} \, \text{is} \; w \in \ca^{n} \textrm{ such
that } f_{1}(w) \neq f_{2}(w) \} \en and \be d(f_{1}, f_{2}) = \frac
{1} {2^{D(f_{1}, f_{2})}}. \en Note that $d(f_{1}, f_{2})$ defines
an \em ultrametric \em~ on $\F$:
 \be d(f, h) \leq \max \{ d(f,g),d(g, h) \} \leq d(f,g) +d(g,h) .
 \en
 With respect to the metric
 $d$, $f_{k} \rightarrow f$ if and only if
 for each $w \in \ca^*$, $f_{k}(w) \rightarrow
f(w)$ in the discrete topology on $\R$, i.e.
 $f_k(w)$ eventually equals $f(w)$.
\begin{prop}
$\F$ is \em complete \em~ with respect to the metric $d$ and is a
\em topological semiring \em~ with respect to the metric $d$ (that
is, $+$ and $\cdot$ are continuous as functions of two variables).
\end{prop}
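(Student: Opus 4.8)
The plan is to treat the two assertions separately; both are formal consequences of the definition of $d$ together with the filtered structure of $\F$.

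\textbf{Completeness.} First I would take a $d$-Cauchy sequence $(f_k)$ in $\F$ and unwind the Cauchy condition coordinatewise. Since $d(f_k,f_\ell)<2^{-n}$ is equivalent to $D(f_k,f_\ell)>n$, i.e.\ to $f_k$ and $f_\ell$ agreeing on every word of length $\le n$, being Cauchy means: for each $n$ there is $K(n)$ with $f_k(w)=f_\ell(w)$ whenever $k,\ell\ge K(n)$ and $|w|\le n$. Taking $n=|w|$ shows that for each fixed $w$ the sequence $(f_k(w))_k$ is eventually constant; let $f(w)$ be its eventual value. Then $f$ is a well-defined function $\ca^*\to{\R}_+$ (the limits stay in ${\R}_+=[0,\infty)$ since each is actually attained), so $f\in\F$. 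Letting $\ell\to\infty$ in $f_k(w)=f_\ell(w)$ gives $f_k(w)=f(w)$ for all $|w|\le n$ and $k\ge K(n)$, hence $D(f_k,f)\ge n+1$ and $d(f_k,f)\le 2^{-(n+1)}<2^{-n}$ for $k\ge K(n)$; thus $f_k\to f$ (this also just re-derives the convergence criterion already recorded above), and $\F$ is complete.

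\textbf{Topological semiring.} The key structural observation is a filtration compatibility: for each $n$, the coefficients on words of length $<n$ of $f_1+f_2$ and of $f_1 f_2$ depend only on the coefficients of $f_1$ and $f_2$ on words of length $<n$. For the sum this is immediate from $(f_1+f_2)(w)=f_1(w)+f_2(w)$. For the product it holds because $(f_1 f_2)(w)=\sum_{uv=w}f_1(u)f_2(v)$ is a finite sum in which $|u|\le|w|$ and $|v|\le|w|$, so when $|w|<n$ only length-$(<n)$ coefficients of $f_1$ and $f_2$ enter. Consequently, if $f_i$ and $f_i'$ agree on all words of length $<n$ for $i=1,2$, then so do $f_1+f_2$ and $f_1'+f_2'$, and so do $f_1 f_2$ and $f_1' f_2'$; equivalently $D(f_1\ast f_2,\,f_1'\ast f_2')\ge\min\{D(f_1,f_1'),D(f_2,f_2')\}$ for $\ast\in\{+,\cdot\}$. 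Rewriting via $d=2^{-D}$, each of $+$ and $\cdot$ is non-expansive, hence uniformly continuous, from $\F\times\F$ equipped with the metric $\max\{d(f_1,f_1'),d(f_2,f_2')\}$ to $\F$. In particular both operations are jointly continuous, and since the semiring axioms for $\F$ were already recorded, $\F$ is a topological semiring.

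\textbf{Expected obstacle.} There is no real obstacle here; the statement is bookkeeping with the ultrametric. The one point I would be careful to state precisely is the filtration property of the product --- that the length-$(<n)$ part of $f_1 f_2$ is determined by the length-$(<n)$ parts of the factors --- since this is exactly what upgrades separate continuity of multiplication to joint continuity and sidesteps the usual failure of continuity of multiplication (as in ${\R}$, where multiplying by an unbounded factor is not continuous). A minor hygiene point is to note that ${\R}_+=[0,\infty)$, so the eventually-constant coordinatewise limits built in the completeness argument automatically land back in ${\R}_+$.
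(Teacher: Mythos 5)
The paper states this proposition without proof (it is treated as routine bookkeeping), so there is no argument to compare against; your verification is correct and is the standard one. Both halves check out: the ultrametric Cauchy condition does force eventual coordinatewise stabilization, and your key observation --- that the length-$(<n)$ coefficients of $f_1+f_2$ and $f_1f_2$ depend only on the length-$(<n)$ coefficients of the factors, because every factorization $uv=w$ has $|u|,|v|\le|w|$ --- is exactly what gives $D(f_1\ast f_2, f_1'\ast f_2')\ge\min\{D(f_1,f_1'),D(f_2,f_2')\}$ and hence joint (indeed uniform) continuity of both operations.
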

\begin{defn}
A family $\{ F_{\lambda}: \lambda \in \Lambda \} $ of formal series
is called \em summable \em~ if there is a series $F \in \F$ such
that for every $\delta > 0$ there is a finite set $\Lambda_{\delta}
\subset \Lambda$ such that for each finite set $I \subset \Lambda$
with $\Lambda_{\delta} \subset I$, $d(\sum_{i \in I}F_i, F)<
\delta$. Then $F$ is called the \em sum \em~ of the series and we
write
$F= \sum_{\lambda \in \Lambda} F_{\lambda}$.\\
\end{defn}
\begin{prop}
If $\{F_\lambda: \lambda \in \Lambda\}$ is locally finite, then it
is summable, and conversely.
\end{prop}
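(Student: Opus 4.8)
The plan is to prove both implications directly from the definitions, exploiting the fact that the ultrametric $d$ on $\F$ is essentially coordinatewise: two series lie within distance $\delta$ of each other precisely when they agree on every sufficiently short word, and for each $N$ the set $\bigcup_{k=0}^{N}\ca^{k}$ of words of length at most $N$ is finite.

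For the forward implication I would assume $\{F_{\lambda}:\lambda\in\Lambda\}$ is locally finite and first produce the candidate sum: define $F(w)=\sum_{\lambda\in\Lambda}F_{\lambda}(w)$ for each $w\in\ca^{*}$. Local finiteness makes this a finite sum of nonnegative reals, so $F$ is a well-defined element of $\F$. Now fix $\delta>0$ and pick $N$ with $2^{-N}<\delta$, so that any two series agreeing on all words of length $\le N$ are within distance $\delta$. For each of the finitely many words $w$ with $|w|\le N$ the set $S_{w}=\{\lambda: F_{\lambda}(w)\neq 0\}$ is finite by hypothesis; put $\Lambda_{\delta}=\bigcup_{|w|\le N}S_{w}$, a finite set. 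Then for every finite $I$ with $\Lambda_{\delta}\subseteq I$ and every $w$ with $|w|\le N$, each $\lambda\notin\Lambda_{\delta}$ contributes $F_{\lambda}(w)=0$, so $\left(\sum_{i\in I}F_{i}\right)(w)=\sum_{i\in\Lambda_{\delta}}F_{i}(w)=F(w)$; hence $d\left(\sum_{i\in I}F_{i},F\right)<\delta$, which is precisely summability with sum $F$.

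For the converse I would assume $\{F_{\lambda}\}$ is summable with sum $F$ and fix an arbitrary $w\in\ca^{*}$. Applying the definition of summability with $\delta=2^{-|w|}$ yields a finite set $\Lambda_{\delta}$ such that $\left(\sum_{i\in I}F_{i}\right)(w)=F(w)$ for every finite $I\supseteq\Lambda_{\delta}$, since $d<2^{-|w|}$ forces agreement on all words of length $\le|w|$, in particular on $w$. Then for any $\lambda_{0}\notin\Lambda_{\delta}$, comparing $I=\Lambda_{\delta}$ with $I'=\Lambda_{\delta}\cup\{\lambda_{0}\}$ (both finite supersets of $\Lambda_{\delta}$) gives $F(w)=\left(\sum_{i\in I'}F_{i}\right)(w)=\left(\sum_{i\in\Lambda_{\delta}}F_{i}\right)(w)+F_{\lambda_{0}}(w)=F(w)+F_{\lambda_{0}}(w)$, so $F_{\lambda_{0}}(w)=0$. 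Hence $\{\lambda:F_{\lambda}(w)\neq 0\}\subseteq\Lambda_{\delta}$ is finite, and the family is locally finite.

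There is no substantial obstacle here; the argument is a matter of unwinding the definitions. The only points that need a little care are the bookkeeping ones: verifying that the coordinatewise sum $F$ really belongs to $\F$ (finiteness of the defining sum, which is exactly where local finiteness is used, and nonnegativity), translating the metric estimate $d<\delta$ into the combinatorial statement ``agreement on all words of length $\le N$'', and, in the converse, checking that $\Lambda_{\delta}$ itself is an admissible choice of $I$ so that the one-element comparison is legitimate.
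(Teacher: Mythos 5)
Your proof is correct, and the forward direction follows essentially the same route as the paper's own (unpublished, commented-out) argument: define $F$ pointwise, use the finiteness of $\bigcup_{k\le N}\ca^k$ to collect the finitely many relevant indices into $\Lambda_\delta$, and translate $d<\delta$ into agreement on words of length at most $N$. The paper in fact only sketches that one implication, so your converse---comparing the partial sums over $\Lambda_\delta$ and $\Lambda_\delta\cup\{\lambda_0\}$ to force $F_{\lambda_0}(w)=0$---is a correct and welcome addition rather than a deviation.
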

Thus any $F \in \F$ can be written as $F=\sum_{w \in \A}F(w)w$,
where the formal series is a convergent infinite series of
polynomials in the metric of $\F$. Recall that
$$(F(w)w) (v) =  \begin{cases}   F(w) & \text{if $w=v$ }\\   0 &
\text{if $w \neq v,$}    \end{cases}$$ where $F(w)w \in \F$ and
$w\in \A$, so that  $\{F(w)w: w \in \A\}$ is a
locally finite, and hence summable, subfamily of $\F$.\\

We note here that the set $\wp (\ca)$ of all polynomials is dense in $\F$.\\

\subsubsection{Recognizable (linearly representable)
series}\label{linrepser}

\begin{defn}\label{def_linrep}
$F \in \F$ is \em linearly representable \em~ if there exists an $n
\geq 1$ (the \em dimension \em~ of the representation) such that
there are a $1 \times n$ nonnegative row vector $x \in \R_{+}^{n}$,
an $n \times 1$ nonnegative column vector $y \in {\R}_{+}^{n}$, and
a morphism of multiplicative monoids $\phi: \A \rightarrow \R_{+}^{n
\times n}$
 (the multiplicative monoid of nonnegative $n \times n$ matrices) such that for all
 $w \in \A$, $F(w) = x \phi(w)
 y$ (matrix multiplication).
 A {\em linearly representable measure} is one whose associated
 series is linearly representable.
 The triple $(x,\phi,y)$ is called the {\em linear representation} of
 the series (or measure).
\end{defn}

\begin{ex} Consider a Bernoulli measure $\mathcal{B}(p_{0}, p_{1}, \dots , p_{d-1})$ on
 $\Omega_{+}(\ca) = \ca^{\Z_{+}}$ where $\ca =  \{a_{0}, a_{1},
 \dots, a_{d-1}\}$, and
  $p =(p_{0}, p_{1}, \dots , p_{d-1})$ is a probability vector.  Let
  $f = \sum_{i=0}^{d-1}p_{i}a_{i} \in \F$.
 Then
 $$f(w) =  \begin{cases}    p_{i}& \text{if $w = a_{i}$ }\\ 0 &
 \text{if $w \neq a_{i}$}  .  \end{cases} $$
 Define $F_{p} = f^{*} = \sum_{n \geq 0} f^{n}$. Note that $f$ is proper
 since we have $f(\epsilon) = 0$.
 Consider the particular word
 $w = a_{2}a_{0}$. Then
  $f^{0}(w) = f(w) =0,$ and
 for $n \geq 3$, we have $f^{n}(w) = 0$ because any factorization $w=u_{1}u_{2}u_{3}$
 includes
 $\epsilon$ and $f(\epsilon) = 0$.
 Thus $F_p(w)=f^*(w)=f^{2}(w) = \sum_{uv =w}f(u)f(v) = f(a_{2})f(a_0) = p_{2}p_{0}$.
 Continuing in this way, we see that for $w_{i} \in \ca$,
 $F_{p}(w_{1} w_{2} \dots w_{n}) = p_{w_{1}}p_{w_{2}} \dots p_{w_{n}}$.
\end{ex}

{
\begin{ex}\label{ex_linrepofmarkov}
 Consider a Markov measure $\mu$ on $\Omega_{+}(\ca)$ defined by a $d
\times d$ stochastic matrix $P$ and a $d$-dimensional probability
row vector $p=(p_0, p_1, \cdots, p_{d-1})$. Define $F_{p,P} \in
\cf(\ca)$ by $F_{p,P}(w_1\dots w_n)=\mu(\cc_0(w_1 \dots w_n))$ for
all $w_1, \dots ,w_n \in \ca$.
 Put
$y=(1, \dots ,1)^\text{tr} \in \R_{+}^{d}, \; x=p \in \R_{+}^{d},$
and let $\phi$ be generated by $\phi(a_{j})$, $j=0,1,...,d-1$, where
\be \phi(a_{j})= \left(
\begin{matrix}
0   &   \cdots  & {P}_{0 j} &     0 & \cdots & 0\\
0   &   \cdots  &{P}_{1j} &       0 & \cdots & 0\\
\vdots  &   \cdots  &\vdots &     \vdots   & \cdots & \vdots\\
0   & \cdots        & {P}_{d-1,j}&    0   & \cdots & 0
\end{matrix}
\right) \; \text{for each} \; {a}_j \in \ca. \en Then the triple
$(x, \phi , y)$ represents the given Markov measure $\mu$.
  In this Markov case each matrix $\phi(a_j)$ has at most one
  nonzero column and thus has rank at most 1.
\end{ex}
}

{
\begin{ex}\label{ex_linrepofsofic}
Now we show how to obtain a linear representation of a sofic measure
that is the image under a 1-block map $\pi$ of a 1-step Markov
measure. Let $\mu$ be a 1-step Markov measure determined by a $d
\times d$ stochastic matrix $P$ and fixed vector $p$ as in Example
\ref{ex_linrepofmarkov}. Let $\pi :X \to Y$ be a 1-block map from
the SFT $X$ to a subshift $Y$. For each $a$ in the alphabet
$B=\ca(Y)$ let $P_a$ be the $d \times d$ matrix such that \be
P_a(i',j')=
\begin{cases}
P(i',j') &\quad\text{if } \pi(j')=a\\
0 &\quad\text{otherwise.}
\end{cases}
 \en
 Thus $P_a$ just zeroes out all the columns of $P$ except the ones
 corresponding to indices in the $\pi$-preimage of the symbol $a$ in the
 alphabet of $Y$.
 Again let $y=(1, \dots ,1)^\text{tr}$.
For each $a \in B$ define $\phi(a)=P_a$. That the $\nu$-measure of
each cylinder in $Y$ is the sum of the $\mu$-measures of its
preimages under $\pi$ says that
  the triple $(x,\phi,y)$
 represents $\nu=\pi\mu$.
   \end{ex}
}

 In working with linearly representable measures, it is useful to
know that the nature of the vectors and matrix involved in the
representation can be assumed to have a particular restricted form.
{}{Below, we say a matrix $P$ is a direct sum of irreducible
stochastic matrices if the index set for the rows and columns of $P$
is the disjoint union of sets for which the associated principal
submatrices of $P$ are irreducible stochastic matrices.
(Equivalently, there are irreducible stochastic matrices $P_1, \dots
, P_k$ and a permutation matrix $Q$ such that $QPQ^{-1}$ is the
block diagonal matrix whose successive diagonal blocks are $P_1,
\dots , P_k$.)  }

\begin{prop}\label{prop_redrep}  A formal series
 $F \in \mathcal{F}(\ca)$ corresponds to
a linearly representable shift-invariant probability measure $\mu$
on $\Omega_+(\ca)$ if and only if $F$ has a linear representation
$(x, \phi, y)$ with  $P=\sum_{a\in \ca}\phi(a)$ a stochastic matrix,
$y$ a column vector of all 1's, and $xP=x$. Moreover, in this case
the vector $x$ can be chosen to be positive with the matrix $P$ a
direct sum of irreducible stochastic matrices.
\end{prop}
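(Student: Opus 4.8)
The plan is to prove both directions. For the easy direction, suppose $\mu$ is a shift-invariant probability measure on $\Omega_+(\ca)$ with a linear representation $(x,\phi,y)$ satisfying $P=\sum_{a\in\ca}\phi(a)$ stochastic, $y=(1,\dots,1)^{\text{tr}}$, and $xP=x$. Then $F_\mu(w)=x\phi(w)y$ defines a formal series, and I would check directly that consistency and shift-invariance follow from the stated conditions: summing $F_\mu(aw)$ over $a\in\ca$ and over the last letter respectively, using $P y = y$ (row sums of a stochastic matrix) and $xP=x$, gives exactly the Kolmogorov consistency relations $\sum_{a}F_\mu(wa)=F_\mu(w)$ and $\sum_{a}F_\mu(aw)=F_\mu(w)$, hence $\mu$ extends to a shift-invariant Borel probability measure on $\Omega_+(\ca)$ by the Kolmogorov Extension Theorem. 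Here $F_\mu(\epsilon)=xy=\sum_i x_i=1$ since $x$ is a stochastic vector (fixed by the stochastic $P$, up to normalization, which we may assume).

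For the main (forward) direction, start with an arbitrary linear representation $(x_0,\phi_0,y_0)$ of dimension $n_0$ of the series $F_\mu$ of a shift-invariant probability measure $\mu$. The first step is a \emph{reduction of the representation}: pass to the subspace $\cv$ spanned by all rows $x_0\phi_0(w)$, $w\in\ca^*$ (equivalently the smallest left-invariant subspace containing $x_0$), and dually quotient by the annihilator so that the column vectors $\phi(w)y$ also span the relevant space; this is the standard minimization procedure for rational series (as in Berstel--Reutenauer), and it produces a representation in which the matrix $P=\sum_a\phi(a)$ acts on a space spanned by the $x$-orbit. The second step is to arrange the \emph{stochastic normalization}: because $\mu$ is a probability measure, $F_\mu(\epsilon)=1$ and $\sum_{a}F_\mu(wa)=F_\mu(w)$ for all $w$; translating this through the reduced representation shows that $Py$ acts as the identity on the spanning set, so after a change of basis fixing $y=(1,\dots,1)^{\text{tr}}$ we get $Py=y$ with $P\ge 0$, i.e. $P$ stochastic, and shift-invariance plus the spanning property forces $xP=x$.

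The third and most delicate step is to upgrade to the additional conclusion: $x$ positive and $P$ a \emph{direct sum of irreducible stochastic matrices}. Here I would invoke Perron--Frobenius structure theory for the nonnegative matrix $P$: decompose the state set into communicating classes, discard transient states and inessential classes (the stationarity $xP=x$ together with positivity of the total mass forces $x$ to be supported on a union of recurrent classes, and on each such class $P$ restricts to an irreducible stochastic matrix), and restrict the representation to the support of $x$. One must check that discarding states outside the support of $x$ does not change any value $F_\mu(w)=x\phi(w)y$ --- this uses $xP=x$, so that $x$ is supported on a forward-invariant set for $P$, hence for every $\phi(a)$ since $\phi(a)\le P$ entrywise. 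After this restriction $x$ is strictly positive on its (now full) index set, and $P$ is a direct sum of irreducible stochastic matrices, one block per recurrent class charged by $x$.

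\textbf{Expected main obstacle.} The bookkeeping in the third step is where the real work lies: one must simultaneously (i) perform the linear-algebra minimization, (ii) keep the vectors entrywise nonnegative throughout (minimization for rational series over a field does not automatically respect the positive cone, so the reduction must be done carefully within $\R_+$, exploiting that $\mu$ is an honest measure), and (iii) reconcile the combinatorial recurrent-class decomposition of $P$ with the linear-algebraic reduced dimension. In particular, showing that the reduced representation can be taken with $x>0$ \emph{and} $P$ a direct sum of irreducibles --- rather than merely block upper-triangular with a stochastic recurrent part --- requires arguing that transient coordinates carry no mass under the stationary $x$ and can be excised without altering the series.
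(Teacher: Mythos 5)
Your easy direction and your final step (excising the transient part once $P$ is stochastic, $y$ is the all-ones vector, and $xP=x$ with $x\ge 0$) are sound, but there is a genuine gap in the middle, and it is exactly the point you flag as the ``expected main obstacle'' without resolving it. Shift-invariance of $\mu$ only gives $(xP-x)\phi(w)y=0$ and $x\phi(w)(Py-y)=0$ for all $w$; to upgrade these to $xP=x$ and $Py=y$ you invoke the Berstel--Reutenauer minimization and then a change of basis sending $y$ to $(1,\dots,1)^{\mathrm{tr}}$. Both operations are performed over the field $\R$ and in general destroy nonnegativity of the matrices $\phi(a)$ and of the vectors, whereas the entire content of the proposition is that the representation can be normalized while staying in the nonnegative cone. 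So as written, your steps 1 and 2 do not produce a representation of the required form, and step 3 has nothing to act on.

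The paper closes this gap by different devices that never leave $\R_+$: (i) instead of vector-space quotients it deletes index sets (zero columns and rows of $P$; initial irreducible components on which $x$ vanishes; terminal components on which $y$ vanishes), each time verifying by a direct computation on products $x(i_0)M_{a_1}(i_0,i_1)\cdots M_{a_m}(i_{m-1},i_m)y(i_m)$ that the represented series is unchanged --- a verification that is needed because, as the paper warns, careless deletion can break the morphism property of $\phi$; (ii) it derives boundedness of $\{P^n\}$ from $xP^{n}y=1$ and then uses a Ces\`{a}ro limit $Q=\lim \frac1n(P+\cdots+P^n)$ to replace $x,y$ by the genuinely fixed nonnegative vectors $xQ,Qy$, which sidesteps the spanning issue entirely; and (iii) it rules out an initial component feeding into a distinct terminal component via the estimate $xy=xP^{m+n}y\ge (n+1)x_iP^m(i,j)y_j$, which is what forces $P$ to be a direct sum of irreducibles rather than merely block-triangular with a stochastic recurrent part. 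Only after $x$ and $y$ are known to be positive does the paper apply the one basis change that is safe for nonnegativity, a positive diagonal conjugation, to normalize $y$ to the all-ones vector and make $P$ stochastic. You would need to supply analogues of (i)--(iii), or some other argument carried out entirely within the nonnegative cone, before your step 3 can be run.
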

\begin{proof}
It is straightforward to check that any $(x, \phi, y)$ of the
specified form linearly represents a shift-invariant measure.
Conversely, given a linear representation $(x, \phi, y)$ as in
Definition \ref{def_linrep} of a shift-invariant probability measure
$\mu$, define $P=\sum_{a \in \ca} \phi(a)$ and note that, by
induction, for all $w \in \ca^*,
\mu(\cc_0(w))=x\phi(w){P}^ky=x{P}^k\phi(w)y$ for all natural numbers
$k$.

Next, one shows that it is possible to reduce to a linear
representation $(x,\phi,y)$ of $\mu$ such that each entry of $x$ and
$y$ is nonzero, and, with ${P}$ defined as $P=\sum_{a\epsilon
\ca}\phi(a)$, $xP=x$ and $Py=y$. This requires some care. If indices
corresponding to 0 entries in $x$ or $y$, or to 0 rows or columns in
$P$, are jettisoned nonchalantly, the resulting new $\phi$ may no
longer be a morphism.

\begin{defn}
A triple $(x',\phi',y')$ is obtained from $(x,\phi ,y)$ by {\em
deleting a set $I$ of indices} if the following holds: the indices
for $(x,\phi ,y)$ are the disjoint union of the set $I$ and the
indices for $(x',\phi',y')$; and for every symbol $a$ and all
indices $i,j$ not in $I$ we have $x'_i=x_i, y'_i=y_i$ and
$\phi'(a)(i,j)=\phi(a)(i,j)$. Then we let $\phi '$ also denote the
morphism
 determined by the map on generators
$a\mapsto \phi'(a)$.
\end{defn}

First, suppose that $j$ is an index such that column $j$ of $P$
(and therefore column $j$ of  every $\phi (a):=M_a$) is zero. By
shift
invariance of the measure, $(xP,\phi ,y)$ is still a representation,
so we may assume without loss of generality that $x_j=0$. Let
$(x',\phi ',y)$ be obtained from $(x,\phi ,y)$ by deleting the index
$j$.
We claim that $(x',\phi ',y)$ still gives a linear representation
of $\mu$. This is because for any word $a_1\dots a_m$,
the difference
$[x\phi (a_1)\cdots \phi (a_m)y]
-[x'\phi' (a_1)\cdots \phi' (a_m)y']$ is a sum of terms of the form
 \be
x(i_0)M_{a_1}(i_0,i_1)M_{a_2}(i_1,i_2)\cdots
M_{a_m}(i_{m-1},i_m)y(i_m)
 \en
 in which at least one index
$i_t$ equals $j$. If $i_0=j$, then $x(i_0)=0$;
if $i_t=j$ with $t>0$, then
$M_{a_t}(i_{t-1},i_t)=0$. In either case, the product is zero.

By the analogous argument involving $y$ rather than $x$, we may
pass to a new representation by deleting the index of any zero
row of $P$. We repeat until we arrive at a representation in which
no row or column of $P$ is zero.

An {\em irreducible component} of $P$ is a maximal principal
submatrix $C$ which is an irreducible matrix. $C$ is an {\em initial
component} if
 for every index $j$ of a column through $C$, $P(i,j)>0$ implies
that $(i,j)$ indexes an entry of $C$. $C$ is a {\em terminal
component} if
 for every index $i$ of a row through $C$, $P(i,j)>0$ implies
that $(i,j)$ indexes an entry of $C$.

Now suppose that $\mathcal I$ is the index set of an initial
irreducible component of $P$, and $x(i)=0$ for every $i$ in
$\mathcal I$. Define $(x',\phi ',y)$ by deleting the index set
$\mathcal I$. By an argument very similar to the argument for
deleting the index of a zero column, the triple $(x',\phi ',y')$
still gives a linear representation of $\mu$. Similarly, if
$\mathcal J$ is the index set of a terminal irreducible component of
$P$, and $y(j)=0$ for every $j$ in $\mathcal J$, we may pass to a
new representation by deleting the index set $\mathcal J$.

Iterating these moves, we arrive at a representation for which $P$
has no zero row and no zero column; every initial component has an
index $i$ with $x(i)>0$; and every terminal component has an index
$j$ with $y(j)>0$. We now claim that for this representation the set
of matrices $\{P^n\}$ is bounded. Suppose not. Then there is a pair
of indices $i,j$ for which the entries $P^n(i,j)$ are unbounded.
There is some initial component
index $i_0$, and some $k\geq 0$, such that
$x(i_0)>0$ and $P^k(i_0,i)>0$. Likewise there is a terminal
component index
$j_0$ and an $m\geq 0$ such that $y(j_0)>0$ and $P^m(j,j_0)>0$.
Appealing to shift invariance of $\mu$, for all $n>0$ we have
 \be
1=xP^{n+k+m}y \geq x(i_0)P^k(i_0,i)P^n(i,j)P^m(j,j_0)y(j_0),
 \en
which is a contradiction to the unboundedness of the entries
$P^n(i,j)$. This proves the family of matrices $P_n$ is bounded.

Next let $Q_n$ be the Ces\`{a}ro sum, $(1/n)(P + ... + P^n)$. Let
$Q$ be a limit of  a subsequence of the bounded sequence $\{Q_n\}$.
Then $PQ=Q=QP$; $xQ$ and $Qy$ are {}{fixed vectors} of $P$; and
$(xQ,\phi ,Qy)$ is a linear representation of $\mu$. It could be
that $xQ$ vanishes on all indices through some initial component, or
that $Qy$ vanishes on all indices through some terminal component.
In this case we simply cycle through our reductions until finally
arriving a linear representation $(x,\phi ,y)$ of $\mu$ such that
{}{$xP=x$;  $Py=y$; the set of matrices $\{P^n\}$ is bounded;
 $P$ has no zero row or column; $x$ does not vanish
on all indices of any initial component; and $y$ does not vanish on
all indices of any terminal component.
}

{}{ If $C$ is an initial component of $P$, then the restriction of
$x$ to the indices of $C$ is a nontrivial fixed vector of $C$. Thus
this restriction is positive, and the
 spectral radius of $C$ is at least 1. The spectral radius
of $C$ must then be exactly 1, because the set $\{P^n\}$ is bounded. }

{}{ We are almost done. Suppose $P$ is not the direct sum of
irreducible matrices. Then there must be an initial component with
index set $\mathcal I$ and a terminal component with index set
$\mathcal J \neq \mathcal I$, with some $i\in \mathcal I$, $j\in
\mathcal J$ and $m$ minimal in $\mathbb N$ such that $P^m(i,j)>0$.
Because $\mathcal I$ indexes an initial component, for any $k\in
\mathbb N$ we have that $(xP^k)_i$ is the sum of the terms
$x_{i_0}P(i_0,i_1)\cdots P(i_{k-1},i)$ such that $i_t \in \mathcal
I$, $0\leq t \leq k-1$. Because $\mathcal J$ indexes an terminal
component, for any $k\in \mathbb N$ we have that $(P^ky)_j$ is the
sum of the terms $P(j,i_1)\cdots P(i_{k-1},i_k)y(i_k)$ such that
$i_t \in \mathcal J$, $1 \leq t \leq k$. Because $\mathcal I \neq
\mathcal J$, by the minimality of $m$ we have for all $n \in \mathbb
N$ that } \be xy =xP^{m+n}y \geq
\sum_{k=0}^n(xP^k)_iP^m(i,j)(P^{n-k}y)_j =(n+1)x_iP^m(i,j)y_j \ ,
\en a contradiction.

Consequently, $P$ is now a direct sum of irreducible matrices,
each of which has spectral radius 1. The eigenvectors $x,y$
are now positive. Let $D$ be the diagonal
matrix with $D(i,i)=y(i)$. Define $(x', \phi',y) = (xD,D^{-1}\phi
D,D^{-1}y)$. Then $(x', \phi',y)$ is the linear representation
satisfying all the conditions of the theorem.

\end{proof}

\begin{ex}\label{nononinv}
The conclusion of the Proposition does not follow without the
hypothesis of stationarity: there need {\em not} be any linear
representation with positive vectors $x,y$, and there need not be
any linear representation in which the nonnegative vectors $x,y$ are
fixed vectors of $P$.
 For example, consider the nonstationary Markov measure $\mu$
on two states $a,b$ with initial vector $p=(1,0)$ and transition
matrix
 \be
 T=
\left(
 \begin{array}{ccc}
    1/2 & 1/2\\
    0 &  1
     \end{array}  \right)
= \left(
 \begin{array}{ccc}
    1/2 & 1/2\\
    0 &  0
     \end{array}  \right)
+ \left(
 \begin{array}{ccc}
    0 & 0 \\
    0 &  1
     \end{array}  \right)
=N_a + N_b
 \ .    \en
If $q$ is the column vector $(1,1)^\text{tr}$, then $p,N_a,N_b,q$
generate a linear representation of $\mu$, e.g. $1=\mu(\cc_0(a)) =
pN_aq$, and
  $(1/2)^k=\mu(\cc_0(a^kb^m)) = p(N_a)^k(N_b)^mq$ when $k,m>0$ .

Now suppose that there is a linear representation of $\mu$
generated by positive vectors $x,y$ and nonnegative
matrices $M_a,M_b$. Then
 \be
 \begin{aligned}
 1&=\mu(\cc_0(a))=xM_ay ,\\
 0&=\mu(\cc_0(b))=xM_by .
 \end{aligned}
 \en
 From the second of these equations, $M_b=0$, since $x>0$ and $y>0$.
 But this contradicts
$
 0 < \mu(\cc_0(ab)) = xM_aM_by
$.

Next suppose there is a linear representation for which  $x,y$ could
be chosen eigenvectors of $P=M_a + M_b$ (necessarily with eigenvalue
1, since $xP^ny=1$ for all $n>0$). Then \be \frac 12 =
\mu(\cc_0(ab))= xM_aM_by \leq xPM_by = xM_by = \mu(\cc_0(b))=0 ,\en
which is a contradiction.

\end{ex}

\subsection{Equivalent characterizations of hidden Markov measures}

\subsubsection{Sofic measures---formal series approach}

The semiring $\F$ of formal series on the alphabet $\ca$ is an \em
${\R}_{+}$-module\em~ in a natural way. On this module we have a
{\em {(linear)} action of $\ca^{*}$} defined as follows:

For $F \in \mathcal{F}(\ca)$ and $w\in \ca^{*}$, define
$(w,F)\rightarrow w^{-1}F$ by
\begin{displaymath}
(w^{-1}F)(v)=F(wv) \textrm{ for all } v\in \ca^{*}.
\end{displaymath}
Thus
$$w^{-1}F=\sum_{v \in \ca^{*}}F(wv)v.$$
If $F=u\in \ca^{*}$, then
$$(w^{-1}F)(v)=u(wv)=
\begin{cases}1 & \text{if $wv=u$}
\\0 & \text{if $wv\neq u.$}\end{cases}$$
Thus $w^{-1}u\neq 0$ if and only if $u=wv$ for some $v\in \ca^{*}$,
and then $w^{-1}u=v$ (in the sense that they are the same function
on $\ca^{*}$): $w^{-1}v$ erases $w$ from $v$ if $v$ has $w$ as a
prefix, otherwise $w^{-1}v$ gives 0. Note also that this is a \em
monoid action\em~ :
\be \label{monoidtag}
(vw)^{-1}F=w^{-1}(v^{-1}F) \ .
\en

\begin{defn}
A submodule $M$ of $\mathcal{F}(\ca)$ is called \em stable\em~ if
$w^{-1}F\in M$ for all $F\in M$, i.e. $w^{-1}M \subset M$, for all
$w\in \ca^{*}$.
\end{defn}

\begin{thm}\label{th_rationalequiv}
Let $\ca$ be a finite alphabet. For a formal series $F \in \mathcal
F_{{\R}_{+}} (\ca)$ that corresponds to a shift-invariant
probability measure $\nu$ in ${\Omega}_{+} (\ca)$, the following are
equivalent:
\begin{enumerate}
\item $F$ is linearly representable.
\item $F$ is a member of a stable finitely generated submodule of $\mathcal F_{{\R}_{+}}
(\ca)$.
\item $F$ is rational.
\item The measure $\nu$ is the image under a 1-block map of a shift-invariant
1-step Markov probability measure $\mu$.
\end{enumerate}
In the {latter} case, the measure $\nu$ is ergodic if and only if it
is possible to choose $\mu$ ergodic.
\end{thm}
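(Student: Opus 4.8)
The plan is to prove $(4)\Rightarrow(1)$, then $(1)\Leftrightarrow(2)$ and $(1)\Leftrightarrow(3)$, then $(1)\Rightarrow(4)$, so that the four conditions become equivalent, and finally to read the ergodicity refinement off the construction used for $(1)\Rightarrow(4)$. The implication $(4)\Rightarrow(1)$ is exactly Example~\ref{ex_linrepofsofic}: from $\nu=\pi\mu$ with $\mu$ the $1$-step Markov measure given by $(p,P)$ and $\pi$ a $1$-block code, one takes $x=p$, $y=(1,\dots,1)^{\text{tr}}$, and $\phi(a)=P_a$, and checks that $F(w)=x\phi(w)y$. For $(1)\Leftrightarrow(2)$ I would argue directly. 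Given a representation $(x,\phi,y)$ of dimension $n$, set $F_i(w)=(\phi(w)y)_i$; then $a^{-1}F_i=\sum_j\phi(a)_{ij}F_j$ has nonnegative coefficients, so (by \eqref{monoidtag} it is enough to check single letters) the $\R_{+}$-submodule generated by $F_1,\dots,F_n$ is stable, finitely generated, and contains $F=\sum_i x_iF_i$. Conversely, if $F$ lies in a stable submodule generated by $G_1,\dots,G_n$, pick nonnegative coefficients with $F=\sum_i x_iG_i$ and $a^{-1}G_i=\sum_j\phi(a)_{ij}G_j$, extend $\phi$ to a monoid morphism on $\ca^{*}$, verify by induction on $|w|$ using \eqref{monoidtag} that $w^{-1}G_i=\sum_j\phi(w)_{ij}G_j$, and put $y_j=G_j(\epsilon)$; then $F(w)=(w^{-1}F)(\epsilon)=x\phi(w)y$. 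Finally $(1)\Leftrightarrow(3)$ is the Kleene--Sch\"utzenberger theorem for $\R_{+}$-rational series (linearly representable $=$ recognizable $=$ rational): the rational operations preserve linear representability, and a linearly representable series is rational; here I follow \cite{BerstelReutenauer1988,HanselPerrin1989}.

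The substantive step is $(1)\Rightarrow(4)$. By Proposition~\ref{prop_redrep} I may choose a linear representation $(x,\phi,y)$ of $F$ in which $P:=\sum_{a\in\ca}\phi(a)$ is stochastic, $y=(1,\dots,1)^{\text{tr}}$, $xP=x$, $x$ is strictly positive (hence a probability vector, since $xy=F(\epsilon)=1$), and $P$ is a direct sum of irreducible stochastic matrices. Form the SFT $X$ on the alphabet $\cb=\{(i,a,j):\phi(a)_{ij}>0\}$ in which $(i,a,j)$ may be followed by $(i',a',j')$ precisely when $j=i'$, and let $\pi\colon X\to Y$ be the $1$-block code $(i,a,j)\mapsto a$. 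Define the $1$-step Markov measure $\mu$ on $X$ with initial distribution $\tilde p(i,a,j)=x_i\,\phi(a)_{ij}$ and transition probabilities $\tilde P\bigl((i,a,j),(j,a',j')\bigr)=\phi(a')_{jj'}$, all others zero. Then $Py=y$ shows that $\tilde P$ is stochastic and $\tilde p$ a probability vector, and $xP=x$ shows that $\tilde p\tilde P=\tilde p$; moreover, for every word $a_0\cdots a_{k-1}$ of $Y$,
\be
\begin{aligned}
(\pi\mu)\bigl(\cc_0(a_0\cdots a_{k-1})\bigr)
&=\sum_{i_0,\dots,i_k}x_{i_0}\,\phi(a_0)_{i_0i_1}\cdots\phi(a_{k-1})_{i_{k-1}i_k}\\
&=x\,\phi(a_0\cdots a_{k-1})\,y=\nu\bigl(\cc_0(a_0\cdots a_{k-1})\bigr),
\end{aligned}
\en
so $\pi\mu=\nu$. (Positivity of $x$ also forces the language of $\pi(X)$ to equal $\{w:F(w)>0\}$, so $Y=\supp(\nu)$ and $\pi$ is onto.)

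For the ergodicity refinement, note first that if some ergodic $\mu$ satisfies $\pi\mu=\nu$ then $\nu$ is ergodic, since the $\pi$-preimage of a $\sigma$-invariant Borel set of $\nu$-measure strictly between $0$ and $1$ would be a $\sigma$-invariant set of $\mu$-measure strictly between $0$ and $1$. Conversely, suppose $\nu$ is ergodic. In the construction above, because $P$ is a direct sum of irreducible stochastic blocks and $x>0$, the alphabet splits as $\cb=\bigsqcup_\ell\cb_\ell$ along those blocks with no $\tilde P$-transition between distinct pieces, and each $\cb_\ell$ spans an irreducible sub-SFT $X_\ell\subset X$; correspondingly $\mu=\sum_\ell c_\ell\mu_\ell$ with all $c_\ell>0$, where $\mu_\ell$ is an ergodic $1$-step Markov measure supported on $X_\ell$. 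Thus $\nu=\pi\mu=\sum_\ell c_\ell\,\pi\mu_\ell$ exhibits $\nu$ as a strict convex combination of the ergodic measures $\pi\mu_\ell$; since $\nu$ is an extreme point of $\cm(Y)$, we must have $\pi\mu_\ell=\nu$ for every $\ell$, and any single $\mu_\ell$ is then an ergodic $1$-step Markov measure mapping to $\nu$.

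Granting Proposition~\ref{prop_redrep} (whose proof, through its careful treatment of zero rows and columns and of initial and terminal irreducible components, is the real technical heart), the rest is largely bookkeeping; I expect the points needing the most care to be, within $(1)\Rightarrow(4)$, the verification that $\tilde p$ is $\tilde P$-stationary and that $\pi$ maps $X$ onto the sofic subshift actually carrying $\nu$, together with the extreme-point argument in the ergodic case. By contrast $(1)\Leftrightarrow(2)$ is routine $\R_{+}$-module algebra and $(1)\Leftrightarrow(3)$ is classical.
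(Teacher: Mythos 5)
Your proposal is correct, and its skeleton matches the paper's: $(4)\Rightarrow(1)$ is Example \ref{ex_linrepofsofic}, $(1)\Leftrightarrow(2)$ is the module argument of Section \ref{subsec_lr=fg}, and $(1)\Leftrightarrow(3)$ is Kleene--Sch\"utzenberger (which you cite, while the paper actually sketches the proof in Section \ref{subsec_lr=rat}, including Lemma \ref{lem_rationallemma}; citing is legitimate, just less self-contained). The genuine divergence is in $(1)\Rightarrow(4)$ and the ergodicity refinement. The paper (Theorem \ref{rationaltheorem}) first assumes $P=\sum_a\phi(a)$ irreducible, stochasticizes, builds the $nk\times nk$ block matrices $M_i$ whose $i$th block column is $RP_i$, must then pass to the largest principal submatrix $M'$ with no zero row or column, uses the strong shift equivalence $M=RC$, $P=CR$ to see $M'$ is irreducible, and takes its stochastic left fixed vector to get an ergodic Markov lift; reducible $P$ is handled afterwards by splitting $\nu$ into a convex combination over the irreducible summands. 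You instead start from the normalized representation of Proposition \ref{prop_redrep} and put the hidden chain directly on the ``edge'' alphabet $\{(i,a,j):\phi(a)_{ij}>0\}$, with the explicit stationary vector $x_i\phi(a)_{ij}$ and transition probabilities $\phi(a')_{jj'}$; stochasticity, stationarity and $\pi\mu=\nu$ then follow in one line each from $Py=y$, $xP=x$, $xy=1$, reducible $P$ is handled in the same pass, and the zero-column issue and the strong-shift-equivalence step disappear because only positive entries are admitted as symbols. What the paper's route buys is that ergodicity of the lift comes for free from irreducibility of $M'$ in the irreducible case; your route recovers the refinement by decomposing $\mu$ along the irreducible blocks of $P$ and invoking extremality of the ergodic $\nu$, which is a cleaner and more explicit treatment of the ``$\nu$ ergodic $\Rightarrow$ $\mu$ may be chosen ergodic'' direction than the paper spells out (the only small point you leave tacit is that each block sub-SFT $X_\ell$ is irreducible, hence each $\mu_\ell$ ergodic, which follows from irreducibility of the corresponding block of $P$; for the extreme-point argument itself even this is not needed).
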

In the next few sections we sketch the proof of this theorem

\subsubsection{Proof that a series is linearly representable if and only if
it is a member of a stable finitely generated submodule of
$\F$}\label{subsec_lr=fg}

Suppose that $F$ is linearly representable by $(x, \phi, y)$. For
each $i=1,2, \cdots ,n$ (where $n$ is the dimension of the
representation) and each $ w\in \ca^{*}$, define
$$F_{i}(w)=[\phi(w)y]_{i}.$$
 Let $M= \langle F_{1},\cdots, F_{n} \rangle$ be the span of the
$F_{i}$ with coefficients in $\R_{+}$, which is a submodule of
$\mathcal{F}(\ca)$.  Since
$$F(w)=x\phi(w)y=\sum_{i=1}^{n}
x_{i}[\phi(w)y]_{i}= \sum_{i=1}^{n}x_{i}F_{i}(w),$$ we have that $F=
\sum_{i=1}^{n} x_{i}F_{i}$, which means $F\in M$.

We next show that $M$ is stable. Let $w\in \ca^{*}$.  Then for $u\in
\ca^{*}$,
\begin{eqnarray*}\begin{gathered}(w^{-1}F_{i})(u)=F_{i}(wu)=[\phi(wu)y]_{i}
=[\phi(w)\phi(u)y]_{i}
\\=\sum_{j=1}^{n} \phi(w)_{ij}[\phi(u)y]_{j}=\sum_{j=1}^{n}
\phi(w)_{ij}F_{j}(u).\end{gathered}\end{eqnarray*}

Since $\phi(w)_{ij} \in \R_{+}$, we have $\sum_{j=1}^{n}
\phi(w)_{ij}F_{j}(u) \in M,$ so
$$w^{-1}F_{i}=\sum_{j=1}^{n} x_{i}\phi(w)_{ij}F_{j}\in
\langle F_{1},...F_{n} \rangle =M.$$

Conversely, let $M$ be a stable finitely generated left submodule,
and assume that $F\in \langle F_{1},\cdots,F_{n}\rangle =M.$ Then
there are $x_1, \cdots, x_n \in {\R}_{+}$ such that
$F=\sum_{i=1}^{n} x_{i}F_{i}$.  Since $M$ is stable, for each $a\in
\ca$ and each $i=1,2,\cdots,n$, we have that $a^{-1}F_{i}\in \langle
F_{1},...F_{n} \rangle$. So there exist $c_{ij}\in \R_{+},
j=1,2,\cdots, n,$ such that $a^{-1}F_{i}=\sum_{j=1}^{n}
c_{ij}F_{j}$.
 Define $\phi(a)_{ij}=c_{ij}$ for $i,j=1,2,\cdots ,n.$
Note by linearity that for any nonnegative row
vector $(t_1, \dots ,t_n)$
we have
\be \label{lineartag}
a^{-1}(\sum_{i=1}^n t_iF_i) =
\sum_{j=1}^n\Big( (t_1, \dots , t_n)\phi(a)\Big)_j F_j \ .
\en
Extend $\phi$ to a monoid morphism $\phi: \ca^{*}\rightarrow
\R_{+}^{n \times n}$ by defining $\phi(a_1 \cdots a_n) = \phi(a_1)
\cdots \phi(a_n)$. Because the action of $\ca^{*}$ on
$\mathcal{F}(\ca)$ satisfies the monoidal condition
(\ref{monoidtag}), we have
from (\ref{lineartag}) that
for any  $w=a_1a_2\cdots a_n \in \ca^{*}$,
\begin{align*}
w^{-1}(\sum_{i=1}^n t_iF_i) &=
(a_1\cdots a_n)^{-1}(\sum_{i=1}^n t_iF_i)
=(a_n^{-1} \cdots  (a_1^{-1}\sum_{i=1}^n t_iF_i)\cdots ) \\
&= \sum_j \Big((t_1,\dots ,t_n) \phi (a_1)\cdots \phi
(a_n)\Big)_jF_j
= \sum_j \Big((t_1,\dots ,t_n) \phi (w)\Big)_jF_j \ .
\end{align*}
 Define the column vector $y$ by
 $y_{j}=F_{j}(1)$ for $j=1,2,\cdots,n$
 and let $x$ be the row vector $(x_1, \dots ,x_n)$.
Then
\be
F(w)= w^{-1}F(1) = \Bigg(\sum_j\Big(x\phi (w)\Big)_jF_j\Bigg)(1)
= \sum_j\Big(x\phi (w)\Big)_jF_j(1) =x\phi (w) y \ ,
\en
 showing that  $(x,\phi, y)$ is a linear representation
for $F$.

\subsubsection{Proof that a formal series is linearly representable if and only if it is
rational}\label{subsec_lr=rat}

This equivalence is from \cite{Kleene1956, Schutzenberger1961}.
Recall that a series is rational if and only if it is in the closure
of the polynomials under the rational operations $+$ $($union$)$,
$\cdot$ $($concatenation$)$, $*$, and multiplication by elements of
$\R_{+}$.

First we prove by a series of steps that every rational series $F$
is linearly representable.
\begin{prop}
Every polynomial is linearly representable.
\end{prop}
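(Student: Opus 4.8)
The plan is to exhibit an explicit linear representation of an arbitrary polynomial, built from a trivial ``path automaton'' for each monomial and then assembled by direct sum.

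First I would handle a single word $w=a_1a_2\cdots a_k\in\ca^*$, i.e. the one-term series $w$. Take the representation of dimension $n=k+1$ with coordinates indexed by $0,1,\dots,k$: let $x$ be the row vector with a $1$ in coordinate $0$ and zeros elsewhere, let $y$ be the column vector with a $1$ in coordinate $k$ and zeros elsewhere, and for each letter $a\in\ca$ let $\phi(a)$ be the $0/1$ matrix whose only possible nonzero entries lie in positions $(i-1,i)$ with $a_i=a$; extend $\phi$ to $\ca^*$ multiplicatively, so that $\phi$ of the empty word is the $(k+1)\times(k+1)$ identity. Since every nonzero entry of every $\phi(a)$ raises the index by exactly $1$, a product $\phi(b_1)\phi(b_2)\cdots\phi(b_m)$ can have a nonzero $(0,k)$-entry only when $m=k$, and then only along the unique index path $0\to1\to\cdots\to k$, which forces $b_t=a_t$ for every $t$. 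Hence $x\,\phi(b_1\cdots b_m)\,y$ equals $1$ if $b_1\cdots b_m=w$ and $0$ otherwise, so $(x,\phi,y)$ linearly represents the series $w$. (For $w=\epsilon$ take $n=1$, $x=y=(1)$, and $\phi(a)=(0)$ for all $a$; then $\phi$ of the empty word is $(1)$ and $\phi$ of any nonempty word is $(0)$, which represents $\epsilon$.)

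Next, given a polynomial $p$, write $\supp(p)=\{w_1,\dots,w_r\}$ and $c_i=p(w_i)\in\R_+$, so that $p=\sum_{i=1}^r c_i w_i$ as a formal series. Let $(x^{(i)},\phi^{(i)},y^{(i)})$ be the representation of $w_i$ just constructed, and form the block-diagonal morphism $\phi=\phi^{(1)}\oplus\cdots\oplus\phi^{(r)}$ into nonnegative matrices, together with the row vector $x=(c_1x^{(1)},\dots,c_rx^{(r)})$ and the column vector $y=(y^{(1)},\dots,y^{(r)})^{\text{tr}}$ obtained by concatenation. Since $\phi(v)=\phi^{(1)}(v)\oplus\cdots\oplus\phi^{(r)}(v)$ for all $v$, we get $x\,\phi(v)\,y=\sum_{i=1}^r c_i\,x^{(i)}\phi^{(i)}(v)y^{(i)}=p(v)$ for every $v\in\ca^*$, so $(x,\phi,y)$ linearly represents $p$. (The zero polynomial is the empty direct sum, or simply take the $1$-dimensional representation with $x=y=(0)$.)

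There is no serious obstacle here; the only points requiring care are the bookkeeping with the empty word and the convention that a monoid morphism sends the empty word to the identity matrix, which is precisely what makes the path automaton assign the correct value $0$ to words of length $\neq k$. I would also record an alternative one-line argument: by the equivalence established in the previous subsection (a series is linearly representable if and only if it lies in a stable, finitely generated submodule of $\F$), it suffices to observe that $M=\langle\, w^{-1}p : w\in\ca^*,\ |w|\le\deg(p)\,\rangle$ is such a submodule containing $p=\epsilon^{-1}p$, because $w^{-1}p=0$ whenever $|w|>\deg(p)$ (giving finite generation), while the monoid-action identity $(wa)^{-1}p=a^{-1}(w^{-1}p)$ gives stability.
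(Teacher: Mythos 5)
Your proof is correct, and your main argument takes a genuinely different route from the paper's. The paper proves the proposition abstractly: it observes that $S=\{w^{-1}F : w\in\ca^*\}$ is a finite set (since $w^{-1}F\equiv 0$ once $|w|>\deg F$) and is stable under the $\ca^*$-action, so $F$ lies in the finitely generated stable submodule spanned by $S$, and then invokes the already-proved equivalence between linear representability and membership in such a submodule. That is precisely the ``alternative one-line argument'' you record at the end, so you have in effect reproduced the paper's proof as a remark. Your primary argument instead builds an explicit representation from the definition: a path automaton of dimension $|w|+1$ for each monomial, assembled by direct sum with the coefficients absorbed into the initial vector. Your construction is sound --- the key point that every nonzero entry of $\phi(a)$ raises the index by exactly one forces the $(0,k)$-entry of $\phi(b_1\cdots b_m)$ to vanish unless $m=k$ and $b_t=a_t$ for all $t$, and your handling of $\epsilon$ and of the zero polynomial is careful. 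What your approach buys is independence from the module-theoretic equivalence of the previous subsection and a completely explicit representation of controlled dimension $\sum_i(|w_i|+1)$; what the paper's approach buys is brevity and uniformity with the other closure arguments (sum, product, star) in that section, all of which are run through the stable-submodule criterion rather than through explicit matrices.
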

\begin{proof}
If $w \in \ca$ and $|w|$ is greater than the degree of the
polynomial $F$, then $w^{-1}\equiv 0$.  Let $S=\{w^{-1}F: w\in
\ca^{*}\}.$ Then $S$ is finite and stable, hence $S$ spans a
finitely generated stable submodule $M$ to which $F$ belongs.  (Take
${\epsilon}^{-1}F=F$). By Section \ref{subsec_lr=fg}, $F$ is
linearly representable.
\end{proof}
The next observation follows immediately from the definition of
stability. {The proof of the Lemma is included for practice.}

\begin{prop}
If $F_{1}$ and $F_{2}$ are in stable finitely generated submodules
of $\mathcal{F}(\ca)$ and $t \in \R_{+}$, then $(F_{1}+F_{2})$ and
$(tF_{1})$ are in stable finitely generated submodules
of $\mathcal{F}(\ca)$.
\end{prop}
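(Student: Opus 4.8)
The plan is to build, by hand, stable finitely generated submodules that contain $F_1+F_2$ and $tF_1$, using only the submodules we are handed. Write $F_1\in M_1$ and $F_2\in M_2$, where each $M_i$ is a stable submodule of $\F$ with a finite generating set, say $M_1=\langle G_1,\dots,G_m\rangle$ and $M_2=\langle H_1,\dots,H_n\rangle$.

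The case of $tF_1$ requires nothing new: $M_1$ is a submodule for the ${\R}_{+}$-action on $\F$, hence closed under multiplication by the scalar $t$, so $tF_1\in M_1$, and $M_1$ is already stable and finitely generated by hypothesis.

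For $F_1+F_2$ I would take $M=M_1+M_2$, i.e.\ the submodule generated by the finite set $\{G_1,\dots,G_m,H_1,\dots,H_n\}$. It is finitely generated and contains $F_1+F_2$, so the only thing to verify is stability. Here I would use that the action of each $w\in\ca^{*}$ on $\F$ is ${\R}_{+}$-linear, together with the monoid-action identity $(vw)^{-1}F=w^{-1}(v^{-1}F)$ of (\ref{monoidtag}): by linearity it suffices to check that $a^{-1}$ carries each generator of $M$ into $M$ for every single letter $a\in\ca$, and by the monoidal identity this letter-by-letter check then suffices for all words $w$. But $a^{-1}G_i\in M_1\subset M$ and $a^{-1}H_j\in M_2\subset M$ because $M_1$ and $M_2$ are stable. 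Hence $M$ is stable.

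There is essentially no obstacle here: the only point needing a word of care is that one should not try to verify $w^{-1}M\subset M$ directly for all words $w$, but the linearity of the action and the monoidal relation (\ref{monoidtag}) reduce this to the generators and to single letters, exactly as in Section \ref{subsec_lr=fg}. No computation is involved.
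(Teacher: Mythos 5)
Your proof is correct and is exactly the argument the paper has in mind (the paper omits it, remarking only that the statement ``follows immediately from the definition of stability''): $tF_1$ stays in $M_1$ by the $\R_{+}$-module structure, and $M_1+M_2$ is a finitely generated stable submodule containing $F_1+F_2$ because each $w^{-1}$ acts $\R_{+}$-linearly. The only superfluous step is the reduction to single letters via (\ref{monoidtag}); since stability of $M_1$ and $M_2$ is already assumed for \emph{all} words $w\in\ca^{*}$, linearity of $w^{-1}$ alone gives $w^{-1}(f_1+f_2)=w^{-1}f_1+w^{-1}f_2\in M_1+M_2$ directly.
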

\begin{lem}
 For $F,G \in \F$ and $a \in \ca$, $a^{-1}(FG) =
(a^{-1}F)G+F(\epsilon)a^{-1}G$.
    \end{lem}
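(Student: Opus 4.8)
The plan is to verify the identity $a^{-1}(FG) = (a^{-1}F)G + F(\epsilon)\,a^{-1}G$ by evaluating both sides on an arbitrary word $v \in \ca^*$ and unwinding the definitions of the action of $\ca^*$ on $\F$ and of the product in $\F$. First I would write, using the definition of the action, $(a^{-1}(FG))(v) = (FG)(av)$, which by the definition of the product equals $\sum_{u_1u_2 = av} F(u_1)G(u_2)$, a finite sum over all factorizations of $av$. The key observation is that the factorizations of $av$ split into two disjoint cases according to whether $u_1$ is empty or not: either $u_1 = \epsilon$ and $u_2 = av$, contributing $F(\epsilon)G(av)$; or $u_1 = a u_1'$ for some $u_1' \in \ca^*$, in which case $u_1' u_2 = v$ and the contribution is $F(au_1')G(u_2)$ summed over all factorizations $u_1' u_2 = v$.

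Next I would recognize each of these two pieces as the value at $v$ of one of the terms on the right-hand side. The first piece, $F(\epsilon)G(av)$, is exactly $F(\epsilon)\,(a^{-1}G)(v)$. For the second piece, since $F(au_1') = (a^{-1}F)(u_1')$ by definition of the action, the sum $\sum_{u_1'u_2 = v} (a^{-1}F)(u_1')G(u_2)$ is precisely $((a^{-1}F)G)(v)$ by the definition of the product in $\F$. Adding the two pieces gives $(a^{-1}(FG))(v) = ((a^{-1}F)G)(v) + F(\epsilon)\,(a^{-1}G)(v)$ for every $v$, which is the claimed identity of formal series.

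There is essentially no obstacle here: the only thing requiring a moment's care is the bookkeeping of the factorizations of $av$, namely checking that the correspondence between $\{(u_1,u_2): u_1u_2 = av,\ u_1 \neq \epsilon\}$ and $\{(u_1',u_2): u_1'u_2 = v\}$ via $u_1 = au_1'$ is a bijection — this uses the free-monoid structure of $\ca^*$ (cancellation of the leading letter $a$) and the fact that all the sums involved are finite, so rearrangement is legitimate. Everything else is a direct substitution of definitions, and the whole argument is a short computation.
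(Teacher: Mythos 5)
Your proof is correct and follows exactly the same computation as the paper: evaluate $(a^{-1}(FG))(v)=(FG)(av)=\sum_{u_1u_2=av}F(u_1)G(u_2)$, split the factorizations of $av$ according to whether $u_1=\epsilon$ or $u_1=au_1'$, and identify the two resulting sums with $F(\epsilon)(a^{-1}G)(v)$ and $((a^{-1}F)G)(v)$. Nothing further is needed.
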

\begin{proof} For any $w \in \ca^{*}$,
\begin{equation}
\begin{split}
(a^{-1}(FG))(w) &= (FG)(aw)=\sum_{uv=aw}F(u)G(v)\\
    &=F(\epsilon)G(aw)+ \sum_{u'v'=w}F(au')G(v')\\
    &=F(\epsilon)G(aw)+\sum_{u'v'=w}(a^{-1}F)(u')G(v')\\
    &=F(\epsilon)(a^{-1}G)(w)+((a^{-1}F)(G))(w).
\end{split}
\end{equation}
\end{proof}
\begin{prop}
Suppose that for $i=1,2$, $F_{i}\in M_{i}$, where each $M_{i}$ is a
stable, finitely generated submodule. Let $M= M_{1}F_{2}+M_{2}.$
Then $M$ is finitely generated and stable
{and contains $F_1F_2$.}
\end{prop}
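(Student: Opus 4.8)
The plan is to verify the three assertions in turn, with stability being the only one that uses the preceding Lemma. First I would observe that $M_1F_2 := \{GF_2 : G\in M_1\}$ is itself a submodule of $\mathcal F(\ca)$: since multiplication in $\mathcal F(\ca)$ distributes over sums and commutes with the $\R_+$-action, $(G+G')F_2 = GF_2 + G'F_2$ and $(tG)F_2 = t(GF_2)$ for $t\in\R_+$. Hence if $g_1,\dots,g_k$ generate $M_1$, then $g_1F_2,\dots,g_kF_2$ generate $M_1F_2$, so $M_1F_2$ is finitely generated. Consequently $M = M_1F_2 + M_2$, being a sum of two finitely generated submodules, is generated by the union of the two finite generating sets, hence is finitely generated. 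Finally, $F_1\in M_1$ gives $F_1F_2\in M_1F_2\subset M$.

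For stability it suffices, by the monoid identity (\ref{monoidtag}), to show $a^{-1}M\subset M$ for every $a\in\ca$; and since $F\mapsto a^{-1}F$ is additive and commutes with the $\R_+$-action, it is enough to check this on the generators of $M$. On a generator of $M_2$ there is nothing to prove, since $M_2$ is stable. On a generator $g_iF_2$ of $M_1F_2$, I would invoke the Lemma:
\begin{equation*}
a^{-1}(g_iF_2) = (a^{-1}g_i)F_2 + g_i(\epsilon)\,a^{-1}F_2 .
\end{equation*}
The first term lies in $M_1F_2\subset M$ because $M_1$ is stable, so $a^{-1}g_i\in M_1$. For the second term, note $F_2\in M_2$ and $M_2$ is stable, so $a^{-1}F_2\in M_2$; since $g_i(\epsilon)\in\R_+$, we get $g_i(\epsilon)\,a^{-1}F_2\in M_2\subset M$. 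Thus $a^{-1}(g_iF_2)\in M$, so $a^{-1}M\subset M$, and iterating via (\ref{monoidtag}) gives $w^{-1}M\subset M$ for all $w\in\ca^*$.

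The only place any real content enters is the stability check for the factor $M_1F_2$, where the product is ``shifted'' by $a^{-1}$; this is exactly the situation the preceding Lemma was set up to handle, so I do not expect a genuine obstacle. The remaining points --- that $M_1F_2$ is a submodule and that a sum of finitely generated submodules is finitely generated --- are routine bookkeeping in the semiring $\mathcal F(\ca)$.
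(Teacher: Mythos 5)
Your proof is correct and follows essentially the same route as the paper: the finite generation and the membership $F_1F_2\in M$ are the routine parts, and stability comes down to the preceding Lemma applied to a product with $F_2$, using stability of $M_1$ for the term $(a^{-1}f_1)F_2$ and stability of $M_2$ (via $F_2\in M_2$) for the term $f_1(\epsilon)a^{-1}F_2$. The only cosmetic difference is that you check stability on generators and extend by linearity, while the paper computes $a^{-1}(f_1F_2+f_2)$ directly for a general element of $M$.
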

\begin{proof} {
The facts that $F_1F_2 \in M$ and $M$ is finitely generated are}
immediate. The proof that $M$ is stable is a consequence of the
Lemma. For if $f_1F_2+f_2$ is an element of $M$ and $a \in \ca$,
then \be
a^{-1}(f_{1}F_{2}+f_2)=(a^{-1}f_{1})F_{2}+f_{1}(\epsilon)(a^{-1}F_{2})+a^{-1}f_2.
\en Note that $a^{-1}f_{1}\in M_{1}$ and $a^{-1}f_{2}$, $a^{-1}F_{2}
\in M_{2}$.  Thus $f_{1}(\epsilon)(a^{-1}F_{2})+f_2 \in M_{2}$, so
we conclude that
$M$ is stable.\\
\end{proof}
\begin{lem}
If $F$ is proper (that is $F_{1}(\epsilon) = 0$) and $a \in \ca$,
then $a^{-1}(F^{*}) = (a^{-1}F)F^{*}$.
\end{lem}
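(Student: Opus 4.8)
The plan is to deduce this from the preceding Lemma, using only facts already in hand: that $F$ proper makes $\{F^{n}\}$ locally finite, hence summable, so that $F^{*}=\sum_{n\ge 0}F^{n}$ is a bona fide element of $\F$; and that the assignment $G\mapsto a^{-1}G$ is additive and continuous for the metric $d$. The continuity is immediate, since $(a^{-1}G)(v)=G(av)$ shows that $a^{-1}G$ depends coordinatewise on $G$, and coordinatewise eventual agreement is precisely $d$-convergence. Hence $a^{-1}$ can be pushed through the (locally finite) sum defining $F^{*}$, giving $a^{-1}(F^{*})=\sum_{n\ge 0}a^{-1}(F^{n})$.

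Next I would compute the individual terms. The $n=0$ term $a^{-1}\epsilon$ vanishes, because $(a^{-1}\epsilon)(v)=\epsilon(av)=0$ for every $v\in\ca^{*}$, the word $av$ having length at least $1$. For $n\ge 1$, write $F^{n}=F\cdot F^{n-1}$ and apply the Lemma with $G=F^{n-1}$:
\[
a^{-1}(F^{n})=(a^{-1}F)F^{n-1}+F(\epsilon)\,a^{-1}(F^{n-1})=(a^{-1}F)F^{n-1},
\]
the second term dropping out because $F(\epsilon)=0$. Thus a single invocation of the Lemma, rather than an induction, handles every power simultaneously.

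Reassembling, $a^{-1}(F^{*})=\sum_{n\ge 1}(a^{-1}F)F^{n-1}=(a^{-1}F)\sum_{m\ge 0}F^{m}=(a^{-1}F)F^{*}$, where the last step is distributivity of multiplication over the summable family $\{F^{m}\}$, which is part of the statement that $\F$ is a topological semiring (multiplication being continuous in each variable). There is no genuine obstacle here; the only points meriting a moment's attention are the two interchanges of an operation with an infinite sum — pushing $a^{-1}$ inside, and factoring $a^{-1}F$ out — both of which follow at once from the continuity and summability facts recorded earlier. One could equally argue via the fixed-point identity $F^{*}=\epsilon+FF^{*}$, applying $a^{-1}$ and the Lemma once, but the term-by-term route seems cleanest.
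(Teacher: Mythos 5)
Your argument is correct. The only point needing care is the pair of interchanges you flag yourself — pushing $a^{-1}$ through the locally finite sum $\sum_n F^n$, and factoring $a^{-1}F$ back out of $\sum_{n\ge 1}(a^{-1}F)F^{n-1}$ — and both are indeed covered by the summability of locally finite families and the continuity of $+$ and $\cdot$ in the metric $d$, which the paper establishes beforehand. (For the second interchange one should also note that $\{(a^{-1}F)F^{n-1}\}_{n\ge 1}$ is itself locally finite, which follows since $F$ proper gives $F^{n-1}(v)=0$ once $n-1>|v|$; this is immediate but worth a clause.)

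The paper takes the route you mention only in passing: it writes $F^{*}=\epsilon+FF^{*}$, applies $a^{-1}$ and the product Lemma exactly once, and reads off $a^{-1}(F^{*})=a^{-1}\epsilon+(a^{-1}F)F^{*}+F(\epsilon)\,a^{-1}(F^{*})=(a^{-1}F)F^{*}$, since $a^{-1}\epsilon=0$ and $F(\epsilon)=0$. That version is shorter because the fixed-point identity absorbs all the limit bookkeeping into a single previously justified equation, so no interchange of $a^{-1}$ or of multiplication with an infinite sum ever has to be argued. Your term-by-term version buys a slightly more explicit picture of where each power of $F$ goes, at the cost of the two (routine but nonzero) convergence justifications. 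Both are sound; if you want the leaner write-up, promote your closing remark to the main argument.
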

\begin{proof}
 Recall that $F_{1}^{*}=\sum_{n \geq 0}F_{1}^{n}$. Thus
 $a^{-1}(F^{*}) = a^{-1}(1+FF^{*}) =
a^{-1}(\epsilon+FF^{*}) = a^{-1}\epsilon +
(a^{-1}F)F^{*}+F(\epsilon)a^{-1}(F^{*}).$

Because $(a^{-1} \epsilon)(w) = \epsilon(aw) =0$ for all $w \in
\ca^{*}$ and $F(\epsilon)=0$, we get that $a^{-1}F^{*} =
(a^{-1}F)F^{*}$.
\end{proof}
\begin{prop}
Suppose $M_{1}$ is finitely generated and stable, and that $F_{1}
\in M_{1}$ is proper. Then $F_{1}^{*}$ is in a finitely generated
stable submodule.
\end{prop}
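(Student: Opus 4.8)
The plan is to write down an explicit finitely generated stable submodule containing $F_1^*$, assembled from $M_1$ and $F_1^*$ itself. The natural candidate is
\[
M = M_1 F_1^* + \R_+ F_1^*,
\]
that is, the $\R_+$-span of $\{\,f F_1^* : f\in M_1\,\}$ together with $F_1^*$. (Equivalently one could use $M_1 F_1^* + \R_+\epsilon$, which works just as well since $F_1^* = \epsilon + F_1 F_1^*$ and $F_1\in M_1$.)

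Two of the three things to check are essentially immediate. First, $F_1^*\in M$, by taking the coefficient of $F_1^*$ to be $1$. Second, $M$ is finitely generated: if $M_1 = \langle G_1,\dots,G_k\rangle$ over $\R_+$, then $M = \langle G_1 F_1^*,\dots,G_k F_1^*, F_1^*\rangle$, because right multiplication by the fixed series $F_1^*$ is $\R_+$-linear, so it carries $\R_+$-linear combinations of the $G_i$ to $\R_+$-linear combinations of the $G_i F_1^*$.

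The substantive step is stability, i.e. $a^{-1}M\subset M$ for every $a\in\ca$, and here I would invoke the two Lemmas just proved. For $f\in M_1$, the product Lemma gives
\[
a^{-1}(f F_1^*) = (a^{-1}f)\,F_1^* + f(\epsilon)\,a^{-1}(F_1^*),
\]
and the starred Lemma, using that $F_1$ is proper, gives $a^{-1}(F_1^*) = (a^{-1}F_1)F_1^*$. Since $M_1$ is stable and $F_1\in M_1$, both $a^{-1}f$ and $a^{-1}F_1$ lie in $M_1$, so the right-hand side lies in $M_1 F_1^*\subset M$; the same identity also shows $a^{-1}(F_1^*)\in M_1 F_1^*$. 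Thus $a^{-1}$ sends each generator of $M$ into $M$, and by $\R_+$-linearity $a^{-1}M\subset M$; the monoidal identity $(vw)^{-1} = w^{-1}\circ v^{-1}$ then gives $w^{-1}M\subset M$ for all $w\in\ca^*$. Invoking Section~\ref{subsec_lr=fg} (or simply the definition of the conclusion) finishes the argument, since $F_1^*$ now lies in the finitely generated stable submodule $M$.

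I do not anticipate a real obstacle: once the right submodule is chosen, everything reduces to the two Lemmas and the identity $F_1^* = \epsilon + F_1 F_1^*$. The only point that deserves attention is that the hypothesis ``$F_1$ proper'' is exactly what makes the starred Lemma — and hence the stability computation — go through; without it, $a^{-1}(F_1^*)$ would acquire an extra $F_1(\epsilon)\,a^{-1}(F_1^*)$ term and $M$ would fail to be stable as written.
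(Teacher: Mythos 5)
Your proof is correct and follows essentially the same route as the paper's: the paper takes $M=\R_{+}+M_{1}F_{1}^{*}$ (your parenthetical alternative), while you take $M=M_{1}F_{1}^{*}+\R_{+}F_{1}^{*}$, and in both cases stability reduces to the same two Lemmas with the same computation. No gaps.
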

\begin{proof}
Define $M=\R_{+} + M_{1}F_{1}^{*}$.  We have
$$F_{1}^{*}=1+ \sum_{n \geq 1}F_{1}^{n}=(1+
F_1F_{1}^{*})\in M.$$  Also $M$ is finitely generated (by $1$ and
the $f_iF_1^{*}$ if the $f_i$ generate $M_1$).

To show that $M$ is stable, suppose that $t \in {\R}_{+}$ and $a \in
\ca$. Then for any $u \in \ca^{*}$ we have $(a^{-1} t)(u)=t(au)=0$,
so $a^{-1} t=0 \in {\R}_{+}$. And for any $f_1 \in M_1$ and $a \in
\ca$,
$a^{-1}(f_1F_1^{*})=(a^{-1}f_1)F_1^{*}+f_1(\epsilon)a^{-1}(F_1^{*})$.
Since $M_1$ is stable, $a^{-1}f_1 \in
M_1$ and the first term is in $M_1 F_1^{*}$. By the Lemma, the
second term is $f_1(\epsilon)(a^{-1}F_1)F_1^{*}$, which is again in
$M_1 F_1^{*}$.
\end{proof}

These observations show that if $F$ is rational, then $F$ lies in a
finitely generated stable submodule, so by Section
\ref{subsec_lr=fg} $F$ is linearly representable.

Now we turn our attention to proving the statement in the title of
this section in the other direction. So assume that $F \in \F$ is
linearly representable. Then $F(w) = x  \phi (w) y$ for all $w \in
\ca$ for some $(x,\phi,y)$. Consider the semiring of formal series
$\mathcal{F}_{K}(\ca) = K^{\ca^{*}}$, where $K$ is the semiring
$\R_{+}^{n \times n}$ of $n \times n$ nonnegative real matrices and
$n$ is the dimension of the representation.
Let $D = \sum_{a \in \ca} \phi(a) a \in \mathcal{F}_{K}(\ca).$  The
 series $D$ is proper, so we can form
 \be D^{*} = \sum_{h \geq 0}D^{h}
        = \sum_{h \geq 0} \Bigl( \sum_{a \in \ca} \phi(a) a \Bigr)^{h}
        =\sum_{h \geq 0}\Bigl(\sum_{w \in \ca^{h}} \phi(w)w \Bigr)
        =\sum_{w \in \ca}\phi(w) w.
\en This series $D^{*}$ is a rational element of
$\mathcal{F}_{K}(\ca),$  since we started with a polynomial and
formed its *. By Lemma \ref{lem_rationallemma} below, each entry
$(D^{*})_{ij}$ is rational in $\mathcal{F}_{\R_{+}}(\ca)$.

With $D$ and $D^*$ now defined, we have that \be F(w) = x\phi(w) y
       =\sum_{i,j}x_{i}\phi(w)_{ij}y_{j}
       =\sum_{i,j}x_{i}D^{*}(w)_{ij}y_{j},
\en and each $D^{*}(w)_{ij}$ is a rational series applied to $w$.
Thus $F(w)$ is a finite linear combination of rational series
$D_{ij}^{*}$ applied to $w$ and hence  is rational.

\begin{lem}\label{lem_rationallemma}
Suppose $D$ is an $n\times n$ matrix whose entries are proper
rational formal series (e.g., polynomials). Then the entries of
$D^*$ are also rational.
\end{lem}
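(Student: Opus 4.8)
The plan is to induct on the size $n$ of the matrix $D$, using the standard "solve one variable, substitute back" technique familiar from Gaussian elimination, but carried out inside the semiring of rational series. For $n=1$, $D$ is a single proper rational series $f$, and $D^* = f^*$ is rational by definition of the rational operations, so the base case is immediate. For the inductive step I would write $D$ in block form
\be
D = \begin{pmatrix} A & B \\ C & d \end{pmatrix},
\en
where $A$ is $(n-1)\times(n-1)$, $B$ is $(n-1)\times 1$, $C$ is $1\times(n-1)$, and $d$ is a single proper rational series. All entries are proper, hence $d$ is proper and $d^*$ is rational; note $d^* $ is then the identity-plus-proper series $1 + d^+$, and in particular $d^* B$ has proper entries whenever $B$ does (since $d^*B = B + d^+B$ and products/sums of proper-or-zero series with proper series are proper).

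The key algebraic identity I would use is the block formula for the star of a $2\times2$ block matrix over a semiring: writing $\Delta = A + B d^* C$ (the "Schur complement"), one has
\be
D^* = \begin{pmatrix} \Delta^* & \Delta^* B d^* \\ d^* C \Delta^* & d^* + d^* C \Delta^* B d^* \end{pmatrix}.
\en
I would verify this identity directly by checking $D^* = \epsilon + D D^*$ block-entrywise, or simply cite it as the standard conductivity/Kleene-algebra identity; either way it is a routine computation with the monoid action and the relation $f^* = \epsilon + f f^*$ that already appears in the excerpt. The only subtlety is that to apply the inductive hypothesis to $\Delta$ I need $\Delta$ to be an $(n-1)\times(n-1)$ matrix of \emph{proper} rational series: its entries are $A_{ij} + (B d^* C)_{ij}$, and since $A$, $B$, $C$ have proper entries and $d^*$ has entries that are "$\epsilon$ plus proper," each product $B_i d^* C_j$ is again proper, so $\Delta_{ij}$ is a sum of proper rational series, hence proper and rational. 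By induction $\Delta^*$ has rational entries, and then every block of $D^*$ displayed above is built from $\Delta^*$, $d^*$, $B$, $C$, and constants using only sums and products — the rational operations — so all entries of $D^*$ are rational. That closes the induction.

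The main obstacle, and the only place real care is needed, is bookkeeping about properness: one must check that each Schur-complement entry stays proper so that the inductive hypothesis applies and so that all the intermediate stars ($d^*$, $\Delta^*$) are actually defined. Everything else — the block star identity, closure of rational series under $+$ and $\cdot$ — is either standard or already established in the excerpt. I would therefore organize the write-up as: (1) reduce to the $2\times 2$ block form; (2) observe properness propagates so that $d^*$ and the Schur complement $\Delta$ make sense and $\Delta$ has proper rational entries; (3) state and verify the block star formula; (4) conclude by induction.
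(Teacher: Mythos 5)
Your proof is correct and follows essentially the same route as the paper's: induction on $n$ via a block decomposition and the standard path-sum/Kleene block star formula, the only cosmetic difference being that you split off a $1\times 1$ block in the bottom-right corner and use the variant of the identity with a single Schur complement star, whereas the paper splits off the top-left entry and writes both diagonal blocks of $D^*$ as stars of Schur complements. Your explicit bookkeeping that properness propagates to $d^*B$ and to the Schur complement (so that the intermediate stars are defined and the inductive hypothesis applies) is a welcome refinement that the paper leaves implicit.
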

\begin{proof}
We use induction on $n$. The case $n=1$ is trivial. Suppose the
lemma holds for $n-1$, and $D$ is $n\times n$ with block form $D=
\begin{pmatrix}
a & u \\
v & Y
\end{pmatrix}$,
with $a$ a rational series. The entries of $D$ can be thought of as
labels on a directed graph; a path in the graph has a label which is
the product of the labels of its edges; and then $D^*(i,j)$
represents the sum of  the labels of all paths from $i$ to $j$
(interpret the term ``1'' in $D(i,i)$ as the label of a path of
length zero).
 With
this view, one can see that $D^*=
\begin{pmatrix}
b & w \\
x & Z
\end{pmatrix}$, where
\begin{enumerate}
\item
$b=(a+uY^*v)^*$ ,
\item
$Z= (Y+va^*u)^* $ ,
\item
$w=buY^*$ ,
\item
$x=Y^*vb$ .
\end{enumerate}
Now $Y^*$ and $Z$ have rational entries by the induction hypothesis,
and consequently all entries of $D^*$ are rational.
\end{proof}

\subsubsection{{Linearly representable series correspond to sofic
measures}}

{} {The (topological) support of a measure is the smallest closed
set of full measure. Recall our convention (Sec. \ref{sec_hmms})
that Markov and sofic measures are ergodic with full support.}

\begin{thm}[\cite{Furstenberg1960,HanselPerrin1989,Heller1965}]\label{rationaltheorem}
A shift-invariant probability measure $\nu$ on $\Omega_{+}(\ca)$
corresponds to a
 linearly representable (equivalently, rational)
 formal series $F =
F_{\nu} \in \mathcal{F}_{\R_{+}}(\ca)$ if and only if it is {}{ a
convex combination of measures which (restricted to their supports)
are sofic measures. }
 {}{ Moreover,
if $(x,\phi , y)$ is a representation of $F_{\nu}$ such that $x$ and
$y$ are positive and the matrix $\sum_{i\in B}\phi (i)$ is
irreducible, then $\nu$ is a sofic measure. }
\end{thm}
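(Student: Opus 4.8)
The plan is to prove the whole statement by reducing it to the ``moreover'' assertion, which carries the real content; the two directions of the equivalence then follow quickly with the help of Proposition~\ref{prop_redrep}. For the ``moreover'' part, suppose $(x,\phi,y)$ represents $F_\nu$ with $x,y>0$ and $P:=\sum_{i\in B}\phi(i)$ irreducible. Shift-invariance of $\nu$ gives the identities
\be\label{eq_invtrick}
xP^{j}\phi(w)P^{k}y=\sum_{|u|=j,\,|v|=k}\nu\bigl(\cc_0(uwv)\bigr)=x\phi(w)y\qquad(j,k\geq 0,\ w\in\ca^*),
\en
because $\sum_{|u|=j}\phi(u)=P^{j}$ and $\nu(\cc_j(w))=\nu(\cc_0(w))$. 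Taking $w=\epsilon$ gives $xP^{k}y=1$ for all $k$, which (as $P$ is irreducible) forces the spectral radius of $P$ to be $1$; hence $\{P^n\}$ is bounded and the Ces\`aro averages $\tfrac1n(P+\dots+P^n)$ converge to an idempotent $Q$ with $PQ=QP=Q$. Averaging (\ref{eq_invtrick}) over $j,k$ and passing to the limit shows that $(xQ,\phi,Qy)$ is again a representation of $\nu$, and now $xQ$ is a positive multiple of the left Perron vector of $P$ and $Qy$ of the right one, with $(xQ)(Qy)=xQy=1$. So we may assume from the start that $xP=x$, $Py=y$, $x,y>0$, $xy=1$.

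Next I would realize $\nu$ as the image of a Markov measure on an ``edge shift.'' Let $E=\{(i,a,j):\phi(a)(i,j)>0\}$, let $X$ be the $1$-step SFT (vertex shift) on alphabet $E$ in which $(i,a,j)$ may be followed by $(i',a',j')$ exactly when $j=i'$, and let $\pi:X\to\Omega_+(\ca)$ be the $1$-block code $(i,a,j)\mapsto a$. Since $P$ is irreducible with no zero row or column, the transition graph on $E$ is strongly connected and every vertex is used, so $X$ is an irreducible SFT. Define the $1$-step Markov measure $\mu$ on $X$ with initial distribution $(i,a,j)\mapsto x(i)\phi(a)(i,j)y(j)$ and (Doob-type) transition probabilities $Q_\mu\bigl((i,a,j),(j,b,k)\bigr)=\phi(b)(j,k)\,y(k)/y(j)$, which is stochastic because $Py=y$. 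A short computation (with the $y$-factors telescoping) gives
\be
\mu\bigl(\cc_0(e_0\cdots e_{n-1})\bigr)=x(i_0)\Bigl(\prod_{t=0}^{n-1}\phi(a_t)(i_t,i_{t+1})\Bigr)y(i_n),\qquad e_t=(i_t,a_t,i_{t+1}),
\en
and summing this over the fibre of a word $a_0\cdots a_{n-1}$ yields $x\phi(a_0)\cdots\phi(a_{n-1})y=\nu(\cc_0(a_0\cdots a_{n-1}))$, so $\pi\mu=\nu$. Since $Q_\mu$ inherits irreducibility from $P$ and $x,y>0$ give full support, $\mu$ is ergodic with full support, so $\nu$ (on its support, an irreducible sofic subshift) is a sofic measure.

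For the equivalence: ($\Leftarrow$) if $\nu=\sum_k c_k\nu_k$ with each $\nu_k$ sofic on its support, then Example~\ref{ex_linrepofsofic} provides linear representations $(x_k,\phi_k,y_k)$ of the $F_{\nu_k}$, and the evident block-diagonal sum (with $x$-vector $(c_1x_1,\dots,c_mx_m)$, matrices $\mathrm{diag}(\phi_1(a),\dots,\phi_m(a))$, and the $y_k$ stacked into one column) represents $F_\nu=\sum_k c_kF_{\nu_k}$; hence $F_\nu$ is linearly representable (equivalently rational). ($\Rightarrow$) Given $F_\nu$ linearly representable, Proposition~\ref{prop_redrep} produces a representation $(x,\phi,y)$ with $y$ all $1$'s, $x>0$, $xP=x$, and $P=\sum_a\phi(a)$ a direct sum of irreducible stochastic blocks $P_1,\dots,P_k$ on index sets $I_1,\dots,I_k$. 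Because each $\phi(a)\geq 0$ and $\sum_a\phi(a)=P$ vanishes off the diagonal blocks, each $\phi(a)$ is itself block diagonal, $\phi(a)=\mathrm{diag}(\phi_1(a),\dots,\phi_k(a))$ with $\sum_a\phi_p(a)=P_p$. Setting $c_p=\sum_{i\in I_p}x(i)>0$ (so $\sum_p c_p=xy=F_\nu(\epsilon)=1$), the normalized triple $\bigl(x|_{I_p}/c_p,\ \phi_p,\ \one\bigr)$ satisfies the hypotheses of the ``moreover'' part, hence represents a sofic measure $\nu_p$, and $F_\nu=\sum_p c_p F_{\nu_p}$, i.e.\ $\nu=\sum_p c_p\nu_p$.

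The one genuinely delicate point is the eigenvector reduction in the ``moreover'' proof: one must verify that replacing $(x,y)$ by $(xQ,Qy)$ preserves the representation of $\nu$, and this requires the full stationarity identities (\ref{eq_invtrick}) rather than merely the constant-trace condition $xP^ny\equiv 1$ (a non-stationary representation can satisfy the latter without coming from a shift-invariant measure). Everything else---the edge-shift construction, the checks that $Q_\mu$ is stochastic and $\pi\mu=\nu$, the irreducibility of $X$ and the ergodicity and full support of $\mu$ needed for $\nu$ to qualify as a sofic measure under the paper's standing conventions, together with the two easy directions of the equivalence---is routine bookkeeping once Proposition~\ref{prop_redrep} is available.
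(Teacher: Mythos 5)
Your proof is correct and follows essentially the same route as the paper's: reduce to positive fixed vectors of $P$ using stationarity, split $P$ into irreducible stochastic blocks via Proposition~\ref{prop_redrep} to obtain the convex combination, and in the irreducible case blow up the state space so that $\nu$ becomes a $1$-block image of a $1$-step Markov measure. The only real difference is in the realization of the blow-up: you use an edge shift on triples $(i,a,j)$ with a Doob-type $h$-transform by $y$, whereas the paper first stochasticizes and then uses the strong shift equivalence $M=RC$, $P=CR$ (pruning zero rows and columns afterward) --- two equivalent constructions of the same Markov extension.
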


\begin{proof}
 {
 Suppose that $\nu$ is the image under a 1-block map
(determined by a map $\pi : \ca \rightarrow \cb$ between the
alphabets) of a 1-step Markov measure $\mu$. Then $\nu$ is linearly
representable by the construction in Example \ref{ex_linrepofsofic}.
}

Alternatively, if $F_{\mu}$ is represented by $(x, \phi , y)$ then
for each $w \in \ca^{*}$ we have
 \be
F_{\mu}(w)
= \sum_{i,j}x_i \phi(w)_{ij}y_j
= \sum_{i,j}x_i \Big([\sum_{a \in \ca} \phi(a)a]^{*}(w)\Big)_{ij}y_j.
 \en
 For  $u \in {\mathcal B^*}$ define
 \be
F_{\nu} (u)
=\sum_{i,j} x_i \Bigg( \Big[ \sum_{b \in \mathcal B }
 (\sum_{a\in \ca,  \phi (a)=b} \phi(a))b\Big]^{*}(u)\Bigg)_{ij}y_j
\en
to see that $F_{\nu}$ is a linear combination of rational series
and to see its linear representation.

 Conversely, suppose that $\nu$ corresponds to a rational (and
hence linearly representable) formal series $F=F_{\nu} \in
\mathcal{F}_{\R_{+}}(\mathcal B)$ with dimension $n$.  Let $(x,
\phi, y)$ represent $F$. {}{ To indicate an ordering of the alphabet
$\mathcal B$, we use notation $\mathcal B=\{1,2, \dots , k\}$ and
$\phi (i) = P_i$. First assume that the $n\times n$ matrix $P$ is
irreducible and the vectors $x$ and $y$ are positive. We will
construct a Markov measure $\mu$ and a 1-block map $\pi$ such that
$\nu  = \pi \mu$. }

{}{ Applying the standard stochasticization trick as in the last
paragraph of the proof of Proposition \ref{prop_redrep}, we may
assume that the irreducible matrix $P$ is stochastic, every entry of
$y$ is 1, and $x$ is stochastic. Define matrices with block forms,
\begin{align*}
 M =
\begin{pmatrix}
P_1 & P_2 & \cdots &P_k \\
P_1 & P_2 & \cdots &P_k \\
\cdots &\cdots & \cdots &\cdots  \\
P_1 & P_2 & \cdots &P_k
\end{pmatrix} \ , \qquad
R& =\begin{pmatrix}
I \\
I \\
\cdots \\
I
\end{pmatrix} \ , \\
C=\begin{pmatrix}
P_1 & P_2 &
\cdots & P_k
\end{pmatrix} \ , \quad
M_i &=
\begin{pmatrix}
0 & \cdots & P_i & \cdots & 0 \\
0 &  \cdots & P_i & \cdots & 0 \\
\cdots  &  \cdots & \cdots & \cdots &\cdots  \\
0 &  \cdots & P_i & \cdots & 0
\end{pmatrix} \
\end{align*}
where each $P_i$ is $n\times n$; $R$ is $nk\times k$;
$I$ is the $n\times n$ identity matrix; $C$ and the $M_i$ are
 $nk \times nk$; and  $M_i$ is zero except in the $i$'th block
column, where it is $RP_i$.
The matrix $M$ is stochastic, but
it can have zero columns. (We thank Uijin Jung for
pointing this out.)  Let $M'$ be the largest principal
submatrix of $M$ with no zero column or row.
}

{}{ We have a strong shift equivalence $M=RC$, $P=CR$,  and it then
follows from the irreducibility of $P$ that  $M'$ is irreducible.
Therefore, there is a unique left stochastic fixed vector $X$ for
$M$. Let $Y$ be the $nk\times 1$ column vector with every entry 1.
We have $MR=RP$, and consequently $XR=x$. Also, $M_iR=RP_i$ for each
$i$. So, for any word $i_1 \cdots i_j$, we have
\begin{align*}
xP_{i_1}\cdots P_{i_j}y &=
XRP_{i_1}\cdots P_{i_j}y \\
&=XM_{i_1}\cdots M_{i_j}Ry =
XM_{i_1}\cdots M_{i_j}Y \ .
\end{align*}
This shows that $(X,\Phi , Y)$ is also a representation of
$F_{\nu}$, where $\Phi(i)=M_i$. Let $X', \Phi'(i)=M_i', Y'$ be the
restrictions of $X, \Phi (i),Y$ to the vectors/matrices on the
indices of $M'$. Then  $(X',\Phi' , Y')$ is also a representation of
$F_{\nu}$. Let $A'$ be the $0,1$ matrix of size matching $M'$ whose
zero entries are the same as for $M'$. Then $(X',M',Y')$ defines an
ergodic Markov measure $\mu$ on $\Omega_{A'}$ and there is a 1-block
code $\pi$ such that $\pi \mu = \nu$.
 Explicitly, $\pi$ is the restriction of the code which
sends $\{ 1,2,\dots n\}$ to 1; $\{n+ 1,n+2,\dots 2n\}$ to 2; and so
on.
Thus $\nu$ is a sofic measure.
}

{}{ Now, for the representation $(x,\phi ,y)$ of $F_{\nu}$, we drop
the assumption that the matrix $P$ is irreducible. However, by
Proposition \ref{prop_redrep}, without loss of generality we may
assume that $P$ is the direct sum of irreducible stochastic matrices
$P^{(j)}$;  $x$ is a positive stochastic left fixed
 vector of $P$; and $y$ is the column vector
with every entry 1. Restricted to the indices through  $P^{(j)}$,
$x$ is a fixed vector of  $P^{(j)}$ and therefore is a multiple
$c_jx^{(j)}$ of the stochastic left fixed vector  $x^{(j)}$ of
$P^{(j)}$. Note, $\sum_jc_j=1$. If  $y^{(j)}$ denotes the column
vector with every entry 1 such that
 $P^{(j)}y^{(j)}=y^{(j)}$, then
\[
(x,\phi ,y ) = \sum_j c_j(x_j, P^{(j)}, y^{(j)}) \ .
\]
If follows from the irreducible case that $\mu$ is a convex
combination
of sofic measures.
}
 \end{proof}

\subsection{Sofic measures---Furstenberg's
approach}\label{sec_furst}

Below we are extracting from \cite[Secs. 18--19]{Furstenberg1960}
only what we need to describe Furstenberg's approach to the
identification of sofic measures and compare it to the others. This
leaves out a lot. We follow Furstenberg's notation, apart from
change of symbols, except that we refer to shift-invariant measures
as well as finite-state stationary processses.

Furstenberg begins with the following definition.

\begin{defn} \cite[Definition 18.1]{Furstenberg1960}
A {\it stochastic semigroup of order r} is a semigroup $\mathcal S$
having an identity $e$ (i.e., a monoid), together with a set of $r$
elements
  {$\ca=
\{ a_1, a_2, \dots , a_r\}$
 }
generating $\mathcal S$, and a real-valued function $F$ defined on
$S$ satisfying
\begin{enumerate}
\item
$F(e)=1$,
\item
$F(s )\geq 0$ for each $s \in S$ and $F(a_i) > 0, \ i=1,2,\dots ,r$
,
\item
$\Sigma_{i=1}^r F(a_is )= \Sigma_{i=1}^r F(s a_i)= F(s )$ for each
$s \in S$.
\end{enumerate}
\end{defn}

Given a subshift $X$ on an alphabet $\{a_1, a_2, \dots , a_r\}$ with
shift-invariant Borel probability $\mu $ and $\mu (a_i)>0$ for every
$i$,  let $S$ be the free semigroup of all formal products of the
$a_i$, with the empty product taken as the identity $e$. Define $F$
on $S$ by $F(e)=1$ and $F(a_{i_1}a_{i_2}\dots  a_{i_k})= \mu
(\cc_0(a_{i_1}a_{i_2}\dots   a_{i_k}))$. Clearly the triple $(\{a_1,
a_2, \dots , a_r\},S,F)$ is a stochastic semigroup, which we denote
$S(X)$.

Conversely,  any stochastic semigroup $(\{ a_1, a_2, \dots ,
a_r\},S,F)$ determines a unique shift-invariant Borel probability
$\mu$ for which $F(a_{i_1}a_{i_2}\dots  a_{i_k})=
\mu(\cc_0((a_{i_1}a_{i_2}\dots   a_{i_k})))$ for all
$a_{i_1}a_{i_2}\dots a_{i_k}$. We denote by $X(S)$ this finite-state
stationary process (equivalently the full shift on $r$ symbols with
invariant measure $\mu$). Two stochastic semigroups are called {\it
equivalent} if they define the same finite-state stationary process
modulo a bijection of their alphabets. A {\em cone} in a linear
space is a subset closed under addition and multiplication by
positive real numbers \cite[Sec. 15.1]{Furstenberg1960}.

\begin{defn}\label{defn_linearsemigp} \cite[Definition 19.1]{Furstenberg1960}
Let $D$ be a linear space, $D^*$ its dual, and let $\cc$ be a cone
in $D$ such that for all $x,y$ in $D$, if $x+\lambda y \in \cc$ for
all real $\lambda$, then $y=0$. Let $\theta\in \cc$ and $\theta^*\in
D^*$, and suppose that  $\theta^*$ is nonnegative on $\cc$. A {\it
linear stochastic semigroup } $S$ on $(\cc, \theta,\theta^*)$ is a
stochastic semigroup $(\{a_1,\dots,a_r\},S,F)$ whose elements are
linear transformations {\em from $\cc$ to $\cc$} satisfying
\begin{enumerate}
\item
$\sum a_i \theta = \theta$;
\item
$\sum a_i^* \theta^* = \theta^*$ (where $L^*$ denotes the
transformation of $D^*$ adjoint to a transformation $L$ of $D$);
\item
$F(s)= (\theta^*,s\theta )$ for  $s \in S$, where $(\cdot , \cdot )$
denotes the dual pairing of $D^*$ and $D$;
\item
$(\theta^*,a_i \theta) > 0$, $\ i=1,2, \dots , r$.
\end{enumerate}
$(S,D,\cc,\theta,\theta^*)$ was called {\em finite dimensional} by
Furstenberg if there is $m \in \Bn$ such that $D=\Br^m$, $\cc$ is
the cone of vectors in $\Br^m$ with all entries nonnegative, and
each element of $S$ is an $m \times m$ matrix with nonnegative
entries.
\end{defn}

A semigroup $S$ of transformations satisfying $(1)$ to $(4)$ does
define a stochastic semigroup if $(\theta^*,\theta) = 1$.

\begin{thm}\cite[Theorem 19.1]{Furstenberg1960}
Every stochastic semigroup $S$ is equivalent to some linear
stochastic semigroup.
\end{thm}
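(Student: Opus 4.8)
The plan is to build the required linear stochastic semigroup tautologically out of the function $F$ itself, using an infinite‑dimensional space of ``shifted copies'' of $F$; note that Furstenberg's Definition~19.1 puts no finiteness restriction on $D$, which is exactly what makes such a construction possible (in contrast with the sofic/rational situation, where finiteness genuinely fails for many measures). Concretely, for each $w\in S$ define $F_w\colon S\to\Br$ by $F_w(t)=F(tw)$; this is well defined because $F(tw)$ depends only on the product $tw\in S$, and $F_w\ge 0$ with $F_e=F$. Let $D$ be the linear span in $\Br^{S}$ of $\{F_w:w\in S\}$, let $\cc=\{\psi\in D:\psi(t)\ge0\text{ for all }t\in S\}$, let $\theta=F_e=F\in\cc$, and let $\theta^*\in D^*$ be evaluation at the identity, $\theta^*(\psi)=\psi(e)$; then $\theta^*$ is nonnegative on $\cc$ and $(\theta^*,\theta)=F(e)=1$. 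For each generator $a_i$ define $T_i\colon\Br^S\to\Br^S$ by $(T_i\psi)(t)=\psi(ta_i)$. A one‑line check gives $T_iF_w=F_{a_iw}$, so $D$ and $\cc$ are $T_i$‑invariant, and for a word $w=a_{i_1}\cdots a_{i_k}$ the composite $T_{i_1}\cdots T_{i_k}$ carries $\theta=F_e$ to $F_w$. Take $S'$ to be the monoid of transformations of $D$ generated by $T_1,\dots,T_r$, and set $F'(s):=(\theta^*,s\theta)$ for $s\in S'$.

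Next I would check the hypotheses of Definition~19.1. The cone $\cc$ contains no full line: if $x,y\in D$ and $x+\lambda y\in\cc$ for every real $\lambda$, then evaluating at each $t\in S$ gives $x(t)+\lambda y(t)\ge0$ for all $\lambda$, forcing $y(t)=0$, hence $y=0$. Each $T_i$ maps $\cc$ into $\cc$, so every element of $S'$ does. Condition~(1), $\sum_i T_i\theta=\theta$, is the second balance law $\sum_i F(ta_i)=F(t)$ read off pointwise (since $T_iF_e=F_{a_i}$). Condition~(2), $\sum_i T_i^*\theta^*=\theta^*$, reduces by linearity, testing on the spanning vectors $F_w$, to $\sum_i F_w(a_i)=F_w(e)$, i.e. $\sum_i F(a_iw)=F(w)$, which is the first balance law. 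Condition~(3) holds by the very definition of $F'$, and condition~(4) is $(\theta^*,T_i\theta)=F(a_i)>0$, a hypothesis on $F$. Since $(\theta^*,\theta)=1$, the remark preceding Theorem~19.1 then shows $(S',D,\cc,\theta,\theta^*)$ is a linear stochastic semigroup. Finally, because $T_{i_1}\cdots T_{i_k}$ sends $\theta$ to $F_w$, we get $F'(T_{i_1}\cdots T_{i_k})=F_w(e)=F(a_{i_1}\cdots a_{i_k})$ for every word, so under the relabeling $a_i\leftrightarrow T_i$ the constructed stochastic semigroup defines the same finite‑state stationary process as the original; that is, the two are equivalent.

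I expect the only real subtlety to be the choice of the space $D$. Taking $D=\Br^{S}$ is tempting but fails: condition~(2) would then demand $\sum_i\psi(a_i)=\psi(e)$ for \emph{every} function $\psi$, which is false; one must cut down to the $T_i$‑invariant subspace spanned by the translates $F_w$, where that identity is forced by the balance law for $F$ and nothing more. The companion point is that the line‑free axiom for the cone is not automatic under passing to a subspace, but it survives here because $D$ is realized concretely inside $\Br^{S}$ with the pointwise order. Everything else — invariance of $D$ and $\cc$, the bookkeeping that $T_{i_1}\cdots T_{i_k}$ prepends $a_{i_1}\cdots a_{i_k}$ to the subscript, and the matching of the left/right balance laws with conditions~(1) and~(2) — is routine once the conventions $F_w(t)=F(tw)$ and $(T_i\psi)(t)=\psi(ta_i)$ are fixed so that $F(s)=(\theta^*,s\theta)$ reproduces $\mu(\cc_0(s))$.
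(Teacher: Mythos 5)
Your construction is correct: the translates $F_w(t)=F(tw)$ span a $T_i$-invariant subspace $D$ of $\Br^S$, the pointwise-nonnegative cone is strongly line-free and invariant, the two balance laws of the stochastic semigroup give exactly conditions (1) and (2) when tested on $\theta=F_e$ and on the spanning set $\{F_w\}$, and the bookkeeping $T_{i_1}\cdots T_{i_k}\theta=F_{a_{i_1}\cdots a_{i_k}}$ yields the equivalence of processes. The paper proves the theorem by the mirror-image abstract construction: it forms the semigroup algebra $A_0(S)$, quotients by the null ideal of $F$ and then by $A(e-\tau)$ with $\tau=\sum_i a_i$, lets the $a_i$ act by \emph{left} multiplication, takes the cone to be the image of the positive span of $S$, and sets $\theta=\overline e$ with $\theta^*$ induced by $F$ --- so the roles of $\theta$ and $\theta^*$ are swapped relative to yours, and your $D$ is essentially the concrete realization (inside $\Br^S$) of that quotient module. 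The substance is the same (a regular-representation module generated by $F$), but your packaging buys something real: because $D$ sits inside $\Br^S$ with the pointwise order, the cone axioms --- no line through the origin, nonnegativity of $\theta^*$, invariance under $(A_S)_+$ --- are immediate, whereas in the quotient-algebra picture one must argue that these properties survive the two quotients. Your closing caveat (that one cannot take $D=\Br^S$ itself, since condition (2) is an identity forced only on the span of the translates of $F$) correctly isolates the one point where the construction could go wrong.
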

\begin{proof}
Let $A_0(S)$ be the real semigroup algebra of $S$, i.e., the real
vector space with basis $S$ and multiplication determined by the
semigroup multiplication in $S$ and the distributive property,

\begin{equation}
\Big(\sum \alpha_{s }s\Big)\Big(\sum \beta_{t }t \Big)= \sum
\alpha_{s }\beta_{t} st .
\end{equation}
(Each sum above has finitely many terms.)

If $S$ is the free monoid generated by $r$ symbols, then
  {
  $A_0(S)$
   is
isomorphic to the set
 $\wp_{\Br}(\ca)$ of real-valued polynomials, i.e. finitely supported
 formal series $\ca^* \to \Br$ (see Definition \ref{defn_poly}).}

 Extend $F$ from $S$ to a linear functional on $A_0(S)$, i.e.
$F(\sum \alpha_{s }s)= \sum \alpha_{s }F(s)$. Define $I=\{u\in
A_0(S): F(u)=0\}$, an ideal in
 $A_0(S)$, and the algebra
 {
 $A=A(S)=A_0(S)/I$.
 }
Define the element $\tau = a_1 + a_2 + \dots +a_r$ in $A(S)$ (here
$a_i$ abbreviates $a_i + I$) and set $D= A/A(e-\tau )$.

The elements of $A$ and in particular those of $S$ operate on $D$ by
left multiplication. Let $a_i'$ denote the operator induced by left
multiplication by $a_i \in S$. Take $V$ to be the image in $D$ of
the set of elements of $A$ that can be represented as positive
linear combinations of elements in $S$. Denote by $\overline u$ the
image in $D$ of an element $u$ in $A$. Set $\theta = \overline e$
and let $\theta^*$ be the functional induced on $D$ by $F$ on $A$
($F$ vanishes on $A(e-\tau )$).

Then the four conditions in the definition of linear stochastic
semigroup are satisfied. This linear stochastic semigroup given by
\begin{equation} \label{furstenbergmodule}
(\{a'_1,\dots ,a'_r\}, D,V,\theta ,\theta^*)
\end{equation}
is equivalent to the given $S$ because $F(s ')= (\theta^*,s '\theta)
= F(s )$. (We will see later that this construction is closely
related to
 Heller's ``stochastic module'' construction.)
\end{proof}

Given a shift-invariant sofic measure on the set of two-sided
sequences on the alphabet $\{1, \dots,r\}$ which assigns positive
measure to each symbol,
 it is
possible to associate an explicit finite-dimensional linear
stochastic semigroup to
 $\mu$
  in the same way that we attached a linear representation in Example
 \ref{ex_linrepofsofic}.
  Here $\mu$ is
the image under some 1-block code $\pi$ of a Markov measure defined
from some $m\times m$  stochastic matrix $P$. For $1\leq i \leq r$,
let $P_i$ be the $m\times m$ matrix  such that $P_i(i',j') =
P(i',j')$ if $\pi (j')=i$ and otherwise $P_i(i',j') = 0$. Let
$\theta^* $ be a stochastic (probability) left fixed vector for
$P$ and let $\theta $ be the column vector with every entry $1$.
Let $C$ be the cone of all nonnegative vectors in $D= \mathbb R^m$.
If we identify $P_i$ with the symbol $i$, then these data give a
finite-dimensional linear stochastic semigroup equivalent to $S(X)$.
Along with this observation, Furstenberg established the converse.

 \begin{thm}\label{furstfinite}
 \cite[Theorem 19.2]{Furstenberg1960}
  A linear stochastic semigroup $S$ is
finite dimensional if and only if the stochastic process that it
determines is a 1-block factor of a 1-step stationary finite-state
Markov process.
\end{thm}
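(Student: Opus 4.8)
The plan is to handle the two implications separately. One of them --- ``if the process determined by $S$ is a $1$-block factor of a stationary Markov process, then $S$ admits a finite-dimensional presentation'' --- is already carried out in the paragraph immediately before the theorem, so I would simply refer to it: given $\nu=\pi\mu$ with $\pi$ a $1$-block code and $\mu$ a stationary $1$-step Markov measure whose stochastic transition matrix $P$ (a direct sum of irreducible stochastic matrices, by Proposition~\ref{prop_redrep}) has stochastic left fixed vector $p$, one sets $\theta^{*}=p$, $\theta=\mathbf{1}$, $D=\Br^{n}$ (the state space) with $\cc$ the nonnegative cone, and $\phi(i)=P_i$, the matrix obtained from $P$ by zeroing out the columns outside $\pi^{-1}(i)$, as in Example~\ref{ex_linrepofsofic}. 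Axioms (1)--(3) of Definition~\ref{defn_linearsemigp} then follow at once from $\sum_iP_i=P$ being stochastic and from $pP=p$; the normalization $(\theta^{*},\theta)=1$ holds; and axiom~(4) is exactly the assertion that $\nu$ charges every symbol, which holds because $S$ itself satisfies axiom~(4). So the real content is the converse.

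\textbf{The converse.} Assume $(S,D,\cc,\theta,\theta^{*})$ is finite-dimensional: $D=\Br^{m}$, $\cc$ the nonnegative cone, generators $a_1,\dots,a_r$ nonnegative $m\times m$ matrices, $\theta$ a nonnegative column vector, $\theta^{*}$ a nonnegative row vector, and (with $P:=\sum_i a_i$) $P\theta=\theta$, $\theta^{*}P=\theta^{*}$, $(\theta^{*},\theta)=1$, and $(\theta^{*},a_i\theta)>0$ for every $i$. The key observation I would exploit is that the triple $(x,\phi,y)=(\theta^{*},\ i\mapsto a_i,\ \theta)$ is, verbatim, a linear representation in the sense of Definition~\ref{def_linrep} of the measure $\nu$ that $S$ determines, because $\nu(\cc_0(i_1\cdots i_k))=F(a_{i_1}\cdots a_{i_k})=\theta^{*}a_{i_1}\cdots a_{i_k}\theta=x\phi(i_1\cdots i_k)y$. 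I would first record that $\nu$ is a shift-invariant probability measure --- shift-invariance is just $\theta^{*}P=\theta^{*}$ and $P\theta=\theta$ (axioms (2) and (1)), and $\nu$ is a probability measure since $F(e)=(\theta^{*},\theta)=1$ and all entries involved are nonnegative. Hence $F_{\nu}$ is linearly representable, so by the equivalence of conditions (1) and (4) in Theorem~\ref{th_rationalequiv}, $\nu$ is the image under a $1$-block map of a stationary $1$-step Markov measure; equivalently, the process determined by $S$ is a $1$-block factor of a stationary finite-state Markov process, as desired. To keep the argument self-contained within Furstenberg's framework rather than invoking Theorem~\ref{th_rationalequiv}, I would instead rerun its ingredients on $(\theta^{*},\ i\mapsto a_i,\ \theta)$: use Proposition~\ref{prop_redrep} to reduce to a representation in which $P$ is a direct sum of irreducible stochastic matrices, $\theta^{*}$ is positive, and $\theta=\mathbf{1}$, and then apply the $RC/CR$ strong-shift-equivalence construction of Theorem~\ref{rationaltheorem} on each irreducible summand to realize $\nu$ as a convex combination of $1$-block images of ergodic Markov measures, which assemble over the disjoint union of the summand shifts into a single $1$-block factor of a stationary Markov measure.

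\textbf{Main obstacle.} I do not expect a deep obstacle. With the machinery of linear representations, Proposition~\ref{prop_redrep}, and Theorem~\ref{th_rationalequiv} (or Theorem~\ref{rationaltheorem}) available, the statement is essentially a dictionary entry identifying ``linearly representable measure'' with ``measure determined by a finite-dimensional linear stochastic semigroup.'' The one genuine verification in the converse is that the data of such a semigroup really do meet the hypotheses required to invoke Theorem~\ref{th_rationalequiv} --- namely, that $(\theta^{*},\ i\mapsto a_i,\ \theta)$ represents an honest shift-invariant probability measure --- and this rests entirely on axioms (1)--(3) of Definition~\ref{defn_linearsemigp} together with the normalization $(\theta^{*},\theta)=1$. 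If one prefers the self-contained route, the work migrates into the reduction bookkeeping of Proposition~\ref{prop_redrep} (discarding zero rows and columns and offending initial/terminal components without spoiling the monoid-morphism property) and the $RC/CR$ construction of Theorem~\ref{rationaltheorem}; but both are already established in the text, so no new difficulty arises.
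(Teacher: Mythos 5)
Your proposal is correct and follows essentially the same route as the paper: the forward direction is the explicit finite-dimensional presentation built from the matrices $P_i$ in the paragraph preceding the theorem, and the converse is the construction of Theorem \ref{rationaltheorem} (equivalently, the implication $(1)\Rightarrow(4)$ of Theorem \ref{th_rationalequiv}), with the same observed simplification that the semigroup axioms already hand you $\theta^{*}$ and $\theta$ as fixed vectors of $P=\sum_i a_i$, so the reduction of Proposition \ref{prop_redrep} to a direct sum of irreducible stochastic summands goes through quickly and the non-ergodic case assembles as a convex combination over the summands.
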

In the statement of Theorem \ref{furstfinite}, ``Markov'' does not
presume ergodic. The construction for the theorem is essentially the
one given in Theorem \ref{rationaltheorem},
with a simplification. Because of the definition of linear
stochastic semigroup (Definition \ref{defn_linearsemigp}),
Furstenberg can begin with $\theta^*, \theta$ actual  fixed vectors
of $P:= \sum_i P_i$. The triple $(P ,\theta^*, \theta )$ corresponds
to $(P,x,y)$ in Theorem \ref{prop_redrep}, where $x,y$ need not be
fixed vectors. Thus Furstenberg can reduce more quickly to the form
where $\theta^*$ and $ \theta $ are {\em positive} fixed vectors of
$P$. Note that ``finite dimensional'' in Theorem \ref{furstfinite}
means more than having the cone $C$ of the linear stochastic
semigroup generating a finite-dimensional space $D$: here $C$ is a
cone in $\mathbb R^m$ with exactly $m$ (in particular, finitely
many) extreme rays.

\subsection{Sofic measures---Heller's approach}

Repeating some problems already stated, but with some refinements,
here are the natural questions about sofic measures which we are
currently discussing, in subshift language.

\begin{prob}\label{prob_1}
Let $\pi: \Omega_A \to Y$ be a 1-block map from a shift of finite
type to a (sofic) subshift and let $\mu$ be a (fully supported)
1-step Markov measure on $\Omega_A$. When is $\pi\mu$ Markov? Can
one determine what the {\em order} (a $k$ such that the measure is
$k$-step Markov) of the image measure might be?
\end{prob}

\begin{prob}\label{prob_2}
Given a shift-invariant probability measure $\nu$ on a subshift $Y$,
when are there a shift of finite type $\Omega_A$, a factor map $\pi:
\Omega_A \to Y$, and a 1-step shift-invariant fully supported Markov
measure $\mu$ on $\Omega_A$ such that $\pi\mu=\nu$?
\end{prob}

\begin{prob}\label{prob_3}
If $\nu$ is a sofic measure, how can one explicitly construct Markov
measures of which $\nu$ is a factor? Are there procedures for
constructing Markov measures that map to $\nu$ which have a minimal
number of states or minimal entropy?
\end{prob}

Problem \ref{prob_1} was discussed in \cite{BurkeRosenblatt1958},
for the reversible case. Later complete solutions depend on Heller's
solution of Problem \ref{prob_2}, so we discuss that first.
Effective answers to the first part of Problem \ref{prob_3} are
given by Furstenberg and in the proof of Theorem
\ref{rationaltheorem}.

{ Problem \ref{prob_2} goes back at least to a 1959 paper of Gilbert
\cite{Gilbert1959}. Following Gilbert and Dharmadhikari
\cite{Dharma1963,Dharma1963-2,Dharma1964,Dharma1965}, Heller (1965)
created his stochastic module theory and within this gave a
characterization
 \cite{Heller1965,Heller1967}
of sofic measures (1965). We describe this next. }

\subsubsection{Stochastic module}\label{subsubsec_stochmod}

We describe the stochastic module machinery setup of Heller
\cite{Heller1965} (with some differences in notation).
Let $S=\{1,2,...,s\}$ be a finite state space for a stochastic
process. Let $A_S$ be the associative real algebra with free
generating set $S$.
An {\em $A_S$-module} is a real vector space $V$ on
which $A_S$ acts by linear transformations, such that for each $i
\in S$ there is a linear transformation $M_i:V\to V$ such that a
word $u_1...u_k$ sends $v \in V$ to
$M_{u_1}(M_{u_2}(...(M_{u_k}(v))..)$. We denote an $A_S$-module as
 $(\{M_i\},V)$ or for brevity just $\{M_i\}$,
where the $M_i$ are the associated generating
linear transformations $V \to V$ as above.

\begin{defn}\label{def_stochmod}
A {\em stochastic S-module} for a stochastic process with
state space $S$
 is a triple $(l,\{M_i\},r)$, where
$(\{M_i\},V)$ is an $A_S$-module, $r\in
V$, $l \in V^*$, and for every word $u=u_1...u_t$ on $S$ its
probability $\text{Prob}(u)=\text{Prob}(\cc_0(u))$ is given by
 \be
 \text{Prob}(u)=    l M_{u_1}M_{u_2}...M_{u_t}r.
 \en
 \end{defn}
Given an $A_S$-module $M$, an $l \in V^*$ and $r \in V$, a few
axioms are required to guarantee that they define a stochastic
process with state space $S$. Define
$\sigma=\sum\{a_i: a_i \in S\}$ and
denote by $\cc_S$ the cone of polynomials in $A_S$ with nonnegative
coefficients. Then the axioms are that
\begin{enumerate}
\item $lr=1$;
\item $l(\cc_Sr) \subset [0, \infty)$;
\item for all $f \in A_S$, $l(f(\sigma -1)\, r)=0$.
\end{enumerate}

\begin{ex}{\it A stochastic module for a sofic measure.}  \label{stochsofic}
As we saw in Section \ref{sec_furst}, this setup of a stochastic
module arises naturally when a 1-block map $\pi$ is applied to a
1-step Markov measure $\mu$ with state space $S$ given by an
$s\times s$ stochastic transition matrix
 $P$
and row probability vector $l$. For each $i \in S$, let $M_i$ be the
matrix whose $j$'th column equals column $j$ of $P$ if $\pi (j)=i$
and whose other columns are zero. The probability of an $S$-word
$u=u_1...u_t$ is
 $lM_{u_1}M_{u_2}...M_{u_t}r$, where $r$ is the vector of all
1's. With $V=\Br^s$, presented as column vectors, $(l,
\{M_i\},r)$ is a
stochastic module for the process given by $\pi\mu$.
\end{ex}

\subsubsection{The reduced stochastic
module}\label{subsubsec_redstochmod}
{ A stochastic module
$(l,(\{M_i\},V),r)$ is {\it reduced} if (i) $V$ is the smallest
invariant (under the operators $M_i$) vector space containing $r$
and (ii) $l$ annihilates no nonzero invariant subspace of $V$. }
Given a stochastic module $(l,\{M_i\},r)$ for a stochastic process,
with its operators $M_i$ operating on the real vector space $V$, a
smallest stochastic module $(l',\{M_i'\},r')$ describing the
stochastic process may be defined as follows. Let $R_1$ be the
cyclic submodule of $V$ generated by the action on $r$; let $L_1$ be
the cyclic submodule of $V^*$ generated by the (dual) action on $l$;
let $V'$ be $R_1$ modulo the subspace annihilated by $L_1$; for each
$i \in S$ let $M'_i$ be the (well defined) transformation of $V'$
induced by $M_i$; let $r',l'$ be the elements of $V'$ and
$(V')^\perp$ determined by $r,l$. Now $(l',M',r')$ is {\em the
reduced stochastic module} of the process.
 $V'$ is the subspace generated by the
action of the $M'_i$ on $r'$, and no nontrivial submodule of $V'$ is
annihilated by $l'$. The reduced stochastic module is still a
stochastic module for the original stochastic process.
We say ``the"
reduced stochastic module because any stochastic modules describing
the same stochastic process have isomorphic reduced stochastic
modules.

\subsubsection{Heller's answer to Problem \ref{prob_2}}
\label{subsubsec_hellersanswer}
 We give some preliminary
notation.
 A process is ``induced from a Markov
chain'' if its states are lumpings of states of a finite state
Markov process, that is, there is a 1-block code which sends the
associated Markov measure to the measure associated to the
stochastic process. Let $(A_S)_+$ be the subset of $A_S$ consisting
of linear combinations of words with all coefficients nonnegative. A
{\it cone} in a real vector space $V$ is a union of rays from the
origin. A convex cone $\mathcal C$ is {\it strongly convex} if it
contains no line through the origin. It is {\it polyhedral} if it is
the convex hull of finitely many
rays.

\begin{thm}\label{thm_Heller}
Let $(l,(\{M_i\},V),r)$ be a reduced stochastic module.
The associated stochastic process is induced from a
Markov chain if and only if there is a cone $\mathcal C$
contained in the vector space $V$  such that the following hold:
\begin{enumerate}
\item
 $r\in \mathcal C$,
\item
 $l\mathcal C \subset [0,\infty )$,
\item
$(A_S)_+ \mathcal C \subset \mathcal C$,
\item
 $\mathcal C$ is strongly convex and  polyhedral.
\end{enumerate}
\end{thm}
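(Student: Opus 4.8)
The plan is to prove both implications by passing between the given reduced module $(l,(\{M_i\},V),r)$ and a companion stochastic module on some $\mathbb R^m$ all of whose data are nonnegative, using the fact recorded in Section~\ref{subsubsec_redstochmod} that any two stochastic modules for one process have isomorphic reduced modules. On one side such a nonnegative module supplies an obvious candidate cone --- the image of the nonnegative orthant; on the other side, a cone satisfying (1)--(4) is exactly the device needed to build a nonnegative module, hence a nonnegative linear representation, to which the earlier results apply.

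For the direction ``induced from a Markov chain $\Rightarrow$ cone'', I would start from the stochastic module $(l_0,(\{N_i\},\mathbb R^m),r_0)$ attached in Example~\ref{stochsofic} to a Markov chain lumping onto the process, in which the $N_i$ are nonnegative matrices, $r_0$ is the all-ones column, and $l_0\ge 0$. Forming its reduced module by the recipe of Section~\ref{subsubsec_redstochmod} --- restrict to the submodule $R_1$ generated by acting on $r_0$, then quotient $R_1$ by the subspace annihilated by the dual cyclic submodule $L_1$ generated from $l_0$ --- yields a module surjection $\psi$ onto an isomorphic copy of $(l,(\{M_i\},V),r)$, with $\psi(r_0)=r$, $l(\psi(w))=l_0(w)$ for $w\in R_1$, and $\psi(N_iw)=M_i\psi(w)$. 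I would then take $\mathcal C=\psi\bigl(R_1\cap(\text{nonnegative orthant})\bigr)$. Conditions (1)--(3) are immediate from these identifications together with the nonnegativity of $l_0$ and of the $N_i$ and the fact that $\psi$ is a module map; polyhedrality holds because $\mathcal C$ is a linear image of (nonnegative orthant)~$\cap$~(subspace). The one step requiring an idea is strong convexity: if $v$ and $-v$ both lie in $\mathcal C$, lift them to nonnegative $w,w'\in R_1$ with $\psi(w+w')=0$, that is $l_0\bigl(g(w+w')\bigr)=0$ for every $g\in A_S$; since $l_0\ge 0$ and every $g$ acts on nonnegative vectors by a nonnegative matrix, the nonnegative summands $l_0(gw)$ and $l_0(gw')$ must each vanish, so $w$ is annihilated by $L_1$ and $v=\psi(w)=0$. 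This ``no cancellation among nonnegative quantities'' argument is the heart of this implication.

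For the converse, suppose $\mathcal C$ satisfies (1)--(4). Being polyhedral and strongly convex, $\mathcal C$ is the nonnegative span of finitely many generators $c_1,\dots,c_m$, its extreme rays. From (3) we have $M_ic_j\in\mathcal C$, so I can pick nonnegative scalars with $M_ic_j=\sum_k N_i(k,j)c_k$, obtaining nonnegative $m\times m$ matrices $N_i$ and, via the map $\Phi\colon\mathbb R^m\to V$ carrying the $j$-th basis vector to $c_j$, the intertwining $M_i\Phi=\Phi N_i$. Using (1), write $r=\sum_j\rho_jc_j$ with $\rho_j\ge 0$ and set $\hat r=(\rho_1,\dots,\rho_m)^{\mathrm{tr}}\ge 0$, so $\Phi\hat r=r$; set $\hat l=(l(c_1),\dots,l(c_m))$, nonnegative by (2), with $\hat l=l\circ\Phi$. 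Then $\hat l\,N_{u_1}\cdots N_{u_t}\,\hat r=l\bigl(M_{u_1}\cdots M_{u_t}r\bigr)=\mathrm{Prob}(u)$, and the stochastic-module axioms (1)--(3) pass through $\Phi$ verbatim (for axiom (2) one also uses $(A_S)_+\mathcal C\subset\mathcal C$ and $r\in\mathcal C$). Thus the process admits a linear representation with nonnegative data. From here I would invoke Proposition~\ref{prop_redrep} to normalize to a representation in which $\sum_i\phi(i)$ is a direct sum of irreducible stochastic matrices, $y$ all-ones, $x$ a positive stochastic fixed vector, and then the construction in the proof of Theorem~\ref{rationaltheorem} to exhibit the measure as a convex combination $\nu=\sum_j t_j\,\pi_j\mu_j$ of $1$-block images of ergodic Markov measures into a common alphabet. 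Bundling the $\mu_j$ into a single reducible finite-state Markov chain on the disjoint union of their state spaces, with block-diagonal transitions and initial distribution $\sum_j t_j(\text{stationary vector of }\mu_j)$, and splicing the codes $\pi_j$, realizes the process as induced from a Markov chain.

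I expect the two genuinely non-formal points to be (i) the strong-convexity verification, which as above rests on the impossibility of cancellation among nonnegative numbers, and (ii) the final bookkeeping in the converse: Theorem~\ref{rationaltheorem} as stated produces a \emph{convex combination} of sofic measures, so repackaging it as a single Markov-chain lumping has to be carried out explicitly, and is legitimate only because ``Markov'' is not required to be ergodic here. Everything else --- the transport of the module axioms through $\psi$ and through $\Phi$, and the extraction of finitely many extreme-ray generators from a polyhedral strongly convex cone --- should be routine once nonnegativity is tracked with care.
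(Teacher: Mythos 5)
The paper does not actually prove Theorem~\ref{thm_Heller}: it states the result and attributes it to Heller, noting only that the original 1965 proof contained a minor error corrected in 1967. So there is no in-paper argument to compare against; what you have written is a self-contained reconstruction built from the paper's own machinery, and as far as I can check it is correct. Your forward direction is sound: the cone $\psi\bigl(R_1\cap(\text{orthant})\bigr)$ does satisfy (1)--(4), and your strong-convexity argument works because the annihilator of $L_1$ is exactly $\{w: l_0(gw)=0 \text{ for all words } g\}$, each word acts by a nonnegative matrix, and a vanishing sum of nonnegative reals forces each summand to vanish; the one external input is the paper's (unproved but asserted) uniqueness of the reduced module up to isomorphism, which legitimately transports the cone to the given $V$. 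Your converse is also fine, and in fact slightly over-engineered: once you have the nonnegative representation $(\hat l,\{N_i\},\hat r)$ with $\hat l N_{u_1}\cdots N_{u_t}\hat r=\mathrm{Prob}(u)$, the implication $(1)\Rightarrow(4)$ of Theorem~\ref{th_rationalequiv} already delivers ``image under a 1-block map of a shift-invariant 1-step Markov measure,'' which is precisely ``induced from a Markov chain'' in the (non-ergodic) sense used in Section~\ref{subsubsec_hellersanswer}; your explicit block-diagonal bundling of the ergodic pieces is a correct way to make that step concrete, but it is already packaged in the cited theorem. It is worth observing, as your argument implicitly shows, that strong convexity is only needed in the forward direction (it is what the reduced hypothesis buys you), while the converse uses only (1)--(3) and polyhedrality.
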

Heller stated this result in \cite[Theorem 1]{Heller1965}. The proof
there contained a minor error which was corrected in
\cite{Heller1967}. Heller defined a process to be {\em finitary} if
its associated reduced stochastic module is finite dimensional. (We
will call the corresponding measure finitary.) A consequence of
Theorem \ref{thm_Heller} is the (obvious) fact that the reduced
stochastic module of a sofic measure must be finitary. Heller gave
an example \cite{Heller1965} of a finitary process which is not a
1-block factor of a 1-step Markov measure, and therefore is not a
factor of any Markov measure. {}{ (However, a subshift with a weakly
mixing finitary measure is measure theoretically isomorphic to a
Bernoulli shift \cite{BinkowskaKaminski1984}.) }

\subsection{Linear automata and the reduced stochastic
module for a finitary measure}
\label{redstochsofic}

The 1960's and 1970's saw the development of the theory of probabilistic
automata  and
linear automata. We have not thoroughly reviewed this literature, and
 we may be missing from it significant points of contact
with and independent invention of the ideas under review.
However, we mention at least one.
A  finite dimensional stochastic module is a special case of
a linear  space
automaton, as developed
in  \cite{InagakiFutumuraMutuura1972} by
Inagaki, Fukumura and
Matuura, following earlier work on probabilistic
automata (e.g. \cite{Paz1971,Rabin1963}.
They associated to each
linear  space
automaton  its canonical (up to isomorphism)
equivalent irreducible linear space automaton.
When the linear space automaton is a stochastic module,
its irreducible linear space automaton
 corresponds exactly to Heller's canonical
(up to isomorphism) reduced stochastic module. Following
\cite{InagakiFutumuraMutuura1972} and Nasu's paper \cite{Nasu1985},
we will give some concrete results on the reduced stochastic module.

 We continue the Example \ref{stochsofic} and produce a
concrete version of the reduced stochastic module in the case that a
measure on a subshift is presented by a stochastic module which is
finite dimensional as a real vector space (for example, in the case
of a sofic measure). Our presentation follows a construction of Nasu
\cite{Nasu1985} (another is in \cite{InagakiFutumuraMutuura1972}).
Correspondingly, in this section we will reverse Heller's roles for
row and column vectors and regard the stochastic module as generated
by row vectors.

So, let $(u,\{M_i\},v)$ be a finite dimensional stochastic module on
finite alphabet $\mathcal A$. We take the presentation so that there
is a positive integer $n$ such that the $M_i$ are $n\times n$
matrices; $u$ and $ v$ are $n$-dimensional row and column vectors;
and  the map $a\mapsto M_a$ induces a monoid homomorphism $\phi$
from $\mathcal A^*$, sending a word $w=a_1\cdots a_j $ to the matrix
$\phi (w)= M_{a_1}\cdots M_{a_j} $.

Let $\mathcal U$ be the vector space generated by vectors of the
form $u\phi (w)$, $w\in \mathcal A^*$. Similarly define $\mathcal V$
as the vector space generated by vectors of the form $\phi (w)v$,
$w\in \mathcal A^*$. Let $k=\text{dim}(\mathcal U)$. If $k<n$, then
construct a smaller module (presenting the same measure) as follows.
Let $L$ be a $k\times n$ matrix whose rows form a basis of $\mathcal
U$. For each symbol $a$ there exists a  $k\times k$ matrix $\widehat
M_a$ such that $L M_a= \widehat M_a L $. Define $\widehat u$ to be
the $k$ dimensional row vector such that $\widehat uL =u$ and set
$\widehat v=Lv$. Let $a\to \widehat M_a$ induce a monoid homomorphism
$\widehat \phi$ from $\mathcal A^*$, sending a word $w=a_1\cdots a_j
$ to  $\widehat \phi (w)= \widehat M_{a_1}\cdots \widehat M_{a_j} $.
The subspace $\widehat{\mathcal U}$ of $\mathbb R^k$ generated by
vectors of the form $\widehat u \widehat \phi (w) $ is equal to
$\mathbb R^k$ because $\widehat{\mathcal U}L=\mathcal U$ and
$\text{dim}(\mathcal U)=k$.  It is easily checked that $\widehat u
\widehat \phi (w) \widehat v = u \phi (w) v$, for every $w$ in
$\mathcal A^*$.
 Let $\widehat{\mathcal V} $ be the subspace of $\mathbb R^k$
generated by column vectors $\widehat{\phi}(w)\widehat v$. We have
for each $a$ that $L M_av= \widehat M_a Lv= \widehat M_a \widehat v
$, so $L$ maps $\mathcal V$ onto $\widehat{\mathcal V}$. Also $L$
maps the space of $n$-dimensional column vectors onto $\mathbb R^k$.
It follows that if $\text{dim}(\mathcal V)=n$, then
 $\text{dim}(\widehat{\mathcal V})=k$.

If  $\text{dim}(\widehat{\mathcal V})<k$, then repeat the reduction
move, but applying it to $v$ (column vectors) rather than to $u$.
This will give
 a stochastic module
$(\overline u, \{\overline M_a\},\overline v )$, say with $m\times
m$ matrices $M_a$ and invariant subspaces $\overline{ \mathcal
U},\overline{\mathcal V}$ generated by the action on $\overline u,
\overline v$. By construction we have $\dim (\overline{\mathcal
V})=m$. And because $\widehat{\mathcal U}$ had full dimension, we
have $\dim (\overline{\mathcal U})=m$ also. {}{ Regarding $\mathcal
V$ as a space of functionals on $\mathcal U$, and letting
$\text{ker}(\mathcal V)$ denote the subspace of $\mathcal U$
annihilated by all elements of $\mathcal V$, we see that $u\mapsto
\overline u$ is a presentation of the map $\pi: \mathcal U\to
\mathcal U/\text{ker}(\mathcal V)$. Thus $(\overline u, \{\overline
M_a\},\overline v )$ is a presentation of the reduced stochastic
module. Also, for all $a$, $\pi M_a = \overline M_a \pi$, and
therefore the surjection $\pi$ (acting from the right) also
satisfies
\begin{equation}
 \label{intertwine}
\Big(\sum_a M_a \Big)\pi
= \pi \Big(\sum_a \overline M_a \Big)  \ .
\end{equation} }
If $(\widetilde u, \{\widetilde M_a\},\widetilde v )$ is another
such presentation of the reduced stochastic module, then it must
have the same (minimal) dimension $m$, and there will be  an
invertible matrix $G$ {}{(giving the isomorphism of the two
presentations)} such that for all
 $a$,
 \be \Big(\widetilde u, \ \{\widetilde M_a\},\ \widetilde v
\Big) = \Big(\overline uG, \ \{G^{-1}\overline M_aG\},\
G^{-1}\overline v \Big) \ .
 \en
 To find $G$, simply take $m$ words
$w$ such that the vectors $\overline{u}\overline{\phi} (w)$
{are a basis for}  $\overline \cu$, and let $G$ be the matrix such
that for each of these $w$, \be \overline u\overline{\phi} G =
\widetilde u\widetilde{\phi} \ . \en

The rows of the matrix $L$
above  (a basis for the space $\mathcal U$) may be obtained by examining
vectors $u\phi (w)$ in some order, with the length of $w$
nondecreasing, and including as a row any vector not in the span of
previous vectors. Let $\mathcal U_m$ denote the space spanned by
vectors $u\phi (w)$ with $w$ of length at most $m$.
 If for some $m$ it holds that $\mathcal U_m =
\mathcal U_{m+1}$, then $\mathcal U_m = \mathcal U$.
In particular,
if $n$ is the dimension of the original stochastic module,
then the matrix $L$ can be found by considering words of
length at most  $n-1$.

One can  check that
if  two equivalent stochastic modules have dimensions $n_1$ and $n_2$,
then they are equivalent (define the same measure) if and only if
they assign the same measure to words of length $n_1 + n_2 -1$.
(This is a special
case of \cite[Theorem 5.2]{InagakiFutumuraMutuura1972}.)
If the reduced stochastic module of a measure has dimension at
most $n$, then one can also construct the reduced stochastic
module from the measures of words of length at most $2n-1$
(one construction is given in
\cite[Theorem 6.2]{InagakiFutumuraMutuura1972}). However, without
additional information about the measure, this forces
the examination of a number of words which for a fixed
alphabet can grow exponentially
as a function of $n$, as indicated by the following example.
\begin{ex}
Let $X$ be the full shift on the three symbols $0,1,2$. Given $k\in
\mathbb N$, define a stochastic matrix $P$ indexed by $X$-words of
length $k+1$ by $P(10^k,0^k1)=1/6=P(20^k,0^k2)$;
$P(10^k,0^k2)=1/2=P(20^k,0^k1)$; $P(a_0\cdots a_{k},  a_1\cdots
a_{k+1}) =1/3$ otherwise; and all other entries  of $P$ are zero.
This matrix defines a $(k+1)$-step Markov measure $\mu$ on $X$ which
agrees with the Bernoulli $(1/3,1/3,1/3)$ measure on all words of
length at most $k+2$ except  the four words
$10^k1,10^k2,20^k1,10^k2$. The reduced stochastic module has
dimension at most $2k+1$, because for any word $U$ the conditional
probabilty function on $X$-words defined by $\rho_U: W\mapsto \mu
(UW|U)$ will be a constant multiple of $\rho_V$ for one of  the
words  $V= 0^{k+1},10^j, 20^j$, with $0\leq j \leq k$. The number of
$X$-words of length $k+2$ is  $3^{k+2}$.
\end{ex}

\subsection{Topological factors of finitary measures, and Nasu's
core matrix}
\label{remark_nasucore}

The content of this section is essentially taken from Nasu's paper
\cite{Nasu1985}, as we explain in more detail below. Given a square
matrix $M$, in this section we let $M^*$ denote any square matrix
similar to one giving the action of $M$ on the maximal invariant
subspace on which the action of $M$ is nonsingular.

Adapting terminology from \cite{Nasu1985}, we define the {\it core
matrix} of a finite dimensional stochastic module give by matrices,
$(l,\{M_i\},r)$, to be $\sum_i M_i$. A core matrix for a finitary
measure $\mu$ is any matrix which is the  core matrix of a reduced
stochastic module for $\mu$. This matrix is well defined only up to
similarity, but for simplicity of language we refer to {\it the core
matrix of} $\mu$, denoted Core($\mu$). Similarly, we define {\it the
eventual core matrix of} $\mu$ to be $\text{Core(}\mu\text{)}^*$,
denoted  $\text{Core}^*\text{(}\mu\text{)}$. E.g., if Core($\mu$) is
$\begin{pmatrix}
\frac 12 & 0 & 0 & 0 \\
0 & 1 & 0& 0 \\
0&0&0&1 \\
0&0&0&0
\end{pmatrix}
$,
then
$\text{Core}^*\text{(}\mu\text{)}$
is
$\begin{pmatrix}
\frac 12 & 0  \\
0 & 1
\end{pmatrix}
$.

Considering  square matrices $M$ and $N$ as linear
endomorphisms, we say $N$ is a quotient of $M$ if
there is a linear surjection $\pi$ such that,
writing action from the right, $M \pi = \pi N$.
(Equivalently, by duality, the action of $N$ is isomorphic
to the action of $M$ on some invariant subspace.)
 In this case, the characteristic polynomial of $M$
divides that of $N$ (but,  e.g.
$\begin{pmatrix}
2&0  \\
0&2
\end{pmatrix}
$
 is a principal submatrix of but not a quotient of
$\begin{pmatrix}
2&1&0  \\
0&2&1 \\
0&0&2
\end{pmatrix}
$).

\begin{thm} \label{coretheorem}
Suppose $\phi$ is a continuous factor map from a subshift $X$ onto a
subshift $Y$, $\mu \in \mathcal M(X)$ and $\phi \mu=\nu \in \mathcal
M(Y)$. Suppose $\mu$ is finitary.
 Then $\nu $ is finitary, and
$\text{Core}^*\text{(}\nu\text{)}$ is a quotient of
$\text{Core}^*\text{(}\mu\text{)}$. In particular, if $\phi$ is a
topological conjugacy, then $\text{Core}^*\text{(}\nu\text{)} =
\text{Core}^*\text{(}\mu\text{)} $.
\end{thm}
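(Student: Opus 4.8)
The plan is to run the whole argument through a concrete model of the reduced stochastic module. For a finitary measure $\lambda$ on a subshift with alphabet $\ca$, let $\cp(\lambda)\subset\mathbb{R}^{\ca^*}$ be the span (finite-dimensional, by hypothesis) of the prediction functions $f^{\lambda}_u\colon w\mapsto\lambda(\cc_0(uw))$, $u\in\ca^*$, equipped with the operators $R_a g=[\,w\mapsto g(aw)\,]$ and the core operator $T_\lambda=\sum_{a\in\ca}R_a$. Then $\cp(\lambda)$ is the smallest $\{R_a\}$-invariant subspace containing $f^{\lambda}_\epsilon$; the functional $g\mapsto g(\epsilon)$ annihilates no nonzero $\{R_a\}$-invariant subspace (a $g$ with $(R_{u_t}\cdots R_{u_1}g)(\epsilon)=0$ for all words vanishes on every word); and $\lambda(\cc_0(u_1\cdots u_t))$ equals $(R_{u_t}\cdots R_{u_1}f^{\lambda}_\epsilon)(\epsilon)$. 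So, up to transpose, the triple $(g\mapsto g(\epsilon),\{R_a\},f^{\lambda}_\epsilon)$ presents the reduced stochastic module of $\lambda$, whence $\operatorname{Core}(\lambda)$ is similar to $T_\lambda$ and $\operatorname{Core}^*(\lambda)$ to the restriction of $T_\lambda$ to its eventual image $\bigcap_n\operatorname{im}(T_\lambda^n)$. I shall use without comment: (i) if $M\sigma=\sigma N$ with $\sigma$ onto then $N$ is a quotient of $M$, and as $\sigma$ then carries $\operatorname{im}(M^n)$ onto $\operatorname{im}(N^n)$, also $N^*$ is a quotient of $M^*$; (ii) restriction of an endomorphism to an invariant subspace is a quotient of it (the duality remark before the theorem); (iii) endomorphisms each a quotient of the other are similar.

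By Curtis--Hedlund--Lyndon write $\phi$ as a block code of length $k$, say $(\phi x)_i=\Phi(x_{i-m}\cdots x_{i+n})$ with $k=m+n+1$, and put $\nu=\phi\mu$. For a word $v$ over $\ca(Y)$ let $\Phi^{-1}(v)$ be the set of $\ca(X)$-words of length $|v|+k-1$ that $\Phi$-encode to $v$; by shift-invariance $\nu(\cc_0(v))=\sum_{w\in\Phi^{-1}(v)}\mu(\cc_0(w))$. Define $\Pi\colon\mathbb{R}^{\ca(X)^*}\to\mathbb{R}^{\ca(Y)^*}$ by $(\Pi g)(v)=\sum_{w\in\Phi^{-1}(v)}g(w)$. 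Two identities drive the proof: $\Pi f^{\mu}_\epsilon=f^{\nu}_\epsilon$, and $\Pi T_\mu=T_\nu\Pi$. The latter is a one-line computation once one notes $\Phi^{-1}(bv)=\{\,aw'\colon w'\in\Phi^{-1}(v),\ a\in\ca(X),\ \Phi(aw'_1\cdots w'_{k-1})=b\,\}$, so summing over $b\in\ca(Y)$ and then over the letters above $b$ just sums over all $a\in\ca(X)$.

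To fit $\cp(\nu)$ inside the range of $\Pi$, enlarge the model: let $\cp^+(\mu)$ be the span of the functions $\mathbf 1_{[\,\cdot\ \text{begins with}\ t\,]}\cdot f^{\mu}_u$ over $0\le j<k$, $t\in\ca(X)^{j}$, $u\in\ca(X)^*$. It is finite-dimensional, contains $\cp(\mu)$, and is $T_\mu$-invariant; the identity $T_\mu\bigl(\mathbf 1_{[\,\cdot\ \text{begins with}\ t\,]}f^{\mu}_u\bigr)=\mathbf 1_{[\,\cdot\ \text{begins with}\ t_2\cdots t_j\,]}f^{\mu}_{ut_1}$ shows $T_\mu^{k-1}$ maps $\cp^+(\mu)$ into $\cp(\mu)$, so $T_\mu|_{\cp^+(\mu)}$ and $T_\mu|_{\cp(\mu)}$ have the same eventual image and $(T_\mu|_{\cp^+(\mu)})^*\sim\operatorname{Core}^*(\mu)$. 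Splitting a word $v'v$ over $\ca(Y)$ and tracking the length-$(k-1)$ overlap between the part of a $\Phi$-preimage word over $v'$ and the part over $v$ yields $f^{\nu}_{v'}=\Pi g_{v'}$ for an explicit $g_{v'}\in\cp^+(\mu)$ (and $f^{\nu}_\epsilon=\Pi f^{\mu}_\epsilon$); so $\cp(\nu)\subseteq\Pi(\cp^+(\mu))$, and in particular $\nu$ is finitary. Now $\Pi(\cp^+(\mu))$ is $T_\nu$-invariant and $\Pi\colon(\cp^+(\mu),T_\mu)\to(\Pi(\cp^+(\mu)),T_\nu)$ is a surjective intertwiner, so $(T_\nu|_{\Pi(\cp^+(\mu))})^*$ is a quotient of $(T_\mu|_{\cp^+(\mu)})^*\sim\operatorname{Core}^*(\mu)$; and since $\cp(\nu)$ is a $T_\nu$-invariant subspace of $\Pi(\cp^+(\mu))$, one gets $\operatorname{Core}^*(\nu)\sim(T_\nu|_{\cp(\nu)})^*$ as a further quotient. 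Thus $\operatorname{Core}^*(\nu)$ is a quotient of $\operatorname{Core}^*(\mu)$; applying this to $\phi$ and to $\phi^{-1}$ gives the conjugacy case.

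The step I expect to cost the most work is the inclusion $\cp(\nu)\subseteq\Pi(\cp^+(\mu))$, together with the check that enlarging $\cp(\mu)$ to $\cp^+(\mu)$ leaves the eventual image of $T_\mu$ unchanged: this is precisely where the anticipation of $\phi$ and the length-$(k-1)$ overlaps must be bookkept with care. The remaining ingredients---the intertwining identity, the behavior of quotients under passage to eventual images, and the identification of the model with the reduced stochastic module---are routine linear algebra.
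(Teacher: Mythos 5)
Your proof is correct, but it follows a genuinely different route from the paper's. The paper's argument is organized around Lemma \ref{corelemma}: recode $\phi$ to a 1-block code on the $k$-block presentation $X^{[k]}$, prove that $\text{Core}^*(\mu^{[k]})$ is a quotient of $\text{Core}^*(\mu)$ via an explicit elementary strong shift equivalence between the core matrices $P$ and $P'$, and then handle the remaining 1-block step by the standard grouping of symbol matrices over $\phi$-preimages of a symbol (as in Example \ref{stochsofic}), which produces a module for $\nu$ with the same core, followed by reduction as in (\ref{intertwine}). You avoid higher block presentations entirely: you work in a concrete prediction-function model of the reduced module (note this model is the transpose-dual of Heller's presentation, so your identification of $T_\lambda$ with $\operatorname{Core}(\lambda)$ tacitly uses that a real square matrix is similar to its transpose --- true, but worth a sentence), you absorb the memory and anticipation of the block code into the auxiliary space $\cp^+(\mu)$ with $T_\mu^{k-1}\cp^+(\mu)\subseteq\cp(\mu)$ (this plays exactly the role that the strong shift equivalence $P=RC$, $P'=CR$ plays in Lemma \ref{corelemma}), and you collapse the recoding step and the symbol-grouping step into the single surjective intertwiner $\Pi$ given by summation over $\Phi$-preimage words. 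Your approach buys a self-contained argument in which both conclusions (finitariness of $\nu$ and the quotient relation on eventual cores) are read off from one intertwining relation and elementary facts about eventual images; the paper's buys modularity, since the $n$-block lemma is the reusable topological-invariance statement and the 1-block step is a construction already displayed earlier in the paper. Two small points to fix in a write-up: the displayed identity $T_\mu\bigl(\mathbf 1_{[\,\cdot\ \text{begins with}\ t\,]}f^{\mu}_u\bigr)=\mathbf 1_{[\,\cdot\ \text{begins with}\ t_2\cdots t_j\,]}f^{\mu}_{ut_1}$ should be stated for nonempty $t$ only (for $t=\epsilon$ one has $T_\mu f^\mu_u=\sum_a f^\mu_{ua}\in\cp(\mu)$, which is what invariance needs), and you should fix an arbitrary extension of the block map $\Phi$ to all length-$k$ words over $\ca(X)$ so that $\Phi^{-1}(v)$ and the identity $\nu(\cc_0(v))=\sum_{w\in\Phi^{-1}(v)}\mu(\cc_0(w))$ are unambiguous; the spurious preimages this introduces carry $\mu$-measure zero, so nothing changes.
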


The key to the topological invariance in Theorem \ref{coretheorem}
is the following lemma (a measure version of
\cite[Lemma 5.2]{Nasu1985}).

\begin{lem}\label{corelemma}
Suppose $\mu$ is a finitary measure on a subshift $X$ and $n\in
\mathbb N$. Let $X^{[n]}$ be the $n$-block presentation of $X$; let
$\psi : X^{[n]}\to X$ the 1-block factor map defined on symbols by
$[a_1\cdots a_n] \mapsto a_1$; let $\mu^{[n]}\in \mathcal
M(X^{[n]})$ be the measure such that $\psi\mu^{[n]}= \mu$.
 Then
$\mu^{[n]}$ is finitary and $\text{Core}^*\text{(}\mu^{[n]}\text{)}$
is a quotient of $\text{Core}^*\text{(}\mu\text{)}$.
\end{lem}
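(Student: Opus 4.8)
The plan is to prove both assertions by producing, directly, a finite-dimensional stochastic module for $\mu^{[n]}$ out of the \emph{reduced} stochastic module of $\mu$, and then reducing it by the procedure of Section~\ref{redstochsofic} and comparing core matrices. We may assume $n\ge 2$, since $X^{[1]}=X$ and $\psi$ is the identity. Fix the reduced stochastic module $(u,\{M_a\}_{a\in\ca(X)},v)$ of $\mu$, with module space $V$ of dimension $m$, induced monoid morphism $\phi$ (so $\phi(w)=M_{w_1}\cdots M_{w_{|w|}}$), and $\text{Core}(\mu)=\sum_a M_a$. Two features of reducedness will be used: the vectors $u\phi(w)$ span $V$ and the vectors $\phi(w)v$ span $V$, which forces $\phi(w)=0$ (the zero matrix) for every $w\notin\cl(X)$, because then $\mu(\cc_0(w_1ww_2))=u\phi(w_1)\phi(w)\phi(w_2)v=0$ for all $w_1,w_2$; and shift-invariance of $\mu$ gives $\text{Core}(\mu)\,v=v$ and $u\,\text{Core}(\mu)=u$.

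First I would build the module for $\mu^{[n]}$ on $W=V_\epsilon\oplus\bigoplus_{p\in\cl_{n-1}(X)}V_p$, with each $V_p$ and $V_\epsilon$ a copy of $V$: take $\tilde u$ to be $u$ placed in the summand $V_\epsilon$; let $\tilde v$ add up the $V$-coordinates of all summands; and for each symbol $B=(b_1,\dots,b_n)$ of $X^{[n]}$ define $\tilde M_B$ to send $V_\epsilon\to V_{b_2\cdots b_n}$ by $\xi\mapsto\xi M_{b_1}\cdots M_{b_n}$, to send $V_{b_1\cdots b_{n-1}}\to V_{b_2\cdots b_n}$ by $\xi\mapsto\xi M_{b_n}$, and to annihilate every other summand. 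Using that a word $B_1\cdots B_k$ of $X^{[n]}$ corresponds to the $X$-word $c_1\cdots c_{k+n-1}$ with $(c_1,\dots,c_n)=B_1$ and $c_{n+\ell}$ the last symbol of $B_{\ell+1}$, one checks directly that $\tilde u\,\tilde M_{B_1}\cdots\tilde M_{B_k}\,\tilde v=u\phi(c_1\cdots c_{k+n-1})v=\mu^{[n]}(\cc_0(B_1\cdots B_k))$: overlap-incompatible products are $0$ by construction, and words illegal in $X$ contribute $0$ automatically since $\mu(\cc_0(\cdot))$ vanishes on them. Hence $(\tilde u,\{\tilde M_B\},\tilde v)$ is a finite-dimensional stochastic module for $\mu^{[n]}$, so $\mu^{[n]}$ is finitary.

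It remains to analyse the core $K=\sum_B\tilde M_B$ of this (generally non-reduced) module. Applying the reduction procedure of Section~\ref{redstochsofic} to $(\tilde u,\{\tilde M_B\},\tilde v)$ produces the reduced stochastic module of $\mu^{[n]}$, and by~(\ref{intertwine}) its core $\text{Core}(\mu^{[n]})$ is a quotient of $K$; consequently $\text{Core}^*(\mu^{[n]})$ is a quotient of $K^*$ — for a surjection $\pi$ with $M\pi=\pi N$ carries the eventual range of $M$ onto that of $N$ and intertwines the restrictions, so ``is a quotient of'' passes to eventual-core parts. Thus it suffices to prove that $K^*\cong\text{Core}^*(\mu)$ (equivalently, that $K^*$ is a quotient of $\text{Core}^*(\mu)$); the Lemma then follows by transitivity. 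The mechanism is that one application of $K$ annihilates the summand $V_\epsilon$, and for $j\ge 1$ the $V_q$-coordinate of any vector in the range of $K^j$ lies in the span of the vectors $\eta\,\phi(w)$ with $w\in\cl(X)$ and $\text{suffix}_{n-1}(w)=q$. On configurations of this type the ``forget the label'' map $(\xi_q)_q\mapsto\sum_q\xi_q$ from $W$ onto $V$ intertwines $K$ with $\text{Core}(\mu)$: the only terms that would obstruct this are of the form $\xi_q M_a$ with $qa\notin\cl_n(X)$, and these vanish because $\phi(wa)=0$ whenever $wa\notin\cl(X)$. Together with the identities $\text{Core}(\mu)\,v=v$ and $u\,\text{Core}(\mu)=u$, this identifies the eventual range of $K$ with that of $\text{Core}(\mu)$.

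The step I expect to be the main obstacle is exactly this last one: pinning down the eventual range of the non-reduced core $K$ and matching its dimension to that of $\text{Core}^*(\mu)$ (crudely, the eventual range sits inside $\bigoplus_q\mathrm{rowspace}(M_{q_{n-1}})$, which is far too large, so genuine cancellation among the summands must be exhibited). The delicate point is that $X$ need not be an SFT, so that ``legal followed by legal'' need not be legal, which a priori could endow $W$ with extra non-nilpotent directions for $K$; what rescues the argument is the vanishing $\phi(w)=0$ for $w\notin\cl(X)$ in the \emph{reduced} module of $\mu$, which forces precisely those would-be directions to be nilpotent and confines the eventual range of $K$ to a faithful copy of the eventual range of $\text{Core}(\mu)$.
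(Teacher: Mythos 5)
Your construction of the auxiliary module $(\tilde u,\{\tilde M_B\},\tilde v)$ on $V_\epsilon\oplus\bigoplus_{p\in\cl_{n-1}(X)}V_p$ is correct, and it does establish that $\mu^{[n]}$ is finitary; likewise your supporting observation that reducedness forces $\phi(w)=0$ for every $w\notin\cl(X)$ is right and is exactly the fact that controls the non-SFT case. The gap is in the core comparison, and it is the step you yourself flag as the expected obstacle. The ``forget the label'' surjection $\Sigma:(\xi_q)_q\mapsto\sum_q\xi_q$ does intertwine $K$ with $\text{Core}(\mu)$ on the invariant subspace $R=\bigoplus_q\mathrm{rowspace}(\phi(q))$ containing the eventual range of $K$, but this intertwining goes the wrong way for your conclusion: in the paper's sense of ``quotient'' ($M\pi=\pi N$ with $\pi$ surjective), $\Sigma$ exhibits $\text{Core}(\mu)|_{\Sigma(E)}$ as a quotient \emph{of} $K|_E$, whereas the Lemma needs $K^*$ (and hence $\text{Core}^*(\mu^{[n]})$) to be a quotient \emph{of} $\text{Core}^*(\mu)$. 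To reverse the direction you would have to prove that $\Sigma$ is injective on the eventual range $E$ of $K$ --- equivalently, that the non-reduced module carries no $K$-invertible ``junk'' killed by $\Sigma$ --- and this is neither stated nor proved. The natural candidate for an intertwiner in the correct direction, $\Gamma:\eta\mapsto(\eta\phi(q))_q$, does satisfy $\Gamma K=\text{Core}(\mu)\Gamma$, but then one needs $E\subseteq\Gamma(V)$, which is also not automatic since the $V_q$-components of vectors in $E$ need not all arise from a single $\eta$. One can in fact show $K^{n-1}|_R=\Gamma\circ\Sigma|_R$ and $\Sigma\circ\Gamma=\text{Core}(\mu)^{n-1}$, which by the $AB$ versus $BA$ trick gives $(K^*)^{n-1}\cong(\text{Core}^*(\mu))^{n-1}$ --- tantalizingly close, but still not the statement about $K^*$ itself.

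The paper avoids this entire eventual-range analysis. It first reduces to $n=2$ by induction (the $n$-block presentation of $X$ is the $2$-block presentation of $X^{[n-1]}$), and then arranges the lifted module so that its core $P'$ and $\text{Core}(\mu)=P$ sit in an \emph{elementary strong shift equivalence} $P'=RC$, $P=CR$; this yields $(P')^*\cong P^*$ at once, with no need to locate the eventual range of the non-reduced core, and the Lemma follows because $\text{Core}(\mu^{[2]})$ is a quotient of $P'$. If you want to salvage your direct (non-inductive) construction, the cleanest repair is to replace $K$ by the ``all transitions'' core (summing $\tilde M_B$ over \emph{all} $n$-tuples $B$, which still represents $\mu^{[n]}$ because $\phi(B)=0$ off $\cl(X)$) and exhibit that matrix as an $(n-1)$-fold chain of elementary strong shift equivalences down to $\text{Core}(\mu)$; as written, your argument does not close.
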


\begin{proof}[Proof of Lemma \ref{corelemma}]
For $n>1$, the $n$-block presentation of $X$ is (after a
renaming of the alphabet) equal to the 2-block presentation
of $X^{[n-1]}$. So, by induction it suffices to
prove the lemma for $n=2$.

Let $(l, \{P_i\},r)$ be a reduced stochastic
module for $\mu$, where the $P_i$ are $k\times k$
and $\mathcal A(X)=\{1,2,\dots ,m\}$.
For each symbol $ij$ of $\mathcal A(X^{[2]})$, define an
$mk\times mk$ matrix $P'_{ij}$ as an $m\times m$
system of $k\times k$ blocks, in which the
$i,j$ block is $P_i$ and the other entries
are zero. Define $l'=(l,\dots ,l)$
($m$ copies of $l$) and define
$r' =
\begin{pmatrix}
P_1r \\
\vdots \\
P_mr
\end{pmatrix}
$.
Then $(l',\{P'_{ij}\},r')$ is a stochastic module for
$\mu^{[2]}$, which is therefore finitary. Also, we have
an elementary strong shift equivalence of the core
matrices $P$ and $P'$,
\[
P' =
\begin{pmatrix}
P_1 \\
\vdots \\
P_m
\end{pmatrix}
\begin{pmatrix}
I &
\cdots &
I
\end{pmatrix}
\qquad , \qquad
P=
\begin{pmatrix}
I &
\cdots &
I
\end{pmatrix}
\begin{pmatrix}
P_1 \\
\vdots \\
P_m
\end{pmatrix} ,
\]
and therefore $P^*=(P')^*$.
Because
$\text{Core}\text{(}\mu^{[2]}\text{)}$
 is a quotient of $P'$,
it follows that
$\text{Core}^*\text{(}\mu^{[2]}\text{)}$ is  a
quotient of $(P')^*=P^*=
\text{Core}^*\text{(}\mu\text{)}$.
\end{proof}

If $\phi : X\to Y$ is a factor map of irreducible sofic
shifts of equal entropy, then $\phi$ must send the
unique measure of maximal entropy of $X$, $\mu_X$,
 to that for
$Y$. These are sofic measures, and consequently
Theorem \ref{coretheorem} gives computable obstructions
to the existence of such a factor map between given
$X$ and $Y$.
In his work, Nasu associated to given $X$ a certain
linear (not stochastic)
automaton. If we denote it $(l,\{M_i\},r)$,
and let $\log (\lambda)$ denote the topological entropy of
$X$, then $(l,\{(1/\lambda )M_i\},r)$ would be a stochastic
module for $\mu_X$. In the end Nasu's core matrix is
$\lambda \text{Core}\text{(}\mu_X\text{)}$.
Nasu remarked in \cite{Nasu1985}
that his arguments could as well be
carried out with respect to measures to obtain his results,
and that is what we have done here.

Eigenvalue relations between core matrices (not so named)
of equivalent linear automata already appear in
\cite[Sec.7]{InagakiFutumuraMutuura1972}. Also,
Kitchens
\cite{Kitchens1981} earlier used the (Markov) measure of
maximal entropy for an irreducible shift of finite type
in a similar way to show that the existence of a factor
map of equal-entropy irreducible SFTs, $\Omega_A \to
\Omega_B$, implies (in our terminology)
that $B^*$ is a quotient of $A^*$. This is a special
case of Nasu's constraint.

%

\section{When is a sofic measure Markov?}
\label{sec_markov}

\subsection{When is the image of a 1-step Markov measure under a 1-block
map 1-step Markov?}\label{sec_when}

We return to considering Problem \ref{prob_1}. In this subsection,
suppose $\mu$ is a 1-step Markov measure, that is, a 1-step fully
supported shift-invariant Markov measure on an irreducible shift of
finite type $\Omega_A$. Suppose that $\pi$ is a 1-block code with
domain $\Omega_A$. How does one characterize the case
when the measure
 $\pi\mu$
 is again 1-step Markov?

{To our knowledge, this problem was introduced, in the language of
Markov processes, by Burke and Rosenblatt (1958)
 \cite{BurkeRosenblatt1958}, who solved it in
the reversible case \cite[Theorem 1]{BurkeRosenblatt1958}.} Kemeny
and Snell \cite[Theorems 6.4.8 and 6.3.2]{KemenySnell1960} gave
another exposition
and introduced the ``lumpability'' terminology.
 Kemeny and Snell defined a (not necessarily
stationary) finite-state Markov process $X$ to be {\it lumpable}
with respect to a partition of its states if for every initial
distribution for $X$ the corresponding quotient process is Markov.
They defined $X$ to be {\it weakly lumpable} with respect to the
partition if there exists an initial distribution for $X$ for which
the quotient process $Y$ is Markov. In all of this, by Markov they
mean 1-step Markov.
Various problems around
these ideas were (and continue to be) explored and solved.
For now
 we restrict our attention to the question of the title of this
subsection and describe three answers.

\subsubsection{Stochastic module answer}

\begin{thm}\label{thm_markovrank}  Let $(l,M,r)$ be
a presentation of the reduced stochastic module of a sofic measure
$\nu$ on $Y$, in which $M_i$ denotes the matrix by which a symbol
$i$ of $\mathcal A(Y)$ acts on the module. Suppose $k\in \mathbb N$.
Then the sofic measure $\nu$ is $k$-step Markov if and only if every
product $M_{i(1)}\cdots M_{i(k)}$ of length $k$ has rank at most 1.
\end{thm}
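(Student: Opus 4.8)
The plan is to reduce the $k$-step Markov property to the rank condition through the conditional measures on the future, exploiting the structure of a \emph{reduced} stochastic module. For a word $W=w_1\cdots w_s$ over $\mathcal A(Y)$ write $\ell_W=l\,M_{w_1}\cdots M_{w_s}$ (a row vector; also $M_U:=M_{u_1}\cdots M_{u_t}$ and $M_\epsilon:=I$). Then for every word $U$ one has $\nu(\cc_0(WU))=\ell_W M_U\,r$, so, whenever $\nu(\cc_0(W))=\ell_W r>0$, the conditional measure on the future is $\nu_W(U)=\ell_W M_U r/\ell_W r$; and unwinding the definition, $\nu$ is $k$-step Markov exactly when $\nu_W=\nu_V$ for every positive-measure word $W$ of length $\ge k$, where $V$ is the length-$k$ suffix of $W$. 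Two consequences of reducedness of $(l,M,r)$ (see Section \ref{redstochsofic}) will do the work. Since the column vectors $\{M_U r:U\in\mathcal A(Y)^*\}$ span the underlying space: (a) if $\nu(\cc_0(W))=0$ then $\ell_W M_U r=\nu(\cc_0(WU))=0$ for all $U$, hence $\ell_W=0$; and (b) for positive-measure words $W,W'$ we have $\nu_W=\nu_{W'}$ iff $\ell_W$ and $\ell_{W'}$ span the same line (the forward implication because $\ell_W/\ell_W r-\ell_{W'}/\ell_{W'}r$ annihilates every $M_U r$). Since the row vectors $\{l\,M_w:w\in\mathcal A(Y)^*\}$ span the whole row space: (c) for any word $V$ the vectors $\{l\,M_{wV}:w\}=\{(l M_w)M_V\}$ span exactly the row space of $M_V$, whose dimension is $\operatorname{rank}(M_V)$. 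I will also use the triviality (from shift-invariance, $\cc_0(W_1V)\subset\sigma$-image of $\cc_0(V)$, giving $\nu(\cc_0(W_1V))\le\nu(\cc_0(V))$) that a length-$k$ suffix of a positive-measure word again has positive measure.

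For the ``if'' direction, assume every product $M_V$, $|V|=k$, has rank at most $1$. Let $W$ be a word of length $\ge k$ with $\nu(\cc_0(W))>0$, write $W=W_1V$ with $|V|=k$. Then $\ell_W=(l M_{W_1})M_V$ lies in the row space of $M_V$; since $\nu(\cc_0(V))\ge\nu(\cc_0(W))>0$ the nonzero vector $\ell_V$ lies there too, so that row space equals $\mathbb R\ell_V$ and $\ell_W$ is a nonzero multiple of $\ell_V$. By (b), $\nu_W=\nu_V$, so the conditional distribution of the future given a past of length $\ge k$ depends only on the last $k$ symbols; that is, $\nu$ is $k$-step Markov.

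For the ``only if'' direction, assume $\nu$ is $k$-step Markov and fix $V$ with $|V|=k$. If $\nu(\cc_0(wV))=0$ then $\ell_{wV}=0$ by (a). If $\nu(\cc_0(wV))$ and $\nu(\cc_0(w'V))$ are both positive, the $k$-step Markov property gives $\nu_{wV}=\nu_{w'V}$ (both equal $\nu_V$), so by (b) the vectors $\ell_{wV}$ and $\ell_{w'V}$ span the same line. Hence $\{l\,M_{wV}:w\in\mathcal A(Y)^*\}$ spans a space of dimension at most $1$; by (c) this space is the row space of $M_V$, so $\operatorname{rank}(M_V)\le1$. Since $M_V=M_{i(1)}\cdots M_{i(k)}$ for $V=i(1)\cdots i(k)$, this is exactly the claim.

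The identity $\nu(\cc_0(WU))=\ell_W M_U r$ and the conditional-measure manipulations are routine. The step I expect to need the most care is isolating and correctly invoking the two reducedness facts — that both the $l$-orbit and the $r$-orbit are spanning — since it is precisely these (not mere finite-dimensionality) that make the rank criterion \emph{equivalent} to, rather than only sufficient for, $k$-step Markovianity: a non-reduced presentation obtained by padding with an irrelevant invariant summand can carry length-$k$ products of rank $>1$ while still representing a $k$-step Markov measure.
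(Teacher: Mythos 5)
Your proof is correct. There is, however, no in-paper argument to compare it against: the paper states Theorem \ref{thm_markovrank} and defers entirely to Heller (the case $k=1$) and Holland (general $k$) for the proof. Your route --- identifying $k$-step Markovianity with the statement that the conditional future measure $\nu_W$ depends only on the length-$k$ suffix $V$ of $W$, translating this via the module into the statement that all the row vectors $lM_{wV}$ lie on a single line, and observing that by reducedness these vectors span exactly the row space of $M_V$ --- is the natural one and is presumably essentially what those references do; in particular your facts (a), (b), (c) correctly isolate where the two spanning properties of a reduced module enter, and your closing observation that reducedness is needed only for the ``only if'' direction is right. Two small points worth tightening. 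First, the inequality $\nu(\cc_0(W_1V))\le\nu(\cc_0(V))$ comes from $\cc_0(W_1V)\subset\sigma^{-|W_1|}\cc_0(V)$ together with shift-invariance, not from a ``$\sigma$-image.'' Second, the premise of (c) --- that $\{lM_w: w\in\ca^*\}$ spans the full dual space --- should be derived explicitly from Heller's reducedness condition (ii): the annihilator of this span is an $\{M_i\}$-invariant subspace killed by $l$, hence zero. (Section \ref{redstochsofic} establishes the spanning property only for the particular matrix presentation constructed there, so citing it alone leaves a one-line gap for a general reduced module.) Neither point affects the validity of the argument.
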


The case $k=1$ of Theorem \ref{thm_markovrank} was proved by Heller
\cite[Prop.3.2]{Heller1965} An equivalent characterization was given
a good deal later, evidently without awareness of Heller's work, by
Bosch \cite{Bosch1974/75}, who worked from the papers of Gilbert
\cite{Gilbert1959} and Dharmadhikari \cite{Dharma1963}. The case of
general $k$ in Theorem \ref{thm_markovrank} was proved by Holland
\cite[Theorem 4]{Holland1968}, following Heller.

\subsubsection{Linear algebra answer}

{ One can approach the problem of deciding whether a sofic measure
is Markov with straight linear algebra. There is a large literature
using such ideas in the context of automata, control theory and the
``lumpability'' strand of literature emanating from Kemeny and Snell
(see e.g. \cite{GurvitsLedoux2005} and its references). Propositions
\ref{linalgprocedure} and \ref{kmarkov} and Theorem
\ref{theoremsecondmarkov} are taken from Gurvits and Ledoux
\cite{GurvitsLedoux2005}. As with previous references, we are
considering only a fragment of this one. }

Let $N$ be the size of the alphabet of the irreducible shift of
finite type $\Omega_A$. Let $\pi$ be a 1-block code mapping
$\Omega_A$ onto a subshift $Y$. Let $P$ be an $N\times N$
irreducible stochastic matrix defining a 1-step Markov measure $\mu$
on $\Omega_A$. Let $p$ be the positive stochastic row
fixed  vector of $P$. Let $U$ be the matrix such
that $U(i,j)=1$ if $\pi$ maps the state $i$ to the state $j$, and
$U(i,j)=0$ otherwise. Given $i\in \mathcal A(\Omega_A)$, let
$\overline i$ be its image symbol in $Y$. Given  $j\in \mathcal
A(Y)$, let $P_j$ be the matrix of size $P$ which equals $P$  in
columns $i$ such that $\overline i = j$, and is zero in other
entries. Likewise define $p_j$.
{Given a $Y$-word $w=j_1\cdots j_k$, we let $P_w=P_{j_1}\cdots
P_{j_k}$.}

{ Alert: We are using parenthetical notation for matrix and vector
entries and subscripts for lists.}
 If $\pi
\mu$ is a 1-step Markov measure on $Y$, then it is defined
using a stochastic row vector $q$ and stochastic matrix $Q$. The
vector $q$ can only be $pU$, and the entries of $Q$ are determined
by $Q(j,k) =(p_jP_{k}U)(k)/q(j)$.
 Let $\nu$ denote the Markov
measure defined using $q,Q$. Define $q_j, Q_j$ by replacing entries
of $q,Q$ with zero in columns not indexed by $j$. For a word
$w=j_0\dots j_k$ on symbols from $\mathcal A(Y)$, we have $ (\pi
\mu) (\mathcal C_0(w)) = \nu (\mathcal C_0(w)) $ if and only if \be
\label{markovcondition} p_{j_0}P_{j_1}\cdots P_{j_k}U = p_{j_0}U
Q_{j_1}\cdots Q_{j_k} \ \ \en (since $q_{j_0}=p_{j_0}U $). Thus
$\pi \mu = \nu$ if and only if (\ref{markovcondition})
 holds for all $Y$-words
 $w$.
This remark is already more or less in
Kemeny and Snell \cite[Theorem
6.4.1]{KemenySnell1960}.

For the additional argument which produces a finite procedure, we
define certain vector spaces (an idea already in
 \cite{Ellis,Kitchens1982,RubinoSericola1989,RubinoSericola1991,
GurvitsLedoux2005} and elsewhere).

Let $\mathcal V_k$ denote the real vector space
generated by the row vectors $p_{j_0}P_{j_1}\cdots P_{j_k} $
 such
that $j_0j_1\cdots j_t$ is a $Y$-word and $0\leq t \leq k$. So,
$\mathcal V_0$ is the vector space generated by the vectors
$p_{j_0}$, and $\mathcal V_{k+1}$ is the subspace generated by
$\mathcal V_k \cup \{vP_j: v\in \mathcal V_k, j\in \mathcal A(Y)\}$.
{}{ In fact, for $k\geq 0$, we claim that
\begin{align}
\label{vfirst}
\mathcal V_k &=
\langle \
\{
p_{j_0}P_{j_1}\cdots P_{j_k}:
j_0\cdots j_k \in \mathcal A(Y)^{k+1}
\}\
\rangle
 \ , \ \textnormal{ and} \\
\label{vsecond}
\mathcal V_{k+1} &=
\langle \
\{
vP_j:
v\in \mathcal V_k, j\in \mathcal A(Y)
\}\
\rangle \ ,
\end{align}
where $\langle \ \ \rangle$ is used to denote span.
Clearly (\ref{vsecond}) follows from (\ref{vfirst}), which
is a consequence of stationarity, as follows.
Because $\sum_jp_j = p = pP =\sum_jpP_j$, and for $i\neq j$
the vectors $p_i$ and $pP_j$ cannot both be nonzero
in any coordinate, we $p_j=pP_j$. So, given $t$ and
$j_1\cdots j_t$, we have
\begin{align*}
p_{j_1}P_{j_2}\cdots P_{j_t} &=
pP_{j_1}P_{j_2}\cdots P_{j_t} \\
&= \sum_{j_0} p_{j_0}P_{j_1}P_{j_2}\cdots P_{j_t} ,
\end{align*}
from which (\ref{vsecond}) easily follows. Let $\mathcal V=
\langle \cup_{k\geq 0}\mathcal V_k\rangle $.
}

\begin{prop} \label{linalgprocedure}
Suppose $P$ is an $N\times N$ irreducible stochastic matrix and
$\phi$ is a 1-block code.
 Let the vector spaces $\cv_k$ be defined as above, and let
 $n$ be the smallest positive integer such that
$\mathcal V_n=\mathcal V_{n+1}$. Then $n\leq  N -|\mathcal A (Y)|$,
$\mathcal V_n=\mathcal V$, and the following are
equivalent:
\begin{enumerate}
\item
$\phi \mu $ is a 1-step Markov measure on the image  of $\phi$.
\item
$p_{j_0}P_{j_1}\cdots P_{j_n}U = p_{j_0}U
Q_{j_1}\cdots Q_{j_n}$,
for all
$j_0\cdots j_n \in
\mathcal A (Y)^{n+1}$.
\end{enumerate}
\end{prop}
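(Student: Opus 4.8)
The plan is to dispose of the two easy assertions first and put all the weight on the equivalence. The spaces $\mathcal V_0\subseteq \mathcal V_1\subseteq\cdots$ form an increasing chain of subspaces of $\mathbb R^N$, and from the recursion $\mathcal V_{k+1}=\mathcal V_k+\sum_{j}\mathcal V_kP_j$ one sees immediately that $\mathcal V_n=\mathcal V_{n+1}$ forces $\mathcal V_{n+1}=\mathcal V_{n+2}$, hence $\mathcal V_k=\mathcal V_n$ for all $k\ge n$ and $\mathcal V_n=\bigcup_k\mathcal V_k=\mathcal V$. For the numerical bound the point is that $\dim \mathcal V_0=|\mathcal A(Y)|$: each $p_j$ is the restriction of the strictly positive stationary vector $p$ to the nonempty fibre $\pi^{-1}(j)$, so the $p_j$ have pairwise disjoint supports and are linearly independent; since each strict inclusion in the chain raises the dimension by at least one and the dimension cannot exceed $N$, the chain strictly increases at most $N-|\mathcal A(Y)|$ times, so $n\le N-|\mathcal A(Y)|$.

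For (1)$\Rightarrow$(2): if $\phi\mu$ is $1$-step Markov then, as recalled just before the statement, its defining data can only be $q=pU$ and the matrix $Q$ displayed there, so $\phi\mu=\nu$; and $\phi\mu=\nu$ is exactly the assertion that (\ref{markovcondition}) holds for every $Y$-word, of which (2) is the case $|w|=n+1$. The substance is (2)$\Rightarrow$(1): one must promote (2) to (\ref{markovcondition}) for all $Y$-words, i.e.\ to $\phi\mu=\nu$. First, using stationarity in the form $p_j=\sum_{j_0}p_{j_0}P_j$ and $q_j=\sum_{j_0}q_{j_0}Q_j$ (these follow from $\sum_jp_j=pP=\sum_jpP_j$ and its $Q$-analogue, using that the summands on each side have disjoint supports, exactly as in the derivation of (\ref{vsecond})), one deduces from (2) that (\ref{markovcondition}) holds for every $Y$-word of length $\le n+1$. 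Then one propagates to all lengths by induction on the length, the inductive step using the stabilization $\mathcal V_n=\mathcal V_{n+1}$: the prefix vector $p_{j_0}P_{j_1}\cdots P_{j_m}$ of a long word lies in $\mathcal V_m=\mathcal V_n$, so by (\ref{vfirst}) it is a combination of the vectors $p_{i_0}P_{i_1}\cdots P_{i_n}$ coming from length-$(n+1)$ words, to which (2) applies; feeding this into (\ref{markovcondition}) for the long word reduces it to instances already known. Conceptually, the cleanest packaging is to check that the ``solution subspace'' $\mathcal Z=\{v\in\mathcal V: vP_{w}U=vU\,Q_{w}\text{ for all }Y\text{-words }w\}$ is invariant under every $P_j$ (a one-line computation from its definition, applied once to the word $j$ and once to $jw$), so that since $\mathcal V$ is the smallest $P_j$-invariant subspace containing the $p_{j_0}$, it is enough to put each $p_{j_0}$ into $\mathcal Z$ --- which is what the induction accomplishes, and whereupon the relations defining $\mathcal Z$ at $v=p_{j_0}$ are precisely (\ref{markovcondition}) for all words beginning with $j_0$.

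The main obstacle is precisely this propagation: arranging the induction so that it closes using only words of the length that appears in (2), rather than one longer. This is where the calibration of $n$ as the stabilization index of $\{\mathcal V_k\}$ is essential and where the indexing of $\mathcal V_k$ against word lengths must be tracked with care; the facts $\dim\mathcal V_0=|\mathcal A(Y)|$ and $\mathcal V_{k+1}=\mathcal V_k+\sum_j\mathcal V_kP_j$ are exactly the bookkeeping that makes it work. Everything else is routine linear algebra on top of the cylinder-measure identity (\ref{markovcondition}) set up before the statement.
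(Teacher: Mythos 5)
You follow the paper's route exactly: the chain stabilization and dimension count for $n\le N-|\mathcal A(Y)|$, the reduction of (\ref{markovcondition}) to the one-step intertwining relation $vP_jU=vUQ_j$ on the stable space, and propagation by induction on word length. The preliminaries are fine (your justification that $\dim\mathcal V_0=|\mathcal A(Y)|$ via the disjoint supports of the $p_j$ is correct, as is the stationarity argument $p_j=pP_j$, $q_j=qQ_j$ reducing (2) to all shorter words). But the step you yourself single out as ``the main obstacle'' is not actually carried out, and as described it cannot be. Condition (2), for words $j_0\cdots j_n$, yields the relation $vP_jU=vUQ_j$ only for $v$ in the span of the prefix vectors $p_{j_0}P_{j_1}\cdots P_{j_{n-1}}$, that is, on $\mathcal V_{n-1}$, because extracting the one-step relation from (\ref{markovcondition}) at a word consumes the last symbol. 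Extending (\ref{markovcondition}) to words $j_0\cdots j_{n+1}$ requires the relation at $v=p_{j_0}\cdots P_{j_n}\in\mathcal V_n$, and by the minimality of $n$ we have $\mathcal V_{n-1}\subsetneq\mathcal V_n$. Your proposed fix --- expand $v$ in the spanning set $\{p_{i_0}\cdots P_{i_n}\}$ of $\mathcal V_n$ and ``apply (2)'' --- is circular: (2) applied to such a spanning vector only identifies $p_{i_0}\cdots P_{i_n}U$, whereas what is needed is the comparison of $p_{i_0}\cdots P_{i_n}P_jU$ with $p_{i_0}\cdots P_{i_n}UQ_j$, which is precisely (\ref{markovcondition}) for a word of length $n+2$, an instance not yet known. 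The $\mathcal Z$-packaging does not escape this either: putting each $p_{j_0}$ into $\mathcal Z$ is literally the conclusion (1), so nothing is gained.

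The gap is not cosmetic. In Example \ref{figblack1/3} one has $N=3$ and $|\mathcal A(Y)|=2$, hence $n=1$, and condition (2) reads $p_{j_0}P_{j_1}U=p_{j_0}UQ_{j_1}$ for all pairs $j_0j_1$ --- which holds automatically, since $Q$ is defined precisely so that $\nu$ and $\phi\mu$ agree on all two-symbol cylinders. Yet $\phi\mu$ there is not Markov of any order. So (2) as written does not imply (1); the hypothesis must involve words of length $n+2$ (equivalently, $n$ must be calibrated as the first index with $\mathcal V_{n-1}=\mathcal V_n$), after which your induction closes by exactly the mechanism you describe. To be fair, the paper's own proof is equally terse here and asserts the same off-by-one equivalence between (\ref{markovcondition}) up to length $K$ and the intertwining relation on $\mathcal V_K$; you have faithfully reproduced that argument, gap included. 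But having correctly identified the propagation as the crux, the proposal needed either to supply the missing step or to discover that it fails.
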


\begin{proof}
For $k\geq 1$, we have $\mathcal V_k \subset \mathcal V_{k+1}$,
and also
\be
\mathcal V_k = \mathcal V_{k+1} \quad\text{implies}\quad \mathcal
V_k = \mathcal V_{t} =\mathcal V
\ \ \ \textnormal{for all }t\geq k \ .
\en
Because $\dim (\mathcal V_0) = |\mathcal A(Y)|$,
it follows  that $n\leq   N - |\mathcal A(Y)|$.

Because (1) is equivalent to
(\ref{markovcondition})  holding for all $Y$-words
$j_0j_1\cdots j_k, \ k \geq 0$, we have that (1) implies (2).

Now suppose (2) holds. For $K\geq 1$, the linear condition
(\ref{markovcondition})  holds for all $Y$-words of length $k$ less
than or equal to $K$ if and only if $vUQ_j = vP_jU$ for all $j$ in
$\mathcal A(Y)$ and all $v$ in $\mathcal V_K$.
($U$ is the matrix defined above.)
 Because $\cv_K=\cv_n$ for $K\geq n$,
  we conclude
from (2) {}{and (\ref{vfirst})}
 that (\ref{markovcondition}) holds for all
 $Y$-words
$j(0)j(1)\cdots j(k),\ k \geq 0$, and therefore (1) holds.
\end{proof}

{}{Next we consider an irrreducible $N\times N$ matrix $P$ defining
a 1-step Markov measure $\mu$ on $\Omega_A$ and a 1-block code
$\phi$ from $\Omega_A$ onto a subshift $Y$. Given a positive integer
$k\geq 1$, we are interested in understanding when $\phi \mu$ is
$k$-step Markov. We use notations $U,p,p_j,P_j, \mathcal V_t $ and
$\mathcal V_n = \mathcal V$ as above. Define a stochastic row vector
$q$ indexed by $Y$-words of length $k$, with $q(j_0\cdots j_{k-1}) =
(p_{j_0} P_{j_1}\cdots P_{j_{k-1}}U)(j_{k-1}). $ Let $Q$ be the
square  matrix indexed by $Y$-words of length $k$ whose nonzero
entries are defined by
\[
Q(j_0\cdots j_{k-1},j_1\dots j_k)
= \frac
{\Big( p_{j_0}P_{j_1}\cdots P_{j_k}U\Big) (j_k)}
{q(j_0\cdots j_{k-1})} \ .
\]
Then $Q$ is an irreducible stochastic matrix and $q$ is a positive
stochastic vector such that $qQ=q$. Let $\nu$ be the $k$-step Markov
measure defined on $Y$ by $(q,Q)$. The measures $\nu$ and $\phi \mu$
agree on cylinders $\cc_0 (j_0\cdots j_k)$ and therefore on all
cylinders $\cc_0 (j_0 \cdots j_t)$ with $0\leq t \leq k$. Clearly,
if $\phi \mu$ is $k$-step Markov then $\phi \mu$ must equal $\nu$. }

{}{
\begin{prop} \label{kmarkov} \cite{GurvitsLedoux2005}
Suppose $P$ is an $N\times N$ irreducible stochastic matrix defining
a 1-step Markov measure $\mu$
on $\Omega_A$ and $\phi: \Omega_A \to Y $ is
a 1-block code. Let $k$ be a fixed positive integer.
With the notations above, the following are equivalent.
\begin{enumerate}
\item
$\phi \mu $ is a $k$-step Markov measure
(i.e., $\phi \mu = \nu$).
\item For every $Y$-word $w=w_0\cdots w_{k-1}$
of length $k$ and
every $v\in \mathcal V$,
 \be \label{firstkmarkovcondition}
vP_w(PU-\mathbf 1Q^{w}) =
 0 ,
 \en
 where $P_w=P_{w_0}\cdots P_{w_{k-1}}$;
$\mathbf 1$ is the size $N$ column vector with every
entry 1;  and
 $Q^{w}$ is the stochastic row vector defined by
 \be Q^{w}(j) = Q(w_0\cdots w_{k-1},w_1\cdots w_{k-1}j) \  , \quad
 j\in \mathcal A(Y)\ .
 \en
\end{enumerate}
\end{prop}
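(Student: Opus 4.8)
The plan is to reduce everything to the facts already in place just above the statement: $\phi\mu$ is $k$-step Markov if and only if $\phi\mu=\nu$, and $\phi\mu$ and $\nu$ already agree on every cylinder of length at most $k+1$. So the only content is the comparison of the two measures on longer cylinders, and the mechanism will be an induction on cylinder length driven by a ``next-symbol'' reformulation of the Markov property.

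First I would set up a short matrix \emph{dictionary}. For any $Y$-word $u_0\cdots u_m$ one checks by induction that $(\phi\mu)(\cc_0(u_0\cdots u_m))=p_{u_0}P_{u_1}\cdots P_{u_m}\mathbf 1$; hence the unnormalized conditional state vector after reading $u_0\cdots u_{m-1}$ is the row vector $s=p_{u_0}P_{u_1}\cdots P_{u_{m-1}}$, with $s\mathbf 1=(\phi\mu)(\cc_0(u_0\cdots u_{m-1}))$. Since $P_j$ is $P$ with all columns outside $\phi^{-1}(j)$ zeroed and $U(i,j)=1$ exactly when $\phi(i)=j$, column $j$ of $PU$ equals $P_j\mathbf 1$, so the vector $\big(u\mapsto(\phi\mu)(\cc_0(u_0\cdots u_{m-1}u))\big)_{u}$ is $s\,(PU)$. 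On the $\nu$-side, by the very definition of $(q,Q)$, when the last $k$ symbols of the word are $w=w_0\cdots w_{k-1}$ the analogous next-symbol vector for $\nu$ is $\nu(\cc_0(u_0\cdots u_{m-1}))\,Q^{w}$; once one knows $\nu$ and $\phi\mu$ agree on $\cc_0(u_0\cdots u_{m-1})$, this equals $(s\mathbf 1)\,Q^{w}=s\,(\mathbf 1 Q^{w})$. Thus, given agreement on the length-$m$ prefix, agreement on all length-$(m+1)$ extensions $\cc_0(u_0\cdots u_{m-1}u)$ is precisely the linear identity $s\,(PU-\mathbf 1 Q^{w})=0$, with $w$ the length-$k$ suffix of $u_0\cdots u_{m-1}$.

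With the dictionary in hand, (2)$\Rightarrow$(1) would proceed by induction: assume $\phi\mu$ and $\nu$ agree on all $Y$-cylinders of length $\le m$ with $m\ge k+1$; for a word $u_0\cdots u_{m-1}$ write $w=u_{m-k}\cdots u_{m-1}$ and factor $s=p_{u_0}P_{u_1}\cdots P_{u_{m-1}}=\tilde v\,P_w$ where $\tilde v=p_{u_0}P_{u_1}\cdots P_{u_{m-k-1}}\in\mathcal V_{m-k-1}\subset\mathcal V$ (legitimate since $m\ge k+1$); then (2) applied to $\tilde v$ and $w$ gives $s(PU-\mathbf 1 Q^{w})=0$, i.e.\ agreement on every length-$(m+1)$ extension, so starting from the pre-established agreement through length $k+1$ we get $\phi\mu=\nu$. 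For the converse, assuming $\phi\mu=\nu$, it suffices by linearity to verify (2) for a spanning family of $\mathcal V$, e.g.\ for $v=p_{j_0}P_{j_1}\cdots P_{j_t}$; then $vP_w=p_{j_0}P_{j_1}\cdots P_{j_t}P_{w_0}\cdots P_{w_{k-1}}$, which is $0$ unless $j_0\cdots j_t w\in\cl(Y)$ (in which case (2) is trivial), and otherwise is the unnormalized conditional state vector for the word $j_0\cdots j_t w$, of length $>k$, with $(\phi\mu)(\cc_0(j_0\cdots j_t w))=vP_w\mathbf 1>0$. For such a word the dictionary, $\phi\mu=\nu$, and the $k$-step Markov property of $\nu$ (next-symbol conditional depends only on the last $k$ symbols, which are $w$) give $vP_w(PU)=(vP_w\mathbf 1)\,Q^{w}=vP_w(\mathbf 1 Q^{w})$, that is $vP_w(PU-\mathbf 1 Q^{w})=0$; linearity then yields (2) for all $v\in\mathcal V$.

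The genuinely delicate part is not the induction but making the dictionary watertight: that column $j$ of $PU$ is $P_j\mathbf 1$, that $p_{u_0}P_{u_1}\cdots P_{u_{m-1}}$ is honestly the unnormalized conditional distribution on the hidden state (and is zero precisely when $u_0\cdots u_{m-1}\notin\cl(Y)$), and that the next-symbol conditional of the candidate $\nu$ really is $Q^{w}$ when the word ends in $w$ — all of which amount to unwinding the definitions of $U$, the $P_j$, $q$ and $Q$ with care about which words are allowed. Note also that Proposition~\ref{linalgprocedure} (the stabilization $\mathcal V_n=\mathcal V$ with $n\le N-|\mathcal A(Y)|$) is what makes condition~(2) a finite check, although it is not logically needed for the equivalence as stated.
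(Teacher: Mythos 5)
Your proposal is correct and takes essentially the same route as the paper: both rest on the identity $(\phi\mu)\cc_0(u_0\cdots u_{m-1}j)-\nu\cc_0(u_0\cdots u_{m-1}j)=\big(\tilde v\,P_w[PU-\mathbf 1Q^w]\big)(j)$ with $\tilde v=p_{u_0}P_{u_1}\cdots P_{u_{m-k-1}}\in\mathcal V$, run an induction on cylinder length starting from the pre-established agreement through length $k+1$, and invoke the spanning description of $\mathcal V$ to pass between allowed words and all of $\mathcal V$. The only cosmetic difference is that the paper packages both implications into a single ``iff'' at each induction step, whereas you separate the converse and verify it on spanning vectors via the $k$-step Markov property of $\nu$ directly; the computations are the same.
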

}

 \begin{proof}
 We continue to denote by
$z(j)$ the
entry in the $j$'th coordinate of a row
 vector
$z$. By  construction of $\nu$ we have for $t=0$ that \be
\label{inductionstatement} (\pi \mu)\cc_0(j_0\cdots j_{t+k}) = \nu
\cc_0(j_0\cdots j_{t+k})\  \quad \textnormal{for all }j_0\cdots
j_{t+k}\in \mathcal A^{t+k+1} \ . \en Now suppose $t$ is a
nonnegative integer and (\ref{inductionstatement}) holds for $t$.
Given
 $j_0\cdots j_{t+k}$, let $w$ be its terminal word of
length $k$. Then for  $j\in \mathcal A(Y)$,
\begin{align*}
&(\pi \mu)\cc_0(j_0\cdots
j_{t+k}j) -
\nu \cc_0(j_0\cdots
j_{t+k}j)
\\
 = \
&\Big( p_{j_0}P_{j_1}\cdots P_{j_{t+k}}P_jU\Big)(j) -
\Big(\nu \mathcal C_0(j_0\cdots j_{t+k})Q^{w}\Big)(j) \\
=\ &\Big( p_{j_0}P_{j_1}\cdots P_{j_{t+k}}P_jU\Big)(j) -
\Big( (p_{j_0}P_{j_1}\cdots
P_{j_{t+k}}\mathbf 1)Q^{w}\Big) (j) \\
=\ &\Big( p_{j_0}P_{j_1}\cdots P_{j_{t+k}}
[P_jU -\mathbf 1 Q^{w}]\Big)(j)  \\
=\ &\Big( p_{j_0}P_{j_1}\cdots P_{j_{t}}P_w [PU -\mathbf 1
Q^{w}]\Big)(j) ,
\end{align*}
where  the term $P_{j_1}\cdots P_{j_{t}}$ is included only if $t>0$,
and the last equality holds because
the $j$th columns of $PU$ and $P_jU$ are equal.
Thus, given (\ref{inductionstatement}) for $t$,
by (\ref{vfirst}) we
have (\ref{inductionstatement}) for $t+1$ if and only
$vP_w[PU -\mathbf 1 Q^{w}]=0$ for all $v\in \mathcal V_{t}$
and all $w$ of length $k$.
It follows from induction that (\ref{inductionstatement})
holds for all $t\geq 0$ (i.e. $\pi \mu = \nu$) if and only
if
(\ref{firstkmarkovcondition}) holds for all $v\in \mathcal V$.
\end{proof}

Because $\mathcal V$ can be computed,  Proposition
\ref{kmarkov} gives an algorithm, given $k$,  for determining whether
the image of a 1-step Markov measure is a $k$-step Markov measure.
The next result gives a criterion which does not require computation
of the matrix $Q$.

 {
\begin{thm} \cite{GurvitsLedoux2005} \label{theoremsecondmarkov}
Let notations be as in Proposition
\ref{kmarkov}. Then $\phi \mu $ is a $k$-step Markov measure on $Y$
if and only if for every $Y$-word $w$ of length $k$, \be
\label{secondmarkovcondition} \Big((\mathcal VP_w)\cap \ker(U)\Big)P
\subset \ker(U) \ . \en
\end{thm}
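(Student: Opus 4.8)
The plan is to read off the criterion from Proposition~\ref{kmarkov}, which already asserts that $\phi\mu$ is $k$-step Markov if and only if, for every $Y$-word $w$ of length $k$ and every $v\in\mathcal V$, one has $vP_w(PU-\mathbf 1Q^{w})=0$. Fix such a $w$, and write $W=\mathcal V P_w$; since $\mathcal V$ is a subspace and $P_w$ is linear, $W$ is a subspace of $\mathbb R^{N}$ (row vectors), and as $v$ ranges over $\mathcal V$ the product $vP_w$ ranges over all of $W$. Introduce the linear map $T\colon W\to\mathbb R^{\mathcal A(Y)}$, $T(v)=vPU-(v\mathbf 1)Q^{w}$ (note $v(\mathbf 1Q^{w})=(v\mathbf 1)Q^{w}$). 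Then Proposition~\ref{kmarkov} says exactly that $\phi\mu$ is $k$-step Markov iff $T$ vanishes on $W$ for every length-$k$ word $w$, so it suffices to prove, for each fixed $w$, the purely linear-algebraic equivalence
\[
T|_{W}\equiv 0 \iff \big((\mathcal VP_w)\cap\ker(U)\big)P\subset\ker(U).
\]

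First I would record two structural facts about $W$. \emph{(i)} Every $v\in W$ is supported on the set of symbols $i\in\mathcal A(\Omega_A)$ with $\phi(i)=w_{k-1}$, since the last factor $P_{w_{k-1}}$ of $P_w=P_{w_0}\cdots P_{w_{k-1}}$ has nonzero columns only there. Consequently $vU$ has at most one nonzero coordinate, namely $w_{k-1}$, where its value is $\sum_i v(i)=v\mathbf 1$; hence for $v\in W$ we have $vU=0$ if and only if $v\mathbf 1=0$. \emph{(ii)} The distinguished vector $v_0:=pP_w$ lies in $W$ (because $p=\sum_j p_j$ and each $p_j\in\mathcal V_0\subset\mathcal V$), and $v_0\mathbf 1=q(w)>0$ by the positivity of $q$ noted in Proposition~\ref{kmarkov}. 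A short computation unwinding the definition of $Q^{w}$ shows $T(v_0)=0$: using $p_{w_0}=pP_{w_0}$ we get $p_{w_0}P_{w_1}\cdots P_{w_{k-1}}=v_0$, and since $v_0P_jU$ is supported at coordinate $j$ with value $(v_0PU)(j)$, the formula $Q^{w}(j)=(v_0P_jU)(j)/q(w)$ collapses to $Q^{w}=v_0PU/(v_0\mathbf 1)$, i.e.\ $v_0PU=(v_0\mathbf 1)Q^{w}$. (This is just the base case $t=0$ already appearing in the proof of Proposition~\ref{kmarkov}.)

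Granting \emph{(i)} and \emph{(ii)}, both implications are short. If $T|_{W}\equiv 0$ and $v\in W\cap\ker(U)$, then $v\mathbf 1=0$ by \emph{(i)}, so $0=T(v)=vPU$, i.e.\ $vP\in\ker(U)$; hence $(W\cap\ker U)P\subset\ker U$. Conversely, suppose $(W\cap\ker U)P\subset\ker U$ and let $v\in W$ be arbitrary. Put $v'=v-\frac{v\mathbf 1}{v_0\mathbf 1}\,v_0\in W$; then $v'\mathbf 1=0$, so $v'\in W\cap\ker U$ by \emph{(i)}, whence $v'P\in\ker U$, i.e.\ $v'PU=0$, and therefore $T(v')=v'PU=0$. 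Linearity of $T$ together with \emph{(ii)} gives
\[
T(v)=T(v')+\frac{v\mathbf 1}{v_0\mathbf 1}\,T(v_0)=0+0=0 ,
\]
so $T$ vanishes on $W$. Running this argument over all length-$k$ words $w$ and invoking Proposition~\ref{kmarkov} completes the proof.

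The bookkeeping is routine; the one place that needs care is pinning down that the scalar in $T$ must be precisely $v\mathbf 1$ — equivalently, that $v_0=pP_w$ satisfies $T(v_0)=0$. This is exactly what allows one to split off the component $v'$ lying in $\ker U$, and it is the step where stationarity ($p_j=pP_j$), the support property \emph{(i)}, and the explicit definition of $Q^{w}$ are all used. Everything else is linear algebra.
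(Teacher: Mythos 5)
Your proof is correct and follows essentially the same route as the paper's: both reduce to the linear condition of Proposition \ref{kmarkov}, check that $pP_w$ satisfies it, and use the fact that $\mathcal VP_w$ is spanned by $pP_w$ together with $\mathcal VP_w\cap\ker(U)$, on which the condition collapses to $uPU=0$. Your explicit support observation (i) is the (correct) content behind the paper's terser statement that ``$vP_w=0$ if and only if $vP_w\mathbf 1=0$'' (which should read $vP_wU=0$), so nothing further is needed.
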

}
{
\begin{proof}
Let $w=w_0\cdots w_{k-1}$ be a $Y$-word of length $k$. Using the
computations of the proof of Proposition \ref{kmarkov}, we obtain for
$j\in \mathcal A(Y)$ that
\begin{align*}
0 \ =&\
\pi \mu \mathcal C_0(w_0\cdots w_{k-1}j)  -
\nu \mathcal C_0(w_0\cdots w_{k-1}j)  \\
=&\
\Big( p_{w_0}P_{w_1}\cdots P_{w_{k-1}} [PU-\mathbf 1Q^w] \Big) (j) \\
 =&\
\Big( pP_{w_0}P_{w_1}\cdots P_{w_{k-1}} [PU-\mathbf1Q^w] \Big) (j) \\
=&\ \Big( pP_w[PU-\mathbf 1Q^w] \Big) (j) \ .
\end{align*}
Consequently, the vector $v=p$ satisfies
(\ref{firstkmarkovcondition}). Moreover,
\[
(pP_wU)(w_{k-1}) =(p_{w_0}P_{w_1}\cdots P_{w_{k-1}}U)(w_{k-1}) =
\pi\mu\mathcal C_0(w)>0 ,
\]
and therefore $pP_w\notin \text{ker}(U)$.
Because $vP_w=0$ if and only if $vP_w\mathbf 1=0$, the space
$\mathcal VP_w$ is spanned by $pP_w$ and $(\mathcal VP_w)\cap
\text{ker}(U)$. Thus (\ref{firstkmarkovcondition}) holds for all
$v\in \mathcal V$ if and only if
(\ref{firstkmarkovcondition}) holds for all $v\in \mathcal V$ such
that
$vP_w\in \text{ker}(U)$, which is equivalent to
(\ref{secondmarkovcondition}).
\end{proof}
}

{ Gurvits and Ledoux \cite[Sec. 2.2.2]{GurvitsLedoux2005} explain
how Theorem \ref{theoremsecondmarkov} can be used to produce an
algorithm, polynomial in the number $N$ of states,
 for deciding
whether  $\pi \mu$ is a 1-step Markov measure.
 }

\subsection{Orders of Markov measures under codes}\label{sec_orders}

This section includes items relevant to
the second part of Problem \ref{prob_1}.

\begin{defn} Given positive integers $m,n,k$ with $1\leq k \leq n$,
recursively define integers $N(k,m,n)$ by setting
\begin{align}
N(n,m,n) &= 1  \\
N(k,m,n) &= (1+m^{N(k+1,m,n)})N(k+1,m,n)\ ,
\quad \text{if } 1\leq k < n \ .
\end{align}
\end{defn}

\begin{prop}\label{prop_stepbound}
Suppose $\pi:\Omega_A \to Y$ is a 1-block code and $\mu$ is a 1-step
Markov measure on $\Omega_A$. Let $n$ be the dimension of the
reduced stochastic module of $\pi \mu$ and let $m=|\mathcal A(Y)|$.
Suppose $n\geq 2$.
  (In the case $n=1$, $\pi \mu$ is Bernoulli.) Let
$K=N(2,m,n)$. If $\pi \mu$
 is not $K$-step Markov, then it is not $k$-step Markov for
any $k$.
\end{prop}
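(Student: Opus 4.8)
The plan is to re-express everything through the rank criterion of Theorem~\ref{thm_markovrank} and then bound, by an iterated pigeonhole, the length of the longest word whose module matrix has rank $\ge 2$. Fix a presentation $(l,\{M_i\},r)$ of the reduced stochastic module of $\nu:=\pi\mu$, with the $M_i$ of size $n$, $i$ ranging over $\mathcal A(Y)$ (where $|\mathcal A(Y)|=m$), and write $\phi(w)=M_{w_1}\cdots M_{w_\ell}$ for a word $w=w_1\cdots w_\ell$. Two elementary observations set things up. First, $\operatorname{rank}(\phi(uv))\le\min\{\operatorname{rank}(\phi(u)),\operatorname{rank}(\phi(v))\}$, so if every word of length $\ell$ has $\operatorname{rank}(\phi(\cdot))\le1$ then so does every longer word; hence by Theorem~\ref{thm_markovrank} the measure $\nu$ is $k$-step Markov for \emph{some} $k$ if and only if $B:=\{w:\operatorname{rank}(\phi(w))\ge2\}$ is finite, and $B$ is closed under passing to subwords. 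Second, in a \emph{reduced} module any word $w$ not allowed in $Y$ has $\phi(w)=0$: the span of $\{\phi(w)\phi(u)r:u\}$ is $M_i$-invariant and is annihilated by $l$ (since $\nu(\cc_0(wu))=0$ for all $u$), so it is $0$, and as $\{\phi(u)r:u\}$ spans $\mathbb R^n$ this forces $\phi(w)=0$. Consequently every word with $\operatorname{rank}(\phi(w))\ge1$ is automatically a legal $Y$-word, which eliminates all ``allowedness'' bookkeeping below. By the first observation it then suffices to prove: if $B$ contains a word of length $N(2,m,n)$ then $B$ is infinite.

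Let $L_\rho:=\sup\{|w|:\operatorname{rank}(\phi(w))\ge\rho\}\in\{0,1,\dots\}\cup\{\infty\}$, so $L_n\le L_{n-1}\le\cdots\le L_2$. The heart of the argument is the downward induction on $\rho$: \emph{for $2\le\rho\le n$, if $L_\rho\ge N(\rho,m,n)$ then $L_\rho=\infty$}; applying this at $\rho=2$, with $K=N(2,m,n)$ and noting that ``$\nu$ is not $K$-step Markov'' means precisely $L_2\ge K$, finishes the theorem. The base case $\rho=n$ uses $N(n,m,n)=1$: if $L_n\ge1$ then some $M_a$ is invertible, hence $M_a^k=\phi(a^k)$ is invertible, so by the reducedness remark $a^k$ is a legal word with $\operatorname{rank}(\phi(a^k))=n\ge2$ for every $k$, giving $L_n=\infty$. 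For the inductive step, assume the claim for $\rho+1$ and suppose $L_\rho\ge N(\rho,m,n)=(1+m^{N'})N'$, where $N':=N(\rho+1,m,n)$. If $L_{\rho+1}\ge N'$, the induction hypothesis gives $L_{\rho+1}=\infty$, hence $L_\rho=\infty$. Otherwise $L_{\rho+1}<N'$, i.e. every word of length $\ge N'$ has rank $\le\rho$; choose a word $w$ of length exactly $(1+m^{N'})N'$ with $\operatorname{rank}(\phi(w))\ge\rho$, so every subword of $w$ of length $\ge N'$ in fact has rank exactly $\rho$. Cutting $w$ into $1+m^{N'}$ consecutive blocks of length $N'$ and invoking the pigeonhole principle, two of these blocks coincide as words, giving a factorization $w=P\,C\,X\,C\,S$ with $|C|=N'$ and $\operatorname{rank}(\phi(C))=\rho$.

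It remains to pump. Write $\alpha=\phi(P),\ \gamma=\phi(C),\ \xi=\phi(X),\ \beta=\phi(S)$ and $W:=\operatorname{Im}(\gamma)$ (so $\dim W=\rho$); I claim $\operatorname{rank}\!\big(\alpha\gamma(\xi\gamma)^k\beta\big)=\rho$ for all $k\ge0$, i.e. each word $PC(XC)^kS$ lies in $B$. Since $|XC|\ge N'\ge1$, these words have unbounded length, so $B$ is infinite and $L_\rho=\infty$, completing the induction. For the claim, note $\operatorname{rank}(\alpha\gamma\xi\gamma\beta)=\operatorname{rank}(\phi(w))=\rho=\operatorname{rank}(\gamma)$, and chase ranks through the factorization: each of $\gamma\beta$, $\alpha\gamma$, $\gamma\xi\gamma$ is squeezed between $\operatorname{rank}(\gamma)=\rho$ above and $\operatorname{rank}(\phi(w))=\rho$ below, hence has rank exactly $\rho$. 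This forces $\operatorname{Im}(\gamma\beta)=W$, forces $\alpha|_W$ to be injective, and forces $\gamma\xi$ to restrict to a bijection of $W$ onto itself (because $\operatorname{Im}(\gamma\xi\gamma)=\gamma(\xi(W))$ has dimension $\rho$ and lies inside $W$). Then $\alpha\gamma(\xi\gamma)^k\beta=\alpha(\gamma\xi)^k(\gamma\beta)$ sends $\mathbb R^n$ onto $\alpha\big((\gamma\xi)^k(W)\big)=\alpha(W)$, which has dimension $\rho$; this proves the claim.

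The step I expect to be the main obstacle is the pumping lemma of the last paragraph: showing that the rank is preserved \emph{exactly} under arbitrary powers $(\xi\gamma)^k$ — in particular that the restriction $\gamma\xi|_W$ is genuinely bijective, not merely of rank $\rho$ as a map on $\mathbb R^n$ — requires the careful rank-squeezing bookkeeping through the five-fold product, and one must also be attentive to the degenerate cases where $P$, $X$, or $S$ is empty. The secondary point of care is matching the pigeonhole counts to the recursion defining $N(k,m,n)$ precisely enough that the two cases of the inductive step fit together. Everything else — the rank criterion itself, submultiplicativity of rank, and the vanishing of $\phi$ on disallowed words in a reduced module — is routine given the results already in the paper.
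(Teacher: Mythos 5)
Your proof is correct and follows essentially the same route as the paper's: the rank criterion of Theorem \ref{thm_markovrank}, a downward induction on rank matching the recursion for $N(k,m,n)$ (the paper's Lemma \ref{lem_boundrank}), a pigeonhole argument producing a repeated block of length $N(k+1,m,n)$, and the same pumping step (the paper's Lemma \ref{lem_prodrank} is precisely your observation that the intervening product restricts to an automorphism of the image of the repeated block). The only differences are cosmetic: you carry the prefix and suffix through to conclude rank exactly $\rho$ where the paper only pumps the core $(P_1\cdots P_t)^m P_1$ and needs rank at least $\rho$, and your side remark that a reduced module annihilates disallowed words is inessential (Theorem \ref{thm_markovrank} quantifies over \emph{all} products) and, as stated, should invoke the submodule generated by $\{\phi(w)\phi(u)r : u\}$ rather than its mere span, since the latter is not obviously invariant.
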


Before proving Proposition \ref{prop_stepbound},
 we state our main interest in it.

\begin{cor} \label{cor_markovalgorithm}
Suppose $\mu$ is a 1-step Markov measure on an irreducible SFT
$\Omega_A$ determined by a stochastic matrix $P$, and that there are
algorithms for doing arithmetic in the field generated by the
entries of $P$. Suppose  $\phi$ is a block code on $\Omega_A$. Then
there is an algorithm for deciding whether the measure $\phi \mu$ is
Markov.
\end{cor}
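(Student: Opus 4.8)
The plan is to combine the a~priori bound of Proposition~\ref{prop_stepbound} with the finite tests of Section~\ref{sec_when}. First I would reduce to the case of a $1$-block code. Recoding $\phi$ to a topologically equivalent $1$-block code on the $k$-block presentation $\Omega_A^{[k]}$, where $k$ is the width of a window determining $\phi$, replaces $\mu$ by the measure $\mu^{[k]}$ on $\Omega_A^{[k]}$; since $\mu$ is $1$-step Markov, so is $\mu^{[k]}$, and its defining stochastic matrix and stationary vector have entries lying in the field $\mathbb F$ generated by the entries of $P$ (each such entry is an entry of $P$, or a product of entries of $P$ and of the stationary vector $p$ of $P$, and $p$ itself is computed from $P$ by linear algebra over $\mathbb F$). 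This recoding changes neither the image measure nor, being a conjugacy upstairs, whether that image is Markov. So we may assume $\pi\colon\Omega_A\to Y$ is a $1$-block code, $\mu$ is $1$-step Markov, and we must decide whether $\nu=\pi\mu$ is Markov of some order; here $\mathbb F$ is a field in which, by hypothesis, addition, multiplication, division by nonzero elements, and equality testing can be carried out effectively.

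Next I would compute the reduced stochastic module of the sofic measure $\nu$. One starts from the explicit finite-dimensional stochastic module of Example~\ref{stochsofic}, whose operators are the column-restrictions $P_a$ of $P$ and which therefore has all data in $\mathbb F$, and then performs the reduction described in Section~\ref{redstochsofic}: a finite sequence of rank computations, extractions of a basis from the lists of vectors $u\phi(w)$ (resp.\ $\phi(w)v$) taken in order of nondecreasing word length, and solutions of linear systems --- all over $\mathbb F$, and all terminating since, as noted there, words of length at most $N-1$ already detect stabilization, $N$ being the size of the initial module. This yields a presentation $(l,\{M_i\},r)$ of the reduced stochastic module together with its dimension $n$. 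If $n=1$, then $\nu$ is Bernoulli, hence Markov, and the algorithm halts with answer ``Markov''.

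If $n\geq 2$, put $m=|\mathcal A(Y)|$ and compute $K=N(2,m,n)$ from its defining recursion. By Proposition~\ref{prop_stepbound}, $\nu$ is $k$-step Markov for some $k$ if and only if $\nu$ is $K$-step Markov. So it remains to decide whether $\nu=\pi\mu$ is $K$-step Markov, which is exactly what Proposition~\ref{kmarkov} (equivalently Theorem~\ref{theoremsecondmarkov}) does for the fixed value $k=K$: one computes the finite-dimensional space $\mathcal V$ spanned by the row vectors $p_{j_0}P_{j_1}\cdots P_{j_t}$, forms the candidate matrix $Q$, and checks the linear conditions~(\ref{firstkmarkovcondition}) (or~(\ref{secondmarkovcondition})) for each of the finitely many $Y$-words of length $K$ --- again finite-dimensional linear algebra over $\mathbb F$. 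The algorithm outputs ``Markov'' precisely when these conditions hold, and correctness follows by chaining the equivalences: $\phi\mu$ is Markov $\iff$ $\nu$ is Markov $\iff$ $\nu$ is $K$-step Markov $\iff$ the displayed conditions hold.

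The substantive ingredient, and the step I would flag as the main obstacle, is Proposition~\ref{prop_stepbound}: the decision procedure works only because there is an a~priori bound $K$, computable from $n$ and $m$, on the order that the image of a $1$-step Markov measure can have before one is entitled to conclude it is not Markov at all; without it one would be left running the tests ``is $\nu$ $k$-step Markov?'' for $k=1,2,3,\dots$ with no stopping rule. Granting that proposition together with the finite tests of Sections~\ref{redstochsofic} and~\ref{sec_when}, the only remaining point is routine effectivity: every reduction and every test above is finite-dimensional linear algebra whose inputs are read off from $P$, so it uses only the arithmetic of $\mathbb F$, which is available by hypothesis.
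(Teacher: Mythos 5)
Your proposal is correct and follows essentially the same route as the paper, whose proof is just the one-line observation that the statement follows from Propositions \ref{linalgprocedure} and \ref{prop_stepbound}; you supply the details (recoding to a 1-block code, computing the reduced stochastic module to get the dimension $n$, forming $K=N(2,m,n)$, and running the finite linear-algebra test for $K$-step Markovness), and you correctly identify Proposition \ref{prop_stepbound} as the ingredient that turns the unbounded search into a decision procedure. The only cosmetic difference is that you invoke Proposition \ref{kmarkov} (or Theorem \ref{theoremsecondmarkov}) for the fixed-order test where the paper cites Proposition \ref{linalgprocedure}; these are interchangeable here via passage to a higher block presentation.
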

\begin{proof}
The corollary is an easy consequence of
Propositions  \ref{linalgprocedure} and
\ref{prop_stepbound}.
\end{proof}

The proof of Proposition \ref{prop_stepbound} uses two lemmas.

\begin{lem}\label{lem_prodrank}
    Suppose $P_1, \dots ,P_t$ are $n \times n$ matrices such that
 $\textnormal{rank}
(P_1 ... P_tP_1)= \textnormal{rank} (P_1) = r$. Then for all
positive
integers $m$,
 $\textnormal{rank}
(P_1 ... P_t)^mP_1= r$.
\end{lem}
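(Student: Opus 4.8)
The plan is to phrase everything in terms of column spaces of a single operator. Write $Q = P_1\cdots P_t$ and let $C = \operatorname{col}(P_1)\subseteq \mathbb{R}^n$ be the column space of $P_1$, so that $\dim C = r$. The first observation is that $C$ is invariant under $Q$: every column of $Q = P_1(P_2\cdots P_t)$ lies in $\operatorname{col}(P_1)$, so $Q(\mathbb{R}^n)\subseteq C$, and in particular $Q(C)\subseteq C$.

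Next I would reinterpret the hypothesis. Since the column space of a product $QP_1$ is the image of the composite linear map, we have $\operatorname{col}(QP_1) = Q\bigl(\operatorname{col}(P_1)\bigr) = Q(C)$, so the assumption $\operatorname{rank}(P_1\cdots P_t P_1) = r$ says exactly that $\dim Q(C) = r = \dim C$. Combined with $Q(C)\subseteq C$, this forces $Q(C) = C$; that is, the restriction $Q|_C\colon C\to C$ is surjective, hence bijective because $C$ is finite-dimensional.

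Finally, bijectivity of $Q|_C$ gives $Q^m(C) = C$ for every $m\ge 1$, and therefore $\operatorname{rank}\bigl((P_1\cdots P_t)^m P_1\bigr) = \dim Q^m\bigl(\operatorname{col}(P_1)\bigr) = \dim Q^m(C) = \dim C = r$, which is the desired conclusion. I do not anticipate a genuine obstacle; the only point that needs a moment's care is the containment $\operatorname{col}(Q)\subseteq\operatorname{col}(P_1)$, since that is what turns the rank hypothesis (a priori a statement about one long product) into the statement that a single endomorphism of an $r$-dimensional space is injective, after which the argument reduces to the elementary fact that an injective endomorphism of a finite-dimensional vector space is invertible.
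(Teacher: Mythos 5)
Your proof is correct and is essentially the argument the paper gives: the rank hypothesis forces $Q=P_1\cdots P_t$ to restrict to an isomorphism of $\operatorname{col}(P_1)$ onto itself, whence all powers preserve that $r$-dimensional image. You have merely spelled out the details (in particular the containment $\operatorname{col}(Q)\subseteq\operatorname{col}(P_1)$) that the paper's one-line proof leaves implicit.
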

\begin{proof}
It follows from  the rank equality that
$(P_1...P_k)$ defines an isomorphism
from the image of $P_1$ (a vector space of column vectors)
to itself.
\end{proof}

\begin{lem}\label{lem_boundrank}
Suppose  $k,m,n$ are positive integers and $1\leq k \leq n$.
Suppose $\mathcal Q$ is a collection of $m$ matrices of size
$n \times n$,
 and there exists a product  of  $N(k,m,n)$  matrices
from $\mathcal Q$ with rank at least $k$.
Then there are arbitrarily long products of matrices from
$\mathcal Q$ with rank
at least $k$.
\end{lem}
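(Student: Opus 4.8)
The plan is to prove Lemma \ref{lem_boundrank} by \emph{downward} induction on $k$, from $k=n$ down to $k=1$. The base case $k=n$ is immediate: since $N(n,m,n)=1$, the hypothesis supplies a single matrix $P\in\mathcal Q$ with $\textnormal{rank}(P)\geq n$, hence $P$ is invertible, and the powers $P,P^2,P^3,\dots$ are arbitrarily long products of matrices from $\mathcal Q$ of rank $n\geq k$.

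For the inductive step, fix $k$ with $1\leq k<n$ and assume the statement for $k+1$ (legitimate, since $k<n$ gives $k+1\leq n$). Write $s=N(k+1,m,n)$, so that $N(k,m,n)=(1+m^{s})s$, and suppose $R$ is a product of $N(k,m,n)$ matrices from $\mathcal Q$ with $\textnormal{rank}(R)\geq k$. I would cut $R$ into $1+m^{s}$ consecutive blocks $B_0,B_1,\dots,B_{m^{s}}$, each a product of exactly $s$ matrices from $\mathcal Q$. Each block is determined by one of only $m^{s}$ possible words over the $m$-element index set of $\mathcal Q$, so by the pigeonhole principle there are indices $a<b$ with $B_a=B_b$. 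Setting $D=B_{a+1}\cdots B_{b-1}$ (a product of matrices from $\mathcal Q$, possibly empty), we obtain a factorization $R=X\,(B_aDB_a)\,Y$ with $X,Y$ again products of matrices from $\mathcal Q$ (possibly empty).

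Now split into two cases. If some $B_i$ has $\textnormal{rank}(B_i)\geq k+1$, then $B_i$ is a product of $s=N(k+1,m,n)$ matrices from $\mathcal Q$ of rank $\geq k+1$, and the inductive hypothesis for $k+1$ produces arbitrarily long products of rank $\geq k+1\geq k$. Otherwise every $B_i$ has rank $\leq k$; but $R$ is a product having each $B_i$ as a factor, so by submultiplicativity of rank $k\leq\textnormal{rank}(R)\leq\textnormal{rank}(B_i)$, forcing $\textnormal{rank}(B_i)=k$ for all $i$. In particular $\textnormal{rank}(B_a)=k$, and from $R=X(B_aDB_a)Y$ together with $\textnormal{rank}(B_aDB_a)\leq\textnormal{rank}(B_a)$ and $\textnormal{rank}(R)\geq k$ we get $\textnormal{rank}(B_aDB_a)=\textnormal{rank}(B_a)=k$. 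This is exactly the hypothesis of Lemma \ref{lem_prodrank} with the role of $P_1$ played by $B_a$ and the role of $P_1\cdots P_t$ by $B_aD$ (so $P_1\cdots P_tP_1=B_aDB_a$), which therefore gives $\textnormal{rank}\big((B_aD)^{j}B_a\big)=k$ for all $j\geq 1$. Since $B_aD$ contains at least the $s\geq 1$ matrices of $B_a$, the product $(B_aD)^{j}B_a$ has length at least $(j+1)s\to\infty$; these are arbitrarily long products of matrices from $\mathcal Q$ of rank $k\geq k$, completing the induction.

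The substantive content is the combinatorial design of $N(k,m,n)$: one needs a block length $s$ large enough to feed the inductive hypothesis in the high-rank case, and then $1+(\text{number of possible blocks of length }s)=1+m^{s}$ blocks to force a repeated block, so that in the complementary case (all blocks of rank exactly $k$) one can pump via Lemma \ref{lem_prodrank}. I expect the only points requiring genuine care to be bookkeeping ones: permitting $D$, $X$, $Y$ to be empty products; verifying that $B_aD$ is genuinely nonempty so that the pumped products grow without bound; and checking that the three rank identities line up precisely with the hypothesis of Lemma \ref{lem_prodrank}.
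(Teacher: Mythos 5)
Your proposal is correct and follows essentially the same route as the paper's own proof: decompose the long product into $1+m^{N(k+1,m,n)}$ blocks of length $N(k+1,m,n)$, invoke the inductive hypothesis if any block has rank at least $k+1$, and otherwise use the pigeonhole principle to find a repeated block and pump via Lemma \ref{lem_prodrank}. Your write-up is in fact somewhat more careful than the paper's about the edge cases (empty $D$, $X$, $Y$ and the nonemptiness of $B_aD$), but there is no substantive difference.
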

\begin{proof}

We prove the proposition by induction on $k$, for $k$ decreasing
from $n$. The case $k=n$ is clear. Suppose now $1\leq k < n$ and
the lemma holds for $k+1$.
Suppose a matrix $M$ is
a  product  $Q_{i(1)}\cdots Q_{i(N(k,m,n))}$
of $N(k,m,n)$ matrices from $\mathcal Q$ and has rank
at least $k$. We must show there are arbitrarily long products
from $\mathcal Q$ with rank at least $k$.

The given product is a concatenation of products of length
 $N(k+1,m,n)$, and we define corresponding matrices,
\be
P_j = Q_{1+ (j-1)(N(k+1,m,n))}  \cdots Q_{j(N(k+1,m,n))}  ,
\quad 1\leq j \leq 1+m^{N(k+1,m,n)} \ .
\en
If any $P_j$ has
rank at least $k+1$, then by the induction hypothesis there
are arbitrarily long products with rank at least $k+1$, and
we are done. So, suppose every $P_j$
 has rank at most $k$. Because
$\text{rank}(P_j)\geq \text{rank}(M) \geq k$,
 it follows that $M$, and every $P_j$,
and every subproduct of consecutive $P_j$'s, has
 rank $k$.

There are only $m^{N(k+1,m,n)}$ words of length $N(k+1,m,n)$
on $m$ symbols, so two of the matrices $P_j$ must be equal.
The conclusion now follows from Lemma \ref{lem_prodrank}.
\end{proof}

 {\em Proof of Proposition \ref{prop_stepbound}}.
As described in Examples \ref{stochsofic} and \ref{redstochsofic},
there are algorithms for producing the reduced stochastic module for
$\pi \mu$ as a set of matrices $M_a$ (one for each symbol from
$\mathcal A(Y)$) and a pair of vectors $u,v$ such that for any
$Y$-word $a_1\cdots a_t$, $(\pi \mu) \mathcal C_0 (a_1\cdots a_t)=
uM_{a_1}\cdots M_{a_t}v$. By  Theorem \ref{thm_markovrank},
 $\pi \mu$ is $k$-step Markov if and only
every product $M_{a_1}\cdots M_{a_k}$ has rank at most 1.
Let $K=N(2,m,n)$.
If $\pi \mu$ is not $K$-step Markov,
then some matrix $\prod_{i=1}^KM_{a(i)} $
has rank at least 2, and by Lemma \ref{lem_boundrank}
there are then arbitrarily long products of $M_a$'s with
rank at least 2. By  Theorem \ref{thm_markovrank},
this shows that
$\pi \mu$ is not $k$-step Markov for any $k$.
 \qed

\begin{rem} Given $m$ and $n$, the numbers $N(k,m,n)$
grow very  rapidly as $k$ decreases.
Consequently, the
 bound $K$ in Proposition \ref{prop_stepbound}
 (and
consequently the algorithm of
Corollary \ref{cor_markovalgorithm}) is not practical.
However, in an analogous case (Problem
\ref{prob_markovianbound} below) we don't even know
the existence of an algorithm.
\end{rem}

\begin{prob}\label{prob_Kbound}
Find a reasonable bound $K$ for Proposition \ref{prop_stepbound}.
\end{prob}
{}{
\begin{ex} This is an example to show that the cardinality
of the domain alphabet cannot be used as the bound $K$ in
Proposition \ref{prop_stepbound}. Given $n>1$ in $\mathbb N$, let
$A$ be the adjacency matrix of the directed graph $\mathcal G$ which
is the union of two cycles, $a_1b_1b_2\cdots b_{n+4}a_1$ and
$a_2b_3b_4\cdots b_{n+3}a_2$. The vertex set $\{a_1,a_2,b_1,\dots
,b_{n+4}\}$ is the alphabet $\mathcal A$ of $\Omega_A$. Let $\phi$
be the 1-block code defined by erasing subscripts, and let $Y$ be
the subshift which is the image of $\phi$, with alphabet $\{a,b\}$.
Let $\mu$ be any 1-step Markov measure on $\Omega_A$. In $\mathcal
G$, there are exactly four first return paths from $\{a_1,a_2\}$ to
$\{a_1,a_2\}$: $a_1b_1\cdots b_{n+4}a_1$, $a_1b_1\cdots b_{n+3}a_2$,
$a_2b_3\cdots b_{n+4}a_1$ and $a_2b_3\cdots b_{n+3}a_2$. Thus, in a
point of $Y$, successive occurrences of the symbol $a$ must
correspondingly be separated by $m$ $b$'s, with $m\in \{ n+4, n+3,
n+2, n+1 \}$. Each $Y$-word $ab^ma$ has a unique preimage word, so
$\phi : \Omega_A \to Y$ is a topological conjugacy. Thus $\phi \mu$
is $k$-step Markov for some $k$. We have
\begin{align*}
\phi (b_1\cdots b_{n+3}a_2b_3\cdots b_{n+3}a_2)
&= \big( b^{n+3}ab^{n+1}\big) a \ , \quad \text{and} \\
 \phi (a_1b_1\cdots b_{n+4}a_1b_1\cdots b_{n+1})
&= ab\big( b^{n+3}ab^{n+1}\big)   \ .
\end{align*}
So, $\big( b^{n+3}ab^{n+1}\big)a$ and $ab\big(b^{n+3}ab^{n+1}\big)$
are $Y$-words, but $ab\big( b^{n+3}ab^{n+1}\big) a$ is not a
$Y$-word. Consequently, we have conditional probabilities,
\begin{align*}
\phi \mu [y_0 =a\ |\ y_{-(2n+5)}\cdots y_{-1}
&= \ \ \big(b^{n+3}ab^{n+1}\big)] > 0 \ , \\
\phi \mu [y_0 =a\ |\ y_{-(2n+7)}\cdots y_{-1} &=
ab\big(b^{n+3}ab^{n+1}\big)] = 0 \ ,
\end{align*}
which shows that $\phi \mu$ cannot be $(2n+5)$-Markov. In contrast,
$|\mathcal A| = n+6 < 2n +5 $.
\end{ex}
}

With regard to the problem (\ref{prob_markovianmap})
of determining whether a given factor map
is Markovian, the analogue of Proposition \ref{prop_stepbound}
is the following open problem.

\begin{prob}\label{prob_markovianbound} Find (or prove there does not exist)
 an algorithm for attaching to any
1-block code $\phi$ from an irreducible shift of finite type
a number $N$ with the following property:
 if a 1-step Markov measure $\mu$ on the range of $\phi$
has no preimage measure which is $N$-step Markov, then $\mu$ has no
preimage measure which is Markov.
\end{prob}

\begin{rema}\textnormal{
(The persistence of memory) Suppose  $\phi :\Omega_A \to \Omega_B $
is a 1-block code from one irreducible 1-step SFT onto another. We
collect some facts on how the memory of a Markov measure and a
Markov image must or can be related.
\begin{enumerate}
\item The image of a 1-step Markov measure can be Markov
but not $1$-step Markov. (E.g. the standard map from the $k$-block
presentation to the 1-block presentation takes the $1$-step Markov
measures onto the $k$-step Markov measures.)
\item
If $\phi$ is finite-to-one and $\nu$ is $k$-step Markov on
$\Omega_B$, then there is a unique Markov measure $\mu$ on
$\Omega_A$ such that $\phi \mu=\nu$, and $\mu$ is also $k$-step
Markov (Proposition \ref{prop_finitetoone}).
\item
If any $1$-step Markov measure on $\Omega_B$ lifts to a $k$-step
Markov measure on $\Omega_A$, then for every $n$, every $n$-step
Markov measure on $\Omega_B$ lifts to an $(n+k)$-step Markov measure
on $\Omega_A$. (This follows from the explicit construction
(\ref{markovmarkovian}) and passage as needed to a higher block
presentation.)
\item
If $\phi$ is infinite-to-one then it can happen \cite[Section
2]{BoyleTuncel1984} (``peculiar memory example'') that every 1-step
Markov measure on $\Omega_B$ lifts to a $2$-step Markov measure on
$\Omega_A$ but not to a 1-step Markov measure, while every 1-step
Markov on $\Omega_A$ maps to a $2$-step Markov measure on
$\Omega_B$.
\end{enumerate}
}
\end{rema}

\section{Resolving maps and Markovian maps}\label{sec_markandresolve}

In this section,
 $\Omega_A$ denotes an irreducible 1-step shift of
finite type defined by an irreducible matrix $A$.

\subsection{Resolving maps}\label{sec_resolving}

 In this section,
$\pi :\Omega_A \to Y$ is a 1-block code onto a subshift $Y$, with
$Y$ not necessarily a shift of finite type, unless specified. $U$
denotes the $0,1$, $|\mathcal A(\Omega_A )| \times |\mathcal A(Y )|$
matrix such that $U(i,j)=1$ iff $\pi (i)=j$. Denote a symbol $(\pi
x)_0$ by $\overline{x_0}$.
\begin{defn}
The factor map $\pi$ as above is {\em right resolving} if for all
symbols $i,\overline i, k$ such that $\overline i k $ occurs in $Y$,
there is at most one $j$ such that $ij$ occurs in $\Omega_A$ and
$\overline j=k$. In other words,  for any diagram
\be
\begin{CD}
i &  &  \\
@VVV & & \\
\overline{i}& @>>> & k
\end{CD}
\en
there is at most one $j$ such that
\be
\begin{CD}
i & @>>> & j \\
@VVV & & @VVV\\
\overline{i}& @>>> & k
\end{CD}
\en
\end{defn}
\begin{defn}
A factor map $\pi$ as above is {\em right e-resolving} if it
satisfies the definition above, with ``at most one'' replaced by
``at least one''.
\end{defn}
Reverse the roles of $i$ and $j$ above to define {\em left
resolving} and {\em left e-resolving}.
A map $\pi$ is {\em resolving
(e-resolving)} if it is left or right resolving (e-resolving).

\begin{prop}\label{resolvingprop}
\begin{enumerate}
\item
If $\pi$ is resolving, then $h(\Omega_A)=h(Y)$.
\item
If $Y=\Omega_B$ and $h(\Omega_A) =h(\Omega_B)$, then $\pi$ is
e-resolving iff $\pi$ is resolving.
\item
If $\pi$ is e-resolving, then $Y$ is a 1-step shift of finite type,
$\Omega_B$.
\item If $\pi$ is e-resolving and $k\in \mathbb N$,
then every $k$-step Markov measure on $Y=\Omega_B$ lifts to a
$k$-step Markov measure on $\Omega_A$.
\end{enumerate}
\end{prop}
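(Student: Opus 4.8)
The four parts are essentially independent; I would handle them in the order (1), (3), (2), (4), since (4) carries the real content. Throughout, write $\overline i=\pi(i)$ on symbols and let $U$ be the $0,1$ matrix with $U(i,k)=1$ exactly when $\pi(i)=k$; each row of $U$ has one $1$, and by surjectivity each column has at least one.

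For (1), say $\pi$ is right resolving. Given a word $w=w_0\cdots w_{n-1}\in\mathcal L(Y)$ and a symbol $v_0\in\mathcal A(\Omega_A)$ over $w_0$, the resolving property forces each subsequent symbol of a preimage word: $v_{m+1}$ is the unique symbol with $v_mv_{m+1}\in\mathcal L(\Omega_A)$ and $\overline{v_{m+1}}=w_{m+1}$. So a preimage word is determined by its first symbol, whence $|\mathcal L_n(\Omega_A)|\le|\mathcal A(\Omega_A)|\,|\mathcal L_n(Y)|$; with the reverse inequality $|\mathcal L_n(Y)|\le|\mathcal L_n(\Omega_A)|$ (from surjectivity) and $\tfrac1n\log$-limits, $h(\Omega_A)=h(Y)$. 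The left case reads words from the right. For (3), say $\pi$ is right e-resolving; to show $Y$ equals the $1$-step SFT $\Omega_B$ determined by its allowed $2$-blocks, it suffices to show every bi-infinite $y$ over $\mathcal A(Y)$ whose $2$-blocks all lie in $\mathcal L(Y)$ has a preimage in $\Omega_A$. Fix $n$, pick any $x_{-n}\in\mathcal A(\Omega_A)$ over $y_{-n}$, and repeatedly apply the e-resolving property (to $x_m$ and $\overline{x_m}\,y_{m+1}=y_my_{m+1}\in\mathcal L(Y)$) to choose $x_{m+1}$ with $x_mx_{m+1}\in\mathcal L(\Omega_A)$, $\overline{x_{m+1}}=y_{m+1}$; since $\Omega_A$ is $1$-step this produces a word $x[-n,n]\in\mathcal L(\Omega_A)$, hence a nonempty cylinder, and a compactness argument in $n$ yields the desired point. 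The left case is symmetric.

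For (2), with $Y=\Omega_B$: since $\pi$ is a factor map, $A(i,j)=1$ implies $B(\overline i,\overline j)=1$, and given this one checks that $\pi$ is right resolving iff $AU\le UB$ entrywise and right e-resolving iff $AU\ge UB$ (with the analogous statements on the left using transposes). Assume $h(\Omega_A)=h(\Omega_B)$, so $A$ and $B$, both irreducible, have a common spectral radius $\lambda$. If $\pi$ is right resolving, take a positive right Perron vector $r$ of $B$ and put $v=Ur>0$; then $Av=AUr\le UBr=\lambda v$, and pairing the nonnegative vector $\lambda v-Av$ with the positive left Perron vector of $A$ forces $Av=\lambda v$, hence $(AU-UB)r=0$, hence $AU=UB$ since $AU-UB\le0$ and $r>0$. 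So $\pi$ is also right e-resolving. The converse implication is the mirror computation with left eigenvectors, and the left-resolving statements are the symmetric version; together these give the equivalence.

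Part (4) is the substantial one. First reduce to $k=1$: given $\nu$ that is $k$-step Markov on $\Omega_B$, pass to the $k$-block presentations $\Omega_A^{[k]},\Omega_B^{[k]}$, which are $1$-step SFTs conjugate (by the standard conjugacies) to $\Omega_A,\Omega_B$; there $\nu$ becomes a $1$-step Markov measure $\nu^{[k]}$. The induced $1$-block code $\pi^{[k]}:\Omega_A^{[k]}\to\Omega_B^{[k]}$, $a_1\cdots a_k\mapsto\overline{a_1}\cdots\overline{a_k}$, is again e-resolving: for the right case, producing the last symbol of an e-resolving successor reduces to a single application of the e-resolving property of $\pi$ to the symbol $a_k$. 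Since the conjugacies intertwine $\pi^{[k]}$ with $\pi$, a $1$-step lift of $\nu^{[k]}$ transports back to a $k$-step lift of $\nu$. For $k=1$ with $\pi$ right e-resolving, let $Q$ be the irreducible stochastic matrix of $\nu$ and $q$ its stationary probability vector; for $A(i,j)=1$ set $P(i,j)=Q(\overline i,\overline j)/|S(i,\overline j)|$, where $S(i,b)=\{j:A(i,j)=1,\ \overline j=b\}$ (nonempty precisely when $B(\overline i,b)=1$, by e-resolving), and $P(i,j)=0$ otherwise --- this is the ``resolving split'' of Example \ref{exliftone}. Then $P$ is stochastic, and because $\nu$ has full support and $\Omega_A$ is irreducible, $P$ is irreducible with every allowed entry positive; let $\mu$ be the resulting fully supported $1$-step Markov measure with stationary vector $p$. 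Two short computations finish it: summing $pP=p$ over each fibre $\{j:\overline j=b\}$ and using $\sum_{j\in S(i,b)}P(i,j)=Q(\overline i,b)$ shows $pU$ is a stationary vector of $Q$, hence $pU=q$ by uniqueness; and summing the product formula for $\mu(\mathcal C_0(a_0\cdots a_n))$ from the right, again via $\sum_{j\in S(i,b)}P(i,j)=Q(\overline i,b)$, telescopes it to $q(b_0)Q(b_0,b_1)\cdots Q(b_{n-1},b_n)=\nu(\mathcal C_0(b_0\cdots b_n))$, so $\pi\mu=\nu$. The left e-resolving case follows by reversing time (reversal carries a fully supported $1$-step Markov measure to one of the same kind). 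The main obstacle is precisely this $k=1$ construction --- choosing the split so the lift is simultaneously stochastic, fully supported, and projects correctly --- together with the (routine but fiddly) verification that $\pi^{[k]}$ inherits the e-resolving property.
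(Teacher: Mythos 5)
Your proof is correct, and on the substantive part (4) it is essentially the paper's argument: reduce to $k=1$ by passing to $k$-block presentations (checking that $\pi^{[k]}$ inherits the e-resolving property), then lift $Q$ to a stochastic $P$ with pattern matching $A$ by splitting each entry $Q(\overline i,\ell)$ over the fiber $S(i,\ell)=\{j:A(i,j)=1,\ \overline j=\ell\}$, which is nonempty exactly because $\pi$ is e-resolving; the paper allows an arbitrary nonnegative split with the correct fiber sums, while you take the uniform one, and both yield $PU=UQ$, full support, and $\pi\mu=\nu$ by the same telescoping computation.

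The differences are in the easier parts. For (2) the paper also starts from the characterizations (right resolving $\Leftrightarrow AU\le UB$, right e-resolving $\Leftrightarrow AU\ge UB$) but then argues by contradiction: if $AU\ne UB$ one could increase an entry of $A$ to get a strictly larger irreducible SFT $\Omega_C$ still admitting a resolving (hence entropy-preserving) map onto $\Omega_B$, forcing $h(\Omega_C)=h(\Omega_B)=h(\Omega_A)$, which is impossible. Your route instead pairs $AU\le UB$ (resp.\ $\ge$) against positive Perron eigenvectors of $A$ and $B$ to force equality; this is a clean, self-contained Perron--Frobenius argument and arguably more robust (it never needs to construct the auxiliary SFT $\Omega_C$ or reprove that the augmented map is resolving). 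For (1) and (3) the paper simply cites the literature (``a resolving map must be finite-to-one,'' ``an easy exercise''), whereas you supply the direct word-counting bound $|\mathcal L_n(\Omega_A)|\le|\mathcal A(\Omega_A)|\,|\mathcal L_n(Y)|$ and the extension-plus-compactness argument showing every sequence with allowed $2$-blocks lifts; both are correct and fill in what the paper leaves to references.
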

\begin{proof}
(1) This holds because a resolving map must be finite-to-one
\cite{LindMarcus1995,Kitchens1998}.

(2) We argue as in \cite{LindMarcus1995,Kitchens1998}. Suppose $\pi$
is right-resolving. This means precisely
 that $AU\leq UB$. If $AU \neq UB$, then it would be possible
to increase some entry of $A$ by one and have a resolving
map onto $\Omega_B$ from some irreducible SFT
$\Omega_C$ properly containing $\Omega_A$. But now
$h(\Omega_C)>h(\Omega_A)$, while
$h(\Omega_C)=h(\Omega_B)=h(\Omega_A)$ because the resolving
maps respect entropy. This is a contradiction. The other direction
holds by a similar argument.

(3) This is an easy exercise \cite{BoyleTuncel1984}.

(4) We consider $k=1$ (the general case follows by passage to the
higher block presentation). Suppose $\pi$ is right e-resolving. This
means that $AU \geq UB$. Suppose $Q$ is a stochastic matrix defining
a 1-step Markov measure $\mu$ on $\Omega_B$. For each positive entry
$B(k,\ell )$ of $B$ and $i$ such that $\pi (i)=k$, let $\mathcal
J(i,k,l)$ be the set of indices $j$ such that $A(i,j)>0$ and $\pi
(j)=\ell$. Now simply choose $P$
to be
 any nonnegative matrix of size and zero/positive pattern
matching $A$ such that for each
 $i,k,l$, $\sum_{{j\in \mathcal J(i,k,l)}} P(i,j)
=Q(k,\ell )$.
 Then $PU=UQ$, and this guarantees that $\pi \mu = \nu$.
The condition on {the} +/0 pattern guarantees that $\mu$ has full
support on $\Omega_A$. (The code $\pi$ in Example \ref{exliftone} is
right e-resolving, and (\ref{resolvingsplit}) gives an example of this
construction.)
\end{proof}

The resolving maps, and the maps which are topologically equivalent
to them (the {\em closing} maps), form the  only class of
finite-to-one maps between nonconjugate irreducible shifts of finite
type which we know how to construct in significant generality
\cite{Ashley1991, Ashley1993,LindMarcus1995,Kitchens1998,Boyle2005}.
The e-resolving maps, and the maps topologically equivalent to them
(the {\em continuing} maps), are similarly the Markovian maps we
know how to construct in significant generality
\cite{BoyleTuncel1984}.
{ If $\Omega_A, \Omega_B$ are mixing shifts
of finite type with $h(\Omega_A)>h(\Omega_B)$ and there exists any
factor map from $\Omega_A$ to $\Omega_B$ (as there will given a
trivially necessary condition), then there will exist infinitely
many continuing (hence Markovian) factor maps from $\Omega_A$ to
$\Omega_B$. However, the most obvious hope, that the factor map send
the maximal entropy measure of $\Omega_A$ to that of $\Omega_B$, can
rarely be realized. Given $\Omega_A$, there are only finitely many
possible values of topological entropy for $\Omega_B$ for which such
a map can exist \cite{BoyleTuncel1984}. }

\subsection{All factor maps lift 1-1 a.e. to Markovian
maps}\label{sec_1-1aemarkovian}

Here ``all factor maps'' means ``all factor maps between irreducible
sofic {subshifts}''. Factor maps between irreducible SFTs need not
be Markovian, but  they are in the following strong sense close to
being Markovian, even if the subshifts $X$ and $Y$ are only sofic.

\begin{thm}\cite{Boyle2005}\label{th_soficlifttomarkovian}
Suppose $\pi : X\to Y$ is a factor map of irreducible sofic
subshifts.
 Then there are irreducible SFT's
$\Omega_A , \Omega_B$ and
a commuting diagram of factor maps
 \be
\label{soficdiagram}
 \begin{CD}
\Omega_A & @>\ \gamma  >> & \Omega_B \\
 @V \alpha VV & & @VV \beta  V\\
 X & @>>\ \pi \ > & Y
 \end{CD}
 \en
such that $\alpha, \beta$ are degree 1 right resolving and $\gamma$
is e-resolving. In particular, $\gamma$ is Markovian. If $Y$ is
SFT, then the composition $\beta \gamma$ is also Markovian.
\end{thm}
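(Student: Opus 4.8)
The plan is to realize the diagram (\ref{soficdiagram}) by a two–stage construction: first replace $X$ by an SFT through a degree~$1$ right resolving cover, then enlarge so as to ``synchronize'' with the right Fischer cover of $Y$, arranging that the induced map down to that cover is e-resolving. First I would reduce to the case that $X$ is an irreducible SFT and $\pi$ is a $1$-block code: passage to a higher block presentation is a topological conjugacy, hence degree~$1$ right resolving, so after one such passage $\pi$ is $1$-block; and if $X$ is merely sofic we replace it by its right Fischer cover $\alpha_0\colon\Omega_{A_0}\to X$ (an irreducible SFT with $\alpha_0$ degree~$1$ right resolving) and work with $\pi\alpha_0$, using that a composition of degree~$1$ right resolving maps is again degree~$1$ right resolving. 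Now let $\beta_0\colon\Omega_B\to Y$ be the right Fischer cover of $Y$ — an irreducible SFT together with a degree~$1$ right resolving $1$-block factor map — and form the fiber product $P=X\times_Y\Omega_B=\{(x,w):\pi(x)=\beta_0(w)\}$. Since $X$ and $\Omega_B$ are $1$-step SFTs and $\pi,\beta_0$ are $1$-block, $P$ is a $1$-step SFT; the projection $p_1\colon P\to X$ is right resolving (fiber product with the right resolving $\beta_0$) and has degree~$1$ over the doubly transitive points of $X$ (their $\pi$-images are doubly transitive in $Y$, so have unique $\beta_0$-preimages), and $p_1,p_2$ are onto $X$, resp.\ $\Omega_B$. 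Let $\Omega_A$ be an irreducible component of $P$ mapping onto $X$ under $p_1$ (for instance the orbit closure of a $p_1$-preimage of a doubly transitive point; such a component exists and, being a maximal irreducible subshift of an SFT, is itself an irreducible SFT), and set $\alpha:=p_1|_{\Omega_A}$ (composed with $\alpha_0$ if $X$ was sofic) and $\gamma:=p_2|_{\Omega_A}$. Then $\alpha$ is degree~$1$ right resolving onto $X$; $\beta_0(p_2(\Omega_A))=\pi(p_1(\Omega_A))=Y$, so $p_2(\Omega_A)$ is a subshift of $\Omega_B$ whose image under the finite-to-one map $\beta_0$ is all of $Y$, hence of entropy $h(Y)=h(\Omega_B)$, hence equal to $\Omega_B$; so $\gamma$ is onto $\Omega_B$, and $\beta_0\gamma=\pi\alpha$ by construction. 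Take $\beta=\beta_0$.

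The main point — and the step I expect to be the real obstacle — is that $\gamma\colon\Omega_A\to\Omega_B$ is e-resolving, say right e-resolving: for every symbol $(a,b)$ of $\Omega_A$ and every $\Omega_B$-transition $b\to b'$ there should be a symbol $a'$ of $X$ with $\pi(a')=\beta_0(b')$ and $(a,b)(a',b')$ allowed in $\Omega_A$. This is exactly where the construction must use more than the bare fiber product: for an arbitrary factor map $\pi$, an allowed continuation of $b$ in $\Omega_B$ yields only an allowed continuation of $\beta_0(b)=\pi(x)$ in $Y$, which in general need \emph{not} lift to an allowed continuation of $x$ in $X$ (this is precisely the failure of $\pi$ to be right continuing). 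The fix is to exploit the defining property of the right Fischer cover: a state $b$ reached by reading a long enough (synchronizing) word $W$ records the follower set of $W$ in $Y$, so the transition $b\to b'$ forces \emph{every} $Y$-point whose past has that follower set to continue in the same way; combining this with surjectivity of $\pi$ and the SFT structure of $X$ now available upstairs should produce the required symbol $a'$. Pinning down how much of the realizing word $W$ must be read, and verifying that the resulting combinatorial continuation is genuine in $\Omega_A$ (not merely in $P$), is where essentially all of the work lies; if right e-resolving cannot be arranged this way, one runs the mirror construction with the left Fischer cover of $Y$ to obtain a left e-resolving $\gamma$. Either way $\gamma$ is e-resolving, and then by Proposition~\ref{resolvingprop}(4) every $k$-step Markov measure on $\Omega_B$ lifts (with full support) to a $k$-step Markov measure on $\Omega_A$; that is, $\gamma$ is Markovian.

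Finally, suppose $Y$ is an SFT. Then $\beta=\beta_0\colon\Omega_B\to Y$ is a right resolving factor map between irreducible SFTs of equal entropy (degree~$1$, hence finite-to-one, hence entropy preserving), so by Proposition~\ref{resolvingprop}(2) — concretely, by the matrix identity $AU=UB$ established in its proof, which is simultaneously the right resolving and the right e-resolving condition — $\beta$ is itself right e-resolving. Hence $\beta\gamma$ is a composition of right e-resolving maps, so it is right e-resolving, and therefore Markovian (again by Proposition~\ref{resolvingprop}(4)); if the mirror construction was used one argues symmetrically on the left. When $Y$ is only sofic this argument is unavailable, and indeed a right resolving map onto a non-SFT cannot be e-resolving (Proposition~\ref{resolvingprop}(3) gives e-resolving maps an SFT range), which is why in general one claims only that $\gamma$, and not $\beta\gamma$, is Markovian. (Recall also that being Markovian is invariant under topological equivalence, by Theorem~\ref{th_markovian} and the remark following it, so the choices of recodings above are harmless.)
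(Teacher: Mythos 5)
There is a genuine gap, and you have correctly located it yourself: the e-resolving property of $\gamma$ is the entire content of the theorem, and your construction does not deliver it. Taking $\Omega_B$ to be the right Fischer cover of $Y$ and $\Omega_A$ an irreducible component of the fiber product $X\times_Y\Omega_B$ cannot work, for a reason you can test against the paper's own examples: when $Y$ is a $1$-step irreducible SFT whose symbols already have distinct follower sets, the right Fischer cover map $\beta_0$ is (conjugate to) the identity, the fiber product is conjugate to $X$, and $\gamma$ is topologically equivalent to $\pi$ itself. Since e-resolving maps are Markovian (Proposition \ref{resolvingprop}) and Markovianness is invariant under topological equivalence (the remark after Theorem \ref{th_markovian}), your construction would force every such $\pi$ to be Markovian. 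Example \ref{figshinnonmarkovian} (Shin's map, with $Y$ exactly of this form) is a non-Markovian factor map between irreducible SFTs, so no choice of irreducible component can make that $\gamma$ e-resolving. The moral is that $\Omega_B$ cannot be built from $Y$ alone; it must be built from the map $\pi$. In \cite{Boyle2005} the cover $\Omega_B$ is (roughly) a ``future cover of $\pi$'' whose states record, for a point of $Y$, the set of domain symbols reachable as the current coordinate of some preimage of its past; the e-resolving property of $\gamma$ then has to be engineered and verified, using magic words and the resolving degree, and this is where the real work of the cited theorem lives. Your proposed fix via synchronizing words for the Fischer cover addresses continuations in $Y$, not continuations in $X$, so it does not touch the failure of $\pi$ to be right continuing, which is precisely the obstruction.

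For calibration: the paper does not reprove this theorem either; it cites \cite{Boyle2005} for the diagram and only justifies the Markovian consequences (e-resolving maps are Markovian, finite-to-one maps are Markovian, and compositions of Markovian maps are Markovian). Your treatment of those consequences, including the case where $Y$ is SFT via Proposition \ref{resolvingprop}(2) and (4), is correct and matches the paper's reasoning. So the reductions at the start and the final two paragraphs are fine; what is missing is the construction of $(\Omega_A,\Omega_B,\gamma)$ itself, which is the theorem.
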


The Markovian claims in Theorem \ref{th_soficlifttomarkovian} hold
because finite-to-one maps are Markovian (Proposition
\ref{prop_finitetoone}), e-resolving maps are Markovian (Proposition
\ref{resolvingprop}), and a composition of Markovian maps is
Markovian. In the case
when $\pi$ is degree 1 between irreducible SFTs, the ``Putnam
diagram'' (\ref{soficdiagram})
 is a special case of Putnam's work
 in \cite{Putnam2005}, which was the stimulus for
\cite{Boyle2005}.

\subsection{Every factor map between SFT's is hidden
Markovian}\label{sec_factorhiddenmarkovian}

A factor map $\pi: \Omega_A \to \Omega_B$ is   Markovian if some
(and therefore every) Markov measure on $\Omega_B$ lifts to a Markov
measure on $\Omega_A$. There exist factor maps between irreducible
SFTs which are not Markovian. In this section we will show in
contrast that all factor maps between irreducible SFTs (and more
generally  between irreducible sofic subshifts) are {\em hidden
Markovian}: every sofic (i.e., hidden Markov) measure lifts to a
sofic measure. The terms Markov measure and sofic measure continue
to include the requirement of full topological support.

\begin{thm}\label{sofician}
Let  $\pi: X \to Y$ be a factor map between irreducible
 sofic subshifts and suppose that $\nu$
is a sofic  measure on Y.
Then $\nu$ lifts to a sofic
measure $\mu$ on X. Moreover, $\mu$ can be chosen to satisfy
$\textnormal{degree}(\mu ) \leq \textnormal{degree}(\nu )$.
\end{thm}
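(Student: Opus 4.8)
The plan is to lift $\nu$ by passing to convenient SFT covers of $X$ and $Y$, routing the lift through a fiber product, and then using a Putnam-type diagram in which every arrow is known to lift Markov measures. \textbf{Setup.} Since $\nu$ is sofic on $Y$ with $\textnormal{degree}(\nu)=n$, fix a factor map $q:\Omega_C\to Y$ from an irreducible SFT with $\textnormal{degree}(q)=n$ and an ergodic Markov measure $\lambda$ on $\Omega_C$ with $q\lambda=\nu$ (if $\textnormal{degree}(\nu)=\infty$ the asserted inequality is vacuous, but the construction still produces a sofic lift, so assume $n<\infty$). Also fix a degree one factor map $\phi:\Omega_M\to X$ from an irreducible SFT --- a right resolving (Fischer) cover \cite{LindMarcus1995}. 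After recoding we may assume $q$ and $\pi\phi$ are $1$-block codes on $1$-step SFTs.

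The first substantive step is to form the fiber product
$$F=\Omega_M\times_Y\Omega_C=\{(w,c)\in\Omega_M\times\Omega_C:\ (\pi\phi)(w)=q(c)\},$$
again a $1$-step SFT, with coordinate projections $p:F\to\Omega_M$ and $p':F\to\Omega_C$ satisfying $(\pi\phi)\circ p=q\circ p'$. I want a single irreducible component of $F$ (a maximal irreducible subshift, hence itself an SFT) that surjects onto \emph{both} factors. To produce it I would choose an ergodic, fully supported measure $\eta$ on $\Omega_M$ with $(\pi\phi)\eta=\nu$: any ergodic component of a measure of maximal relative entropy over $\nu$ for $\pi\phi$ works --- it still projects to $\nu$ and is still relatively maximal (entropy is affine and $\nu$ is ergodic), hence is fully supported by Yoo's theorem \cite{Yoo2009} quoted in the Remark above. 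Let $\omega'$ be an ergodic component of the relatively independent joining $\eta\otimes_\nu\lambda$; it is carried by $F$, and since $\eta$ and $\lambda$ are ergodic its marginals are $p\omega'=\eta$ and $p'\omega'=\lambda$. Let $F_0$ be the irreducible component of $F$ containing $\textnormal{supp}(\omega')$; then $p(F_0)=\Omega_M$ and $p'(F_0)=\Omega_C$, so $p|_{F_0}$ and $p'|_{F_0}$ are factor maps of irreducible SFTs. Finally, for a doubly transitive $w\in\Omega_M$ the point $(\pi\phi)(w)$ is doubly transitive in $Y$ and $(p|_{F_0})^{-1}(w)\subseteq\{(w,c):q(c)=(\pi\phi)(w)\}$, so $\textnormal{degree}(p|_{F_0})\le|q^{-1}((\pi\phi)(w))|=n$.

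Next I would lift $\lambda$ up to $F_0$ and push it to $X$. Apply Theorem \ref{th_soficlifttomarkovian} to the factor map $p'|_{F_0}:F_0\to\Omega_C$: it yields irreducible SFTs $\Omega',\Omega''$ and a commuting square with $a:\Omega'\to F_0$ and $b:\Omega''\to\Omega_C$ degree one right resolving and $c:\Omega'\to\Omega''$ e-resolving, with $p'|_{F_0}\circ a=b\circ c$. Since $b$ is finite-to-one, $\lambda$ has a unique Markov lift $\lambda_1$ on $\Omega''$ (Proposition \ref{prop_finitetoone}); since $c$ is e-resolving it lifts $\lambda_1$ to a Markov measure $\lambda_2$ on $\Omega'$ (Proposition \ref{resolvingprop}). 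Set $\mu=(\phi\circ p|_{F_0}\circ a)(\lambda_2)\in\mathcal M(X)$. Then
$$\pi\mu=(\pi\phi\circ p|_{F_0}\circ a)(\lambda_2)=(q\circ p'|_{F_0}\circ a)(\lambda_2)=(q\circ b\circ c)(\lambda_2)=q\lambda=\nu.$$
The measure $\mu$ is ergodic (image of an ergodic measure) and fully supported on $X$, because $\lambda_2$ is fully supported on $\Omega'$ and $a,p|_{F_0},\phi$ are all onto; and it is sofic, being the image of the Markov measure $\lambda_2$ on the irreducible SFT $\Omega'$ under the factor map $\phi\circ p|_{F_0}\circ a:\Omega'\to X$. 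By multiplicativity of degree \cite{LindMarcus1995} this presenting map has degree $\textnormal{degree}(\phi)\cdot\textnormal{degree}(p|_{F_0})\cdot\textnormal{degree}(a)\le 1\cdot n\cdot 1=n$, so $\textnormal{degree}(\mu)\le n=\textnormal{degree}(\nu)$, as required.

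I expect the main obstacle to be the extraction of $F_0$: producing, inside the possibly reducible fiber product, a single irreducible component that surjects \emph{simultaneously} onto $\Omega_M$ and onto $\Omega_C$, while still guaranteeing that the final measure has full support on all of $X$. This is exactly what forces the detour through relatively independent joinings together with Yoo's full-support theorem, and it also obliges me to keep everything within the category of SFTs (hence the Fischer cover $\phi$), since ``irreducible component'' and ``support of an ergodic measure lies in one component'' behave well for SFTs but not for general sofic shifts. One must also check that the initial recodings preserve irreducibility, finite type, and the relevant degrees. By contrast, everything after the construction of $F_0$ --- the Putnam diagram, lifting Markov measures through finite-to-one and through e-resolving maps, and the degree bookkeeping --- is routine given the results cited above.
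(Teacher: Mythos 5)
Your proof is correct, and its overall architecture coincides with the paper's: reduce to lifting the presenting Markov measure $\lambda$ through a factor map of irreducible SFTs obtained as an irreducible component of a fiber product over $Y$, lift $\lambda$ through that map via the Putnam diagram of Theorem \ref{th_soficlifttomarkovian} (finite-to-one maps lift Markov measures uniquely, e-resolving maps are Markovian), and finish with the degree bookkeeping. The one genuinely different ingredient is how you produce an irreducible component of the fiber product that surjects onto \emph{both} factors. The paper isolates this as Lemma \ref{surjcomponent} and proves it topologically: by Corollary \ref{cor_open} (which rests on the magic-symbol open-image argument of Proposition \ref{openimage}) one finds doubly transitive points in the two covers with the same image, and takes the irreducible component to which the resulting pair is forward asymptotic. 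You instead argue measure-theoretically: take an ergodic, relatively maximal, hence fully supported lift $\eta$ of $\nu$ to the Fischer cover, form an ergodic component of the relatively independent joining $\eta\otimes_\nu\lambda$, and take the irreducible component containing its support. Both arguments work, and your degree bookkeeping ($\textnormal{degree}(p|_{F_0})\le n$ via preimages of doubly transitive points, multiplicativity for the finite-to-one composite $\phi\circ p|_{F_0}\circ a$) matches the paper's. The trade-off is that your route makes the theorem depend essentially on Yoo's full-support theorem, which the paper records only in a Remark as a preprint received after submission and does not use in its own proofs, whereas the paper's topological route is self-contained (modulo \cite{Boyle2005}) and yields Corollary \ref{cor_open} as a reusable byproduct; on the other hand your route avoids the magic-word machinery entirely and reuses the joining technology already developed in Section \ref{sec_relmaxent}.
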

\begin{proof}
We consider two cases.

Case I: $\nu$ is a Markov measure on $Y$.
Consider the
Putnam diagram (\ref{soficdiagram}) associated to $\pi$ in
Theorem \ref{th_soficlifttomarkovian}.
The measure $\nu$ lifts to a Markov measure $\mu^*$ on
$\Omega_A$.
Set $\mu = \alpha \mu^*$.
Then $\pi \mu = \nu$, and
$\text{degree}(\mu )= 1 \leq
\text{degree}(\nu )$.

Case II: $\nu$ is a degree $n$ sofic measure on $Y$. (Possibly
$n=\infty$.) Then there are an irreducible SFT
 $\Omega_C$ with a Markov measure
$\mu'$ and a degree $n$ factor map $g: \Omega_C \to Y$ which sends
$\mu'$ to $\nu$. By Lemma  \ref{surjcomponent}
 below,
there exist another irreducible SFT $\Omega_F$ and factor maps
$\widetilde g$ and $\widetilde{\pi}$ with
 $\textnormal{degree}(\widetilde g) \leq
\textnormal{degree}(g)$ such that the following diagram commutes:
 \be
 \begin{CD}
\Omega_F & @>\ \widetilde{\pi}\ >> & \Omega_C \\
 @V \widetilde g VV & & @VV g V\\
X & @>>\ \pi \ > & Y
 \end{CD}
 \en
Apply Case I to
$\widetilde{\pi}$ to get a degree 1 sofic measure $\nu^*$ on
$\Omega_F $
which $\widetilde{\pi}$ sends to $\mu'$. Then
$\widetilde g(\nu^*)$ is a sofic
measure of  degree at most
$n$  which $\pi$ sends to $\nu$.
\end{proof}

To complete the proof of Theorem \ref{sofician} by proving Lemma
\ref{surjcomponent}, we must recall some background on magic words.
Suppose
$X=\Omega_A$ is SFT and $\pi :\Omega_A\to Y$ is a 1-block factor
map. Any $X$-word $v$ is mapped to a $Y$-word $\pi v$ of equal
length. Given
{ a $Y$-word $w=w[1,n]$}
 and an integer $i$ in
$[1,n]$, set $d(w,i) = |\{w_i': \pi w' =w\}|$. As in
\cite{Boyle2005}, the {\em resolving degree} $\delta (\pi )$ of
$\pi$ is defined as the minimum of $d(w,i)$ over all allowed $w,i$,
and $w$ is a {\em magic word} for $\pi$ if for some $i$,
$d(w,i)=\delta (\pi )$. (For finite-to-one maps, these are the
standard magic words of symbolic dynamics
\cite{LindMarcus1995,Kitchens1998}; some of their properties are
still useful in the infinite-to-one case. The junior author
confesses  an error: \cite[Theorem 7.1]{Boyle2005} is wrong. The
resolving degree is not in general invariant under topological
conjugacy, in contrast to the finite-to-one case.)

If a magic word has length 1, then it is a {\it magic symbol}. As
remarked in \cite[Lemma 2.4]{Boyle2005}, the argument of
\cite[Proposition 4.3.2]{Kitchens1998} still works in the
infinite-to-one case to show that $\pi$ is topologically equivalent
to a 1-block code from a one step irreducible SFT for which there is
a magic symbol. (Factor maps $\pi,\phi$ are topologically equivalent
if there exist topological conjugacies $\alpha,\beta$ such that
$\alpha \phi \beta =\pi$.)

{
\begin{prop} \label{openimage}
Suppose $X$ is
 SFT; $\pi :X\to Y$ is a 1-block factor map;  $a$
is a magic symbol for $\pi$; $aQa$ is a $Y$-word; and $a'Q'a''$ is
an $X$-word such that $\pi (a'Q'a'') = aQa$. Then the image of the
cylinder $\mathcal C_0[a'Q'a'']$ equals the cylinder $\mathcal
C_0[aQa]$.
\end{prop}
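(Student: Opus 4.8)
The plan is to exploit the defining property of a magic symbol, namely that over the magic symbol $a$ every preimage can be continued in a way that is "forced," so that the block $aQa$ (which has $a$ at both ends, hence $a$ appearing as the $(|Q|+2)$-th symbol as well) pins down the preimage structure. Recall that $a$ is magic means $d(a,1) = \delta(\pi)$, i.e., the symbol $a$ has exactly $\delta(\pi)$ preimages and this is the minimum over all $(w,i)$. The key elementary fact about magic symbols (standard in the finite-to-one case and, as noted in the excerpt, still valid here) is: if $a'$ is any preimage of $a$ and $aR$ is any $Y$-word, then there is \emph{at least} one extension of $a'$ over $R$, and moreover the preimages of $aRa$ whose first symbol is a fixed preimage $a'$ of $a$, \emph{restricted to the right endpoint}, still realize all $\delta(\pi)$ preimages of the terminal $a$ — because passing through a magic symbol cannot increase the fiber count, and it is already minimal at $a$. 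The upshot I want is: \emph{for any preimage $a'$ of $a$ and any $Y$-word $aRa$, every preimage $b''$ of the terminal $a$ extends backward to a preimage of $aRa$ beginning with $a'$.}

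**First I would** fix the $X$-word $a'Q'a''$ with $\pi(a'Q'a'')=aQa$, and prove the inclusion $\mathcal C_0[a'Q'a''] \subset \mathcal C_0[aQa]$, which is immediate since $\pi$ is a 1-block code and $\mathcal C_0[a'Q'a'']\subseteq \pi^{-1}\mathcal C_0[\pi(a'Q'a'')] = \pi^{-1}\mathcal C_0[aQa]$, and applying $\pi$ to both sides (using that $\pi$ is onto and shift-commuting) gives the containment of images. **Then** the substance is the reverse inclusion: given any point $y\in \mathcal C_0[aQa]$, I must produce $x\in \mathcal C_0[a'Q'a'']$ with $\pi x = y$. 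Write $y = aQa\,R\,\cdots$ forward and $\cdots\,L\,aQa$ backward (so $y$ has $aQa$ starting at coordinate $0$). I would build $x$ in three stages: on the coordinates $[0,|Q|+1]$ put $x = a'Q'a''$ (forced to match the given block); for the forward tail $y[|Q|+2,\infty)$, which begins with the symbol $a$ (the terminal symbol of $aQa$, of which $a''$ is a preimage), use the magic-symbol extension property to extend $a''$ forward to a preimage of $y[|Q|+1,\infty)$ — here I need that $a''$ being a preimage of the magic symbol $a$ guarantees a forward extension exists (this is exactly the "at least $\delta(\pi)$, hence $\geq 1$, and consistency of extensions" content); for the backward tail $y[-\infty,-1]$, which ends entering the symbol $a = y_0$, I need to extend $a'$ (a preimage of $y_0=a$) backward to a preimage of $y(-\infty,0]$. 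Concatenating and using that $X=\Omega_A$ is a 1-step SFT so that local consistency at each overlap suffices to give a legitimate point of $X$, I get the desired $x$, and $\pi x = y$ by construction.

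**The hard part will be** the backward extension across the left endpoint: I must know that $a'$, an arbitrary chosen preimage of $a$, actually extends backward along the \emph{given} infinite past of $y$. For the forward direction this is the clean magic-symbol statement (any preimage of a magic symbol extends forward over any continuation in $Y$), but the backward direction requires the symmetric statement, which holds because one may equally take $a$ to be a magic symbol for the code obtained by reversing time; alternatively, and more in keeping with the excerpt's framing, I would phrase everything in terms of the bi-infinite setting and invoke that the preimage set of a bi-infinite $Y$-point that passes through a magic symbol is controlled by the fiber over that symbol, so in particular is nonempty once we've fixed a compatible local preimage at the magic coordinate. I expect the proof to be short modulo citing (from \cite{Boyle2005,Kitchens1998}) the precise magic-word extension lemma; the only real care needed is bookkeeping of coordinates so that the three pieces $a'Q'a''$, the forward extension, and the backward extension glue along their single-symbol overlaps, which is routine because $\Omega_A$ is one-step. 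Once the reverse inclusion is established, combined with the forward inclusion from the first step, we conclude $\pi(\mathcal C_0[a'Q'a'']) = \mathcal C_0[aQa]$, i.e., the image of the cylinder is \emph{open} (a cylinder), which is the assertion of the Proposition and the ingredient needed for the subsequent Lemma \ref{surjcomponent} and hence Theorem \ref{sofician}.
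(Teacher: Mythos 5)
Your proposal is correct and follows essentially the same route as the paper: the load-bearing fact in both is that, since $d(w,i)\geq\delta(\pi)=d(a,1)$ for every $Y$-word $w$ carrying $a$ at position $i$ while every preimage of $w$ restricted to position $i$ is a preimage symbol of $a$, every preimage symbol of the magic symbol is realized at that position, after which one splices across the single-symbol overlaps at $a'$ and $a''$ using that $X$ is a $1$-step SFT. The only cosmetic difference is that the paper works with finite extensions $PaQaR$ and concludes by density plus compactness of the image, whereas you build a bi-infinite preimage point directly, which hides the same compactness argument inside the passage from finite to infinite one-sided extensions (and note that the stronger two-sided claim in your ``upshot'' is never actually needed, since the given word $a'Q'a''$ already supplies the connection from $a'$ to $a''$).
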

\begin{proof}
Suppose $PaQaR$ is a $Y$-word, with preimage $X$-words
$P^ja^jQ^j(a_*)^jR^j$, say $1\leq j\leq J$, with the 1-block code
acting by erasing $*$ and superscripts.
 Because $a$ is a magic symbol,
there must exist some $j$ such that $a_j=a'$, and there must exist
some $k$ such that $(a_*)^k=a''$. Because $X$ is a 1-step SFT,
$P^ja'Q'a''R^k$ is an $X$-word, and it maps to $PaQaR$. This shows
that the image of
 $\mathcal C_0[a'Q'a']$ is dense in
 $\mathcal C_0[aQa]$ and therefore, by compactness,
equal to it.
\end{proof}
}

\begin{cor} \label{cor_open}
Suppose $\pi:X\to Y$ is a factor map from an irreducible SFT $X$ to
a sofic subshift $Y$. Then there is a residual set of points in $Y$
which lift to doubly transitive points in $X$.
\end{cor}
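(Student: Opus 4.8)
The plan is to reduce, using the discussion preceding Proposition~\ref{openimage}, to the case where $X=\Omega_A$ is a $1$-step irreducible SFT, $\pi$ is a $1$-block code, and $a\in\ca(Y)$ is a magic symbol (topological conjugacies preserve irreducibility, doubly transitive points and residual sets, so nothing is lost), and then to show that every doubly transitive point of $Y$ lifts to a doubly transitive point of $X$; since the doubly transitive points of an irreducible sofic subshift form a dense $G_\delta$, this yields the required residual set in $Y$. Put $\cb=\{\,i\in\ca(X):\pi(i)=a\,\}$, a nonempty set with $|\cb|=\delta(\pi)$. The one feature of magic-ness I would record is the \emph{extension property}: for any $Y$-word $u$ beginning with $a$ we have $|\cb|=\delta(\pi)\le d(u,1)\le|\cb|$, so every symbol of $\cb$ occurs as the first symbol of some preimage of $u$; dually, every symbol of $\cb$ occurs as the last symbol of a preimage of any $Y$-word ending with $a$.

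Fix a doubly transitive $y\in Y$ and hunt for a doubly transitive point in the compact metrizable fiber $F=\pi^{-1}(y)$ by a Baire category argument. Let $\cw$ be the countable set of $X$-words that begin and end with a symbol of $\cb$; by irreducibility of $X$ every $X$-word extends on both sides to a word in $\cw$, so $x\in X$ is doubly transitive iff every $v\in\cw$ occurs in $x$ at positions tending to $+\infty$ and to $-\infty$. Thus the doubly transitive points in $F$ form $\bigcap_{v\in\cw,\,N\ge1}(F\cap U^+_{v,N})\cap(F\cap U^-_{v,N})$, where $U^+_{v,N}=\bigcup_{j\ge N}\cc_j(v)$ and $U^-_{v,N}=\bigcup_{j\le-N}\cc_j(v)$; each $F\cap U^{\pm}_{v,N}$ is open in $F$, so it suffices to show each is \emph{dense} in $F$.

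That density is the technical heart. Given $x_0\in F$ and a window $[-L,L]$, I must produce $x\in F$ agreeing with $x_0$ on $[-L,L]$ with $v\in\cw$ occurring at some position $\ge N$. Using double transitivity of $y$, pick $a$-positions $m_-<-L$ and $m_+>\max(L,N)$ of $y$; then $x_0[m_\pm]\in\cb$. The combinatorial claim is: there are an $a$-position $n>m_+$ of $y$ and a preimage $X$-word $w$ of $y[m_+,n]$ with $w[m_+]=x_0[m_+]$, $w[n]\in\cb$, and $v$ occurring in $w$. Granting this, $W:=x_0[m_-,m_+]\cdot w$ (overlapping at $m_+$) is a preimage of $y[m_-,n]$ beginning and ending in $\cb$, restricting to $x_0$ on $[-L,L]$, and containing $v$ at a position $\ge m_+\ge N$; since $\pi(W)$ begins and ends with $a$, Proposition~\ref{openimage} gives $\pi(\cc_0[W])=\cc_0[\pi(W)]$, and as $\sigma^{m_-}y\in\cc_0[\pi(W)]$ there is $z\in\cc_0[W]$ with $\pi z=\sigma^{m_-}y$, whence $x:=\sigma^{-m_-}z$ is the desired point of $F$. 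To prove the claim I would grow $w$ out of $x_0[m_+]$ one first-return-word-to-$\cb$ at a time: the extension property always permits continuing a $\cb$-anchored partial preimage past the next $a$-position of $y$, while irreducibility of $X$ makes the first-return transitions on $\cb$ strongly connected and double transitivity of $y$ makes every admissible finite run of first-return words of $Y$ to $a$ occur cofinally in $y$. Combining these, one steers the construction so that at some index at which the upcoming first-return words of $y$ spell $\pi(v)$ the current $\cb$-symbol is the first symbol of $v$, and there one inserts $v$ itself; a useful point is that, $X$ being $1$-step, once the set of $\cb$-symbols reachable out of $x_0[m_+]$ over an initial run of first-return words equals all of $\cb$ it stays equal to $\cb$, by the dual extension property.

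I expect the steering step just described to be the only genuinely nontrivial point; everything else is bookkeeping. Its difficulty is purely combinatorial --- forcing the reachable $\cb$-symbol to match $v$'s initial symbol at a moment when $y$'s return-word sequence is about to spell $\pi(v)$ --- and requires irreducibility of $X$ and genericity of $y$ to be used together; a self-contained treatment is close to the magic-word mechanics of \cite{Boyle2005, Kitchens1998}. An alternative organization is via the Putnam diagram of Theorem~\ref{th_soficlifttomarkovian}: a degree-$1$ factor map of irreducible sofic subshifts restricts to a bijection on doubly transitive points, so $y$ pulls back to a doubly transitive point of $\Omega_B$; lifting that along the e-resolving map $\gamma$ and pushing forward along $\alpha$ yields $x\in X$ with $\pi x=y$ doubly transitive, the lift along $\gamma$ being the e-resolving analogue of the routing claim.
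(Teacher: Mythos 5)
Your proposal aims at a strictly stronger statement than the corollary: that \emph{every} doubly transitive point of $Y$ lifts to a doubly transitive point of $X$. The paper flags exactly this as unknown in the sentence immediately following the corollary (``We do not know whether in Corollary \ref{cor_open} every doubly transitive point of $Y$ must lift to a doubly transitive point of $X$''), and the gap in your argument sits precisely at what you call the steering step. Once $y$ and the starting symbol $x_0[m_+]$ are fixed, the set $R_k$ of symbols of $\cb$ reachable at the $k$-th $a$-position of $y$ is completely determined --- there is nothing to steer. The extension property you correctly record only says that the relation on $\cb$ induced by each return word of $y$ has full domain and full range; it does not say the relation is all of $\cb\times\cb$, so the sets $R_k$ need not grow (nothing prevents each relation from being the graph of a permutation of $\cb$, as happens for finite-to-one codes, in which case $R_k$ remains a singleton forever). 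Irreducibility of $X$ gives strong connectivity of the \emph{union} of the return-word relations, and double transitivity of $y$ gives that any prescribed finite run of return words occurs cofinally in $y$; but what you need is that a run carrying the \emph{current} set $R_k$ onto the initial symbol of $v$ occurs at a position where that particular $R_k$ is current \emph{and} is immediately followed by $\pi(v)$. Neither hypothesis delivers this simultaneity, and no argument for it is given. The closing alternative via the Putnam diagram has the same problem displaced into the maps $\alpha$, $\beta$, $\gamma$: that doubly transitive points lift pointwise along degree-one or e-resolving maps is again an instance of the open question, not a fact you can quote.

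The paper's proof avoids fibers over individual points entirely and runs the Baire category argument in $Y$: it writes the set of doubly transitive points of $X$ as a decreasing intersection of dense open sets $E_n$, each a union of cylinders on words beginning and ending at a preimage of the magic symbol, uses Proposition \ref{openimage} to see that each $\pi E_n$ is open (and dense, since $\pi$ is onto), and concludes that $\pi\bigl(\bigcap_n E_n\bigr)=\bigcap_n \pi E_n$ is residual. That is the reduction to make: you do not need to locate a doubly transitive preimage over a \emph{given} $y$, only to show that the image of the doubly transitive points of $X$ is residual in $Y$, and for that the openness of images of magic-anchored cylinders is exactly enough.
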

\begin{proof}
Without loss of generality, we  assume $\pi$ is a 1-block factor
map, $X$ is a 1-step SFT, and there is a magic symbol $a$ for $\pi$.
Let $v_n=a'P_na'$, $n\in \mathbb N$, be a set of $X$-words such that
every $X$-word occurs as a subset of some $P_n$ and $a'$ is a symbol
sent to $a$. The set $E_n$ of points in $X$ which see the words
$v_1,v_2,\dots v_n$ both in the future and in the past is a dense
open subset of $X$. It follows from Proposition \ref{openimage} that
each $\pi E_n$ is open. For every $n$,  $E_n$ contains $E_{n+1}$, so
$\pi (\cap_n E_n)=\cap_n \pi E_n$. Thus the set $\cap_n E_n$ of
doubly transitive points in $X$ maps to a residual subset of $Y$.
\end{proof}

We do not know whether in  Corollary \ref{cor_open} every doubly
transitive point of $Y$ must lift to a doubly transitive point of
$X$.

\begin{lem} \label{surjcomponent}
Suppose $\alpha: X\to Z$ and $\beta :Y\to Z$ are  factor maps of
irreducible sofic subshifts. Then there is an irreducible SFT $W$
with factor maps $\widetilde {\alpha}$ and $\widetilde{\beta}$ such
that $\textnormal{degree}(\widetilde{\beta} ) \leq
\textnormal{degree}(\beta )$ and the following diagram commutes.
 \be
 \begin{CD}
W & @>\ \widetilde{\alpha}\ >> & Y \\
 @V \widetilde{\beta} VV & & @VV \beta V\\
X & @>>\ \alpha \ > & Z
 \end{CD}
 \en
\end{lem}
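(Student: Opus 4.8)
The plan is to build $W$ as an irreducible component of the fibered product of $\alpha$ and $\beta$, after first reducing to the SFT case. I would begin by arranging that $X$ and $Y$ are irreducible SFTs and that $\alpha,\beta$ are $1$-block codes between $1$-step SFTs. To get the first reduction, apply Theorem \ref{th_soficlifttomarkovian} to the identity maps of $X$ and of $Y$ to obtain degree-$1$ factor maps $r\colon\widehat X\to X$ and $q\colon\widehat Y\to Y$ from irreducible SFTs; replacing $X,Y$ by $\widehat X,\widehat Y$ and $\alpha,\beta$ by $\alpha\circ r,\beta\circ q$ affects neither the commuting-square conclusion (one composes the final maps with $r$ and $q$) nor, since degree is multiplicative and $r,q$ have degree $1$, the inequality $\textnormal{degree}(\widetilde\beta)\le\textnormal{degree}(\beta)$; passing to higher block presentations then makes $\alpha,\beta$ $1$-block and $X,Y$ $1$-step. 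In this situation $X\times Y$ is a $1$-step SFT, and since $\alpha(x)=\beta(y)$ is a constraint on individual symbols, the fibered product
\[
P=\{(x,y)\in X\times Y:\alpha(x)=\beta(y)\}
\]
is the $1$-step SFT obtained from $X\times Y$ by deleting the symbols $(a,b)$ with $\alpha(a)\neq\beta(b)$. The coordinate projections $p_X\colon P\to X$ and $p_Y\colon P\to Y$ commute with the shift, satisfy $\alpha\circ p_X=\beta\circ p_Y$, and $p_X$ is surjective because for $x\in X$ one has $\alpha(x)\in Z=\beta(Y)$.

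The hard part will be to locate one irreducible component of $P$ that maps onto $X$ and onto $Y$ simultaneously; a naive choice of preimages does not work, since preimages of doubly transitive points under $\alpha$ or $\beta$ need not be doubly transitive. Instead I would invoke Corollary \ref{cor_open}, applied to $\alpha$ and to $\beta$: residually many points of $Z$ lift to a doubly transitive point of $X$, and residually many lift to a doubly transitive point of $Y$; choosing $z_0$ in the intersection of these two residual sets gives doubly transitive $x_0\in\alpha^{-1}(z_0)$ and $y_0\in\beta^{-1}(z_0)$, so $s_0:=(x_0,y_0)\in P$. Let $C$ be the irreducible component of $P$ (the vertex shift of a strongly connected component of the defining graph) into which the forward orbit of $s_0$ is eventually absorbed: a forward-infinite walk in a finite graph runs through strongly connected components in non-increasing order and so is ultimately trapped in one, i.e.\ $(s_0)_n\in C$ for all large $n$. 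Then for $n_k\to\infty$ the point $\lim_k\sigma^{n_k}s_0$ uses, in each finite window, only symbols of $C$, so the forward $\omega$-limit set $\omega(s_0)$ is contained in the irreducible SFT $C$. Moreover $p_X(\omega(s_0))=\omega(x_0)=X$: the inclusion $\subseteq$ is clear, and given $x=\lim_k\sigma^{n_k}x_0$, passing to a subsequence along which $\sigma^{n_k}y_0$ also converges exhibits $x$ in $p_X(\omega(s_0))$, while $\omega(x_0)=X$ because $x_0$ has dense forward orbit; likewise $p_Y(\omega(s_0))=Y$. Hence $p_X(C)=X$ and $p_Y(C)=Y$.

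I would then set $W=C$, $\widetilde\beta=p_X|_C\colon W\to X$ and $\widetilde\alpha=p_Y|_C\colon W\to Y$. These are factor maps of irreducible shifts; the square commutes because $\alpha(p_X(x,y))=\alpha(x)=\beta(y)=\beta(p_Y(x,y))$ for $(x,y)\in C$; and for the degree bound, if $x\in X$ is doubly transitive then so is $\alpha(x)$ in $Z$, whence
\[
\widetilde\beta^{-1}(x)\ \subseteq\ p_X^{-1}(x)\ =\ \{x\}\times\beta^{-1}(\alpha(x)),\qquad |\widetilde\beta^{-1}(x)|\ \le\ |\beta^{-1}(\alpha(x))|\ =\ \textnormal{degree}(\beta),
\]
so $\textnormal{degree}(\widetilde\beta)\le\textnormal{degree}(\beta)$ (both sides possibly $\infty$). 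Apart from the middle step, everything is bookkeeping with topological conjugacies, products, and multiplicativity of degree; the single genuine obstacle is producing one irreducible component of the fibered product that surjects onto both factors at once, which is exactly what Corollary \ref{cor_open} is used for.
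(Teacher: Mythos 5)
Your proposal is correct and follows essentially the same route as the paper's proof: reduce to irreducible SFTs via degree-$1$ covers, use Corollary \ref{cor_open} to find a point of $Z$ whose $\alpha$- and $\beta$-fibers each contain a doubly transitive point, and take the irreducible component of the fiber product to which that pair is forward asymptotic. You merely supply more detail where the paper is terse (the $\omega$-limit-set argument for surjectivity of the coordinate projections, and the explicit degree bookkeeping), and you perform the sofic-to-SFT reduction at the start rather than at the end, which is only a cosmetic difference.
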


\begin{proof}
First, suppose $X$ and $Y$ are SFT. The intersection of any two
residual sets in $Z$ is nonempty, so by
 Corollary \ref{cor_open} we may find $x$ and $y$, doubly
transitive in $X$ and $Y$ respectively, such that
$\alpha x = \beta y$.
Let $\Omega_F$ be the irreducible component of the
 fiber product
 {
 $\{ (u,v) \in X \times Y : \alpha x = \beta y\}$
 }
 built from $\alpha$
and $\beta$ to which the point $(x,y)$ is forward asymptotic, and
let $\widetilde {\beta},\widetilde{\alpha}$ be restrictions to
$\Omega_F$ of the coordinate projections. These restrictions must be
surjective. Note that $\textnormal{degree}(\widetilde{\beta} ) \leq
\textnormal{degree}(\beta )$.

If $X$ and $Y$ are not necessarily SFT, then there are degree 1
factor maps from irreducible SFT's, $\rho_1 :\Omega_A \to X$ and
$\rho_2 :\Omega_B \to Y$, and we can apply the first case to find
$\widetilde{\alpha \rho_1}$ and $\widetilde {\beta \rho_2}$  in the
diagram with respect to the pair $\alpha \rho_1, \beta \rho_2$. Now
for
 $\widetilde {\alpha}$ and $ \widetilde{\beta}$ we
use the maps
$\rho_1 \widetilde{\alpha \rho_1}$ and
$\rho_2 \widetilde {\beta \rho_2}$.
\end{proof}

\begin{ack*}
 This article arose from the October 2007 workshop ``Entropy of Hidden
 Markov Processes and Connections to Dynamical Systems''
at the Banff International Research Station, and we thank BIRS,
PIMS, and MSRI for hospitality and support. We thank Jean-Ren\'e
Chazottes, Masakazu Nasu, Sujin Shin, Peter Walters and Yuki Yayama
for very helpful comments. We are especially grateful to Uijin Jung
and the two referees for extremely thorough comments and
corrections.
 Both authors
thank the  Departamento de Ingen{i}er\'{i}a Matem\'{a}tica, Center
for Mathematical Modeling, of the University of Chile and the
CMM-Basal Project, and the second author also the Universit\'e
Pierre et Marie Curie (University of Paris 6) and Queen Mary
University of London, for hospitality and support
 during the preparation of this article.
 Much of Section \ref{sec_ident} is drawn from lectures given
 by the second author in a graduate course at the University of North
 Carolina, and we thank  the students who wrote up the notes:
 Rika Hagihara, Jessica Hubbs, Nathan Pennington, and Yuki Yayama.
\end{ack*}

\bibliographystyle{amsplain}
\bibliography{MBKP04Jan2010}

\providecommand{\bysame}{\leavevmode\hbox to3em{\hrulefill}\thinspace}
\providecommand{\MR}{\relax\ifhmode\unskip\space\fi MR }
\providecommand{\MRhref}[2]{%
  \href{http://www.ams.org/mathscinet-getitem?mr=#1}{#2}
}
\providecommand{\href}[2]{#2}
\begin{thebibliography}{100}

\bibitem{Abdel-MoneimLeysieffer1982}
{Atef M.} Abdel-Moneim and Frederick~W. Leysieffer, \emph{Weak lumpability in
  finite {M}arkov chains}, J. Appl. Probab. \textbf{19} (1982), no.~3,
  685--691. \MR{MR664854 (84a:60077)}

\bibitem{Abdel-MoneimLeysieffer1984}
\bysame, \emph{Lumpability for nonirreducible finite {M}arkov chains}, J. Appl.
  Probab. \textbf{21} (1984), no.~3, 567--574. \MR{MR752021 (85k:60089)}

\bibitem{Ahmad1977}
Rashid Ahmad, \emph{An algebraic treatment of {M}arkov processes}, Transactions
  of the {S}eventh {P}rague {C}onference on {I}nformation {T}heory,
  {S}tatistical {D}ecision {F}unctions, {R}andom {P}rocesses and of the
  {E}ighth {E}uropean {M}eeting of {S}tatisticians ({T}ech. {U}niv. {P}rague,
  {P}rague, 1974), {V}ol. {A}, Reidel, Dordrecht, 1977, pp.~13--22.
  \MR{MR0488303 (58 \#7854)}

\bibitem{Arbib1967}
Michael Arbib, \emph{Realization of stochastic systems}, Ann. Math. Statist.
  \textbf{38} (1967), 927--933. \MR{MR0225606 (37 \#1199)}

\bibitem{Ashley1991}
Jonathan Ashley, \emph{Resolving factor maps for shifts of finite type with
  equal entropy}, Ergodic Theory Dynam. Systems \textbf{11} (1991), no.~2,
  219--240. \MR{MR1116638 (92d:58056)}

\bibitem{Ashley1993}
\bysame, \emph{An extension theorem for closing maps of shifts of finite type},
  Trans. Amer. Math. Soc. \textbf{336} (1993), no.~1, 389--420. \MR{MR1105064
  (93e:58048)}

\bibitem{Bancilhon1974}
Fran{\c{c}}ois Bancilhon, \emph{A geometric model for stochastic automata},
  IEEE Trans. Computers \textbf{C-23} (1974), no.~12, 1290--1299. \MR{MR0406736
  (53 \#10522)}

\bibitem{BaumPetrie1966}
Leonard~E. Baum and Ted Petrie, \emph{Statistical inference for probabilistic
  functions of finite state {M}arkov chains}, Ann. Math. Statist. \textbf{37}
  (1966), 1554--1563. \MR{MR0202264 (34 \#2137)}

\bibitem{BerstelReutenauer1988}
J.~Berstel and C.~Reutenauer, \emph{Rational {S}eries and {T}heir {L}anguages},
  Springer-Verlag, 1988.

\bibitem{BezhaevaOseledets2005}
Z.~I. Bezhaeva and V.~I. Oseledets, \emph{Erd{\H o}s measures, sofic measures,
  and {M}arkov chains}, Zap. Nauchn. Sem. S.-Peterburg. Otdel. Mat. Inst.
  Steklov. (POMI) \textbf{326} (2005), no.~Teor. Predst. Din. Sist. Komb. i
  Algoritm. Metody. 13, 28--47, 279--280. \MR{MR2183214 (2006h:60119)}

\bibitem{Billingsley1995}
P.~Billingsley, \emph{Probability and {M}easure}, John Wiley and Sons, Inc.,
  1995.

\bibitem{BinkowskaKaminski1984}
M.~Binkowska and B.~Kaminski, \emph{Classification of ergodic finitary shifts},
  Ann. Sci. Univ. Clermont-Ferrand II Probab. Appl. (1984), no.~2, 25--37.

\bibitem{Blackwell1957}
D.~Blackwell, \emph{The entropy of functions of finite state {M}arkov chains},
  Trans. First Prague Conf. Inf. Theory, Stat. Dec. Functions, Random Proc.,
  1957, pp.~13--20.

\bibitem{BlackwellKoopmans1957}
David Blackwell and Lambert Koopmans, \emph{On the identifiability problem for
  functions of finite {M}arkov chains.}, Ann. Math. Statist. \textbf{28}
  (1957), 1011--1015. \MR{MR0099081 (20 \#5525)}

\bibitem{Bosch1974/75}
Karl Bosch, \emph{Notwendige und hinreichende {B}edingungen daf\"ur, da\ss\
  eine {F}unktion einer homogenen {M}arkoffschen {K}ette {M}arkoffsch ist}, Z.
  Wahrscheinlichkeitstheorie und Verw. Gebiete \textbf{31} (1974/75), 199--202.
  \MR{MR0383535 (52 \#4416)}

\bibitem{Boudreau1968}
Paul~E. Boudreau, \emph{Functions of finite {M}arkov chains and exponential
  type processes}, Ann. Math. Statist. \textbf{39} (1968), 1020--1029.
  \MR{MR0224161 (36 \#7207)}

\bibitem{Boyle2005}
Mike Boyle, \emph{Putnam's resolving maps in dimension zero}, Ergodic Theory
  Dynam. Systems \textbf{25} (2005), no.~5, 1485--1502. \MR{MR2173429
  (2006h:37013)}

\bibitem{BoyleTuncel1984}
Mike Boyle and Selim Tuncel, \emph{Infinite-to-one codes and {M}arkov
  measures}, Trans. Amer. Math. Soc. \textbf{285} (1984), no.~2, 657--684.
  \MR{MR752497 (86b:28024)}

\bibitem{BurkeRosenblatt1958-2}
C.~Burke and M.~Rosenblatt, \emph{Consolidation of probability matrices}, Bull.
  Inst. Internat. Statist. \textbf{36} (1958), no.~3, 7--8. \MR{MR0120680 (22
  \#11429)}

\bibitem{BurkeRosenblatt1958}
C.~J. Burke and M.~Rosenblatt, \emph{A {M}arkovian function of a {M}arkov
  chain}, Ann. Math. Statist. \textbf{29} (1958), 1112--1122. \MR{MR0101557 (21
  \#367)}

\bibitem{ChazottesUgalde2003}
J.-R. Chazottes and E.~Ugalde, \emph{Projection of {M}arkov measures may be
  {G}ibbsian}, J. Statist. Phys. \textbf{111} (2003), no.~5-6, 1245--1272.
  \MR{MR1975928 (2004d:37008)}

\bibitem{ChazottesUgalde2009}
\bysame, \emph{On the preservation of {G}ibbsianness under symbol
  amalgamation}, preprint, 2009.

\bibitem{Dharma1963}
S.~W. Dharmadhikari, \emph{Functions of finite {M}arkov chains}, Ann. Math.
  Statist. \textbf{34} (1963), 1022--1032. \MR{MR0152020 (27 \#2001a)}

\bibitem{Dharma1963-2}
\bysame, \emph{Sufficient conditions for a stationary process to be a function
  of a finite {M}arkov chain}, Ann. Math. Statist. \textbf{34} (1963),
  1033--1041. \MR{MR0152021 (27 \#2001b)}

\bibitem{Dharma1964}
\bysame, \emph{Exchangeable processes which are functions of stationary
  {M}arkov chains}, Ann. Math. Statist. \textbf{35} (1964), 429--430.
  \MR{MR0161370 (28 \#4577)}

\bibitem{Dharma1965}
\bysame, \emph{A characterisation of a class of functions of finite {M}arkov
  chains}, Ann. Math. Statist. \textbf{36} (1965), 524--528. \MR{MR0172333 (30
  \#2552)}

\bibitem{Dharma1968}
\bysame, \emph{Splitting a single state of a stationary process into
  {M}arkovian states}, Ann. Math. Statist. \textbf{39} (1968), 1069--1077.
  \MR{MR0224154 (36 \#7200)}

\bibitem{DharmaNadkarni1970}
S.~W. Dharmadhikari and M.~G. Nadkarni, \emph{Some regular and non-regular
  functions of finite {M}arkov chains}, Ann. Math. Statist. \textbf{41} (1970),
  207--213. \MR{MR0263161 (41 \#7766)}

\bibitem{DownarowiczMauldin2005}
T.~Downarowicz and R.D. Mauldin, \emph{Some remarks on output measures},
  Topology Applications \textbf{152} (2005), 11--25.

\bibitem{DownarowiczSerafin2002}
Tomasz Downarowicz and Jacek Serafin, \emph{Fiber entropy and conditional
  variational principles in compact non-metrizable spaces}, Fund. Math.
  \textbf{172} (2002), no.~3, 217--247. \MR{MR1898686 (2003b:37027)}

\bibitem{Ellis}
Martin~H. Ellis, \emph{Lumping states of an irreducible stationary {M}arkov
  chain}, unpublished manuscript.

\bibitem{Ellis1976}
\bysame, \emph{The {$\overline d$}-distance between two {M}arkov processes
  cannot always be attained by a {M}arkov joining}, Israel J. Math. \textbf{24}
  (1976), no.~3-4, 269--273. \MR{MR0414820 (54 \#2912)}

\bibitem{EphraimMerhav2002}
Yariv Ephraim and Neri Merhav, \emph{Hidden {M}arkov processes}, IEEE Trans.
  Inform. Theory \textbf{48} (2002), no.~6, 1518--1569, Special issue on
  Shannon theory: perspective, trends, and applications. \MR{MR1909472
  (2003f:94024)}

\bibitem{Erickson1970}
R.~V. Erickson, \emph{Functions of {M}arkov chains}, Ann. Math. Statist.
  \textbf{41} (1970), 843--850. \MR{MR0264769 (41 \#9360)}

\bibitem{Fannes1992}
M.~Fannes, B.~Nachtergaele, and L.~Slegers, \emph{Functions of {M}arkov
  processes and algebraic measures}, Rev. Math. Phys. \textbf{4} (1992), no.~1,
  39--64. \MR{MR1160137 (93g:82010)}

\bibitem{FoxRubin1968}
Martin Fox and Herman Rubin, \emph{Functions of processes with {M}arkovian
  states}, Ann. Math. Statist. \textbf{39} (1968), 938--946. \MR{MR0232450 (38
  \#775)}

\bibitem{FoxRubin1969}
\bysame, \emph{Functions of processes with {M}arkovian states. {II}}, Ann.
  Math. Statist. \textbf{40} (1969), 865--869. \MR{MR0243607 (39 \#4928)}

\bibitem{FoxRubin1970}
\bysame, \emph{Functions of processes with {M}arkovian states. {III}}, Ann.
  Math. Statist. \textbf{41} (1970), 472--479. \MR{MR0258099 (41 \#2746)}

\bibitem{Furstenberg1960}
H.~Furstenberg, \emph{Stationary {P}rocesses and {P}rediction {T}heory}, Annals
  of Mathematics Studies, vol.~44, Princeton University Press, 1960.

\bibitem{Gilbert1959}
E.~J. Gilbert, \emph{On the identifiability problem for functions of finite
  {M}arkov chains}, Ann. Math. Stat. \textbf{30} (1959), 688--697.

\bibitem{GurvitsLedoux2005}
Leonid Gurvits and James Ledoux, \emph{Markov property for a function of a
  {M}arkov chain: a linear algebra approach}, Linear Algebra Appl. \textbf{404}
  (2005), 85--117. \MR{MR2149655 (2006g:60108)}

\bibitem{HachigianRosenblatt1962}
J.~Hachigian and M.~Rosenblatt, \emph{Functions of reversible {M}arkov
  processes that are {M}arkovian}, J. Math. Mech. \textbf{11} (1962), 951--960.
  \MR{MR0145588 (26 \#3118)}

\bibitem{Haggstrom2003}
Olle H{\"a}ggstr{\"o}m, \emph{Is the fuzzy {P}otts model {G}ibbsian?}, Ann.
  Inst. H. Poincar\'e Probab. Statist. \textbf{39} (2003), no.~5, 891--917.
  \MR{MR1997217 (2005f:82049)}

\bibitem{HanMarcus2006}
Guangyue Han and Brian Marcus, \emph{Analyticity of entropy rate of hidden
  {M}arkov chains}, IEEE Trans. Inform. Theory \textbf{52} (2006), no.~12,
  5251--5266. \MR{MR2300690 (2007m:62008)}

\bibitem{HanMarcus2007}
\bysame, \emph{Derivatives of entropy rate in special families of hidden
  {M}arkov chains}, IEEE Trans. Inform. Theory \textbf{53} (2007), no.~7,
  2642--2652. \MR{MR2319402 (2008b:94023)}

\bibitem{HanselPerrin1989}
G.~Hansel and D.~Perrin, \emph{Rational probability measures}, Theor. Comp.
  Sci. \textbf{65} (1989), 171--188.

\bibitem{Harris1955}
T.~E. Harris, \emph{On chains of infinite order}, Pacific J. Math. \textbf{5}
  (1955), 707--724. \MR{MR0075482 (17,755b)}

\bibitem{Heller1965}
A.~Heller, \emph{On stochastic processes derived from {M}arkov chains}, Ann.
  Math. Stat. \textbf{36} (1965), 1286--1291.

\bibitem{Heller1967}
Alex Heller, \emph{Probabilistic automata and stochastic transformations},
  Math. Systems Theory \textbf{1} (1967), 197--208. \MR{MR0235926 (38 \#4227)}

\bibitem{Holland1968}
Paul~W. Holland, \emph{Some properties of an algebraic representation of
  stochastic processes}, Ann. Math. Statist. \textbf{39} (1968), 164--170.
  \MR{MR0221574 (36 \#4626)}

\bibitem{InagakiFutumuraMutuura1972}
Yasuyoshi Inagaki, Teruo Fukumura, and Hiroyuki Matuura, \emph{Some aspects of
  linear space automata}, Information and Control \textbf{20} (1972), 439--479.
  \MR{MR0332402 (48 \#10729)}

\bibitem{Israel1979}
Robert~B. Israel, \emph{Convexity in the {T}heory of {L}attice {G}ases},
  Princeton University Press, Princeton, N.J., 1979, Princeton Series in
  Physics, With an introduction by Arthur S. Wightman. \MR{MR517873
  (80i:82002)}

\bibitem{Kelly1982}
F.~P. Kelly, \emph{Markovian functions of a {M}arkov chain}, Sankhy\=a Ser. A
  \textbf{44} (1982), no.~3, 372--379. \MR{MR705461 (85d:60129)}

\bibitem{KemenySnell1960}
John~G. Kemeny and J.~Laurie Snell, \emph{Finite {M}arkov {C}hains},
  Springer-Verlag, New York, 1976, Reprinting of the 1960 original,
  Undergraduate Texts in Mathematics. \MR{MR0410929 (53 \#14670)}

\bibitem{Kitchens1981}
Bruce Kitchens, \emph{An invariant for continuous factors of markov shifts},
  Proc.Amer.Math.Soc. \textbf{83} (1981), 825--828. \MR{MR0630029 (82k:28021)}

\bibitem{Kitchens1982}
\bysame, \emph{Linear algebra and subshifts of finite type}, Conference in
  modern analysis and probability ({N}ew {H}aven, {C}onn., 1982), Contemp.
  Math., vol.~26, Amer. Math. Soc., Providence, RI, 1984, pp.~231--248.
  \MR{MR737405 (85m:28022)}

\bibitem{KitchensTuncel1985}
Bruce Kitchens and Selim Tuncel, \emph{Finitary measures for subshifts of
  finite type and sofic systems}, Mem. Amer. Math. Soc. \textbf{58} (1985),
  no.~338, iv+68. \MR{MR818917 (87h:58110)}

\bibitem{Kitchens1998}
Bruce~P. Kitchens, \emph{Symbolic {D}ynamics}, Universitext, Springer-Verlag,
  Berlin, 1998, One-sided, two-sided and countable state Markov shifts.
  \MR{MR1484730 (98k:58079)}

\bibitem{Kleene1956}
S.~C. Kleene, \emph{Representation of events in nerve nets and finite
  automata}, Automata {S}tudies (C.~E. Shannon and J.~McCarthy, eds.),
  Princeton University Press, 1956, pp.~3--42.

\bibitem{KomotaKimura1978}
Yasuo Komota and Masayuki Kimura, \emph{A characterization of the class of
  structurally stable probabilistic automata. {I}. {D}iscrete-time case},
  Internat. J. Systems Sci. \textbf{9} (1978), no.~4, 369--394. \MR{MR0490599
  (58 \#9937a)}

\bibitem{KomotaKimura1978-2}
\bysame, \emph{A characterization of the class of structurally stable
  probabilistic automata. {II}. {C}ontinuous-time case}, Internat. J. Systems
  Sci. \textbf{9} (1978), no.~4, 395--424. \MR{MR0490600 (58 \#9937b)}

\bibitem{KomotaKimura1981}
\bysame, \emph{On {M}arkov chains generated by {M}arkovian controlled {M}arkov
  systems: structural stability}, Internat. J. Systems Sci. \textbf{12} (1981),
  no.~7, 835--854. \MR{MR626281 (83c:93060)}

\bibitem{KunschGeman1995}
Hans K{\"u}nsch, Stuart Geman, and Athanasios Kehagias, \emph{Hidden {M}arkov
  random fields}, Ann. Appl. Probab. \textbf{5} (1995), no.~3, 577--602.
  \MR{MR1359820 (97a:60070)}

\bibitem{LedrappierWalters1977}
Fran{\c{c}}ois Ledrappier and Peter Walters, \emph{A relativised variational
  principle for continuous transformations}, J. London Math. Soc. (2)
  \textbf{16} (1977), no.~3, 568--576. \MR{MR0476995 (57 \#16540)}

\bibitem{Leysieffer1967}
Frederick~W. Leysieffer, \emph{Functions of finite {M}arkov chains}, Ann. Math.
  Statist. \textbf{38} (1967), 206--212. \MR{MR0207043 (34 \#6859)}

\bibitem{LindMarcus1995}
Douglas Lind and Brian Marcus, \emph{An {I}ntroduction to {S}ymbolic {D}ynamics
  and {C}oding}, Cambridge University Press, Cambridge, 1995. \MR{MR1369092
  (97a:58050)}

\bibitem{Madsen1975}
Richard~W. Madsen, \emph{Decidability of {$\alpha (P\sp{k})>0$} for some
  {$k$}}, J. Appl. Probability \textbf{12} (1975), 333--340. \MR{MR0373011 (51
  \#9213)}

\bibitem{MaesVandevelde1995}
Christian Maes and Koen Vande~Velde, \emph{The fuzzy {P}otts model}, J. Phys. A
  \textbf{28} (1995), no.~15, 4261--4270. \MR{MR1351929 (96i:82022)}

\bibitem{MarcusPetersenWilliams1984}
B.~Marcus, K.~Petersen, and S.~Williams, \emph{Transmission rates and factors
  of {M}arkov chains}, Contemp. Math. \textbf{26} (1984), 279--293.

\bibitem{Nasu1985}
Masakazu Nasu, \emph{An invariant for bounded-to-one factor maps between
  transitive sofic subshifts}, Ergodic Theory Dynam. Systems \textbf{5} (1985),
  no.~1, 89--105. \MR{MR782790 (86i:28030)}

\bibitem{NeuhoffShields1982}
David~L. Neuhoff and Paul~C. Shields, \emph{Indecomposable finite state
  channels and primitive approximation}, IEEE Trans. Inform. Theory \textbf{28}
  (1982), no.~1, 11--18. \MR{MR651096 (83k:94025)}

\bibitem{Parry1964}
William Parry, \emph{Intrinsic {M}arkov chains}, Trans. Amer. Math. Soc.
  \textbf{112} (1964), 55--66. \MR{MR0161372 (28 \#4579)}

\bibitem{ParryTuncel1982}
William Parry and Selim Tuncel, \emph{Classification {P}roblems in {E}rgodic
  {T}heory}, London Mathematical Society Lecture Note Series, vol.~67,
  Cambridge University Press, Cambridge, 1982, Statistics: Textbooks and
  Monographs, 41. \MR{MR666871 (84g:28024)}

\bibitem{ParryTuncel1982-2}
\bysame, \emph{On the stochastic and topological structure of {M}arkov chains},
  Bull. London Math. Soc. \textbf{14} (1982), no.~1, 16--27. \MR{MR642417
  (84i:28024)}

\bibitem{Paz1975}
A.~Paz, \emph{Word functions of pseudo-{M}arkov chains}, Linear Algebra and
  Appl. \textbf{10} (1975), 1--5. \MR{MR0388543 (52 \#9379)}

\bibitem{Paz1971}
Azaria Paz, \emph{Introduction to probabilistic automata}, Academic Press, New
  York-London, 1971. \MR{MR0289222}

\bibitem{Petersen1998}
K.~Petersen, \emph{Symbolic {D}ynamics},
  http://www.math.unc.edu/Faculty/petersen/m261s98.pdf, 1998.

\bibitem{Petersen1989}
Karl Petersen, \emph{Ergodic {T}heory}, Cambridge Studies in Advanced
  Mathematics, vol.~2, Cambridge University Press, Cambridge, 1989, Corrected
  reprint of the 1983 original. \MR{MR1073173 (92c:28010)}

\bibitem{PetersenQuasShin2003}
Karl Petersen, Anthony Quas, and Sujin Shin, \emph{Measures of maximal relative
  entropy}, Ergodic Theory Dynam. Systems \textbf{23} (2003), no.~1, 207--223.
  \MR{MR1971203 (2004b:37009)}

\bibitem{PetersenShin2005}
Karl Petersen and Sujin Shin, \emph{On the definition of relative pressure for
  factor maps on shifts of finite type}, Bull. London Math. Soc. \textbf{37}
  (2005), no.~4, 601--612. \MR{MR2143740 (2005m:37066)}

\bibitem{Phelps2002}
Robert~R. Phelps, \emph{Unique equilibrium states}, Dynamics and {R}andomness
  ({S}antiago, 2000), Nonlinear Phenom. Complex Systems, vol.~7, Kluwer Acad.
  Publ., Dordrecht, 2002, pp.~219--225. \MR{MR1975579 (2004c:28028)}

\bibitem{Putnam2005}
Ian~F. Putnam, \emph{Lifting factor maps to resolving maps}, Israel J. Math.
  \textbf{146} (2005), 253--280. \MR{MR2151603 (2007i:37020)}

\bibitem{Rabin1963}
M.O Rabin, \emph{Probabilistic automata}, Information and Control \textbf{6}
  (1963), 230--245.

\bibitem{Robertson1973}
J.~B. Robertson, \emph{The mixing propoerties of certain processes related to
  {M}arkov chains}, Math. {S}ys. {T}h \textbf{7} (1973), 39--43.

\bibitem{Robertson1973-2}
\bysame, \emph{A spectral representation of the states of a measure preserving
  transformation}, Z. {W}ahr. verw. {G}eb. \textbf{27} (1973), 185--194.

\bibitem{RubinoSericola1989}
Gerardo Rubino and Bruno Sericola, \emph{On weak lumpability in {M}arkov
  chains}, J. Appl. Probab. \textbf{26} (1989), no.~3, 446--457. \MR{MR1010934
  (90j:60069)}

\bibitem{RubinoSericola1991}
\bysame, \emph{A finite characterization of weak lumpable {M}arkov processes.
  {I}. {T}he discrete time case}, Stochastic Process. Appl. \textbf{38} (1991),
  no.~2, 195--204. \MR{MR1119981 (92g:60092)}

\bibitem{Schonhuth2009}
A.~Sch\"{o}nhuth, \emph{Equations for hidden {M}arkov models}, preprint, 2009.

\bibitem{Schutzenberger1961}
M.~P. Sch\"utzenberger, \emph{On the definition of a family of automata}, Inf.
  and Control \textbf{4} (1961), 245--270.

\bibitem{ShannonWeaver1949}
Claude~E. Shannon and Warren Weaver, \emph{The {M}athematical {T}heory of
  {C}ommunication}, The University of Illinois Press, Urbana, Ill., 1949.
  \MR{MR0032134 (11,258e)}

\bibitem{Shin2001-2}
Sujin Shin, \emph{An example of a factor map without a saturated compensation
  function}, Ergodic Theory Dynam. Systems \textbf{21} (2001), no.~6,
  1855--1866. \MR{MR1869074 (2002h:37020)}

\bibitem{Shin2001}
\bysame, \emph{Measures that maximize weighted entropy for factor maps between
  subshifts of finite type}, Ergodic Theory Dynam. Systems \textbf{21} (2001),
  no.~4, 1249--1272. \MR{MR1849609 (2002i:37009)}

\bibitem{Shin2006}
\bysame, \emph{Relative entropy functions for factor maps between subshifts},
  Trans. Amer. Math. Soc. \textbf{358} (2006), no.~5, 2205--2216 (electronic).
  \MR{MR2197440 (2006i:37026)}

\bibitem{Silio1979}
Charles~B. Silio, Jr., \emph{An efficient simplex coverability algorithm in
  {$E\sp{2}$} with application to stochastic sequential machines}, IEEE Trans.
  Comput. \textbf{28} (1979), no.~2, 109--120. \MR{MR519218 (80d:68069)}

\bibitem{Tuncel1981}
Selim Tuncel, \emph{Conditional pressure and coding}, Israel J. Math.
  \textbf{39} (1981), no.~1-2, 101--112. \MR{MR617293 (82j:28012)}

\bibitem{Walters1982}
Peter Walters, \emph{An {I}ntroduction to {E}rgodic {T}heory}, Graduate Texts
  in Mathematics, vol.~79, Springer-Verlag, New York, 1982. \MR{MR648108
  (84e:28017)}

\bibitem{Walters1986}
\bysame, \emph{Relative pressure, relative equilibrium states, compensation
  functions and many-to-one codes between subshifts}, Trans. Amer. Math. Soc.
  \textbf{296} (1986), no.~1, 1--31. \MR{MR837796 (87j:28028)}

\bibitem{Weiss1973}
Benjamin Weiss, \emph{Subshifts of finite type and sofic systems}, Monatsh.
  Math. \textbf{77} (1973), 462--474. \MR{MR0340556 (49 \#5308)}

\bibitem{Yayama2009}
Y.~Yayama, \emph{Existence of a measurable saturated compensation function
  between subshifts and its applications}, preprint, 2009.

\bibitem{Yoo2009}
Jisang Yoo, \emph{Measures of maximal relative entropy with full support},
  preprint (2009).

\end{thebibliography}

\end{document}